\def\theequation{\thesection.\arabic{equation}}
\newcommand{\eqnum}{\refstepcounter{equation}\textup{\tagform@{\theequation}}}
\newcounter{copy}
\renewcommand{\thecopy}{\ifnum0=\c@section\arabic{copy}\else\thesection.\arabic{copy}'\fi}
\theoremstyle{definition}
\newtheorem{df}[equation]{Definition}
\newtheorem{notn}[equation]{Notation}
\theoremstyle{plain}
\newtheorem{thm}[equation]{Theorem}
\newtheorem{prop}[equation]{Proposition}
\newtheorem{lem}[equation]{Lemma}
\newtheorem{cor}[equation]{Corollary}
\theoremstyle{remark}
\newtheorem{rem}[equation]{Remark}
\newtheorem{ex}[equation]{Example}
\crefname{df}{Definition}{Definitions}
\crefname{notn}{Notation}{Notations}
\crefname{thm}{Theorem}{Theorems}
\crefname{prop}{Proposition}{Propositions}
\crefname{lem}{Lemma}{Lemmas}
\crefname{cor}{Corollary}{Corollaries}
\crefname{conj}{Conjecture}{Conjectures}
\crefname{rem}{Remark}{Remarks}
\crefname{ex}{Example}{Examples}
\crefname{section}{Section}{Sections}
\crefname{subsection}{Subsection}{Subsections}
\crefname{para}{}{}
\crefname{appendix}{Appendix}{Appendices}
\crefname{subappendix}{Appendix}{Appendices}
\crefname{table}{Table}{Tables}
\definecolor{shade}{gray}{0.94}
\newcommand{\bC}{\mathbb{C}}
\newcommand{\bM}{\mathbb{M}}
\newcommand{\bN}{\mathbb{N}}
\newcommand{\bR}{\mathbb{R}}
\newcommand{\bT}{\mathbb{T}}
\newcommand{\bU}{\mathbb{U}}
\newcommand{\bZ}{\mathbb{Z}}
\newcommand{\bfE}{\mathbf{E}}
\newcommand{\bfK}{\mathbf{K}}
\newcommand{\sfA}{\mathsf{A}}
\newcommand{\sfE}{\mathsf{E}}
\newcommand{\sfH}{\mathsf{H}}
\newcommand{\sfK}{\mathsf{K}}
\newcommand{\sfM}{\mathsf{M}}
\newcommand{\bbmv}{\mathbbm{v}}
\newcommand{\bbmw}{\mathbbm{w}}
\newcommand{\bbmx}{\mathbbm{x}}
\newcommand{\cA}{\mathcal{A}}
\newcommand{\cB}{\mathcal{B}}
\newcommand{\cC}{\mathcal{C}}
\newcommand{\cD}{\mathcal{D}}
\newcommand{\cE}{\mathcal{E}}
\newcommand{\cF}{\mathcal{F}}
\newcommand{\cG}{\mathcal{G}}
\newcommand{\cH}{\mathcal{H}}
\newcommand{\cI}{\mathcal{I}}
\newcommand{\cJ}{\mathcal{J}}
\newcommand{\cK}{\mathcal{K}}
\newcommand{\cL}{\mathcal{L}}
\newcommand{\cM}{\mathcal{M}}
\newcommand{\cO}{\mathcal{O}}
\newcommand{\cS}{\mathcal{S}}
\newcommand{\cT}{\mathcal{T}}
\newcommand{\cU}{\mathcal{U}}
\newcommand{\cX}{\mathcal{X}}
\newcommand{\cY}{\mathcal{Y}}
\newcommand{\cZ}{\mathcal{Z}}
\newcommand{\fA}{\mathfrak{A}}
\newcommand{\fC}{\mathfrak{C}}
\newcommand{\fD}{\mathfrak{D}}
\newcommand{\fT}{\mathfrak{T}}
\newcommand{\fU}{\mathfrak{U}}
\newcommand{\fu}{\mathfrak{u}}
\newcommand{\fv}{\mathfrak{v}}
\newcommand{\fw}{\mathfrak{w}}
\newcommand{\bv}{\mathbf{v}}
\newcommand{\bw}{\mathbf{w}}
\newcommand{\bx}{\mathbf{x}}
\newcommand{\sA}{\mathscr{A}}
\newcommand{\sB}{\mathscr{B}}
\newcommand{\sC}{\mathscr{C}}
\newcommand{\sD}{\mathscr{D}}
\newcommand{\sF}{\mathscr{F}}
\newcommand{\sH}{\mathscr{H}}
\newcommand{\sI}{\mathscr{I}}
\newcommand{\sJ}{\mathscr{J}}
\newcommand{\sM}{\mathscr{M}}
\newcommand{\sS}{\mathscr{S}}
\newcommand{\mrq}{\mathrm{q}}
\newcommand{\rF}{\mathrm{F}}
\newcommand{\rQ}{\mathrm{Q}}
\newcommand{\op}{\mathrm{op}}
\newcommand{\blank}{\text{\textvisiblespace}}
\renewcommand{\Im}{\mathrm{Im} \hspace{0.1em}}
\newcommand{\pt}{\mathrm{pt}}
\newcommand{\loc}{\mathrm{loc}}
\newcommand{\id}{\mathrm{id}}
\newcommand{\Hilb}{\mathrm{Hilb}}
\newcommand{\Hilbgo}{\operatorname{Hilb^f_{\Gamma,\omega}}}
\newcommand{\Hilbg}{\operatorname{Hilb^f_{\Gamma}}}
\newcommand{\kC}{\mathrm{k}_{\mathscr{C}}}
\newcommand{\qC}{\mathrm{q}_{\mathscr{C}}}
\DeclareMathOperator{\hotimes}{\hat{\otimes}}
\DeclareMathOperator{\K}{\mathrm{K}}
\DeclareMathOperator{\KK}{\mathrm{KK}}
\DeclareMathOperator{\iEnd}{\underline{\mathrm{End}}}
\DeclareMathOperator{\iiEnd}{\underline{END}}
\DeclareMathOperator{\Hom}{\mathrm{Hom}}
\DeclareMathOperator{\Der}{\mathrm{Der}}
\DeclareMathOperator{\cspan}{\overline{\mathrm{span}}}
\DeclareMathOperator{\Ad}{\mathrm{Ad}}
\DeclareMathOperator{\ad}{\mathrm{ad}}
\DeclareMathOperator{\Aut}{\mathrm{Aut}}
\DeclareMathOperator{\diag}{\mathrm{diag}}
\DeclareMathOperator{\Ind}{Ind}
\DeclareMathOperator{\Res}{Res}
\DeclareMathOperator{\Mod}{\mathrm{Mod}}
\DeclareMathOperator{\Bimod}{\mathrm{Bimod}}
\DeclareMathOperator{\sMod}{\mathscr{M}\mathrm{od}}
\DeclareMathOperator{\sBimod}{\mathscr{B}\mathrm{imod}}
\DeclareMathOperator{\Func}{\mathrm{Func}}
\DeclareMathOperator{\Rep}{\mathrm{Rep}}
\DeclareMathOperator{\qHom}{\mathrm{qHom}}
\newcommand{\Calg}{\mathfrak{C}^*\mathchar`-\mathfrak{alg}}
\newcommand{\Ctriv}{\mathfrak{C}^*\mathchar`-\mathfrak{alg}_{\mathrm{triv}}}
\newcommand{\Ccat}{\mathfrak{C}^*\mathchar`-\mathfrak{cat}}
\newcommand{\Corr}{\mathfrak{Corr}}
\newcommand{\eval}{\mathrm{ev}}
\newcommand{\evst}{R}
\newcommand{\ev}{R^*}
\newcommand{\coev}{\overline{R}}
\newcommand{\ass}{\Phi}
\newcommand{\cone}{\mathsf{C}}
\newcommand{\Kas}{\mathfrak{KK}}
\newcommand{\pr}{\mathrm{pr}}
\newcommand{\alg}{\mathrm{alg}}
\DeclareMathOperator{\Obj}{\mathrm{Obj}}
\DeclareMathOperator{\Irr}{\mathrm{Irr}}
\title{Tensor category equivariant KK-theory}
\author[Y. Arano]{Yuki Arano}
\address{Graduate School of Mathematics, Nagoya University
Furocho, Chikusaku, Nagoya, 464-8602, Japan}
\email{y.arano@math.nagoya-u.ac.jp}
\author[K. Kitamura]{Kan Kitamura}
\address{RIKEN iTHEMS, 2-1 Hirosawa, Wako, Saitama 351-0198, Japan}
\email{kan.kitamura@riken.jp}
\author[Y. Kubota]{Yosuke Kubota}
\address{Graduate School of Science, Kyoto University, Kitashirakawa Oiwake-cho, Sakyo-ku, Kyoto 606-8502, Japan}
\address{RIKEN iTHEMS, 2-1 Hirosawa, Wako, Saitama 351-0198, Japan}
\email{ykubota@math.kyoto-u.ac.jp}
\date{}
\keywords{KK-theory, tensor category action, inclusion of C*-algebras, anomalous action.}
\subjclass{Primary 19K35, Secondary 20G42, 46L35}
\begin{document}
\maketitle
\begin{abstract}
	In this paper, we introduce Kasparov's bivariant K-theory that is equivariant under symmetries of a C*-tensor category. 
	It is motivated by some dualities in quantum group equivariant KK-theory, and the classification theory of inclusions of C*-algebras. 
	The fundamental properties of the KK-theory, i.e., the existence of the Kasparov product, Cuntz's picture, universality, and triangulated category structure, hold in this generalization as well.
	Moreover, we further prove a new property specific to this theory; the invariance of KK-theory under weak Morita equivalence of the tensor categories. 
 As an example, we study the Baum--Connes type property for $3$-cocycle twists of discrete groups. 
	\end{abstract}
	\tableofcontents 

	\section{Introduction}
	This paper aims to develop the tensor category equivariant version of Kasparov's KK-theory \cite{kasparovOperatorFunctorExtensions1980}, as a generalization of discrete (quantum) group equivariant KK-theory \cites{kasparovEquivariantKKTheory1988,baajCalgebresHopfTheorie1989}.
    We start by introducing two motivations for the present work.

    The first is to understand compact quantum group equivariant KK-theory with the internal language of its representation category, following the line of  Tannaka--Krein duality. 
    A prior work by Voigt \cite{voigtBaumConnesConjectureFree2011} leads us to this problem. 
    In his study of the Baum--Connes conjecture for free orthogonal quantum groups, it is proved as a key ingredient that two compact quantum groups have equivalent equivariant KK-theory if their representation categories are monoidally equivalent.
    This suggests a tensor-categorical view of the equivariant KK-theory, or more specifically, a description of it with the language of the representation category. 
    The developments in the theory of Tannaka--Krein duality in this decade \cites{decommerTannakaKreinDualityCompact2013,decommerTannakaKreinDualityCompact2015,neshveyevDualityTheoryNonergodic2014,neshveyevCategoricallyMoritaEquivalent2018} support this idea, in which the actions of quantum groups on C*-algebras are translated to the language of C*-categories. We are now ready to build KK-theory on this basis.

    The second is the classification of C*-algebra inclusions. 
	The classification theory of C*-algebras is expected to follow a similar path as the corresponding theory of von Neumann algebras, but with the addition of a topological distinction captured by K(K)-theory. Two notable successes in this regard are the celebrated Kirchberg--Phillips classification theorem \cite{phillipsClassificationTheoremNuclear2000} and the recent classification of group actions on Kirchberg algebras announced by Gabe and Szab\'{o} \cite{gabeDynamicalKirchbergphillipsTheorem2022}. 
	The next problem would be the classification of inclusions, which is known as subfactor theory initiated by Jones~\cite{jones1983index} and has been a great success in the von Neumann algebra side. 
    Classification of inclusions is understood to involve three steps; classification of rigid C*-tensor categories $\sC$, of actions of $\sC$, and of Q-systems in $\sC$ \cites{ocneanuquantized1988,popa1990classification,longo1994duality,popa1995axiomatization}. 
    Our KK-theory will serve as a `topological' counterpart of the second part.

    In the context of Tannaka--Krein duality, the notion of tensor category action onto a C*-algebra $A$ is described in terms of a $\sC$-module structure on the C*-category $\Mod(A)$. 
    Familiar examples of such actions include those of the category $\Hilb_\Gamma^{\rm f}$ of finite dimensional graded Hilbert spaces for a discrete group $\Gamma$, the representation category $\Rep(G)$ for a compact quantum group $G$, and the tensor category $\Hilb_{\Gamma,\omega}^{\rm f}$ obtained by twisting the associator of $\Hilb_{\Gamma}^{\rm f}$ by a $3$-cocycle $\omega \in Z^3(\Gamma ; \bT)$, each of which corresponds to an action of $\Gamma$, an action of $\hat{G}$, and an anomalous action of $\Gamma$ \cites{decommerTannakaKreinDualityCompact2013,neshveyevDualityTheoryNonergodic2014}, respectively.
    To ensure that our KK-theory has a rich cohomological nature, such as the Bott periodicity, we have to include nonunital C*-algebras (e.g. $A \otimes C_0(\bR)$) in the target.  
    This is why we work on nonunital C*-categories \cites{mitchenerCATEGORIES2002,antounBicolimitsCategories2020}. 
    The duality of a C*-algebra and the associated C*-category $\Mod(A)$ extends to this class, and a functor $\Mod(A) \to \Mod(B)$ corresponds to an \emph{essential} (non-degenerate) Hilbert $A$-$B$ bimodule. 

    Here we are faced with the problem that we need to deal with non-essential Kasparov bimodules to reduce a Kasparov bimodule to a $\ast$-homomorphism following Cuntz's argument in ordinary KK-theory \cites{cuntzNewLookKK1987,meyerEquivariantKasparovTheory2000}.
    For this reason, we translate the notion of tensor category action to the C*-algebra side. 
    Through the category-algebra duality, the functor $\blank \otimes_\alpha \pi$ tensoring $\pi \in \Obj \sC$ corresponds to a $A$-$A$ bimodule $\alpha_\pi$, besides the coherence map corresponding to a natural unitary $\fu_{\pi,\sigma} \colon \alpha_\pi \otimes \alpha_\sigma \to \alpha_{\pi \otimes \sigma}$ called a cocycle.
    Now, a $\sC$-Hilbert bimodule is a Hilbert $A$-$B$ bimodule $E$ together with a family of `1-cocycles' $\bbmv_\pi \colon \alpha_\pi \otimes_A E \to E \otimes_B \beta_\pi$, which serves a C*-algebraic counterpart of $\sC$-module functor in the C*-category side, but encompasses a larger class since we allow the left $A$-action on $E$ to be non-essential.
    The technical problem is that these 1-cocycles are now not necessarily unitary or even adjointable when $E$ is not essential, which leads to technical difficulties in the entire theory.

    We build the $\sC$-equivariant KK-theory on this basis. We first introduce the notion of $\sC$-Kasparov bimodule between ($\bZ_2$-graded) $\sC$-C*-algebras, and then define the KK-group as the set of their homotopy classes. 
    In the familiar cases of $\sC=\Hilb_\Gamma^{\rm f}$ or $\Rep(G)$, our KK-theory is the same thing as the (quantum) group equivariant KK-theory\cites{kasparovEquivariantKKTheory1988,baajCalgebresHopfTheorie1989}.
    The notion of Kasparov bimodule and equivariant KK-theory is reformulated later in terms of a $\sC$-module category as a pair of a $\sC$-module functor and family of odd Fredholm operators.
    A standard argument parallel to the prior works \cites{higsonTechnicalTheoremKasparov1987,kasparovEquivariantKKTheory1988,baajCalgebresHopfTheorie1989} shows that our KK-theory is equipped with the associative product operation called the Kasparov product.
    It makes a family of groups $\KK^\sC(A,B)$ form the morphism set of an additive category, which we call the $\sC$-equivariant Kasparov category $\Kas^\sC$. 

    We further study the categorical nature of $\KK^\sC$-theory, or more specifically, the universality (\cref{thm:Kasparov.universality}) and the triangulated category structure (\cref{thm:triangulated}) of the category $\Kas^\sC$. 
    This is done by generalizing Cuntz's quasihomomorphism picture \cites{cuntzNewLookKK1987} to our $\sC$-equivariant setting (\cref{thm:quasihom.replacement}), which enables us to replace a $\sC$-Kasparov bimodule with an equivariant $\ast$-homomorphism. 
    In the first step reducing a $\sC$-Kasparov bimodule to the one having a trivial Fredholm operator, we follow the idea of `equivariant stabilization' by Meyer \cite{meyerEquivariantKasparovTheory2000}. 
    The remaining part goes parallel to the non-equivariant case, except that the free product C*-algebra $\mathrm{Q} A = A \ast A$ and its ideal $\mathrm{q} A$ are replaced with the `free product over $\sC$' and its ideal, denoted by $\mathrm{Q}_\sC A$ and $\qC A$, in which a cocycle $\fu_{\pi,\sigma}$ of a copy of $A$ in $A \ast A$ is identified with the one of another.
    The triangulated structure of the Kasparov category allows Meyer's relative homological algebra \cite{meyerHomologicalAlgebraBivariant2008} to work. We expect this will be the basis for formulating the Baum--Connes conjecture in the future. 
    Neshveyev, Voigt, and Yamashita kindly told us that the tensor category equivariant KK-theory and its Baum-Connes conjecture have been studied by them at the same time \cite{NVY}.

    A remarkable new aspect of it is the \emph{weak Morita invariance}, that is, the categorical equivalence $\Kas^\sC \cong \Kas^\sD$ for weakly Morita equivalent C*-tensor categories $\sC$ and $\sD$. 
    This includes, as an example, the well-known Baaj--Skandalis--Takesaki--Takai duality of quantum group equivariant Kasparov categories \cite{baajCalgebresHopfTheorie1989}. 
    Furthermore, our weak Morita invariance includes a generalization of it, a partial version of the Takesaki--Takai duality. 
    For a discrete group $\Gamma$, its finite abelian normal subgroup $\Lambda$, and a $\Gamma$-C*-algebra $A$, the crossed product $\Lambda \ltimes A$ is equipped with cocycle actions of both $\Gamma/\Lambda$ and the Pontrjagin dual $\hat {\Lambda}$ independently. 
    In fact, these symmetries on $\Lambda \ltimes A$ are assembled to an action of $\hat{\Lambda} \rtimes \Gamma/\Lambda$ twisted by a $3$-cocycle arising from the extension. 
    This not only enables us to study a new class of equivariant Kasparov categories through an existing one, but also shows new categorical equivalences between existing Kasparov categories.
    
    As a new example, we study the equivariant $\KK$-theory of $(\Hilb_{\Gamma, \omega}^{\rm f})^{\rm rev}$, the $3$-cocycle twist of a discrete group, from the viewpoint of the Baum--Connes conjecture. 
    An action of this tensor category on a C*-algebra $A$ is identical to an anomalous action of $\Gamma$ \cite{jonesRemarksAnomalousSymmetries2021}, or a $(\Gamma, \omega)$-action, given by a family of automorphisms $\{ \alpha_g\}$ and $2$-cocycles $\{ \fu_{g,h} \}$ with the twisted cocycle conditions. 
    A $3$-cocycle $\omega$ can be untwisted if it is a $(\Gamma , \omega)$-C*-algebra over a locally compact $\Gamma$-space $X$
    and $p_X^*\omega =0 \in H^3(\Gamma ; C(X;\bT))$.   
    A cohomology computation shows that  
    this happens for any proper $(\Gamma , \omega )$-C*-algebra if $\omega$ is trivialized by the Bockstein map. 
    Consequently, if we further assume that $\Gamma$ has the Haagerup property \cite{higsonTheoryKKTheory2001}, 
    then the twisted and untwisted $\Gamma$-equivariant Kasparov categories are equivalent. 
    This also yields a Baum--Connes type theorem for this class of symmetries, that is, a $(\Gamma, \omega)$-C*-algebra is equivariantly $\KK$-contractible if the underlying C*-algebra is $\KK$-contractible.
    This provides a prototype of the Baum--Connes type conjecture for tensor category actions.

    The paper is organized as follows. In \cref{section:action}, we prepare the setup of actions of tensor categories on C*-algebras we are working on.
    In \cref{section:equivariant.KKtheory}, we give a definition of the tensor category equivariant KK-theory in both C*-algebraic and C*-categorical ways, and show that they are equivalent. 
    In \cref{section:Cuntz.category}, we study the equivariant Cuntz's picture of our KK-theory, and show that the Kasparov category $\Kas^\sC$ possesses the universality and the triangulated structure. 
    In \cref{section:Morita}, we study the weak Morita invariance of the equivariant Kasparov category, with application to partial Takesaki-Takai duality. 
    In \cref{section:group.cocycle}, we study the Baum--Connes property of $3$-cocycle twists of torsion-free discrete groups.

\subsection{Notations and Conventions}
	\begin{itemize}
	    \item We write $\sC$ for a rigid C*-tensor category with countably many objects. The unit and zero object of $\sC$ is denoted by $\mathbf{1}_\sC$ and $\mathbf{0}_\sC$, or simply by $\mathbf{1}$ and $\mathbf{0}$ when there is no confusion. 
        We also always assume the simplicity of $\mathbf{1}_\sC$ when $\sC$ is rigid. 
		For a triple of objects $\pi , \rho, \sigma \in \Obj \sC$, the associator is denoted by ${\ass} (\pi,\rho,\sigma) \colon (\pi \otimes \rho) \otimes \sigma \to \pi \otimes (\rho \otimes \sigma)$. 
		For an object $\pi \in \Obj  \sC$, its conjugate object is denoted by $\bar{\pi}$, and the counit and unit morphisms are denoted by $\evst_\pi \colon \mathbf{1}_\sC \to \bar{\pi} \otimes \pi $ and $\coev_\pi \colon \mathbf{1}_\sC \to \pi \otimes \bar{\pi} $ respectively. 
		\item Unless otherwise noted, a locally compact group is always assumed to be second countable, as well as a discrete group is always assumed to be countable. Similarly, a compact quantum group $G$ is always assumed to have separable function algebra $C(G)$ and a faithful Haar state. 
		\item Unless otherwise noted, a C*-algebra is always assumed to be $\sigma$-unital and a Hilbert C*-module is always assumed to be countably generated.
		\item Unless otherwise noted, we always mean by $A \otimes B$ or $\sA \boxtimes \sB$ the minimal tensor product of C*-algebras or C*-categories. In most parts, however, the same statement is true even if the minimal tensor product is replaced by the maximal one. 
		In the same way, we mean by $\Gamma \ltimes A$ or $\hat{G} \ltimes A$ the reduced crossed product. 
		\item For a C*-algebra $B$ and a Hilbert $B$-module $E$, we write $\cL_B(E)$ or $\cL(E)$ (resp.~$\cK_B(E)$ or $\cK(E)$) for the C*-algebra of bounded (resp.~compact) adjointable operators. As well, we write $\cU(E)$ for the set of unitary operators on $E$ (we use the same notation for C*-algebras). 
		A $\ast$-homomorphism $A \to \cL(E)$ is called \emph{essential} in this paper if it satisfies $\phi(A)E=E$ (although it is often called to be non-degenerate in the literature) in order to distinguish it from the notion of degenerated Kasparov bimodules. 
	\end{itemize}

\subsection*{Acknowledgment}
The authors would like to thank Sergey Neshveyev, Christian Voigt, and Makoto Yamashita for their interest and discussions. 
YA was supported by JSPS KAKENHI Grant Number 18K13424. 
KK was partially supported by JSPS KAKENHI Grant Number JP21J21283, JST CREST program JPMJCR18T6, JSPS Overseas Challenge Program for Young Researchers, and the WINGS-FMSP program at the University of Tokyo, and he would like to thank his advisor Yasuyuki Kawahigashi for his encouragement and the University of Glasgow for the hospitality during his visit. YK was supported by RIKEN iTHEMS and JSPS KAKENHI Grant Numbers 19K14544, 22K13910, JPMJCR19T2. 
The authors also thank for the kind local hospitality and the support by the Operator Algebra Supporters' Fund provided for the research camp at Tsurui village in September 2022, where part of this study was done.

	\section{Tensor category action: from module category to C*-algebra}\label{section:action}
	In this section we summarize the basic materials for actions of tensor categories on C*-algebras, which is in the spirit of Tannaka--Krein duality of quantum group actions \cites{decommerTannakaKreinDualityCompact2013,decommerTannakaKreinDualityCompact2015,neshveyevDualityTheoryNonergodic2014}.
	We first define the notion of tensor category action in the categorical language, and later introduce a more concrete C*-algebraic counterpart.
	Although these two definitions are essentially the same, there are some minor, rather technical differences. 
	
	\subsection{Nonunital \texorpdfstring{$\sC$}{C}-module C*-category}
    Following the line of \cites{decommerTannakaKreinDualityCompact2013,decommerTannakaKreinDualityCompact2015}, the actions of tensor categories on C*-algebras are described in terms of module C*-categories.
    We work in a slightly broader framework than C*-category in the standard terminology (which we call unital C*-category in this paper), the \emph{nonunital C*-category} \cites{mitchenerCATEGORIES2002,antounBicolimitsCategories2020}, which is the categorical counterpart of non-unital C*-algebra.

	\begin{df}[{cf.~\cite{antounBicolimitsCategories2020}*{2.8}}]\label{def:separable.category}
		A \emph{nonunital C*-category} is a C*-category $\sA$ with a distinguished categorical ideal $\cK_\sA$ such that the space of endomorphisms $\cL_\sA(X)$ in $\sA$ is naturally isomorphic to the multiplier $\cM(\cK_\sA(X))$ for any object $X \in\Obj\sA$. We say that a nonunital C*-category $\sA$ is \emph{separable} if
		\begin{enumerate}
			\item it is idempotent complete and closed under countable \emph{$\ell^2$-direct sums}, 
			\item it has an \emph{dominant object} $X$, in the sense that any $Y \in\Obj\sA$ is a direct summand of $X^{\oplus\infty}$, and
			\item its morphism space $\cK_\sA(X,Y)$ is separable for any $X,Y \in\Obj\sA$.
		\end{enumerate}
	\end{df}
	
	Here, for a countable family $\{ X_n \}_{n \in \bN} \subset \Obj \sA$ in a nonunital C*-category, the object $\bigoplus_n X_n \in \Obj \sA$ is called their $\ell^2$-direct sum if  $\cK_{\sA}( \bigoplus_n X_n, Y) \cong \bigoplus_n \cK_{\sA} (X_n , Y) $ for any $Y \in \Obj \sA$, where the direct sum is taken as Hilbert modules with respect to the inner product $\langle T,S \rangle := TS^* \in \cK_\sA( Y, Y)$.

	For example, a C*-category is a nonunital C*-category with $\cL = \cK$. 
	A C*-category is sometimes called a unital C*-category for emphasis.
	On the other hand, for a separable nonunital C*-category $\sA$, its categorical ideal $\cK_\sA$ is uniquely recovered from $\cL_\sA$. 
	This is because, for a separable C*-algebra $A$, the C*-subalgebra $A \subset \cM(A)$ is characterized as the set of operators $a$ with separable $a \cM(A)$ \cite{woronowiczAlgebrasGeneratedUnbounded1995}*{Proposition~1.1}. 
	
	\begin{rem}\label{rem:separable.envelope}
		For a nonunital C*-category $\sA$, one can add formal subobjects and countable direct sums to obtain a new nonunital C*-category $\tilde \sA$ satisfying (1). 
		When $\sA$ satisfies (2) and (3) of \cref{def:separable.category} (as for (2) we mean that there exists $X \in \Obj \sA$ such that any $Y \in \Obj \sA$ is a direct summand of $X^{\oplus \infty}$ in $\tilde{\sA}$), then $\tilde \sA$ is separable. In this case, we call $\tilde \sA$ the \emph{separable envelope} of $\sA$. 
	\end{rem}
	
	\begin{df}\label{def:nonunital.functor}
		Let $\sA$ and $\sB$ be nonunital C*-categories. 
		A \emph{nonunital $\ast$-functor} $\cF \colon \sA \to \sB$ is a $\ast$-functor which is strictly continuous on the unit ball on each morphism set. In other words, a nonunital $\ast$-functor consists of
		\begin{itemize}
			\item the assignment $\Obj \sA \ni X \mapsto \cF(X) \in \Obj \sB$, and
			\item  the family of unital bounded linear maps $\cF=\cF_{X,Y} \colon \cL_\sA(X,Y) \to \cL_\sB(\cF(X), \cF(Y))$ that is strictly continuous on the unit ball, 		\end{itemize} 
		such that the $\ast$-functoriality $\cF(T\circ S)=\cF(T)\circ \cF(S)$, $\cF(T^*)=\cF(T)^*$ hold for any $S \in \cL_\sA(X,Y)$ and $T \in \cL_\sA(Y,Z)$. 
		A \emph{proper} nonunital $\ast$-functor is a nonunital $\ast$-functor $\cF$ that each $\cF_{X,Y}$ restricts to $\cK_\sA (X,Y) \to \cK_\sB (\cF(X),\cF(Y))$. 
	\end{df}
    Unless otherwise noted, we always assume that a nonunital $\ast$-functor $\cF \colon \sA \to \sB$ is \emph{countably additive}, i.e., the canonical morphism induces $\cF(\bigoplus_n X_n) \cong \bigoplus_n\cF(X_n)$ for any countable family $\{ X_n \}_{n \in \bN} \subset  \Obj \sA$.

    The terminology on functors (natural transform, faithfulness, fullness, and essential surjectivity, etc.) follows the usual category theory. 
    We say that a natural transform of two nonunital $\ast$-functors $\cF, \cG$ is a natural unitary if each $u_X$ is a unitary. 
    We also say that $\cF$ and $\cG$ are naturally unitarily isomorphic ($\cF \cong \cG$) if they are naturally isomorphic by natural unitaries, and two nonunital C*-categories $\sA$, $\sB$ are (C*-)categorically equivalent if there are proper nonunital $\ast$-functors $\cF \colon \sA \to \sB$ and $\cG \colon \sB \to \sA$ such that $\cF \circ \cG \cong \id_{\sB}$ and $\cG \circ \cF \cong \id_{\sA}$.
	Note that, if a fully faithful nonunital $\ast$-functor $\cF $ is essentially surjective, then $\cF$ is a categorical equivalence.

\begin{ex}\label{exmp:nonunital.category}
	The following (nonunital) C*-categories appear in this paper. Among them, (2) and (3) are sometimes treated as nonunital C*-category by $\cK:=\cL$.
	\begin{enumerate}
		\item For a $\sigma$-unital C*-algebra $A$, the category $\Mod(A)$ of (countably generated) Hilbert $A$-modules is a nonunital C*-category by $\cK_{\Mod(A)}(X,Y) = \cK_A(X,Y)$. It is a separable C*-category if so is $A$, with a dominant object $A$.
		For a Hilbert $A$-$B$ bimodule $E$, i.e., a Hilbert $B$-module together with $\phi \colon A \to \cL_B(E)$, the tensor product $ \blank \otimes_A E \colon \Mod(A) \to \Mod(B)$ gives a nonunital $\ast$-functor, which is proper if and only if $\phi (A) \subset \cK_B(E)$.
		\item For C*-algebras $A$ and $B$, 
		the category $\Bimod (A,B)$ of (countably generated) Hilbert $A$-$B$ bimodules and adjointable intertwiners is a C*-category. 
		We write  $\Bimod_{\rm es}(A,B)$ for its full subcategory consisting of essential Hilbert $A$-$B$ bimodules, which is again a C*-category.
		\item Let $\sA$ and $\sB$ be nonunital C*-categories. The functor category $\Func(\sA, \sB)$, the category whose objects are countably additive nonunital $\ast$-functors from $\sA$ to $\sB$ and morphisms are natural transformations, forms a C*-category. 
	\end{enumerate}
\end{ex}

	\begin{prop}[{\cite{antounBicolimitsCategories2020}*{Propositions 2.3, 2.4}}]\label{thm:algebra.category}
		The following hold.
  \begin{enumerate}
  \item Let $\sA$ be a separable nonunital C*-category with a distinguished dominant object $X \in\Obj\sA$. Then there exists a natural categorical equivalence $\sA \simeq \Mod(\cK_\sA(X))$.
  \item For separable C*-algebras $A$ and $B$, the natural functor
		\[\Bimod_{\rm es}(A,B) \to \Func(\Mod(A),\Mod(B)) ; \quad X \mapsto \blank \otimes_A X,\]
		is a categorical equivalence.
  \end{enumerate}
	\end{prop}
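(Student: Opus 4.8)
The plan is to prove both statements by the same pattern: construct an explicit comparison functor, then verify full faithfulness and essential surjectivity using only the axioms of a separable nonunital C*-category, Kasparov stabilization, and nondegeneracy of C*-algebra actions.

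For part (1), put $D := \cK_\sA(X)$ and define $\Phi \colon \sA \to \Mod(D)$ by $\Phi(Y) := \cK_\sA(X,Y)$, viewed as a right Hilbert $D$-module under composition $S \cdot a := S \circ a$ with inner product $\langle S, T\rangle := S^* \circ T \in \cK_\sA(X,X) = D$, and $\Phi(f) := f \circ \blank$ on a morphism $f$, an adjointable operator with adjoint $f^* \circ \blank$. First I would check that $\Phi$ is a well-defined, countably additive, proper nonunital $\ast$-functor: each $\cK_\sA(X,Y)$ is complete with module norm equal to the C*-category norm and is countably generated, being separable as a Banach space by axiom (3) of \cref{def:separable.category}; countable additivity of $\Phi$ is exactly the $\ell^2$-direct-sum axiom, which moreover gives $\Phi(X^{\oplus\infty}) = \cK_\sA(X,X^{\oplus\infty}) \cong D^{\oplus\infty}$, $\cK_\sA(X^{\oplus\infty}) \cong D \otimes \cK$ and $\cL_\sA(X^{\oplus\infty}) \cong \cM(D \otimes \cK)$; and $\Phi$ carries the morphisms $\xi \circ \eta^*$ to the rank-one compact operators $\xi\langle\eta,\blank\rangle$, which gives properness. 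Next, full faithfulness: $\Phi$ is faithful, since if $f \circ \blank$ vanishes on $\cK_\sA(X,Y)$ then, writing $Y$ as a retract $\pi\iota = \id_Y$ of $X^{\oplus\infty}$ (possible because $X$ is dominant) and using that $\cK_\sA(X)$ acts nondegenerately on $\cK_\sA(X,Z)$, one forces $f \circ \pi = 0$ and hence $f = 0$; a faithful $\ast$-functor is isometric on morphism spaces, and since dominance of $X$ makes the span of the morphisms $\xi \circ \eta^*$ dense in $\cK_\sA(Y,Z)$, the image $\Phi(\cK_\sA(Y,Z))$ is dense in $\cK_D(\Phi Y,\Phi Z)$ and hence, by isometry, equal to it; this identification of morphism ideals extends uniquely to the multiplier level $\cL = \cM(\cK)$, completing full faithfulness. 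Finally, essential surjectivity: a countably generated Hilbert $D$-module $M$ is, by Kasparov stabilization, the range of a projection in $\cL_D(D^{\oplus\infty}) \cong \cL_\sA(X^{\oplus\infty})$, which splits through some $Y \in \Obj\sA$ by idempotent completeness of $\sA$, so that $\Phi(Y) \cong M$. Hence $\Phi$ is a categorical equivalence, and it is natural in $X$ because $\cK_\sA(X',\blank)$ is functorial in $X'$.

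For part (2), the functor $\Psi \colon \Bimod_{\rm es}(A,B) \to \Func(\Mod(A),\Mod(B))$, $X \mapsto \blank \otimes_A X$, is fully faithful by evaluation at $A \in \Mod(A)$. Given a natural transformation $\eta \colon (\blank \otimes_A X) \Rightarrow (\blank \otimes_A X')$, its component at $A$, read through the canonical unitaries $A \otimes_A X \cong X$ (available precisely because $X, X'$ are essential, which is why the source is $\Bimod_{\rm es}$), is an adjointable right $B$-linear map $T \colon X \to X'$; naturality at the left-multiplication multipliers $\ell_a \in \cM(A) = \cL_A(A)$, which act through $A \otimes_A X \cong X$ by the left action $\phi_X(a)$, forces $T\phi_X(a) = \phi_{X'}(a)T$, so $T$ is a bimodule intertwiner. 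Since $\eta$ and $\blank \otimes_A T$ are natural transformations of countably additive functors that agree at $A$, they agree at all finite and countable direct sums of $A$ and, by naturality at idempotents, at every retract of $A^{\oplus\infty}$, which is everything because $A$ is dominant in $\Mod(A)$; thus $\eta = \blank \otimes_A T$, and faithfulness is the injectivity half of the same computation. For essential surjectivity, given a countably additive nonunital $\ast$-functor $\cF$, set $X := \cF(A)$ with left action $\phi_X(a) := \cF(\ell_a)$; applying strict continuity of $\cF$ to an approximate unit $(u_\lambda)$ of $A$ yields $\phi_X(u_\lambda) \to \id_X$ strictly, so $X$ is essential, and one builds a natural isomorphism $\cF \cong \blank \otimes_A X$ just as before: the canonical isomorphism at $A$, extended by additivity to $A^{\oplus n}$ and $A^{\oplus\infty}$, by strict continuity to all morphisms between copies of $A^{\oplus\infty}$, and by idempotent completeness of $\Mod(A)$ to every retract of $A^{\oplus\infty}$.

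The step I expect to demand the most care is the dominance-driven identification underlying full faithfulness in part (1): realizing each object as a retract of $X^{\oplus\infty}$, matching the abstract morphism spaces $\cK_\sA(Y,Z)$ with the corresponding corners of $\cK_\sA(X^{\oplus\infty}) \cong D \otimes \cK$ and of its multiplier algebra, and checking that these identifications are mutually compatible and independent of the chosen retractions — this is where all three conditions defining a separable nonunital C*-category (idempotent completeness, closure under $\ell^2$-direct sums, existence of a dominant object) are used. The remaining points — completeness and countable generation of morphism spaces, the strict-continuity estimates in part (2), and well-definedness and naturality of the comparison isomorphisms — are routine.
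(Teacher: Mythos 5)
Your proposal is correct and uses exactly the constructions the paper itself recalls (and attributes to Antoun--Voigt): $Y \mapsto \cK_\sA(X,Y)$ for (1), and $\cF \mapsto \cF(A)$ with left action $\cF_{A,A}$ and comparison map $(T,\xi)\mapsto \cF(T)\cdot\xi$ for (2). The paper defers the verifications to the cited reference, and the details you supply — full faithfulness via dominance and the isometry of faithful $\ast$-functors, essential surjectivity via Kasparov stabilization and idempotent completeness, and essentialness of $\cF(A)$ via strict continuity on an approximate unit — are the standard ones and are sound.
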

 We just recall the construction of these categorical equivalences.  The categorical equivalence in (1) is given by sending $Y$ to $\cK_\sA(X, Y) \in \Mod (\cK_\sA(X))$. The preimage of a functor $\cF \in \Func(\Mod(A),\Mod(B))$ in (2) is given by $E = \cF(A) \in\Obj\Mod(B)$, on which $A$ acts essentially from the left by 
		\[
  \cF_{A,A} \colon A = \cK(A) \to \cL(\cF(A)) = \cL(E).
  \]
  Indeed, the isomorphism $ X \otimes_A E \cong \cF(X)$ is given by the extension of
		\begin{align}
			\cK_A(A,X)  \times E \to \cF(X), \quad (T , \xi) \mapsto \cF(T) \cdot \xi .\label{eqn:functor.to.bimodule}
		\end{align}

	Let $\sA$ be a nonunital C*-category. 
	As is mentioned in \cref{exmp:nonunital.category} (3), the category of endofunctors $\Func(\sA, \sA)$ on $\sA$ is again a C*-category.
	Moreover, it is endowed with the strict monoidal structure by the composition of functors. 
	We write $\Func(\sA, \sA)^{\rm rev}$ for the C*-tensor category with the reversed monoidal structure, i.e., $\cF \otimes \cG:=\cG \circ \cF$. 
	A (right) action of a rigid C*-tensor category $\sC$ on a nonunital C*-category $\sA$ is a monoidal functor
	\[
	(\alpha , \fu) \colon \sC \to \Func(\sA, \sA)^{\rm rev}.
	\]
	We call the triple $(\sA, \alpha, \fu)$ a \emph{nonunital $\sC$-module category}, and often write it just as $\sA$ by suppressing $(\alpha,\fu)$. 
	For $\pi, \sigma \in \Obj \sC $, $X \in \Obj \sA$ and $f \in \Hom_\sC(\pi,\sigma)$, the images $\alpha_\pi (X) \in \Obj \sA$ and $\alpha_f (X) \in \Hom_{\sA}(\alpha_\pi(X), \alpha_\sigma(X))$ are also denoted by $X \otimes_\alpha \pi$ and $1_X \otimes_\alpha f$, or just $X \otimes \pi$ and $1_X \otimes f$ if there is no confusion, respectively. 
	For $\sC$-module categories $\sA$ and $\sB$, a $\sC$-module functor from $\sA$ to $\sB$ is a nonunital $\ast$-functor $\cF \colon \sA \to \sB$ together with natural unitaries $\mathsf{v}_{X,\pi} \colon \cF(X \otimes_\alpha \pi ) \to \cF(X) \otimes_\beta \pi $ satisfying the associativity and the unit conditions 
	(cf.\ \cite{etingofTensorCategories2015}*{Definition 7.2.1}).

 \begin{df}\label{defn:category.Ccat}
 We write $\Ccat^\sC$ for the category of separable nonunital $\sC$-module categories and natural isomorphism classes of proper $\sC$-module functors.
\end{df}
    \begin{df}\label{defn:action.tensor.category}
	Let $A$ be a separable C*-algebra. 
	We call an action of $\sC$ on the nonunital C*-category $\Mod(A)$ in the above sense a (right) \emph{action of $\sC$ on $A$}. 
    \end{df}
Equivalently (up to natural unitary), by \cref{thm:algebra.category} (2), a $\sC$-action on $A$ is given by a C*-tensor functor $(\alpha, \fu) \colon \sC \to \Bimod_{\rm es}(A,A)$. 
We conventionally regard $\Bimod_{\rm es}(A,A)$ as a strict C*-tensor category and omit brackets for iterated tensor products of bimodules. 

\begin{rem}\label{rem:monoidal.invariance}
    A monoidal functor $(\cS , \mathsf{s}) \colon \sC \to \sD$ induces a pull-back functor $\cS^* \colon \Ccat^\sD \to \Ccat^\sC$ by $\cS^*(\sA, \alpha,\fu):= (\sA, \alpha \circ \cS , (1 \otimes_\alpha \mathsf{s}_{\pi \otimes \sigma })\fu_{\cS(\pi),\cS(\sigma)})$ and $\cS^* (\cF, \mathsf{v}):=(\cF, \mathsf{v}_{\cS(\blank )} )$.
    If $\cS$ is a monoidal equivalence, then $\cS^*$ is a categorical equivalence. More explicitly, for the inverse $(\cT , \mathsf{t})$ of $\cS$ and the natural isomorphism $u \colon \id \to \cT \circ \cS$, the family of $\sC$-module functors $ (\id_\sA, \{ 1 \otimes_\alpha u_\pi \}_{\pi \in \Obj \sC }) \colon \sA \to \cS^*\cT^*\sA$ is associated, which forms a natural isomorphism $\id \to \cS^* \circ \cT^*$. 
\end{rem}

	\subsection{\texorpdfstring{$\sC$}{C}-action on C*-algebras and Hilbert bimodules}
	We start with making \cref{defn:action.tensor.category} explicit, to make it parallel to the cocycle actions of groups. 
	\begin{df}\label{def:action.of.tensor.category.explicit}
		A \emph{$\sC$-C*-algebra} is by a triple $(A, \alpha , \fu )$, where 
		\begin{itemize}
			\item $A$ is a C*-algebra, 
			\item $\alpha \colon \sC \to \Bimod_{\rm es}(A)$ is a functor, consisting of the families $\{ \alpha_\pi\} _{\pi \in \Obj \sC}$ and $\{ \alpha_f \in \Hom (\alpha_\pi, \alpha _\sigma) \mid f \in \Hom_\sC(\pi,\sigma) \}$ satisfying the functoriality and $\alpha(\mathbf{1}_\sC)=A$, and 
			\item each $\fu_{\pi, \sigma} \colon \alpha_\pi \otimes _A \alpha_\sigma  \to \alpha_{\pi \otimes \sigma }$ is a bimodule unitary satisfying, for any $f \in \Hom(\pi,\pi')$ and $g \in \Hom (\sigma, \sigma')$, 
			\begin{gather*}
				\xymatrix{
				\alpha_\pi \otimes_A \alpha_\sigma \ar[r]^{\fu_{\pi,\sigma}} \ar[d]^{\alpha_f \otimes_A \alpha_g} & \alpha_{\pi \otimes \sigma} \ar[d]^{\alpha_{f\otimes g}} \\ \alpha_{\pi'} \otimes_A \alpha_{\sigma'} \ar[r]^{\fu_{\pi',\sigma'}} & \alpha_{\pi' \otimes \sigma'},
				}
				\quad 
				\xymatrix@C=8ex{
					\alpha_\pi \otimes_A \alpha_\sigma \otimes_A \alpha_\rho \ar[rr]^{1_{\alpha_\pi} \otimes \fu_{\sigma, \rho}} \ar[d]^{\fu_{\pi,\sigma} \otimes 1_{\alpha_\rho}}&& \alpha_\pi \otimes_A \alpha_{\sigma \otimes \rho} \ar[d]^{\fu_{\pi, \sigma \otimes \rho}} \\
					\alpha_{\pi \otimes \sigma} \otimes _A \alpha_\rho \ar[r]^{\fu_{\pi\otimes \sigma, \rho}} & \alpha_{(\pi \otimes \sigma) \otimes \rho}\ar[r]^-{\alpha_{\ass(\pi,\sigma,\rho)}} & \alpha_{\pi \otimes (\sigma \otimes \rho)}. }
		\end{gather*}\label{eqn:diagram.Caction}
	\end{itemize}
\end{df}
Throughout the paper, a $\sC$-C*-algebra $(A,\alpha, \fu)$ is often denoted just as $A$ by suppressing $(\alpha, \fu)$.

We also introduce the notion of morphisms of $\sC$-C*-algebras corresponding to $\sC$-module functors. It is more relevant to consider Hilbert bimodules (correspondences) instead of $\ast$-homomorphisms. 

\begin{rem}
In this paper, we call a bounded $B$-liner map $\bbmv \colon E_1 \to E_2$ of Hilbert $B$-modules an \emph{isometric map} if it preserves the $B$-valued inner product, i.e., $\langle \bbmv \xi , \bbmv \eta \rangle = \langle \xi , \eta \rangle$,  but may not have a $B$-linear adjoint operator.
We remark that, for a Hilbert $B$-$D$ bimodule $\phi \colon B \to \cL(E')$, $\bbmv \otimes_\phi 1$ is well-defined and is again an isometric map. 
As well, if $E_1$ and $E_2$ are equipped with a left $A$-action preserved by $\bbmv$, then $1 \otimes_\phi \bbmv \colon E' \otimes E_1 \to E' \otimes E_2$ is well-defined and is again a isometric map for a Hilbert $A$-module $E'$. 
\end{rem}

\begin{df}\label{defn:cocycle.hom}
	A \emph{$\sC$-Hilbert $A$-$B$ bimodule} is a triple $\sfE=(E,\phi,\bbmv)$, where 
	\begin{itemize}
		\item $E$ is a Hilbert $B$-module,
		\item a $\ast$-homomorphism $\phi \colon A \to \cL(E)$, 
		\item a family $\{ \bbmv_\pi \colon \alpha_\pi \otimes_A E \to E \otimes _B \beta_\pi \}_{\pi \in \mathrm{Obj}\sC}$ of isometric maps intertwining the left $A$-actions, 
	\end{itemize}
	such that 
		the diagrams
		\begin{gather*}
				\xymatrix{
					\alpha_\pi \otimes _A E \ar[r]^-{\bbmv_\pi} \ar[d]^{\alpha_f \otimes 1_B} & E \otimes_B \beta_\pi \ar[d]^{1_B \otimes \beta_f} \\ 
					\alpha_\sigma \otimes_A E \ar[r]^-{\bbmv_\sigma} & E \otimes_B \beta_\sigma , 
				}
				\xymatrix{
					\alpha_\pi \otimes_A \alpha_\sigma \otimes_A {E} \ar[r]^{1_{\alpha_\pi} \otimes \bbmv_\sigma} \ar[d]^{\fu_{\pi,\sigma} \otimes 1_{{E}}} & \alpha_\pi \otimes_A {E} \otimes_B \beta _\sigma \ar[r]^{\bbmv_\pi \otimes 1_{\beta_\sigma}} & {E} \otimes_B \beta_\pi \otimes _B \beta_\sigma \ar[d]^{1_{E} \otimes \fv_{\pi ,\sigma }} \\ 
					\alpha_{\pi \otimes  \sigma } \otimes _A {E} \ar[rr]^{\bbmv_{\pi \otimes \sigma }} && {E} \otimes _B \beta_{\pi \otimes \sigma} 
				}
		\end{gather*}
		commute for any $\pi, \sigma \in \Obj \sC$ and $f \in \Hom (\pi, \sigma)$. 
		
		We call a $\sC$-Hilbert $A$-$B$ bimodule $\sfE$ proper or essential if so is the underlying Hilbert $A$-$B$ bimodule. 
  We further call a proper $\sC$-Hilbert $A$-$B$ bimodule of the form $(B,\phi,\bbmv)$ a \emph{cocycle $\sC$-$\ast$-homomorphism}, and denote by the pair $(\phi,\bbmv)\colon A \to B$.
\end{df}

An \emph{intertwiner} of $\sC$-Hilbert $A$-$B$ bimodules $(E_i, \phi _i, \bbmv_i)$ ($i=1,2$) is $X \in \cL(E_1, E_2)$ such that the diagrams
\[
\xymatrix{
	\alpha_\pi \otimes E_1 \ar[r]^{\bbmv_{1,\pi}} \ar[d]^{1_{\alpha_\pi} \otimes X} & E_1 \otimes \beta_\pi \ar[d]^{X \otimes 1_{\beta_\pi}}  \\
	\alpha_\pi \otimes E_2 \ar[r]^{\bbmv_{2,\pi}} & E_1 \otimes \beta_\pi
}
\]
commute for any $\pi \in \Obj \sC$. In this sense, a unitary $u \in \cL(E,E')$ intertwines 
\begin{align}
    u \colon (E,\phi,\bbmv) \to (E', \Ad(u) \circ \phi , (u \otimes 1) \bbmv (1 \otimes u^*)), \label{eqn:intertwiner.unitary}
\end{align}
where $(u \otimes 1) \bbmv (1 \otimes u^*)$ denotes the family of unitaries
\[ \alpha_\pi \otimes_{\Ad(u) \circ \phi} E' \xrightarrow{1 \otimes u^*} \alpha_\pi \otimes _\phi E \xrightarrow{\bbmv_\pi} E \otimes_B \beta_\pi \xrightarrow{u \otimes_B 1} E' \otimes_B \beta_\pi.    
\]

Two $\sC$-Hilbert $A$-$B$ bimodules are said to be isomorphic if there is a unitary intertwiner of them. 
We write the unital C*-category of $\sC$-Hilbert $A$-$B$ bimodules and intertwiners by $\Bimod^\sC(A,B)$.
Two $\sC$-C*-algebras $A$ and $B$ are said to be \emph{$\sC$-equivariantly isomorphic}, or \emph{cocycle conjugate}, if they are equivalent in the category of $\sC$-C*-algebras and isomorphism classes of cocycle $\sC$-$\ast$-homomorphisms.

\begin{rem}
In \cref{def:action.of.tensor.category.explicit}, some cocycles are automatically normalized by definition. 
First, by the assumption $\alpha_{\mathbf{1}}=A$, it follows that $\fu_{\mathbf{1},\mathbf{1}}\colon A\otimes_A A\to A$ coincides with the multiplication on $A$ (cf.~\cite{etingofTensorCategories2015}*{(2.24)}). 
Similarly, for $\pi \in \Obj \sC$,  $\fu_{\mathbf{1},\pi}$ and $\fu_{\pi,\mathbf{1}}$ are identified with the left and right module structures of $\alpha_\pi$ under $\alpha_{\mathbf{1}}=A$.
Second, for a $\sC$-Hilbert $A$-$B$ bimodule $\sfE$, we have that $\bbmv_{\mathbf{1}}$ is the inclusion from $\alpha_{\mathbf{1}} \otimes_A E = \overline{\phi(A)E}$ to $E \otimes_B \beta_{\mathbf{1}} = E$, by the commutativity of the right diagram for $\pi=\sigma=\mathbf{1}_\sC$ and the triviality of $\fu_{\mathbf{1},\mathbf{1}}$ and $\fv_{\mathbf{1},\mathbf{1}}$.
\end{rem}

\begin{ex}\label{ex:cocycle.twist}
	Let $(A, \alpha, \fu)$ be a $\sC$-C*-algebra. For $\pi \in \Obj \sC$, let $\tilde{\alpha}_\pi$ be a Hilbert $A$-module and let $v_\pi \colon \alpha_\pi \to \tilde{\alpha}_\pi$ be a unitary of Hilbert $A$-modules (we are typically interested in the case that $\tilde{\alpha}_\pi$ is $A$ or $\alpha_\pi$ itself). Then, with respect to the left $A$-action inherited by $\tilde{\alpha}_\pi$  via $v_\pi$, the triple $(A, \tilde{\alpha}, \tilde{\fu})$ is again a $\sC$-C*-algebra by letting
	\[\tilde{\fu}_{\pi, \sigma}:=v_{\pi \otimes \sigma } \fu_{\pi, \sigma }(v_\pi^* \otimes v_{\sigma}^*) \colon \tilde{\alpha}_\pi \otimes \tilde{\alpha}_\sigma \to  \alpha_\pi \otimes \alpha_\sigma \to \alpha_{\pi \otimes \sigma } \to \tilde{\alpha}_{\pi \otimes \sigma }. \]
	It is $\sC$-equivariantly isomorphic to $(A, \alpha, \fu)$ by the cocycle $\sC$-$\ast$-homomorphism $(\id_A, \{v_\pi \}_{\pi})$. We call it a \emph{cocycle twist} for $(A, \alpha, \fu)$, and write as $(A,\alpha, \fu)_v:=(A, \tilde{\alpha}, \tilde{\fu})$. 
\end{ex}

To a $\sC$-Hilbert $A$-$B$ bimodule $(E,\phi,\bbmv)$, the $\sC$-module functor $\blank \otimes_A E \colon \Mod (A) \to \Mod(B)$ is associated. There is a redundancy in this correspondence:  it is essential $\sC$-Hilbert $A$-$B$ bimodule, not all $\sC$-Hilbert $A$-$B$ bimodule, that corresponds one-to-one to $\sC$-module functor.

\begin{prop}\label{prop:functor_bimodule}
	Let $(A, \alpha, \fu)$ and $(B, \beta, \fv)$ be $\sC$-C*-algebras. Then there is an equivalence of C*-categories $\Bimod^\sC_{\rm es}(A,B)$ and the (unital) C*-category of $\sC$-module nonunital $*$-functors $\Func_\sC (\Mod(A), \Mod(B))$. 
	Moreover, 
	\begin{itemize}
		\item a proper $\sC$-module functor corresponds to a proper $\sC$-Hilbert bimodule, and
		\item a $\sC$-module functor sending $A \in\Obj\Mod(A)$ to $B \in\Obj\Mod(B)$ corresponds to an essential cocycle $\sC$-$\ast$-homomorphism.  
	\end{itemize}
\end{prop}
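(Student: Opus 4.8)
The plan is to promote the non-equivariant equivalence $\Bimod_{\rm es}(A,B)\simeq\Func(\Mod(A),\Mod(B))$ of \cref{thm:algebra.category}~(2) by carrying the $\sC$-module decorations along on each side. Define a functor $\Phi\colon\Bimod^\sC_{\rm es}(A,B)\to\Func_\sC(\Mod(A),\Mod(B))$ on objects by sending $\sfE=(E,\phi,\bbmv)$ to the nonunital $\ast$-functor $\cF_\sfE:=\blank\otimes_A E$ equipped with the $\sC$-module structure $\mathsf{v}_{X,\pi}:=1_X\otimes_A\bbmv_\pi\colon X\otimes_A\alpha_\pi\otimes_A E\to X\otimes_A E\otimes_B\beta_\pi$, and on morphisms by $X\mapsto 1\otimes_A X$ as in \cref{thm:algebra.category}~(2). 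Since $\phi$ is essential, each $\bbmv_\pi$ is in fact unitary --- this is where essentiality of the bimodule is used: the rigidity of $\sC$ together with the cocycle diagram of \cref{defn:cocycle.hom} for the pair $\pi,\bar\pi$ produces a two-sided inverse for $\bbmv_\pi$, and this inverse has range in $E$ precisely because $\alpha_{\mathbf{1}}\otimes_A E=E$ --- so each $\mathsf{v}_{X,\pi}$ is a natural unitary.

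Next I would verify that $(\cF_\sfE,\mathsf{v})$ really is a $\sC$-module functor, i.e.\ that $\mathsf{v}$ satisfies the unit and associativity axioms of \cite{etingofTensorCategories2015}*{Definition 7.2.1}. The unit axiom is immediate from the normalization remark following \cref{defn:cocycle.hom}. Naturality of $\mathsf{v}$ in the $\sC$-variable is exactly the first (naturality) diagram of \cref{defn:cocycle.hom}. After cancelling the canonical coherence isomorphisms for iterated tensor products of bimodules, the associativity axiom unwinds to the cocycle (multiplicativity) diagram of \cref{defn:cocycle.hom} relating $\bbmv_{\pi\otimes\sigma}$ with $\fu_{\pi,\sigma}$ and $\fv_{\pi,\sigma}$; so it holds by hypothesis. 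Finally, for a morphism $X$, the defining diagram of an intertwiner of $\sC$-Hilbert bimodules says precisely that $1\otimes_A X$ commutes with the $\mathsf{v}$'s, so $\Phi(X)$ is a morphism of $\sC$-module functors. This translation of the two cocycle diagrams into the module-functor coherences is the main technical content, and I expect the associativity matching, with its bookkeeping of the identifications $X\otimes_A(\alpha_\pi\otimes_A\alpha_\sigma)\otimes_A E\cong\cdots$, to be the most delicate point.

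For essential surjectivity, start from a $\sC$-module functor $(\cF,\mathsf{w})$, put $E:=\cF(A)$ with the essential left $A$-action $\cF_{A,A}\colon A=\cK_A(A)\to\cL(E)$ described after \cref{thm:algebra.category}, and recall the canonical natural unitary $\theta_X\colon X\otimes_A\cF(A)\xrightarrow{\cong}\cF(X)$ from the same place. Define $\bbmv_\pi$ as the composite $\alpha_\pi\otimes_A E\xrightarrow{\theta_{\alpha_\pi}}\cF(\alpha_\pi)=\cF(A\otimes_A\alpha_\pi)\xrightarrow{\mathsf{w}_{A,\pi}}\cF(A)\otimes_B\beta_\pi=E\otimes_B\beta_\pi$. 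Using that $\theta$ and $\mathsf{w}$ are natural with respect to the operators of left multiplication on $\alpha_\pi$ --- which correspond to $L_a\otimes 1$ under $A\otimes_A\alpha_\pi\cong\alpha_\pi$ --- one checks that $\bbmv_\pi$ intertwines the left $A$-actions, and feeding the naturality and coherence of $\mathsf{w}$ through $\theta$ produces the two diagrams of \cref{defn:cocycle.hom}; hence $\sfE_\cF:=(E,\cF_{A,A},\bbmv)$ is an essential $\sC$-Hilbert $A$-$B$ bimodule. A direct comparison using $\theta$ upgrades the round-trip isomorphisms of \cref{thm:algebra.category}~(2) to natural unitaries $\Phi(\sfE_\cF)\cong(\cF,\mathsf{w})$ and $\sfE_{\Phi(\sfE)}\cong\sfE$ of $\sC$-module functors (resp.\ $\sC$-Hilbert bimodules). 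Faithfulness of $\Phi$ is inherited from \cref{thm:algebra.category}~(2); fullness holds because a natural transformation of $\sC$-module functors is in particular one of the underlying functors, hence of the form $1\otimes_A X$, and its compatibility with the $\mathsf{v}$'s forces $X$ to be a $\sC$-intertwiner.

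Finally, the two supplementary clauses are read off the construction together with \cref{exmp:nonunital.category}~(1): $\cF_\sfE=\blank\otimes_A E$ is proper if and only if $\phi(A)\subseteq\cK(E)$, which is the definition of $\sfE$ being proper; and $\cF$ sends $A\in\Obj\Mod(A)$ to $B\in\Obj\Mod(B)$ if and only if $E=\cF(A)=B$, in which case $(\phi,\bbmv)$ with $\phi\colon A\to\cL(B)=\cM(B)$ is precisely a cocycle $\sC$-$\ast$-homomorphism in the sense of \cref{defn:cocycle.hom}, essential because $\cF(A)=B$ forces $\phi(A)B=B$. The principal obstacle, to reiterate, is the coherence matching of the second paragraph; the rest is a faithful transcription of \cref{thm:algebra.category}~(2) with the $\sC$-module structure carried along.
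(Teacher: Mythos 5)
Your proposal is correct and follows essentially the same route as the paper: tensoring with $E$ and setting $\mathsf{v}_{X,\pi}:=1_X\otimes\bbmv_\pi$ in one direction, and recovering $E:=\cF(A)$ with $\bbmv_\pi$ induced by $\mathsf{v}_{A,\pi}$ via \cref{thm:algebra.category}~(2) in the other. The extra detail you supply on why essentiality makes each $\bbmv_\pi$ unitary is exactly the rigidity argument the paper records separately as \cref{lem:cocycle_range}~(1), and the remaining coherence checks are the ones the paper leaves implicit.
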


\begin{proof}
	A $\sC$-Hilbert $A$-$B$ bimodule $(E,\phi,\bbmv)$ induces a $\sC$-module functor $(\cF, \mathsf{v}) \colon \Mod(A) \to \Mod(B)$ given by
	\[ 
	\cF(X):=X \otimes _\phi E, \quad \mathsf{v}_{X,\pi}:= 1_X \otimes \bbmv_{\pi}.
	\]
	Conversely, if we have a $\sC$-module functor $(\cF, \mathsf{v}) \colon \Mod(A) \to \Mod(B)$, the $A$-$B$ bimodule $E:=\cF(A)$ satisfies $\cF \cong \blank  \otimes _A E$ by \cref{thm:algebra.category} (2), 
	and the family of unitaries
	\[ \bbmv_{\pi} \colon  \alpha_\pi \otimes_A E \cong \cF(\alpha_\pi )  \xrightarrow{\mathsf{v}_{A, \pi}} \cF(A) \otimes_B \beta_\pi = E \otimes_B \beta_\pi \]
	satisfies the cocycle conditions, i.e., the diagrams in \cref{defn:cocycle.hom} commute. The remaining part is obvious.
\end{proof}

\begin{df}\label{defn:category.CorrC}
For $\sC$-C*-algebras $A$, $B$, $D$ and 
$\sC$-Hilbert bimodules $\mathsf{E}_1 =(E_1,\phi_1,\bbmv_1) \in \Bimod^\sC(A,D)$ and $\mathsf{E}_2 =(E_2,\phi_2,\bbmv_2) \in \Bimod^\sC(D,B)$, their interior tensor product is defined by
\begin{align}
	\mathsf{E}_1\otimes_D\mathsf{E}_2:=(E_1 \otimes_{\phi_2}E_2, \phi_1 \otimes_{\phi_2}1, (1 \otimes_{D} \bbmv_{2,\pi})(\bbmv_{1,\pi} \otimes_{\phi_2} 1)) \label{eqn:tensor.C-bimodule}
\end{align}
as a $\sC$-Hilbert $A$-$B$ bimodule. 
    Let $\Corr_{\rm es} ^\sC$ denote the category of separable $\sC$-C*-algebras and equivalence classes of separable proper essential $\sC$-Hilbert bimodules. 
\end{df}
Under the identification given in \cref{prop:functor_bimodule}, the composition \eqref{eqn:tensor.C-bimodule} corresponds to the composition of $\sC$-module functors. 	
Moreover, the identity morphism of the category $\Corr_{\mathrm{es}}^\sC$ is given by $(A,\id_A,\mathbbm{1})$, where $\mathbbm{1}_\pi \colon \alpha_\pi \to \alpha_\pi$ is the identity. 
Note that $(E,\phi,\bbmv) \circ (A,\id_A,\mathbbm{1}) = (E,\phi,\bbmv)$ holds only if $\phi \colon A \to \cL(E)$ is essential.

In summary, we get the following equivalence of categories. 


\begin{thm}\label{thm:equivalence.category.algebra}
	The correspondence $(A , \alpha , \fu ) \mapsto (\Mod(A), \otimes_\alpha, \fu)$ coming from a comparison of \cref{defn:action.tensor.category} and \cref{def:action.of.tensor.category.explicit} gives a categorical equivalence of $\Corr_{\rm es}^\sC$ and $\Ccat^\sC$ defined in \cref{defn:category.Ccat} and \cref{defn:category.CorrC}. 
\end{thm}

\begin{proof}
    We have the canonical functor $\Mod \colon \Corr_{\rm es}^\sC \to \Ccat^\sC$ by definition, which is fully faithful by \cref{prop:functor_bimodule} and essentially surjective by \cref{thm:algebra.category}. 
\end{proof}

\begin{df}\label{defn:category.Calg}
  For cocycle $\sC$-$\ast$-homomorphisms $(\phi ,\bbmv)\colon A \to D$ and $(\psi,\bbmw) \colon D \to B$, their composition is defined by 
  \begin{align}
      (\psi,\bbmw) \circ (\phi,\bbmv) \coloneqq (\psi \circ \phi, (1_A \otimes_D \bbmw_\pi)(\bbmv_\pi \otimes_D 1_B)). \label{eqn:product.homomorphism}
  \end{align}
  We write $\Calg^\sC$ for the category of separable $\sC$-*-algebras and $\sC$-$\ast$-homomorphisms.
\end{df}
\begin{rem}
    We mention about a difference of the categories $\Corr_{\mathrm{es}}^\sC$ and $\Calg^\sC$. Indeed, the latter is not a subcategory of the former by two reasons. First, in the category $\Calg^\sC$, cocycle conjugate $\sC$-$\ast$-homomorphisms are not identified.  
    Second, we need not assume that a morphism in the category $\Calg^\sC$ is essential. 
    This works well since the composition \eqref{eqn:product.homomorphism} satisfies $(\phi,\bbmv)\circ (\id,\mathbbm{1}) =(\phi,\bbmv)$ even if $\phi $ is not essential.
\end{rem}

\begin{ex}\label{ex:group.action}
		Let $\Gamma$ be a discrete group, and let $\Hilb_\Gamma^{\rm f}$ denote the C*-tensor category of finite dimensional $\Gamma$-graded Hilbert spaces. 
		Let us consider this category with the reversed monoidal structure $\bC \delta_g \otimes \bC \delta_h :=\bC \delta_{hg}$, namely $\sC:=(\Hilb_{\Gamma}^{\rm f})^{\rm rev}$.

		A C*-algebra $A$ with an action $\alpha \colon \Gamma \to \Aut(A)$ can be viewed as a $\sC$-C*-algebra 
		by considering Hilbert bimodules ${}_{\alpha_g}A$ (the Hilbert $A$-module $A$ with the left action $\alpha_g$) and the canonical unitaries ${}_{\alpha_g}A \otimes{}_{\alpha_h} A \cong {}_{\alpha_{hg}}A$ for $g,h\in \Gamma$. 
		Conversely, take an action $(\alpha,\fu)$ of $\sC $ on $A$, and further assume $\alpha_g \cong A$ as a right Hilbert $A$-module for each $g \in \Gamma$. 
		Then, each $\alpha_g$ is identified with a $\ast$-automorphism on $A$ (denoted by the same letter $\alpha_g$), $\fu_{g,h} \in \cU \cM(A)$, and they satisfy $\Ad(\fu_{h,g}) \circ \alpha_g \circ \alpha_h = \alpha_{gh}$ and $\fu_{hk,g}\alpha_{g}(\fu_{k,h}) = \fu_{k,gh}\fu_{h,g}$.  
		That is, $((\alpha_g)_{g\in \Gamma}, (\overline{\fu}_{g,h})_{g,h\in\Gamma})$ forms a cocycle action of $\Gamma$ (cf.~\cite{szaboCategoricalFrameworkClassifying2021}*{Definition~1.1}) by letting $\overline{\fu}_{g,h}:=\fu_{h,g}^*$. 
		Note that a $\sC$-C*-algebra without the assumption $\alpha_\pi \cong A$ is viewed as a Fell bundle over $\Gamma$ (cf.~\cite{neshveyevDualityTheoryNonergodic2014}*{Example 4.2}).

		More generally, for a $3$-cocycle $\omega \in Z^3(\Gamma ; \bT)$ taking values in $\bT:=U(1)$, let $\Hilb_{\Gamma , \omega}^{\rm f} $ denote the C*-tensor category obtained from $\Hilb_\Gamma^{\rm f}$ by twisting the associator as $\ass(g,h,k):=\omega(g,h,k)$. 
		Then, a $(\Hilb_{\Gamma, \omega}^{\rm f})^{\rm rev}$-C*-algebra has an explicit description similar to left cocycle action of $\Gamma$ up to $\omega$, 
		which will be discussed later in \cref{defn:twisted.action}. 
	\end{ex}

\begin{ex}\label{ex:CQG.action}
	Let $G$ be a compact quantum group. 
	Here we observe that an action of the representation category $\Rep(G)$ is the same thing as a right cocycle action of the dual discrete quantum group $\hat{G}$. 
	More precisely, $\Corr_{\rm es}^{\Rep(G)}$ is equivalent to the category $\Corr_{\rm es}^{\hat{G}}$ of separable right cocycle $\hat{G}$-C*-algebras and unitary equivalence classes of proper essential $\hat{G}$-Hilbert bimodules. 

	We first confirm some notations. 
	The dual discrete quantum group $\hat{G}:=(C^*(G),\hat{\Delta})$ is defined by $\hat{\Delta}(x):=\Ad W^G_{21}(1\otimes x)$ for $x \in C^*(G)$ by using the leg-numbering notation. Here, $W^G$ denotes the multiplicative unitary coming from the left regular representation. 
	For each $\pi\in \Rep (G)$, use the same letter $\pi \colon C^*(G) \to \cB(\sH_\pi)$ for the associated $*$-representation. 
	Note that, in our notation, the tensor product representation $\pi \otimes \sigma$ is given by $(\pi \otimes \sigma) \circ \hat{\Delta}_{21}$. 
	A right cocycle action of $\hat{G}$ on $A$ is a pair $(\alpha, \fu)$, where $\alpha \colon A \to \cM(A\otimes C^*(G)) $ and  $\fu \in \cU\cM(A\otimes C^*(G) \otimes C^*(G))$, such that 
	$( \alpha \otimes \id ) \alpha = \Ad( \fu ) ( \id \otimes \hat{\Delta} )\alpha$ and 
	\[
	(\fu\otimes1)(\id \otimes \hat{\Delta} \otimes \id(\fu))=(\alpha\otimes\id \otimes \id (\fu))(\id \otimes \id\otimes \hat{\Delta}(\fu))
	\] (cf.~\cite{vaesExtensionsLocallyCompactQuantum2003}*{Definition 1.1}). 
		For right cocycle $\hat{G}$-C*-algebras $A$ and $B$, an essential right $\hat{G}$-Hilbert $A$-$B$ bimodule is given by a triple $(E,\phi, \delta)$, where $(E,\phi)$ is an essential Hilbert $A$-$B$ bimodule, $\delta \colon E \to \cM(E \otimes C^*(G))$ satisfies $\delta (a \xi b) = \alpha(a) \delta(\xi)\beta(b)$ and $(\delta \otimes \id)\delta = \fu(\id \otimes \hat{\Delta}) \delta(\blank) \fv^*$.

	To a right cocycle $\hat{G}$-C*-algebra $(A,\alpha,\fu)$, a $\Rep(G)$-action (denoted by $(\alpha, \fu)$ by abuse of notation) is associated by letting $\alpha_\pi := (A\otimes \sH_\pi, (\id_A \otimes \pi) \circ \alpha)$ and $\fu _{ \pi ,  \sigma }:=( \id \otimes \pi \otimes \sigma )( \fu^*_{132} ) \in \cU\cM(A \otimes \cB ( \sH_\pi ) \otimes \cB ( \sH_\sigma ))$ for $\pi, \sigma \in \Rep(G)$. A straightforward computation shows that $(\alpha_{\pi}, \fu_{\pi,\sigma})$ is characterized by the same relation as \cref{def:action.of.tensor.category.explicit}. 
	For an essential $\hat{G}$-Hilbert $A$-$B$ bimodule $(E,\phi,\delta)$, let 
	\[ 
	X \in \cL(E \otimes_\beta (B\otimes C^*(G)), E\otimes C^*(G)), \quad X(\xi \otimes b \otimes x) = \delta (\xi)(b \otimes x),
	\]
	for $\xi\in E$, $b\in B$, $x\in C^*(G)$ be the associated unitary (cf.~\cite{decommerTannakaKreinDualityCompact2013}*{Definition 3.3}). 
	Then, by letting $\bbmv_\pi:=X^* \otimes_{\id\otimes\pi} 1_{B\otimes\cB(\sH_\pi)}$, the triple $(E,\phi,\bbmv)$ forms an essential $\Rep(G)$-Hilbert $A$-$B$ bimodule. 
	The desired relation is checked by
	\[ 
	(\phi\otimes\id\otimes\id(\fu^*))(X\otimes 1_{C^*(G)})(X \otimes_{\beta\otimes\id} 1_{B\otimes C^*(G)\otimes C^*(G)})=(X \otimes_{\id \otimes \hat{\Delta}} 1_{B\otimes C^*(G)\otimes C^*(G)}) (1_E \otimes \fv^*)
	\] 
	as unitaries from $E \otimes_{(\beta\otimes\id)\beta}(B\otimes C^*(G)\otimes C^*(G))$ to $E\otimes C^*(G)\otimes C^*(G)$. 
	Conversely, for an essential $\Rep(G)$-Hilbert $A$-$B$ bimodule $(E,\phi,\bbmv)$, the unitary $X$ with the above relation (and hence $\delta$) is recovered from $\bbmv_\pi$ as $X =\bigoplus_{\pi \in \Irr \sC} \bbmv_\pi^* $.

	Now we get a well-defined functor $\Corr_{\rm es}^{\hat{G}} \to \Corr_{\rm es}^{\Rep(G)}$, which is fully faithful. 
	It is also essentially surjective since any $\Rep(G)$-action on $A$ can be replaced to the one satisfying $\alpha_\pi \cong A\otimes \sH_\pi$ as right Hilbert $A$-modules, after tensoring with the compact operator algebra $\cK$ if necessary (cf.~\cref{lem:endomorphism.realization}). 
		
	When $G=\hat{\Gamma}$ for some discrete group $\Gamma$, a right cocycle action $(\alpha,\fu)$ of $\hat{G}=\Gamma $ corresponds to a cocycle action of $\Gamma$ in the sense of \cref{ex:group.action}, by setting $\alpha_g:=(\id \otimes \eval_g) \circ \alpha$ and $\fu_{g,h}:=(\id \otimes \eval_g \otimes \eval_h)(\fu)$. 
	The above discussion is consistent with \cref{ex:group.action} via the monoidal equivalence $\Hilb_{\Gamma}^{\rm f} \simeq \Rep(\hat{\Gamma})$. 
\end{ex}
	
\begin{rem}\label{rem:appendix.CQG.action}
	We add more comments on \cref{ex:CQG.action} comparing correspondence categories arising from $\hat{G}$-C*-algebras. 
	\begin{enumerate}
	    \item A non-essential $\sC$-Hilbert $A$-$B$ bimodule may not come from any $\hat{G}$-Hilbert $A$-$B$ bimodule. 
	    Indeed, an example is given by the zero $*$-homomorphism $0 \colon A \to I$ and the zero isometries $0_\pi \colon 0 \to I\otimes\beta_\pi$, where $I$ is a non-$\hat{G}$-invariant ideal of $B$. 
	    \item Any cocycle $\hat{G}$-C*-algebra is Morita equivalent (i.e., equivalent in the category $\Corr_{\rm es}^{\hat{G}}$) to a genuine $\hat{G}$-C*-algebra, due to a Packer--Raeburn type stabilization trick \cite{vaesExtensionsLocallyCompactQuantum2003}*{Proposition 1.9}. 
	    This shows that our $\Corr_{\rm es}^{\hat{G}}$ is categorically equivalent to the category of right $\hat{G}$-C*-algebras and unitary equivalence classes of proper essential $\hat{G}$-Hilbert bimodules in the sense of \cite{baajUnitairesMultiplicatifsDualite1993}. 
	\end{enumerate}
\end{rem}

\subsection{Miscellaneous remarks on \texorpdfstring{$\sC$}{C}-Hilbert bimodules}
We remind some generalities of the Hilbert bimodules with conjugate. Standard references are \cites{izumiInclusionsSimpleAst2002,kajiwaraJonesIndexTheory2004}. 
\begin{lem}\label{lem:dual.bimodule}
    Let $E$ be an essential Hilbert $A$-$A$ bimodule with the conjugate $(\overline{E}, \evst,\coev)$. 
    \begin{enumerate}
        \item If $RR^*$ is invertible, then the underlying Hilbert $A$-module $E$ is full. 
        \item The left $A$-action on $E$ is proper. 
        \item If $A$ is $\sigma$-unital, $E$ is countably generated as a Hilbert $A$-module. Moreover, if $A$ is unital, then so is $\cK(E)$. 
        \item Let $A$ be unital, and set $\sH_A :=\ell^2\bZ \otimes _\bC A$. Then, a projection $p\in \cL(\sH_A)$ with $p \sH_A \cong E$ is a compact operator; $p \in \cK(\sH_A)$. In particular, $\cL(E) =\cK(E)$ holds. 
        \item The Hilbert $A$-module $\overline{E}$ is isomorphic to $\cK(E,A)$ with the $A$-valued inner product given by $\langle S,T\rangle := \coev^*(S^*T\otimes_A 1_{\overline{E}})\coev$. 
    \end{enumerate}    
\end{lem}
\begin{proof}
    The claim (1) is checked as $\langle E,E \rangle \supset \langle \overline{E} \otimes _A E , \overline{E} \otimes _A E  \rangle \supset \langle \ev(A),\ev(A) \rangle =A $. 
    To see (2), observe that $(\cK(E) \otimes 1_{\overline{E}})\cK(E \otimes_A \overline{E}) = \cK(E \otimes_A \overline{E})$, and hence
    \begin{align*}
    	A \cdot 1_{E}
    	={}& (1_{E} \otimes_A \ev)(\coev \otimes_A 1_{E}) \cdot (A \otimes _A 1_E) \\
	    \subset {}& (1_{E} \otimes_A \ev)(\cK(A, E \otimes_A \overline{E}) \otimes_A 1_{E}) \\
	    ={} & \cK(E) (1_{E} \otimes_A \ev)(\cK(A,E \otimes_A \overline{E})\otimes_A 1_{E}) 
	    \subset \cK(E). 
        \end{align*}
        Now (3) follows from (2), together with the $\sigma$-unitality of $\cK(E)$. The claim (4) follows from the latter half of (3). 

        To see (5), observe that the conjugation relation shows that the following $A$-module maps are inverses of each other; 
        \begin{align*}
	       &
            \cK_A(E,A) 
            \xrightarrow{\blank \otimes 1_{\overline{E}}} 
            \cK_A(E \otimes_A \overline{E},\overline{E})
            \xrightarrow{\blank \circ \coev }
        	\cK_A(A,\overline{E} )\cong \overline{E},
        	\\&
        	\overline{E} \cong \cK_A(A,\overline{E}) 
        	\xrightarrow{\blank \otimes 1_E}
        	\cK_A(E,\overline{E} \otimes_A E) 
        	\xrightarrow{\ev \circ \blank }
            \cK_A(E,A) . \qedhere 
        \end{align*}
\end{proof}

\begin{rem}\label{rmk:fullHilbmod}
	Let $B$ be a $\sC$-C*-algebra. If a Hilbert $B$-module $E$ is full (or $E \otimes_B \beta_\pi = E \otimes_B \beta_\pi \otimes_B I$ for any $\pi \in \Obj \sC$ where $I := \langle E,E\rangle$), the C*-algebra $\cK(E)$ has a canonical $\sC$-C*-algebra structure $(\beta^E, \fv^E)$ given by 
	\begin{align}
		\begin{split}
			\beta_\pi^E :=& E \otimes_B \beta_\pi \otimes_B E^*,\\
			\fv_{\pi, \sigma }^E:=& \id_E \otimes_B \fv_{\pi,\sigma} \otimes \id_{E^*}.
		\end{split} \label{eqn:module.cocycle}
	\end{align}
	Here, note that $E$ becomes canonically a $\sC$-Hilbert $\cK(E)$-$B$ bimodule by construction. Especially, we apply this to $E\oplus B$ for any Hilbert $B$-module $E$. 
	In this case, a triple $(E,\phi,\bbmv)$ is a proper $\sC$-Hilbert $A$-$B$ bimodule if and only if $(\phi\oplus 0_B,\bbmv\oplus 0) \colon A \to \cK(E\oplus B)$ is a cocycle $\sC$-$\ast$-homomorphism. 
\end{rem}

\begin{rem}\label{ex:essential.bimodule}
	For a $\sC$-Hilbert $A$-$B$ bimodule $\sfE=(E,\phi,\bbmv)$, we define its essential part by
	\[ \sfE_{\rm es}= (E_{\rm es}, \phi_{\rm es}, \bbmv_{\rm es}):= (A,\id, \mathbbm{1}) \otimes_A \sfE, \]
	i.e., $\sfE_{\rm es}= (E_{\rm es}, \phi_{\rm es}, \bbmv_{\rm es})$ where 
	\[E_{\rm es}:=A \otimes_A E = \overline{\phi(A)E}, \quad \phi_{\rm es}(a):=\phi(a)|_{E_{\rm es}}, \quad \bbmv_{{\rm es},\pi} :=\bbmv_\pi.  \]
	Then the underlying Hilbert $A$-$B$ bimodule $(E_{\rm es},\phi_{\rm es})$ is essential, i.e., $\overline{\phi_{\rm es}(A) E_{\rm es}} = E_{\rm es}$. 
\end{rem}

We provide a handy method to treat possibly non-adjointable isometric maps $\bbmv_\pi$ in \cref{defn:cocycle.hom}. 
For $\xi \in \alpha_\pi$, we write $\bar{\xi}$ for the element of $\alpha_{\bar{\pi}} \cong \cK(\alpha_\pi, A)$ (\cref{lem:dual.bimodule} (5)) determined by the inner product with $\xi$. 

\begin{notn}
	Let $A,B$ be C*-algebras, $E_1$ be a right Hilbert $A$-module and $E_2$ be a right Hilbert $A$-$B$ bimodule. 
	For $\xi\in E_1$, we write $T_\xi \in \cL_B (E_2 , E_1 \otimes_A E_2)$ for the operator $\eta \mapsto \xi \otimes_A \eta$. 
\end{notn}
\begin{lem}\label{lem:cocycle_range}
	Let $(A,\alpha,\fu)$ and $(B,\beta, \fv)$ be $\sC$-C*-algebras and let $(E,\phi, \bbmv) $ be a $\sC$-Hilbert $A$-$B$ bimodule. Then the following hold:
	\begin{enumerate}
		\item For any $\pi \in \Obj \sC$, $1_{\alpha_{\mathbf{1}} } \otimes \bbmv_\pi \colon \alpha_{\mathbf{1}}  \otimes \alpha_\pi \otimes E \to \alpha_{\mathbf{1}} \otimes E \otimes \beta_\pi$ is invertible (and hence is a unitary).
		\item For $\pi\in\Obj\sC$ and $\xi \in \alpha_\pi$, the operator $\bbmv_\pi T_\xi \colon E \to E\otimes_B\beta_\pi$ is adjointable with the adjoint 
		\[ E \otimes \beta_\pi \xrightarrow{T_{\bar{\xi}} \otimes 1_{\beta_\pi }} \alpha_{\bar{\pi}} \otimes _A E \otimes_B \beta_\pi \xrightarrow{\bbmv_{\bar{\pi}} \otimes 1_{\beta_\pi}} E \otimes _B \beta_{\bar{\pi}} \otimes_B \beta_{\pi} \xrightarrow{1_E \otimes \beta_{\ev_\pi}\fv_{\bar{\pi} , \pi }} E.  \]
	\end{enumerate}
\end{lem}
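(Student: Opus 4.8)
The plan is to use the conjugate structure on $\alpha_\pi$ together with the cocycle identities in \cref{defn:cocycle.hom} to exhibit explicit adjoints, thereby bypassing the non-adjointability of individual $\bbmv_\pi$. For (1), the point is that even though $\bbmv_\pi$ need not be adjointable, it becomes so after tensoring on the left with the essential part. First I would observe that $\alpha_{\mathbf 1}\otimes_A\alpha_\pi\cong\alpha_{\mathbf 1\otimes\pi}$ and $\alpha_{\mathbf 1}\otimes_A E\cong E_{\rm es}=\overline{\phi(A)E}$ via the normalized cocycles $\fu_{\mathbf 1,\pi}$ and $\bbmv_{\mathbf 1}$ (recalled in the remark after \cref{defn:cocycle.hom}). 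Under these identifications the map $1_{\alpha_{\mathbf 1}}\otimes\bbmv_\pi$ is intertwined with $\bbmv_\pi$ restricted to the essential submodule, or more precisely with the composite appearing in the right-hand cocycle square of \cref{defn:cocycle.hom} for the pair $(\mathbf 1,\pi)$. Since $\fu_{\mathbf 1,\pi}$, $\fv_{\mathbf 1,\pi}$ are unitaries (indeed the module structure maps) and $\bbmv_{\mathbf 1}$ is the inclusion of $E_{\rm es}$, that square forces $1_{\alpha_{\mathbf 1}}\otimes\bbmv_\pi$ to agree with $\fv_{\mathbf 1,\pi}\circ(\bbmv_{\mathbf 1}\otimes 1)\circ\fu_{\mathbf 1,\pi}^{-1}$ up to the obvious identifications — a composite of a unitary, an isometry onto $E_{\rm es}\otimes\beta_\pi=\alpha_{\mathbf 1}\otimes E\otimes\beta_\pi$, and a unitary — hence invertible, and being an isometric bijection it is unitary.

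For (2), the natural candidate for $(\bbmv_\pi T_\xi)^*$ is the operator $S$ displayed in the statement; I would verify directly that $S\,\bbmv_\pi T_\xi=\mathrm{id}_E$ and $\bbmv_\pi T_\xi\, S = $ the appropriate operator, using: (a) the cocycle square of \cref{defn:cocycle.hom} applied with $(\bar\pi,\pi)$ to rewrite $(\bbmv_{\bar\pi}\otimes 1_{\beta_\pi})(1_{\alpha_{\bar\pi}}\otimes\bbmv_\pi)$ in terms of $\fu_{\bar\pi,\pi}$, $\bbmv_{\bar\pi\otimes\pi}$ and $\fv_{\bar\pi,\pi}$; (b) the conjugate equations $(\evst_\pi^*\otimes 1)(1\otimes\coev_\pi)=\mathrm{id}$ and the companion one, which recover the identity from the zig-zag; and (c) the elementary identity $T_{\bar\xi}^* T_\eta = \langle\xi,\eta\rangle$-type relations tying $T_\xi$ and $T_{\bar\xi}$ together via the description of $\alpha_{\bar\pi}$ as $\cK(\alpha_\pi,A)$ from \cref{lem:dual.bomodule}(5). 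Concretely, on an elementary tensor $\eta\otimes_A\zeta\in\alpha_\pi\otimes_A E$ one has $T_{\bar\xi}(\bbmv_\pi(\eta\otimes\zeta))$ traced through the three maps, and the $A$-valued pairing $\bar\xi\otimes_A\eta=\langle\xi,\eta\rangle_A$ produced by $\ev_\pi$ collapses the $\beta_{\bar\pi}\otimes\beta_\pi$-legs via $\fv_{\bar\pi,\pi}$ and $\beta_{\ev_\pi}$; pushing the $\phi(\langle\xi,\eta\rangle_A)$ through $\bbmv$ (it is $A$-linear) returns $\phi(\langle\xi,\eta\rangle_A)\zeta$, which is exactly $T_\xi^*(\eta\otimes_A\zeta)$ followed by nothing — matching $(\bbmv_\pi T_\xi)^*$ on the range of $\bbmv_\pi$. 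Since part (1) (applied after tensoring with $\alpha_{\mathbf 1}$, which suffices to detect adjointability as these are essential-module computations) guarantees we have enough room to make these manipulations rigorous on a dense set, boundedness of $S$ and the computation on elementary tensors give adjointability.

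The main obstacle I anticipate is bookkeeping the non-adjointable maps carefully: a priori one cannot freely take adjoints of $\bbmv_\pi$, so the verification $S = (\bbmv_\pi T_\xi)^*$ must be done by checking $\langle S\zeta,\omega\rangle=\langle\zeta,\bbmv_\pi T_\xi\omega\rangle$ directly rather than by an adjoint-of-a-composite argument, and one must make sure every intermediate map that gets "adjointed" is one that is genuinely adjointable (the unitaries $\fu$, $\fv$, the compact operators $\beta_{\ev_\pi}$, $\beta_f$, and the $T_\xi$, $T_{\bar\xi}$, which are adjointable with $T_\xi^*(\eta\otimes\zeta)=\phi(\langle\xi,\eta\rangle)\zeta$). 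The role of (1) here is to provide the one place where we do need invertibility of a $\bbmv$-type map, namely to identify $\bbmv_{\bar\pi\otimes\pi}$ composed appropriately with something unitary; alternatively one threads the computation entirely through $\alpha_{\mathbf 1}\otimes(-)$ and then restricts, which is legitimate since $\overline{\phi(A)E}$-computations determine operators on all of $E$ by non-degeneracy considerations of the relevant pieces. Once the two displayed relations $S\circ(\bbmv_\pi T_\xi)=\mathrm{id}$ and $(\bbmv_\pi T_\xi)\circ S = $ projection-type operator are in hand, adjointability of $\bbmv_\pi T_\xi$ with the asserted adjoint is immediate.
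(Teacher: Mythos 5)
Your plan for part (2) is essentially the paper's argument: the displayed operator is the correct candidate, one first verifies the identity $(1_E \otimes \beta_{\ev_\pi}\fv_{\bar\pi,\pi})\,\bbmv_{\bar\pi}T_{\bar\xi}\,\bbmv_\pi = T_\xi^*$ on elementary tensors of $\alpha_\pi\otimes_A E$ using the cocycle square for $(\bar\pi,\pi)$ and the identification $\alpha_{\bar\pi}\cong\cK(\alpha_\pi,A)$, and then extends to all of $E\otimes_B\beta_\pi$ by writing $\xi=a\xi'$ (Cohen factorization) so that $T_{\bar\xi}y=T_{\bar{\xi'}}(\phi(a)^*\otimes 1)y$ lands in the range of $\bbmv_\pi$, where part (1) applies. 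That is fine, modulo (1).

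Part (1), however, has a genuine gap. The cocycle square of \cref{defn:cocycle.hom} for the pair $(\mathbf 1,\pi)$ reads
\[
(\bbmv_{\mathbf 1}\otimes 1_{\beta_\pi})\circ(1_{\alpha_{\mathbf 1}}\otimes\bbmv_\pi) \;=\; (1_E\otimes\fv_{\mathbf 1,\pi})\circ\bbmv_{\mathbf 1\otimes\pi}\circ(\fu_{\mathbf 1,\pi}\otimes 1_E),
\]
and the right-hand side still contains $\bbmv_{\mathbf 1\otimes\pi}$, which under $\mathbf 1\otimes\pi\cong\pi$ is $\bbmv_\pi$ itself. So this square cannot ``force $1_{\alpha_{\mathbf 1}}\otimes\bbmv_\pi$ to agree with $\fv_{\mathbf 1,\pi}\circ(\bbmv_{\mathbf 1}\otimes 1)\circ\fu_{\mathbf 1,\pi}^{-1}$''; that identity would make $\bbmv_\pi$ essentially the identity, and your argument is circular: the only nontrivial map on either side of the square is the one whose surjectivity you are trying to establish. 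The real content of (1) is that the isometry $\bbmv_\pi$ maps \emph{onto} all of $E_{\rm es}\otimes_B\beta_\pi$, and this is exactly where rigidity must enter. The paper constructs an explicit right inverse
\[
\alpha_{\mathbf 1}\otimes E\otimes\beta_\pi \xrightarrow{\;\fu_{\pi,\bar\pi}^*\alpha_{\coev_\pi}\otimes 1\otimes 1\;} \alpha_\pi\otimes\alpha_{\bar\pi}\otimes E\otimes\beta_\pi \xrightarrow{\;1\otimes\bbmv_{\bar\pi}\otimes 1\;} \alpha_\pi\otimes E\otimes\beta_{\bar\pi}\otimes\beta_\pi \xrightarrow{\;1\otimes1\otimes\beta_{\ev_\pi}\fv_{\bar\pi,\pi}\;} \alpha_\pi\otimes E
\]
and checks via the conjugate equations that it is a right inverse of $1\otimes\bbmv_\pi$; since $1\otimes\bbmv_\pi$ is isometric, a right inverse is automatically a two-sided inverse. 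You do deploy the conjugate machinery in (2), but (1) needs it just as much; as written, the surjectivity claim in (1) is unproved, and with it the extension step in your (2) loses its foundation.
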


\begin{proof}
	First, we show (1). 
	The inverse of $1_{\alpha_{\mathbf{1}_{\sC}}} \otimes \bbmv_\pi$ is given by using the rigidity of $\sC$ as
	\begin{align*} 
		\alpha_{\mathbf{1}_{\sC}} \otimes_A E \otimes_B \beta_\pi
		&\xrightarrow{\fu_{\pi,\bar{\pi}}^*\alpha_{\coev_\pi} \otimes 1_E \otimes 1_{\beta_\pi}} 
		\alpha_{\pi} \otimes_A \alpha_{\bar{\pi}} \otimes_A E \otimes_B \beta_\pi 
		\xrightarrow{1_{\alpha_\pi} \otimes \bbmv_{\bar{\pi}} \otimes 1_{\beta_\pi}} 
		\alpha_{\pi} \otimes_A  E \otimes_B \beta_{\bar{\pi}} \otimes_B \beta_\pi \\ 
		&\xrightarrow{1_{\alpha_\pi} \otimes 1_{E} \otimes \beta_{\ev_\pi}\fv_{\bar{\pi},\pi}} 
		\alpha_\pi \otimes_A E.  
	\end{align*}
	Indeed, the commutativity of the diagram
	\begin{align*}
		\xymatrix@C=5em{
			\alpha_1 \otimes E \otimes \beta_\pi  \ar[r]^-{\fv_{\pi,\bar{\pi}}^*\beta_{\coev_\pi}}  \ar[d]^-{\fu_{\pi,\bar{\pi}}^*\alpha_{\coev_\pi}} &  
			\alpha _1 \otimes E \otimes  \beta_\pi \otimes \beta_{\bar{\pi}} \otimes \beta_\pi \ar[r]^-{\beta_{\ev_\pi}\fv_{\bar{\pi},\pi}}  & 
			\alpha_1 \otimes E \otimes \beta_\pi  \\
			\alpha_\pi \otimes \alpha_{\bar{\pi}} \otimes E \otimes \beta_\pi \ar[r]^-{\bbmv_{\bar{\pi}}} & \alpha_\pi \otimes E \otimes \beta_{\bar{\pi}} \otimes \beta_\pi \ar[r]^-{\beta_{\ev_\pi}\fv_{\bar{\pi},\pi}} \ar[u]_-{\bbmv_\pi} & \alpha_\pi \otimes E \ar[u]_-{\bbmv_\pi}}
	\end{align*}
	shows that the above morphism is a right inverse of $1_{\alpha_{\mathbf{1}_{\sC}}} \otimes \bbmv_\pi$. Since $1 \otimes \bbmv_\pi$ is an isometric map, it is actually the inverse. 
	
    To see (2), we first observe that $ (1_E \otimes \beta_{\ev_\pi}\fv_{\bar{\pi},\pi}) \bbmv_{\bar{\pi}} T_{\bar{\xi}} \bbmv_\pi  = T_{\xi}^*$ for any $\xi \in \alpha_\pi$. Indeed, for any $\zeta \in \alpha_\pi$ and $w \in E$, we have
    \begin{align*}
        (1_E \otimes \beta_{\ev_\pi}\fv_{\bar{\pi},\pi}) \bbmv_{\bar{\pi}} T_{\bar{\xi}}\bbmv_\pi(\zeta \otimes w) &= (1_E \otimes \beta_{\ev_\pi})\bbmv_{\bar{\pi} \otimes \pi} (\fu_{\bar{\pi},\pi} \otimes 1_E) (\bar{\xi} \otimes \zeta \otimes w) \\&= (\alpha_{\ev_\pi} \fu_{\bar{\pi},\pi} (\bar{\xi} \otimes \zeta)) \otimes w = \langle \xi,\zeta \rangle \cdot w.
    \end{align*}
    Let $\xi \in \alpha_\pi$, $x \in E$ and $y \in E \otimes_B \beta_\pi$. 
    Apply Cohen's factorization theorem to pick $a \in A$ and $\xi' \in \alpha_\pi$ such that $\xi = a \cdot \xi'$.  
    By (1), there is $z \in \alpha_\pi \otimes_A E$ with $a^*y = \bbmv_\pi (z)$. 
    Since $T_{\bar{\xi}} y = T_{\bar{\xi}'}a^*y = T_{\bar{\xi}'} \bbmv_\pi (z)$, we conclude that
		\begin{align*}
			\langle x, (1_E \otimes \beta_{\ev_\pi}\fv_{\bar{\pi},\pi}) \bbmv_{\bar{\pi}} T_{\bar{\xi}} y \rangle
			={}&\langle x, (1_E \otimes \beta_{\ev_\pi}\fv_{\bar{\pi},\pi}) \bbmv_{\bar{\pi}} T_{\bar{\xi}'} \bbmv_\pi(z) \rangle  
            = \langle T_{\xi'} x, z \rangle \\
            ={}& \langle \bbmv_\pi T_{\xi'} x, \bbmv_\pi(z) \rangle = \langle \bbmv_\pi T_{\xi'} x, a^*y \rangle =\langle \bbmv_\pi T_{\xi} x, y \rangle . \qedhere
		\end{align*}
\end{proof}

\subsection{Realization by endomorphisms}
\begin{df}\label{def:Caction.endo}
	A \emph{$\sC$-action on $A$ realized by endomorphisms} consists of  
	\begin{itemize}
		\item $\{ \alpha_\pi\}_{\pi \in \Obj \sC \setminus \{ \mathbf{0}\} }$ is a family of essential endomorphisms on $A$, 
		\item $\Hom(\pi,\sigma) \ni f \mapsto \alpha_f \in \cM(A)$ is a family of linear maps forming a $\ast$-functor that satisfies
		\[ \alpha _\sigma(a)\alpha_f = \alpha_f \alpha_\pi(a)\]
		for any $a \in A$, and
		\item $\{ \fu_{\pi, \sigma } \}_{\pi, \sigma \in \Obj \sC \setminus \{ \mathbf{0}\}}$ is a family of unitaries in $\cM(A)$ satisfying 
		\begin{gather*}
		\fu_{\pi',\sigma'} \alpha_\sigma(\alpha_f)\alpha_g = \alpha_{f \otimes g} \fu_{\pi, \sigma}  \quad \text{for any $f \in \Hom(\pi,\pi')$ and $\Hom (\sigma, \sigma')$,}\\ 
			\Ad\fu_{\pi, \sigma}\alpha_\sigma\alpha_\pi = \alpha_{\pi\otimes\sigma}, 
			\quad \fu_{\pi, \sigma \otimes \rho }\fu_{\sigma, \rho} = \alpha_{\ass(\pi,\sigma,\rho)}\fu_{\pi \otimes \sigma , \rho} \alpha_\rho(\fu_{\pi, \sigma}),
			\quad \fu_{\mathbf{1}_\sC, \pi} =1_{\alpha_\pi}=\fu_{\pi, \mathbf{1}}. 
		\end{gather*} 
	\end{itemize}
	Equivalently, we say $(A, \alpha, \fu)$ is an \emph{endomorphism $\sC$-C*-algebra}.
\end{df}
    The above definition is regarded as a special case of \cref{def:action.of.tensor.category.explicit} such that $\alpha_{\pi} \cong A$ as right Hilbert $A$-modules for all $\pi \in \Obj \sC \setminus \{ \mathbf{0}\}$, by putting $\alpha_{\mathbf{0}}:=0$, $\fu_{\sigma,\mathbf{0}}:=0$, and $\fu_{\mathbf{0},\sigma} :=0$. 
    Note that the equation in \eqref{def:Caction.endo} is equivalent to the commutativity of the diagrams in \cref{def:action.of.tensor.category.explicit}.

 In the same way, if $(A, \alpha, \fu)$ and $(B, \beta, \fv)$ are endomorphism $\sC$-C*-algebras, then a $\ast$-homomorphism $\phi \colon A \to B$ and a family of isometric maps $\bbmv_\pi \colon (A,\alpha_\pi)\otimes_\phi B=\overline{\phi(A)B} \to B =(B,\beta_\pi)$ for $\pi \in \Obj \sC\setminus\{\mathbf{0}_\sC\}$ forms a cocycle $\sC$-$\ast$-homomorphism if and only if 
	\begin{align}
		\begin{split}
			\bbmv_\pi \bigl(\phi(\alpha_\pi(a))b\bigr) &= \beta_\pi (\phi(a))\bbmv_\pi (b), \\
			\bbmv_\sigma \bigl(\phi(\alpha_f a)\bigr) &= \beta_f \cdot\bbmv_\pi(\phi(a)),  \\
			\bbmv_{\pi \otimes \sigma} \bigl(\phi(\fu_{\pi, \sigma} \alpha_\sigma(a')a )\bigr) &= \fv_{\pi, \sigma}  \cdot \beta_\sigma\bigl(\bbmv_\pi (\phi(a'))\bigr)\bbmv_\sigma(\phi(a))  , 
		\end{split}\label{eqn:cocycle.hom.endo}  
	\end{align}
	hold for any $\pi, \sigma \in \Obj \sC$, $f \in \Hom(\pi, \sigma)$, $a,a' \in A$, and $b\in\overline{\phi(A)B}$. 
	
	A feature of endomorphism $\sC$-C*-algebra is that the domain $\alpha_\pi \otimes_A E$ of $\bbmv_\pi$ is regarded as a subspace $\overline{\phi(A)E} \subset E$. It enables us to consider extending $\bbmv_\pi$ to a unitary on $E$. 
    \begin{df}\label{defn:trivial.cocycle}
        Let $A$, $B$ be $\sC$-C*-algebras such that $A$ is realized by endomorphism. 
        \begin{enumerate}
            \item For a $\sC$-Hilbert $A$-$B$ bimodule $(E,\phi,\bbmv )$, we call a family of unitaries $\{ \bv_\pi \colon E \to E \otimes_B \beta_\pi \}_{\pi \in \Obj \sC \setminus \{ \mathbf{0}_\sC \}}$ a \emph{unitary cocycle} of $(E,\phi,\bbmv)$ if $\bv_\pi|_{\overline{\phi(A) E}} = \bbmv_\pi$. 
            We call the triple $(E,\phi,\bv)$ a $\sC$-Hilbert $A$-$B$ bimodule with unitary cocycle. 
            \item Assume that $B$ is also realized by endomorphisms. A cocycle $\sC$-$\ast$-homomorphism $(\phi,\bbmv)$ is said to have \emph{trivial cocycle}, and is denoted by $(\phi,\mathbbm{1})$, if $\bv_\pi =1$ is a unitary cocycle of $(B,\phi,\bbmv)$ (in other words, each $\bbmv_\pi \colon \overline{\phi(A)B}\to B$ coincides with the inclusion). We just call the pair $(\phi,\mathbbm{1})$ a \emph{$\sC$-$\ast$-homomophism} in short.  
        \end{enumerate}
    \end{df}
    
\begin{ex}\label{ex:inner.auto}
    The inner cocycle automorphism associated to the unitary $u \in \cM(B)$ is $(\Ad (u) , u(1 \otimes u)^*)$ (cf.~\eqref{eqn:intertwiner.unitary}), 
    where $u(1 \otimes_{\beta_\pi } u)^*$ is the family
    \[
    \beta_\pi \otimes_{\Ad(u)} B \xrightarrow{(1_{\beta_\pi} \otimes u)^*} \beta_\pi \otimes_{\id } B = \beta_\pi \xrightarrow{u} \beta_\pi = B \otimes \beta_\pi.
    \]
    If $B$ is realized by endomorphisms, the above $u(1 \otimes_{\beta_\pi} u^*)$ is identified with $\beta_\pi(u)u^*$. 
    Therefore, $(\Ad (u), \mathbbm{1})$ forms a $\sC$-$\ast$-homomorphism if and only if $u$ is $\sC$-invariant, i.e., $\beta_\pi(u) = u$ for any $\pi \in \Obj \sC \setminus \{ \mathbf{0}\}$. 
    We remark that, for any cocycle $\sC$-$\ast$-homomorphism $(\phi,\bbmv)$ from another $\sC$-C*-algebra $(A,\alpha,\fu)$, we have
    \begin{align*}
        (\Ad (u), u (1 \otimes u)^*) \circ (\phi,\bbmv)&{}= (\Ad(u) \circ \phi, u (1_{\beta_\pi} \otimes u^*) (\bbmv_\pi \otimes_{\Ad(u)} 1_B)  )\\
        &{} = (\Ad(u) \circ \phi, u \bbmv_\pi (1_{\alpha_\pi \otimes_A B} \otimes u^*) ).
    \end{align*} 
\end{ex}
 
	If $\phi$ is essential and $\bbmv$ is a unitary cocycle, with the unitary extension $\bv_\pi \in \cM(B)$, then second and the third equations of \eqref{eqn:cocycle.hom.endo} are simplified
    to
	\[
	\bv_\sigma \phi(\alpha_f)\bv_\pi^* = \beta_f ,  \quad
	\bv_{\pi \otimes \sigma} \phi(\fu_{\pi, \sigma})\bv_\sigma^* \beta_\sigma (\bv_\pi)^* = \fv_{\pi, \sigma}.
	\]
	
\begin{lem}\label{lem:endomorphism.realization}
	Let $(A,\alpha,\fu)$ be a $\sC$-C*-algebra. If the underlying Hilbert $A$-modules $\alpha_\pi$ are all isomorphic to $A$, especially if 
	\begin{enumerate}
		\item[(i)] $(A,\alpha,\fu)=(B \otimes \cK,\beta \otimes \cK , \fv \otimes 1)$ for some $\sigma$-unital $\sC$-C*-algebra $(B,\beta,\fv)$, or 
		\item[(ii)] $(A, \alpha, \fu )=(B \otimes \cO_\infty , \beta \otimes \cO_\infty , \fv \otimes 1)$ for some unital $\sC$-C*-algebra $(B,\beta,\fv)$ and $[\alpha_\pi]=0 \in \K_0(A)$ for all $\pi \in \Obj \sC$, 
	\end{enumerate}
	then $(A,\alpha,\fu)$ is $\sC$-equivariantly isomorphic to an endomorphism $\sC$-C*-algebra.
\end{lem}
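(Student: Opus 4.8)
The plan is to split the argument into a formal step---the cocycle-twist reduction, which is where the hypothesis ``$\alpha_\pi\cong A$'' is used---and an analytic/K-theoretic step verifying that hypothesis in situations (i) and (ii).

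\textbf{Step 1 (reduction by cocycle twist).} Assume first that the underlying right Hilbert $A$-module of $\alpha_\pi$ is isomorphic to $A$ for every $\pi\in\Obj\sC\setminus\{\mathbf{0}\}$, and fix such unitaries $v_\pi\colon\alpha_\pi\to A$ of Hilbert $A$-modules, normalized by $v_{\mathbf{1}}=\id_A$. By the cocycle-twist construction of \cref{ex:cocycle.twist}, the triple $(A,\alpha,\fu)_v=(A,\tilde{\alpha},\tilde{\fu})$ is a $\sC$-C*-algebra, $\sC$-equivariantly isomorphic to $(A,\alpha,\fu)$ via the cocycle $\sC$-$\ast$-homomorphism $(\id_A,\{v_\pi\})$, and by construction the underlying right Hilbert $A$-module of each $\tilde{\alpha}_\pi$ is $A$ itself. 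Transporting the left $A$-actions along the $v_\pi$ turns each $\tilde{\alpha}_\pi$ into a $\ast$-endomorphism $A\to\cL_A(A)=\cM(A)$, which is essential since $\alpha_\pi\in\Bimod_{\rm es}(A)$ and the $v_\pi$ intertwine the left actions; similarly $\tilde{\alpha}_f=v_\sigma\alpha_f v_\pi^*\in\cM(A)$ and $\tilde{\fu}_{\pi,\sigma}\in\cU\cM(A)$ under the identification $A\otimes_A A=A$. The relations of \cref{def:Caction.endo} are precisely the commutativity of the diagrams of \cref{def:action.of.tensor.category.explicit} rewritten under $\tilde{\alpha}_\pi=A$, as noted after \cref{def:Caction.endo}, so they hold automatically. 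Hence $(A,\tilde{\alpha},\tilde{\fu})$ is an endomorphism $\sC$-C*-algebra, and the lemma follows in this generality.

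\textbf{Step 2 (verifying $\alpha_\pi\cong A$ in (i) and (ii)).} In both situations $\beta_\pi$ is an essential Hilbert $B$-$B$ bimodule admitting a conjugate (the $\sC$-action $\beta$ on $B$ is a C*-tensor functor into $\Bimod_{\rm es}(B)$ and $\sC$ is rigid), so \cref{lem:dual.bomodule} applies: $\beta_\pi$ is full, it is countably generated when $B$ is $\sigma$-unital, and $\cK(\beta_\pi)$ is unital---i.e.\ $\beta_\pi$ is finitely generated projective---when $B$ is unital. In case (i), routine Hilbert-module identifications over $B\otimes\cK$ (using $\cK\cong\cK\otimes\cK$ and $\cK(\ell^2)\cong\bigoplus_{\aleph_0}\ell^2$ as right Hilbert $\cK$-modules) reduce the isomorphism $\alpha_\pi=\beta_\pi\otimes\cK\cong B\otimes\cK=A$ to $\ell^2\otimes\beta_\pi\cong\ell^2\otimes B$ of right Hilbert $B$-modules; the latter is the classical stabilization theorem for full countably generated modules, and here it also follows directly from Kasparov's stabilization theorem together with the fact that $B$ is a complemented submodule of $\beta_\pi\otimes_B\beta_{\bar\pi}$ (via $\coev_\pi$, hence of $\ell^2\otimes\beta_\pi$) and an Eilenberg swindle. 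In case (ii), $A=B\otimes\cO_\infty$ contains $\cO_\infty$ unitally and is therefore properly infinite, so $A^{\oplus n}\cong A$ for all $n\ge 1$; moreover $\alpha_\pi=\beta_\pi\otimes\cO_\infty$ is a full, finitely generated projective $A$-module whose representing projection is properly infinite (as $\beta_\pi\ne 0$ and $1_{\cO_\infty}$ is properly infinite). From $[\alpha_\pi]=0\in\K_0(A)$ we get $\alpha_\pi\oplus A^{\oplus N}\cong A^{\oplus N}$ for some $N$, hence $\alpha_\pi\oplus A\cong A$, so $\alpha_\pi\cong pA$ for a projection $p\in A$ that is full and properly infinite. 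Fullness yields $1_A\precsim p^{\oplus N'}$ for some $N'$, and proper infiniteness yields $p^{\oplus N'}\precsim p$; thus $1_A\precsim p\precsim 1_A$ with $p$ and $1_A$ both properly infinite, whence $p\sim 1_A$ and $\alpha_\pi\cong A$.

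Step 1 is purely formal once \cref{ex:cocycle.twist} is in place; the substance lies in Step 2. I expect case (i)---pinning down the precise full-module stabilization $\ell^2\otimes\beta_\pi\cong\ell^2\otimes B$ and the Hilbert-module bookkeeping that derives $\beta_\pi\otimes\cK\cong B\otimes\cK$ from it---to be the main obstacle; case (ii) is shorter but leans on the comparison theory of properly infinite projections, and one should also dispose separately of the trivial edge cases $\pi=\mathbf{0}$ and $B=0$.
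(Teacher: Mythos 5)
Your overall route is exactly the paper's: reduce to the case $\alpha_\pi\cong A$ by a cocycle twist as in \cref{ex:cocycle.twist}, then verify $\alpha_\pi\cong A$ in case (i) from fullness of $\beta_\pi$ (\cref{lem:dual.bomodule}) plus the full stabilization theorem, and in case (ii) by comparison of projections. Step 1 and case (i) are correct and match the paper, just with more detail spelled out.

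Case (ii), however, ends with an invalid inference. The implication ``$1_A\precsim p\precsim 1_A$ with $p$ and $1_A$ both properly infinite, whence $p\sim 1_A$'' is a Schr\"oder--Bernstein statement that is false for properly infinite projections: in $\cO_\infty$ one has $1\precsim 1_2$ and $1_2\precsim 1$, both projections are full and properly infinite, yet $1\not\sim 1_2$ because $[1_2]=2\neq 1=[1]$ in $\K_0(\cO_\infty)\cong\bZ$. The same example shows that your earlier claim ``$A$ is properly infinite, so $A^{\oplus n}\cong A$ for all $n$'' does not follow from proper infiniteness alone. What is actually needed is Cuntz's theorem that two full, properly infinite projections are Murray--von Neumann equivalent if and only if they have the same $\K_0$-class. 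You already have every ingredient for this: $p$ is full and properly infinite with $[p]=[\alpha_\pi]=0$ by hypothesis, and since $\mathbf{1}_\sC\in\Obj\sC$ with $\alpha_{\mathbf{1}}=A$, the hypothesis applied to $\pi=\mathbf{1}$ gives $[1_A]=[\alpha_{\mathbf{1}}]=0$ as well; hence $p\sim 1_A$ (comparing with $1_A\oplus 0_{n-1}$ if $p$ lives in a matrix algebra) and $\alpha_\pi\cong A$. This observation also legitimizes $A^{\oplus n}\cong A$, though that intermediate step then becomes unnecessary. With this substitution your argument coincides with the paper's, which likewise settles (ii) by ``a Murray--von Neumann equivalence of $p$ and $1_A$.''
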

\begin{proof}
	Let us fix unitaries $v_\pi \colon \alpha_\pi \to A$. Then $(A, \alpha, \fu)_v$ as in \cref{ex:cocycle.twist} is an endomorphism $\sC$-C*-algebra that is $\sC$-equivariantly isomorphic to $(A,\alpha,\fu)$. 

	Let us observe that (i) and (ii) are sufficient for $\alpha_\pi \cong A$. 
	For (i), it follows from the fullness of $\beta_\pi$ proved in \cref{lem:dual.bimodule} (1). 
	For (ii), the isomorphism $\alpha_\pi \cong A$ is given by a Murray--von Neumann equivalence of the support projection $p$ of $\alpha_\pi$ in \cref{lem:dual.bimodule} (4) and $1_A$.
\end{proof}

\section{\texorpdfstring{$\sC$}{C}-equivariant \texorpdfstring{$\KK$}{KK}-theory}\label{section:equivariant.KKtheory}

\subsection{Equivariant \texorpdfstring{$\KK$}{KK}-group}

For $\bZ_2$-graded C*-algebras $A$ and $B$, a graded Hilbert $A$-$B$ bimodule $E$, a graded Hilbert $B$-$B$ bimodule $M$, and homogeneous operators $x \in \cL(E,E)$ and $y \in \cL(E,E \otimes_B M)$, we write their graded commutator $[x,y]$ as
\begin{align}
	[x,y]:= (x \otimes 1_M) y -  (-1)^{|x| \cdot |y|} yx  \in \cL(E, E \otimes_B M).    \label{eqn:graded.commutator}
\end{align}
We extend the graded commutator to inhomogeneous operators linearly.

\begin{df}\label{def:Kasparovbimodule}
	Let $A$, $B$ be $\sigma$-unital $\bZ_2$-graded $\sC$-C*-algebras (cf.\ \cref{subsubsection:graded}). 
	A \emph{$\sC$-Kasparov $A$-$B$ bimodule} is a quadruple $(E, \phi,\bbmv, F)$, where 
	\begin{itemize}
		\item the triple $\sfE=(E,\phi,\bbmv)$ is a countably generated $\bZ_2$-graded $\sC$-Hilbert $A$-$B$-bimodule, and 
		\item $F \in \cL_B(E)$ is an odd operator with $[\phi(A), F], \phi(A)(F^2-1) , \phi(A) (F-F^*) \subset \cK(E)$ and that
		\[ 
		[F,\bbmv_\pi T_\xi]=(F \otimes_B 1_{\beta_\pi})\bbmv_\pi T_\xi - (-1)^{|\xi|}\bbmv_\pi T_\xi F  \in \cK( E , E \otimes_{B} \beta_\pi)
		\]
		for any $\pi \in \Obj \sC$ and any homogeneous $\xi\in\alpha_\pi$. 
	\end{itemize}
	We write $\bfE^\sC(A,B)$ for the class of $\sC$-Kasparov $A$-$B$ bimodules. 
\end{df}

For $\sC$-Kasparov $A$-$B$ bimodules $(E_0,\phi_0,\bbmv_0,F_0), (E_1,\phi_1,\bbmv_1,F_1)$, their \emph{unitary equivalence} is an even unitary $U \in \cU_B(E_0,E_1)$, such that $\phi_1 = \Ad (U) \circ \phi_0$, $\bbmv_{1,\pi} = (U \otimes_B 1) \bbmv_{0,\pi} (1 \otimes_A U^*)$ for any $\pi \in \Obj \sC$, and $F_1 = UF_0U^*$.

\begin{ex}
	For a $\bZ_2$-graded proper $\sC$-Hilbert $A$-$B$ bimodule $\sfE=(E,\phi,\bbmv)$, the pair 
	$(\sfE,0)$ is a $\sC$-Kasparov $A$-$B$ bimodule. 
	Especially, a $\bZ_2$-graded cocycle $\sC$-$\ast$-homomorphism $(\phi,\bbmv) \colon A \to B$ determines a $\sC$-Kasparov $A$-$B$ bimodule $(B, \phi , \bbmv, 0)$. 
	We simply write $\sfE$ or $(\phi,\bbmv)$ for the corresponding Kasparov bimodule by abusing notation. 
\end{ex}

	Let $A$, $B$, $D$ be $\bZ_2$-graded $\sigma$-unital $\sC$-C*-algebras, $(\sfE,F)\in\bfE^\sC(A,D)$, and $\sfM$ be a $\bZ/2$-graded proper $\sC$-equivariant $D$-$B$ bimodule. We write 
	\begin{align*}
		(\sfE,F) \hotimes_D \sfM := (\sfE \hotimes_D \sfM,  F \hotimes 1 ) \in\bfE^\sC(A,B), 
	\end{align*}
	where $\sfE \hotimes \sfM$ is as \eqref{eqn:tensor.C-bimodule} with the product grading. We remark that this is a special case of the Kasparov product that will be defined in \cref{subsection:Kasparov.product}.

\begin{df}
	A \emph{homotopy} between $\sC$-Kasparov $A$-$B$ bimodules $(\sfE_0,F_0) , (\sfE_1,F_1)$ consists of a $\sC$-Kasparov $A$-$B[0,1]$ bimodule $(\widetilde{\sfE},\widetilde{F})$ and unitary equivalences $U_i \colon (\widetilde{\sfE},\widetilde{F}) \otimes_{B[0,1]} \eval_i \to (\sfE_i, F_i)$ for $i=0,1$, 
	where $\eval_t \colon B[0,1]\to B$ denotes the evaluation at each $t \in [0,1]$. 
	We say $(\sfE_0,F_0)$ and $(\sfE_1,F_1)$ are \emph{homotopic} if there is a homotopy between them. 
\end{df}

\begin{df}
	Let $A$ and $B$ be $\sigma$-unital $\sC$-C*-algebras. The \emph{$\sC$-equivariant KK-group} $\KK^\sC(A,B)$ is defined to be the set of homotopy classes of $\sC$-Kasparov $A$-$B$ bimodules. 
\end{df}

\begin{rem}
	For $(\sfE_1,F_1),(\sfE_2,F_2)\in\bfE^\sC(A,B)$, we call
	\[(\sfE_1,F_1)\oplus(\sfE_2,F_2) :=(\sfE_1 \oplus \sfE_2, \diag(F_1,F_2)) \in \bfE^\sC(A,B) \]
	their direct sum. 
	It is proved in the same way as in non-equivariant KK-theory that: 
	\begin{enumerate}
		\item
		The homotopy of Kasparov bimodules is an equivalence relation. 
		\item We say that a $\sC$-Kasparov $A$-$B$ bimodule is \emph{degenerate} if $[\phi(A),F]=0$, $F^2=1$, $F=F^*$, and $[F, \bbmv_\pi T_\xi]=0$ for any $\pi \in \Obj \sC$ and $\xi \in \alpha_\pi$. A degenerate $\sC$-Kasparov bimodule is homotopic to $\mathbf{0}$. Indeed, the homotopy is given by $(\sfE [0,1), F \otimes 1)$. 
		\item The set $\KK^\sC(A,B)$ is an abelian group under direct sum and the zero element $\mathbf{0}$. Indeed, for $(\sfE, F) \in \bfE^\sC(A,B)$, its inverse is given by $(\sfE^{\rm op}, -F)$.
	\end{enumerate}
\end{rem}

\begin{lem}\label{lem:self.adjoint.Cinvariance}
	The set of operators $X \in \cL_B(E)$ satisfying $[X, \bbmv_\pi T_\xi ]\in \cK(E,E \otimes_B \beta_\pi)$ for any $\pi \in \Obj \sC$ and $\xi \in \alpha_\pi$ forms a $\bZ_2$-graded C*-subalgebra.
\end{lem}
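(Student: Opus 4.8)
Write $\cA\subseteq\cL_B(E)$ for the set in the statement. I would verify in turn that $\cA$ is a linear subspace, is closed under multiplication, is closed under adjoints, is norm-closed, and is $\bZ_2$-graded; the first, fourth and (to some extent) last are routine, the adjoint step is the real content. Linearity is immediate: by \eqref{eqn:graded.commutator} the graded commutator $[\,\cdot\,,\bbmv_\pi T_\xi]$ is $\bC$-linear in its first slot and $\cK(E,E\otimes_B\beta_\pi)$ is a subspace. Norm-closedness is equally routine, since on homogeneous $X$ one has $\|[X,\bbmv_\pi T_\xi]\|\le 2\,\|\bbmv_\pi T_\xi\|\,\|X\|$, so $X\mapsto[X,\bbmv_\pi T_\xi]$ is norm-continuous, and $\cK(E,E\otimes_B\beta_\pi)$ is norm-closed; thus a norm-limit of elements of $\cA$ lies in $\cA$. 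I would dispatch these first.

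For multiplicativity I would first record the graded Leibniz identity: for homogeneous $X,Y\in\cL_B(E)$, $\pi\in\Obj\sC$ and homogeneous $\xi\in\alpha_\pi$,
\[
[XY,\bbmv_\pi T_\xi]=(X\otimes 1_{\beta_\pi})\,[Y,\bbmv_\pi T_\xi]+(-1)^{|Y|\,|\xi|}\,[X,\bbmv_\pi T_\xi]\,Y,
\]
which follows by expanding both sides and using $(X\otimes 1_{\beta_\pi})(Y\otimes 1_{\beta_\pi})=XY\otimes 1_{\beta_\pi}$. If $X,Y\in\cA$, then the right-hand side lies in $\cK(E,E\otimes_B\beta_\pi)$, because that space is a bimodule over $\cL_B(E\otimes_B\beta_\pi)$ on the left and over $\cL_B(E)$ on the right; passing to inhomogeneous $X,Y$ by linearity gives $XY\in\cA$.

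The main obstacle is closure under adjoints, precisely because $\bbmv_\pi$ itself need not be adjointable, so one cannot form $\bbmv_\pi^*$; instead I would work with the adjointable operators $\bbmv_\pi T_\xi$ and exploit \cref{lem:cocycle_range}(2). Let $X\in\cA$ be homogeneous, fix $\pi\in\Obj\sC$ and homogeneous $\xi\in\alpha_\pi$, and put $G:=1_E\otimes\beta_{\ev_\pi}\fv_{\bar\pi,\pi}\colon E\otimes_B\beta_{\bar\pi}\otimes_B\beta_\pi\to E$, which is adjointable since $\fv_{\bar\pi,\pi}$ is a bimodule unitary and $\beta_{\ev_\pi}$ is an adjointable intertwiner. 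Applying the defining property of $\cA$ at the object $\bar\pi$ and the vector $\bar\xi\in\alpha_{\bar\pi}$ gives $[X,\bbmv_{\bar\pi}T_{\bar\xi}]\in\cK(E,E\otimes_B\beta_{\bar\pi})$; tensoring with $1_{\beta_\pi}$ and composing with $G$ on the left keeps us inside $\cK$, and using $G(X\otimes 1_{\beta_{\bar\pi}\otimes\beta_\pi})=XG$ together with the identity $(\bbmv_\pi T_\xi)^*=G(\bbmv_{\bar\pi}T_{\bar\xi}\otimes 1_{\beta_\pi})$ supplied by \cref{lem:cocycle_range}(2) (and $|\bar\xi|=|\xi|$) one obtains
\[
G\bigl([X,\bbmv_{\bar\pi}T_{\bar\xi}]\otimes 1_{\beta_\pi}\bigr)=X(\bbmv_\pi T_\xi)^*-(-1)^{|X|\,|\xi|}(\bbmv_\pi T_\xi)^*(X\otimes 1_{\beta_\pi})\in\cK(E\otimes_B\beta_\pi,E).
\]
Taking the adjoint of this expression (legitimate since $\bbmv_\pi T_\xi$ is adjointable) yields exactly $[X^*,\bbmv_\pi T_\xi]\in\cK(E,E\otimes_B\beta_\pi)$; as $\pi$ and $\xi$ were arbitrary and $\cA$ is linear, $X^*\in\cA$. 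I expect the bookkeeping around this step — recognizing $(\bbmv_\pi T_\xi)^*$ through $\bbmv_{\bar\pi}$ and keeping the signs straight — to be where the argument really has to be done carefully.

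Finally, for the $\bZ_2$-grading I would argue by degree counting rather than by placing the grading operator inside $\cA$. Decompose $X=X_0+X_1$ into its even and odd parts; then $[X_0,\bbmv_\pi T_\xi]$ and $[X_1,\bbmv_\pi T_\xi]$ are homogeneous of degrees $|\xi|$ and $|\xi|+1$, hence lie in the two distinct homogeneous components of $\cL_B(E,E\otimes_B\beta_\pi)$. Since $\cK(E,E\otimes_B\beta_\pi)$ is invariant under the grading automorphism and hence is a graded subspace, the relation $[X,\bbmv_\pi T_\xi]=[X_0,\bbmv_\pi T_\xi]+[X_1,\bbmv_\pi T_\xi]\in\cK$ forces each summand into $\cK$; thus $X_0,X_1\in\cA$. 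Combining all four items, $\cA$ is a norm-closed $\bZ_2$-graded $*$-subalgebra of $\cL_B(E)$.
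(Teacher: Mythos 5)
Your proof is correct and follows essentially the same route as the paper: the only substantive step is closure under adjoints, and you handle it exactly as the paper does, by expressing $(\bbmv_\pi T_\xi)^*$ through $\bbmv_{\bar\pi}T_{\bar\xi}$ via \cref{lem:cocycle_range}~(2) and taking adjoints (your final expression differs from $[X^*,\bbmv_\pi T_\xi]$ only by the harmless nonzero scalar $-(-1)^{|X||\xi|}$, which does not affect membership in $\cK$). The linearity, Leibniz, norm-closure, and grading checks you spell out are the parts the paper dismisses as trivial, and they are all fine.
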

\begin{proof}
	The only non-trivial part is that the set is $\ast$-closed. 
	By \cref{lem:cocycle_range} (2), we have 
	\begin{align*}
		& \big( (X^* \otimes 1_{\beta_{\pi}})\bbmv_\pi T_\xi - (-1)^{|\xi| \cdot |X^*|} \bbmv_\pi T_\xi X^* \big)^* \\
		=& (\bbmv_\pi T_\xi)^* (X \otimes 1_{\beta_{\pi}}) - (-1)^{|\xi| \cdot |X|}X(\bbmv_\pi T_\xi)^* \\
		=& (1_E \otimes \beta_{\ev_\pi} \fv_{\bar{\pi},\pi}) \cdot \Big( \big( \bbmv_{\bar{\pi}} T_{\bar{\xi}} X - (-1)^{|\xi| \cdot |X|} (X \otimes 1_{\beta_{\bar{\pi}}}) (\bbmv_{\bar{\pi}} T_{\bar{\xi}}) \big) \otimes 1_{\beta_{\pi}} \Big) \in \cK(E\otimes_B \beta_\pi, E). \qedhere
	\end{align*}
\end{proof}

	For a $\sC$-Hilbert $A$-$B$ bimodule $\sfE=(E,\phi,\bbmv)$, we define 
	\begin{align*}
		\fD^\sC (\sfE) :=& \{ x \in \cL(E) \mid [x,\phi(A)] \subset \cK(E), [x, \bbmv_\pi T_\xi ] \in \cK(E, E \otimes_B \beta_\pi)  \}, \\
		\fC^\sC (\sfE) :=& \{ x \in \cL(E) \mid x\phi(A), \phi(A)x \subset \cK(E), [x, \bbmv_\pi T_\xi] \in \cK(E, E \otimes_B \beta_\pi)  \}. 
	\end{align*}
	We say an element of $\fC^\sC(\sfE)$ \emph{locally compact}.  
	They are C*-algebras by \cref{lem:self.adjoint.Cinvariance}, and $\fC^\sC(\sfE)$ is an ideal of $\fD^\sC(\sfE)$. 
	The condition for $(\sfE , F) $ to be a Kasparov bimodule is equivalent to that $F \in  \fD^\sC (\sfE)$ represents an odd self-adjoint unitary in  $\fD^\sC(\sfE)/\fC^\sC(\sfE)$. 
	For a $\bZ_2$-graded C*-algebra $D$, the set of homotopy classes of odd self-adjoint unitaries of $\bM_\infty(D)$ is isomorphic to its $\K_1$-group \cite{vandaeleKtheoryGradedBanach1988}. Thus we get a group homomorphism 
	\[ \K_1(\fD^\sC(\sfE) / \fC^\sC(\sfE)) \to \KK^\sC(A,B), \quad [q(F)] \mapsto [\sfE^n, F]. \]
	
	\begin{rem}\label{rem:normalization.F}
		A consequence of \cref{lem:self.adjoint.Cinvariance} and the above argument is that, for any $\sC$-Kasparov $A$-$B$ bimodule $(\sfE, F)$, the operator can be replaced to an odd self-adjoint contraction by a locally compact perturbation.
	\end{rem}
	
	Following \cite{meyerEquivariantKasparovTheory2000}*{Section 4}, for an (not necessarily adjointable) inclusion $E' \subset E$ of Hilbert $B$-modules, we define the C*-subalgebra 
	\[ 
	\cL_{E}(E') := \{ x \in \cL_B(E) \mid  xE, x^*E \subset E' \} \subset \cL_B(E).
	\]
	It is shown in \cite{meyerEquivariantKasparovTheory2000}*{Lemma~4.1} that the restriction $T \mapsto T|_{E'}$ gives an injective $\ast$-homomorphism $\cL_{E}(E') \to \cL(E')$, that is, $T \in \cL_{E}(E')$ is determined by its restriction to $E'$. 
	
	\begin{lem}\label{lem:condbimod}
		Let 
		$(A,\alpha,\fu)$, $(B,\beta,\fv)$ be $\sC$-C*-algebras, and $(E,\phi,\bbmv)$ be a $\sC$-Hilbert $A$-$B$-bimodule. 
		Then for any $\pi \in\Obj\sC$, the $\ast$-homomorphism $\phi(\blank) \otimes 1_{\beta_\pi} \colon A \to \cL(E \otimes_B \beta_\pi)$ uniquely extends to 
		\[ 
		\phi_\pi \colon \cK_A(\alpha_\pi) \to \cL_{E \otimes _B \beta_\pi} (E_{\rm es} \otimes_B \beta_\pi),
		\] 
		such that 
		\[ 
		\bbmv_\pi (x \otimes_\phi 1_E) = \phi_\pi (x) \bbmv_\pi \colon \alpha_\pi \otimes_A E \to E \otimes_B \beta_\pi 
		\]
		for any $x \in \cK(\alpha_\pi)$. 
		If $\bbmv_\pi$ is adjointable, then $\phi_\pi$ coincides with $(\Ad \bbmv_\pi)( \blank \otimes_\phi 1_{E})$. 
	\end{lem}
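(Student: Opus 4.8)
The plan is to define $\phi_\pi$ by conjugating the tautological left action of $\cK_A(\alpha_\pi)$ on $\alpha_\pi\otimes_\phi E$ by $\bbmv_\pi$, and then to check that the resulting operators live in $\cL_{E\otimes_B\beta_\pi}(E_{\rm es}\otimes_B\beta_\pi)$ by reducing to rank-one operators. First I would record the key input. Since $\alpha_{\mathbf{1}}=A$ and the left $A$-action on $\alpha_\pi$ is essential, the canonical identifications $\alpha_{\mathbf{1}}\otimes_A\alpha_\pi\otimes_A E\cong\alpha_\pi\otimes_\phi E$ and $\alpha_{\mathbf{1}}\otimes_A E\otimes_B\beta_\pi\cong E_{\rm es}\otimes_B\beta_\pi$ turn $1_{\alpha_{\mathbf{1}}}\otimes\bbmv_\pi$ into the corestriction $\bbmv_\pi^{\rm es}\colon\alpha_\pi\otimes_\phi E\to E_{\rm es}\otimes_B\beta_\pi$ of $\bbmv_\pi$; by \cref{lem:cocycle_range} (1) this $\bbmv_\pi^{\rm es}$ is an isometric bijection, hence a unitary of Hilbert $B$-modules, so in particular the range of $\bbmv_\pi$ is exactly $E_{\rm es}\otimes_B\beta_\pi$. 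I would then set $\phi_\pi(x):=\bbmv_\pi^{\rm es}\,(x\otimes_\phi 1_E)\,(\bbmv_\pi^{\rm es})^{*}\in\cL_B(E_{\rm es}\otimes_B\beta_\pi)$ for $x\in\cK_A(\alpha_\pi)$, which is at once a $\ast$-homomorphism. The identity $\bbmv_\pi(x\otimes_\phi 1_E)=\phi_\pi(x)\bbmv_\pi$ then follows by construction; conversely, since $\bbmv_\pi^{\rm es}$ is surjective, any $\phi_\pi$ satisfying it must agree with this one on $E_{\rm es}\otimes_B\beta_\pi$, and by the injectivity of $T\mapsto T|_{E_{\rm es}\otimes_B\beta_\pi}$ (\cite{meyerEquivariantKasparovTheory2000}*{Lemma 4.1}) this forces uniqueness, once the target is pinned down.

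The core step is to show $\phi_\pi(x)$ lies in the image of the embedding $\cL_{E\otimes_B\beta_\pi}(E_{\rm es}\otimes_B\beta_\pi)\hookrightarrow\cL_B(E_{\rm es}\otimes_B\beta_\pi)$ of \cite{meyerEquivariantKasparovTheory2000}*{Lemma 4.1}, i.e.\ that $\phi_\pi(x)$ is the restriction of some $Y\in\cL_B(E\otimes_B\beta_\pi)$ with $Y$ and $Y^{*}$ both mapping $E\otimes_B\beta_\pi$ into $E_{\rm es}\otimes_B\beta_\pi$. As that image is a C*-subalgebra (hence norm-closed) and the finite-rank operators are dense in $\cK_A(\alpha_\pi)$, it suffices to treat a rank-one $x=\theta_{\xi,\eta}\colon\zeta\mapsto\xi\langle\eta,\zeta\rangle$ with $\xi,\eta\in\alpha_\pi$. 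For these, $\theta_{\xi,\eta}\otimes_\phi 1_E=T_\xi T_\eta^{*}$ on $\alpha_\pi\otimes_\phi E$, so $\phi_\pi(\theta_{\xi,\eta})=(\bbmv_\pi^{\rm es}T_\xi)(\bbmv_\pi^{\rm es}T_\eta)^{*}$; by \cref{lem:cocycle_range} (2) the operators $\bbmv_\pi T_\xi,\bbmv_\pi T_\eta\colon E\to E\otimes_B\beta_\pi$ are adjointable with range in $E_{\rm es}\otimes_B\beta_\pi=\mathrm{range}(\bbmv_\pi)$, and since the inclusion $E_{\rm es}\otimes_B\beta_\pi\subset E\otimes_B\beta_\pi$ is isometric, $(\bbmv_\pi^{\rm es}T_\eta)^{*}$ is the restriction of $(\bbmv_\pi T_\eta)^{*}$. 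Hence $Y:=(\bbmv_\pi T_\xi)(\bbmv_\pi T_\eta)^{*}$ does the job, and taking norm limits yields $\phi_\pi\colon\cK_A(\alpha_\pi)\to\cL_{E\otimes_B\beta_\pi}(E_{\rm es}\otimes_B\beta_\pi)$.

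Finally I would verify that $\phi_\pi$ extends $\phi(\blank)\otimes 1_{\beta_\pi}$ in the appropriate sense, namely $\phi_\pi\circ\lambda_\pi=\phi(\blank)\otimes 1_{\beta_\pi}$ for the left action $\lambda_\pi\colon A\to\cK_A(\alpha_\pi)$, which is valued in the compacts by rigidity of $\sC$ and \cref{lem:dual.bomodule} (2). Because $\bbmv_\pi$ intertwines the left $A$-actions and $\phi(a)(E)\subset E_{\rm es}$, one has $\bbmv_\pi^{\rm es}(\lambda_\pi(a)\otimes_\phi 1_E)=(\phi(a)\otimes 1_{\beta_\pi})\bbmv_\pi^{\rm es}$, so $\phi_\pi(\lambda_\pi(a))=\phi(a)\otimes 1_{\beta_\pi}$ under the Meyer embedding. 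For the last assertion, if $\bbmv_\pi$ is adjointable it is an adjointable isometry, so $\bbmv_\pi\bbmv_\pi^{*}$ is the projection onto $E_{\rm es}\otimes_B\beta_\pi$ and $\bbmv_\pi^{*}$ restricts to $(\bbmv_\pi^{\rm es})^{*}$ there; then $\bbmv_\pi(x\otimes_\phi 1_E)\bbmv_\pi^{*}\in\cL_{E\otimes_B\beta_\pi}(E_{\rm es}\otimes_B\beta_\pi)$ and restricts to $\phi_\pi(x)$, so the two coincide by injectivity of the restriction map.

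I expect the main obstacle to be the bookkeeping forced by the non-adjointability of $\bbmv_\pi$ and the fact that $E_{\rm es}\otimes_B\beta_\pi$ need not be complemented in $E\otimes_B\beta_\pi$: one must pass consistently between $\bbmv_\pi$ and its corestriction $\bbmv_\pi^{\rm es}$, lean on \cref{lem:cocycle_range} (2) to get adjointability of the partial operators $\bbmv_\pi T_\xi$, and use \cite{meyerEquivariantKasparovTheory2000}*{Lemma 4.1} to identify $\phi_\pi(x)$ unambiguously with an operator on $E\otimes_B\beta_\pi$.
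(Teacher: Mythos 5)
Your proposal is correct and follows essentially the same route as the paper: define $\phi_\pi$ by conjugating $x\otimes_\phi 1$ with the unitary corestriction $\bbmv_{{\rm es},\pi}$ of $\bbmv_\pi$ onto $E_{\rm es}\otimes_B\beta_\pi$, reduce membership in $\cL_{E\otimes_B\beta_\pi}(E_{\rm es}\otimes_B\beta_\pi)$ to rank-one operators via $\phi_\pi(\theta_{\xi,\eta})=(\bbmv_\pi T_\xi)(\bbmv_\pi T_\eta)^*$ together with \cref{lem:cocycle_range}~(2), and deduce uniqueness from the injectivity of the restriction map of \cite{meyerEquivariantKasparovTheory2000}*{Lemma~4.1}. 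Your additional verifications (that $\phi_\pi$ extends $\phi(\blank)\otimes 1_{\beta_\pi}$ and the adjointable case) are fine and only spell out what the paper leaves implicit.
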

	
	\begin{proof}
		The assignment 
		\[ \phi _\pi \colon \cK(\alpha_\pi) \to \cL(E_{\rm es} \otimes_B \beta_\pi ), \quad  \phi_\pi(x) = \bbmv_{{\rm es}, \pi} (x\otimes_\phi 1_{E_{\rm es}}) \bbmv_{{\rm es},\pi}^* \]
		is a $\ast$-homomorphism. 
		Thus, since $\cL_{E \otimes_B \beta_\pi }(E_{\rm es} \otimes_B \beta_\pi) \subset \cL(E_{\rm es} \otimes_B \beta_\pi)$, 
		the desired $\ast$-homomorphism is obtained by showing that each $\phi_\pi(x)$ is contained in $\cL_{E \otimes_B \beta_\pi }(E_{\rm es} \otimes_B \beta_\pi)$.
        This holds since for any $\xi,\eta\in \alpha_\pi$, 
			\begin{align*}
				\phi_\pi(\xi \otimes \eta^*) = (\bbmv_\pi T_{\xi})(\bbmv_\pi T_{\eta})^*
				\in \cL_{E \otimes_B \beta_\pi}(E_{\rm es}\otimes_B\beta_\pi). 
			\end{align*}
		Finally, the uniqueness follows from that $\phi_\pi (x)$ is determined by its restriction to $E_{\rm es} \otimes_B \beta_\pi = \Im \bbmv_\pi$, which is characterized by the relation $\bbmv_\pi(x \otimes_\phi 1_E) = \phi_\pi(x)\bbmv_\pi $.
	\end{proof}

\subsection{Kasparov product}\label{subsection:Kasparov.product}
Let $A$, $B$, $D$ be $\bZ_2$-graded $\sC$-C*-algebras, with $A$ being separable, and with $B$, $D$ being $\sigma$-unital. 
We again emphasize that we are assuming Hilbert modules are countably generated. 

Recall that, for a $\bZ_2$-graded Hilbert $D$-module $M$, a $\bZ_2$-graded Hilbert $D$-$B$ bimodule $(E,\phi)$, and $ X \in \cL_B(E)$ with $[\phi(D),X]\subset \cK_B(E)$, an operator $Y \in \cL(M \otimes_D E)$ is said to be \emph{an $X$-connection} if
	\[
	YT_\xi -(-1)^{|X||\xi|} T_{\xi} X,\quad Y^*T_\xi -(-1)^{|X||\xi|} T_{\xi} X^* \in \cK(E, M \otimes _D E) 
	\]
for any $\xi \in M$. 
It is a fundamental fact in KK-theory that $X$-connection exists for any $M$, $(E,\phi)$ and $X \in \cL_B(E)$. See \cite{blackadarTheoryOperatorAlgebras1998}*{Section~18.3} for this and other facts we use.

	\begin{df}
		Let $(\sfE_1,F_1) =(E_1,\phi_1,\bbmv_1,F_1) \in \bfE^\sC(A,D)$ and $(\sfE_2,F_2) =(E_2,\phi_2,\bbmv_2,F_2) \in \bfE^\sC(D,B)$. 
		Set $\sfE=(E,\phi,\bbmv):=\sfE_1 \otimes_D \sfE_2$ as \eqref{eqn:tensor.C-bimodule}.  
		We write $\sfE_1 \# \sfE_2 \subset \cL_B(E)$ for the set of odd self-adjoint operators $F$ satisfying the following conditions; 
		\begin{itemize}
			\item $F$ is an $F_2$-connection,   
			\item $\phi(a)[F_1 \otimes_{\phi_2} 1, F]\phi(a)^* \geq 0$ modulo compact for any $a \in A$, 
			\item $(E, \phi, \bbmv, F) \in \bfE^\sC(A,B)$.
		\end{itemize}
		A \emph{Kasparov product} of $(\sfE_1,F_1)$ and $(\sfE_2,F_2)$ is defined by
		\[(\sfE_1,F_1) \hotimes_D (\sfE_2,F_2) := (\sfE,F) \in \bfE^\sC(A,B), \]
		where $F$ is an arbitrary element of $(\sfE_1,F_1) \# (\sfE_2,F_2)$.
	\end{df}
	
	As in the ordinary non-equivariant KK-theory, the well-definedness and the associativity of the Kasparov product at the level of KK-class are summarized as the following statements.
	
	\begin{thm}\label{thm:Kasparov.product}
		Let $A$, $B$, $D$ be as above. 
		\begin{enumerate}
			\item For $(\sfE_1,F_1) \in \bfE^\sC(A,D)$ and $(\sfE_2,F_2) \in \bfE^\sC(D,B)$, 
			the set $(\sfE_1,F_1)\#(\sfE_2,F_2)$ is non-empty and path-connected. 
			Especially, the Kasparov product of $(\sfE_1,F_1)$ and $(\sfE_2,F_2)$ exists and determines a unique element of $\KK^\sC(A,B)$, 
			for which we write $[\sfE_1,F_1]\otimes_D [\sfE_2,F_2]$.   
			\item The map $\bfE^\sC(A,D)\times \bfE^\sC(D,B)\to \KK^\sC(A,B)$ given by (1) induces a well-defined $\bZ$-bilinear map $\KK^\sC(A,D)\times \KK^\sC(D,B)\to \KK^\sC(A,B)$. 
			\item The Kasparov product given in (2) is associative.  
		\end{enumerate}
	\end{thm}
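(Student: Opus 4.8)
The plan is to reduce \cref{thm:Kasparov.product} to the corresponding non-equivariant statement, treating the family of 1-cocycles $\bbmv$ as extra structure that is automatically compatible with the Kasparov product construction. The strategy closely follows the proofs in \cite{kasparovEquivariantKKTheory1988} and \cite{baajCalgebresHopfTheorie1989}, with the key technical input being the generalized Technical Theorem of Higson \cite{higsonTechnicalTheoremKasparov1987} applied to a suitable pair of $\sigma$-unital C*-subalgebras of $\cL(E)$.

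First I would prove (1). For existence, given $(\sfE_1,F_1)\in\bfE^\sC(A,D)$ and $(\sfE_2,F_2)\in\bfE^\sC(D,B)$, form $\sfE=\sfE_1\otimes_D\sfE_2$ and pick, by the standard connection construction, an $F_2$-connection $G\in\cL_B(E)$; by \cref{rem:normalization.F} we may take $G$ odd self-adjoint. Set $F_1':=F_1\otimes_{\phi_2}1$, which is well-defined on $E$ up to locally compact perturbation. The Kasparov Technical Theorem is then applied to the subalgebras $\fA_1$ generated by $\phi(A)$ together with $[F_1',G]$ and the commutators $[G,\bbmv_\pi T_\xi]$-type terms, and $\fA_2$ generated by the "connection defect" operators $GT_\xi-(-1)^{\cdots}T_\xi F_2$ and $G^*T_\xi-\cdots$, inside the $\sigma$-unital algebra $\fD^\sC(\sfE)$, with the separable "support" subspace coming from $A$ separable; this produces a positive $N\in\cM$ with $N\in\fA_1$-algebra-side, $1-N\in\fA_2$-side, commuting appropriately, and one sets $F:=N^{1/2}F_1'+(1-N)^{1/2}G$ (suitably symmetrized). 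The verification that $F\in(\sfE_1,F_1)\#(\sfE_2,F_2)$ is the usual computation, and the \emph{only} new point is the clause $(E,\phi,\bbmv,F)\in\bfE^\sC(A,B)$, i.e. that $[F,\bbmv_\pi T_\xi]\in\cK(E,E\otimes_B\beta_\pi)$. Here one uses that $\bbmv_\pi=(1\otimes_D\bbmv_{2,\pi})(\bbmv_{1,\pi}\otimes_{\phi_2}1)$ by \eqref{eqn:tensor.C-bimodule}, that $F_1$ and $F_2$ each satisfy the $\sC$-equivariance clause of \cref{def:Kasparovbimodule} on their own bimodules, and that the connection/Technical-Theorem operators all lie in $\fD^\sC(\sfE)$, so their graded commutators with $\bbmv_\pi T_\xi$ are compact by \cref{lem:self.adjoint.Cinvariance}; this is exactly why $\fD^\sC$, not merely the non-equivariant $\fD$, was used as the ambient algebra in the Technical Theorem. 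Path-connectedness of $(\sfE_1,F_1)\#(\sfE_2,F_2)$ follows by the same Technical-Theorem argument applied to pairs, again inside $\fD^\sC$, so the resulting path consists of $\sC$-Kasparov bimodules.

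Next, part (2): I would show the Kasparov product is compatible with homotopy and with direct sums. Homotopy invariance is the standard observation that a homotopy $(\widetilde{\sfE}_1,\widetilde F_1)\in\bfE^\sC(A,D[0,1])$ (resp.\ on the $D$-$B$ side) produces, after taking a product with the non-homotopic factor, a $\sC$-Kasparov $A$-$B[0,1]$ bimodule whose endpoint restrictions are products of the endpoints — here one uses that products can be computed "fibrewise" and that the Technical-Theorem choices can be made continuously in $t$, which again only requires that everything stays inside $\fD^\sC$. Bilinearity over $\bZ$ is immediate from additivity of connections and of the Technical Theorem output under direct sums, together with the inverse formula $(\sfE^{\rm op},-F)$ noted earlier. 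Then part (3), associativity, follows the classical route: given three bimodules, one checks that a triple product $F$ satisfying all the connection and positivity conditions relative to both bracketings can be produced by a single application of the Technical Theorem to a configuration of finitely many $\sigma$-unital subalgebras of $\fD^\sC(\sfE_1\otimes_D\sfE_2\otimes_{D'}\sfE_3)$, and that such an $F$ lies in both $(\sfE_1\#(\sfE_2\otimes\sfE_3))$-style sets and the other; uniqueness from (1) then forces the two iterated products to agree in $\KK^\sC$.

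The main obstacle I expect is bookkeeping the non-adjointability of the $\bbmv_\pi$. In the non-equivariant proofs the equivariance condition is phrased via genuine operators (e.g.\ a unitary or a representation of a multiplier algebra), whereas here the only handle is the adjointable operators $\bbmv_\pi T_\xi$ for $\xi\in\alpha_\pi$ — and $\alpha_\pi$ is merely a Hilbert module, possibly infinitely generated. Thus every appeal to the Technical Theorem must be arranged so that the \emph{countably many} generators of $\alpha_\pi$ over all (countably many) $\pi\in\Obj\sC$ feed a single $\sigma$-unital subalgebra; this is possible because $\sC$ has countably many objects, each $\alpha_\pi$ is countably generated (as $A$ is $\sigma$-unital), and $A$ is separable, but it requires care to assemble the correct separable/$\sigma$-unital data. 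A secondary technical point is \cref{lem:cocycle_range}(2): one repeatedly needs that $(\bbmv_\pi T_\xi)^*$ is again expressible through $\bbmv_{\bar\pi}$ and the conjugate data, in order to see that the relevant subalgebras of $\fD^\sC(\sfE)$ are genuinely $\ast$-closed and hence C*-subalgebras to which the Technical Theorem applies; this is precisely \cref{lem:self.adjoint.Cinvariance}, so I would invoke it at each step rather than re-deriving the adjoint formulas.
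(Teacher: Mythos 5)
Your overall strategy (form the tensor product, take an $F_2$-connection, run a technical theorem, and check the extra equivariance clause) is the right one, but the specific mechanism you propose for the new clause does not work. You want to run the \emph{ordinary} Kasparov--Higson technical theorem with ambient algebra $\fD^\sC(\sfE)$ and to feed it ``$[G,\bbmv_\pi T_\xi]$-type terms''; the problem is that $[G,\bbmv_\pi T_\xi]$ and $[F_1\hotimes 1,\bbmv_\pi T_\xi]$ are elements of $\cL(E,E\otimes_B\beta_\pi)$, not of $\cL(E)=\cM(\cK(E))$, so they cannot be placed in a C*-subalgebra or a separable subspace $\Delta\subset\cM(\cJ)$ to which the classical theorem applies. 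Moreover, the claim that the output $M$ will automatically satisfy $[M,\bbmv_\pi T_\xi]\in\cK(E,E\otimes_B\beta_\pi)$ ``because everything lies in $\fD^\sC(\sfE)$'' is circular: \cref{lem:self.adjoint.Cinvariance} only says that the set of such operators is a C*-subalgebra of $\cL(E)$; it gives no reason why the $M$ produced by the technical theorem (which a priori only lies in $\cM(\cK(E))$) should belong to it. This compactness of $[M,\bbmv_\pi T_\xi]$ is precisely one of the conclusions one must \emph{arrange}, not a consequence of the ambient-algebra choice.

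The paper closes this gap by proving a generalized technical theorem (\cref{thm:technicalappendix}) involving a countable family of target algebras $\cJ_\pi=\cK(E\oplus E\otimes_B\beta_\pi)$ together with the diagonal homomorphisms $\sigma_\pi(x)=\diag(x,x\otimes 1_{\beta_\pi})$; each $\bbmv_\pi T_\xi$ is encoded as the adjointable off-diagonal self-adjoint operator $V_\xi\in\cL(E\oplus E\otimes_B\beta_\pi)$, so that the conditions $[M,\bbmv_\pi T_\xi]\in\cK$, $((1-M)\otimes 1)[G,\bbmv_\pi T_\xi]\in\cK$, etc., become honest conditions inside $\cM(\cJ_\pi)$ and are delivered by the theorem. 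The hypothesis $\sigma_\pi(\cA_1)\cdot\cA_{2,\pi}\subset\cJ_\pi$ of that theorem is exactly the content of \cref{lem:Kasprodbimod}, which shows $[G,\bbmv_\pi T_\xi]\cdot(\cK(E_1)\otimes 1_{E_2})\subset\cK(E,E\otimes_B\beta_\pi)$ for any $F_2$-connection $G$ via the factorization $\bbmv_\pi=(1\otimes\bbmv_{2,\pi})(\bbmv_{1,\pi}\otimes 1)$ and the adjoint formula of \cref{lem:cocycle_range}\,(2) — you correctly anticipate these two ingredients, but without the multi-target technical theorem there is no way to assemble them into the required operator $M$. So as written the proposal has a genuine gap at the central step; parts (2) and (3) then inherit the same issue, since they too rely on running the (generalized) technical theorem.
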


	We call the \emph{$\sC$-equivaraint Kasparov category} for the category $\Kas^\sC$ whose objects separable $\sC$-C*-algebras, morphisms $\sC$-equivariant KK-groups, and the composition is the Kasparov product. 
	Indeed, the unit morphism of $A \in \Obj \Kas^\sC$ is given by $[\id_A, \mathbbm{1}]$, as is proved later in \cref{lem:KK.unit}. 
	Two separable $\sC$-C*-algebras are said to be $\KK^\sC$-equivalent if they are isomorphic in this category. 
	A separable $\sC$-C*-algebra $A$ is $\KK^\sC$-contractible if it is $\KK^\sC$-equivalent to $0$, or equivalently, $\KK^\sC(A,A) \cong 0$.
	More categorical aspects of $\sC$-equivariant KK-theory will be discussed in \cref{section:Cuntz.category}. 
	
	\begin{rem}
		The Kasparov product is denoted by the graded tensor product symbol $\hotimes_D$ as $[\sfE_1,F_1] \hotimes_D [\sfE_2,F_2]$ because it is a vast generalization of the interior tensor product of bimodules. 
		On the other hand, from the categorical viewpoint, it is natural to be denoted like a composition of morphisms as $[\sfE_2,F_2] \circ [\sfE_1,F_1]$. 
		These two conflict due to the opposite funcoriality. 
		In this paper, therefore, we use both notations depending on the context, i.e., 
		\[
		[\sfE_1,F_1] \hotimes_D [\sfE_2,F_2] = [\sfE_2,F_2] \circ [\sfE_1,F_1].
		\]
	\end{rem}
	
	We give a proof of \cref{thm:Kasparov.product}, which is generally in line with the known versions by Kasparov \cites{kasparovOperatorFunctorExtensions1980,kasparovEquivariantKKTheory1988} and Baaj--Skandalis~\cite{baajCalgebresHopfTheorie1989}, but with technical modifications specific to the tensor category in some details. 
	\begin{lem}\label{lem:Kasprodbimod}
		Let $\sfE_1:=(E_1,\phi_1,\bbmv_1)$ be a $\bZ/2$-graded $\sC$-Hilbert $A$-$D$ bimodule, and 
		let $\sfE_2:=(E_2,\phi_2,\bbmv_2 ,F_2)$ be a $\sC$-Kasparov $D$-$B$ bimodule. 
		Let $\sfE=(E,\phi,\bbmv):=\sfE_1 \hotimes_D \sfE_2$. 
		Then an odd $F_2$-connection $G$ on $E$ satisfies that 
		\[
		[G,\bbmv_\pi T_\xi] \cdot (\cK(E_1) \otimes 1_{E_2}) , \; (\cK(E_1)  \otimes 1_{E_2} \otimes 1_{\beta_\pi }) [G, \bbmv_\pi T_\xi] \subset \cK(E, E \otimes _B \beta_\pi ),
		\]
		for any $\pi \in \Obj \sC$ and $\xi \in \alpha_\pi$.
	\end{lem}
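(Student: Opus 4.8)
The plan is to first reduce the two inclusions to the single statement that, for every homogeneous $\xi\in\alpha_\pi$ and every odd $F_2$-connection $G$ on $E$,
\begin{equation*}
  [G,\bbmv_\pi T_\xi]\,(k\otimes_{\phi_2}1_{E_2})\in\cK_B(E,E\otimes_B\beta_\pi)
  \qquad\text{for all }k\in\cK_D(E_1),
\end{equation*}
and then to prove this. Write $\gamma$ for the $\sC$-action on the middle algebra $D$, so that $\bbmv_{1,\pi}\colon\alpha_\pi\otimes_A E_1\to E_1\otimes_D\gamma_\pi$, $\bbmv_{2,\pi}\colon\gamma_\pi\otimes_D E_2\to E_2\otimes_B\beta_\pi$, and $\bbmv_\pi=(1_{E_1}\otimes_D\bbmv_{2,\pi})(\bbmv_{1,\pi}\otimes_{\phi_2}1_{E_2})$ by \eqref{eqn:tensor.C-bimodule}; here and below $T_{(\cdot)}$ denotes the creation operators as in \cref{lem:cocycle_range}, with domains read off from context. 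By \cref{rem:normalization.F} I may assume $F_2=F_2^{*}$, so that $G^{*}$ is again an odd $F_2$-connection and all the conditions defining a $\sC$-Kasparov bimodule become symmetric in $F_2$.

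For the reduction: $\bbmv_\pi T_\xi$ is adjointable by \cref{lem:cocycle_range}(2) applied to $\sfE$, with adjoint $\Theta_\pi\,((\bbmv_{\bar\pi}T_{\bar\xi})\otimes_B 1_{\beta_\pi})$, where the coherence operator $\Theta_\pi:=1_E\otimes_B\beta_{\ev_\pi}\fv_{\bar\pi,\pi}$ is adjointable and acts only on the $\beta$-legs. Using this, a short computation (commuting $\Theta_\pi$ past $G^{*}\otimes\id$ and using functoriality of $(\cdot)\otimes_B 1_{\beta_\pi}$) gives $[G,\bbmv_\pi T_\xi]^{*}=\pm\,\Theta_\pi\bigl([G^{*},\bbmv_{\bar\pi}T_{\bar\xi}]\otimes_B 1_{\beta_\pi}\bigr)$, so that, taking adjoints, the second asserted inclusion for $(G,\pi,\xi)$ is equivalent to the displayed inclusion for $(G^{*},\bar\pi,\bar\xi)$. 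Hence it suffices to prove the displayed inclusion.

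To do so, approximate $k$ in norm by finite sums of $\theta_{w,w'}$ with $w,w'\in E_1$ homogeneous, so that $k\otimes_{\phi_2}1_{E_2}=T_wT_{w'}^{*}$ with $T_w,T_{w'}\colon E_2\to E_1\otimes_D E_2=E$; since $T_{w'}^{*}$ is bounded it is enough to show $[G,\bbmv_\pi T_\xi]\,T_w\in\cK_B(E_2,E\otimes_B\beta_\pi)$. Writing $\bbmv_{1,\pi}(\xi\otimes_A w)\in E_1\otimes_D\gamma_\pi$ as a limit of finite sums of elementary tensors $u\otimes_D d$ with $u\in E_1$, $d\in\gamma_\pi$ homogeneous, a direct computation gives $\bbmv_\pi T_\xi T_w=T_u\circ(\bbmv_{2,\pi}T_d)$, where $T_u\colon E_2\otimes_B\beta_\pi\to E_1\otimes_D(E_2\otimes_B\beta_\pi)=E\otimes_B\beta_\pi$ and $\bbmv_{2,\pi}T_d\colon E_2\to E_2\otimes_B\beta_\pi$ are both adjointable (the latter by \cref{lem:cocycle_range}(2) for $\sfE_2$). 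Expanding $[G,\bbmv_\pi T_\xi]\,T_w$ and pushing connections through—using that $G$ is an $F_2$-connection on $E_1\otimes_D E_2$ to replace $G T_w$ by $\pm T_wF_2$ modulo $\cK$, and that $G\otimes_B 1_{\beta_\pi}$ is an $(F_2\otimes_B 1_{\beta_\pi})$-connection on $E_1\otimes_D(E_2\otimes_B\beta_\pi)$ over the module $E_1$ (standard compatibility of connections with interior tensor products, see \cite{blackadarTheoryOperatorAlgebras1998}*{Section~18.3}) to replace $(G\otimes1_{\beta_\pi})T_u$ by $\pm T_u(F_2\otimes1_{\beta_\pi})$ modulo $\cK$, and keeping track of the grading signs via $|u|+|d|\equiv|\xi|+|w|$—one obtains
\begin{equation*}
  [G,\bbmv_\pi T_\xi]\,T_w\equiv\pm\,T_u\bigl[F_2,\bbmv_{2,\pi}T_d\bigr]\pmod{\cK_B(E_2,E\otimes_B\beta_\pi)}.
\end{equation*}
Here $[F_2,\bbmv_{2,\pi}T_d]\in\cK_B(E_2,E_2\otimes_B\beta_\pi)$ is exactly the Kasparov-bimodule condition of \cref{def:Kasparovbimodule} for $\sfE_2$ (with $\gamma_\pi$ in the role of $\alpha_\pi$ and $d\in\gamma_\pi$), and composing with the bounded operator $T_u$ preserves compactness. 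This proves the displayed inclusion, hence the lemma.

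I expect the main obstacle to be bookkeeping rather than concept: keeping the three tensor slots in $E_1\otimes_D E_2\otimes_B\beta_\pi$ straight, verifying that every auxiliary $T_{(\cdot)}$ and every $\bbmv_{i,\pi}T_{(\cdot)}$ one composes with is genuinely adjointable (so that compactness is preserved under composition), running the two successive approximation steps cleanly (first $k$ by rank-one operators, then $\bbmv_{1,\pi}(\xi\otimes_A w)$ by elementary tensors in $E_1\otimes_D\gamma_\pi$), and—perhaps the most easily overlooked point—identifying $G\otimes_B 1_{\beta_\pi}$ as a connection over $E_1$ for the module $E_1\otimes_D(E_2\otimes_B\beta_\pi)$. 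The grading signs are routine once the identity $|u|+|d|\equiv|\xi|+|w|\pmod 2$ is recorded.
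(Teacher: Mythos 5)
Your proposal is correct and follows essentially the same route as the paper's proof: the second inclusion is reduced to the first for $(G^*,\bar\pi,\bar\xi)$ via the adjoint formula of \cref{lem:cocycle_range}(2), and the first is proved by cutting down to creation operators $T_w$, factoring $\bbmv_\pi T_\xi T_w=(1\otimes\bbmv_{2,\pi})T_{\bbmv_{1,\pi}(\xi\otimes w)}$, approximating by elementary tensors, and combining the connection property of $G$ over $E_1$ with the compactness of $[F_2,\bbmv_{2,\pi}T_d]$. The only cosmetic difference is that the paper first establishes the elementary-tensor estimate for all $\zeta\in E_1\otimes_D\delta_\pi$ by continuity and then specializes to $\zeta=\bbmv_{1,\pi}(\xi\otimes w)$, whereas you specialize first and approximate afterwards; both orderings close the same way since $\zeta\mapsto(G\otimes 1)(1\otimes\bbmv_{2,\pi})T_\zeta-(-1)^{|\zeta|}(1\otimes\bbmv_{2,\pi})T_\zeta F_2$ is bounded linear and $\cK$ is norm-closed.
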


	\begin{proof}
		For the former inclusion, it suffices to show that $[\bbmv_\pi T_\xi,  G] \cdot T_x T_y^* \in \cK(E, E \otimes_B \beta_\pi)$ for any $x, y \in E_1$ and $\xi \in \alpha_\pi$. 
        We first observe that 
        \[ (1 \otimes \bbmv_{2,\pi}) T_\zeta F_2 - (-1)^{|\zeta|} (G \otimes 1_{\beta_\pi}) (1 \otimes \bbmv_{2,\pi}) T_{\zeta} \in \cK(E_2, E \otimes_B \beta_\pi)\]
        for any homogeneous $\zeta \in E_1 \otimes _D \delta_\pi$. Indeed, in the case of $\zeta = y \otimes \eta$ for any homogeneous $y \in E_1$ and $\eta \in \delta_\pi$, we have
        \begin{align*}
            & (1 \otimes \bbmv_{2,\pi}) T_{y \otimes \eta} F_2 = T_y \bbmv_{2,\pi} T_\eta F_2 \equiv (-1)^{|\eta|}T_y (F_2 \otimes 1_{\beta_\pi}) \bbmv_{2,\pi } T_\eta \\
            \equiv{}& (-1)^{|\eta| + |y|} (G \otimes 1_{\beta_\pi}) T_y \bbmv_{2,\pi} T_\eta = (-1)^{|\eta| + |y|} (G\otimes 1_{\beta_\pi}) (1 \otimes \bbmv_{2,\pi})T_{y \otimes \eta}
        \end{align*}
        modulo $\cK(E_2, E \otimes_B \beta_\pi)$.
        This extends to entire $E_1 \otimes _D \delta_\pi$ by continuity of $\zeta \mapsto (1 \otimes \bbmv_{2,\pi})T_\zeta$. 
        Applying this to $\zeta=\bbmv_{1,\pi}(\xi \otimes x)$, we get
        \begin{align*}
            \bbmv_{\pi} T_\xi G T_x &{}\equiv (-1)^{|x|} \bbmv_\pi T_\xi T_x F_2 = (-1)^{|x|} (1 \otimes \bbmv_{2,\pi})T_{\bbmv_{1,\pi}(\xi \otimes x)} F_2 \\
            &{}\equiv (-1)^{|\xi|} (G \otimes 1_{\beta_\pi}) (1 \otimes \bbmv_{2,\pi}) T_{\bbmv_{1,\pi}(\xi \otimes x)} = (-1)^{|\xi|} (G \otimes 1_{\beta_\pi}) \bbmv_\pi T_\xi T_x
        \end{align*}
        modulo $\cK(E,E \otimes \beta_\pi)$.

		The latter inclusion is reduced to the former one for the $F_2$-connection $G^*$ by using \cref{lem:cocycle_range} (2)  as
		\begin{align*}
			&(\bbmv_\pi T_\xi G -(-1)^{|\xi|} (G \otimes 1_{\beta_\pi})\bbmv_\pi T_\xi)^* (\cK(E_1) \otimes 1_{E_2} \otimes 1_{\beta_\pi}) \\
			=& (1_{E} \otimes \beta_{\ev_{\pi}}\fv_{\bar{\pi},\pi}) \cdot \big( ((G^* \otimes 1_{\beta_{\bar{\pi}}}) \bbmv_{\bar{\pi}} T_{\bar{\xi}} - (-1)^{|\bar{\xi}|} \bbmv_{\bar{\pi}}T_{\bar{\xi}} G^*  ) \cdot  (\cK(E_1) \otimes 1_{E_2}) \big) \otimes 1_{\beta_\pi} \subset \cK(E). 
		\end{align*}   
		Now we get the conclusion by taking the adjoint.
	\end{proof}
	
	\begin{proof}[Proof of \cref{thm:Kasparov.product}]
		Let $G \in \cL_B(E)$ be an odd self-adjoint $F_2$-connection. For $\pi \in \Obj \sC$, let
		\begin{align}
			\sigma_\pi  \colon \cK(E) \to \cK(E \oplus E \otimes_B \beta_\pi), \quad \sigma_\pi(x):=\diag (x,x \otimes_B 1_{\beta_\pi}) \label{eqn:technical}
		\end{align} 
		and, for $\xi \in \alpha_\pi$, let
		\[V_{\xi}:= \begin{pmatrix} 0 & (\bbmv_\pi T_\xi)^* \\ \bbmv_\pi T_\xi & 0 \end{pmatrix} \subset \cL(E \oplus E \otimes_B \beta_\pi ).  \]
        Let $u_n$ be an approximate unit $u_n$ of $\cK(E_1) \hotimes 1 + \cK(E)$. Then $(1-u_n\otimes1_{\beta_\pi})(1 \hotimes \bbmv_{2,\pi}) T_\zeta \to 0$ holds for any $\zeta \in E_1 \hotimes \delta_\pi$, and hence $(1-u_n\otimes1_{\beta_\pi})\bbmv_{\pi}T_\xi T_x = (1-u_n\otimes1_{\beta_\pi})(1 \hotimes \bbmv_{2,\pi})T_{\bbmv_{1,\pi}(\xi \hotimes x)} \to 0$ for any $x \in E_1$. 
        Therefore, for $K \in \cK(E_1)$, by \cref{lem:cocycle_range} (2) we have 
		\begin{align}
			\begin{split}
				\sigma_\pi (u_n) V_\xi \sigma_\pi (K \otimes 1_{E_2}) 
				=& 
				\begin{pmatrix} 
					0 & u_n(\bbmv_\pi T_\xi)^* (K \otimes 1_{E_2} \otimes 1_{\beta_\pi}) \\ 
					(u_n \otimes 1_{\beta_\pi}) \bbmv_\pi T_\xi (K \otimes 1_{E_2}) & 0
				\end{pmatrix}  
				\\
				\to & V_\xi \sigma_\pi(K \otimes 1_{E_2}) \quad \text{as $n \to \infty$. }
			\end{split}
			\label{eqn:technical.check}
		\end{align} 
		
		Now we are ready to apply \cref{thm:technicalappendix} for $I=\Obj \sC$ and
		\begin{itemize}
			\item $\cJ=\cJ_{\mathbf{0}_\sC}:=\cK(E)$ and $\sigma_{\mathbf{0}_\sC}:=\id_\cJ$, 
			\item $ \cJ_\pi := \cK(E \oplus E \otimes \beta_\pi)$ and $\sigma_\pi \colon \cJ \to \cJ_\pi $ is as in \eqref{eqn:technical} for $\pi\neq\mathbf{0}_\sC$, 
			\item $\cA_1 := \cK(E_1) \hotimes 1 + \cK(E)$, 
			\item $\cA_{2, \mathbf{0}_\sC} := C^*(G^2-1, [G,\phi(A)], [G, F_1 \hotimes 1])$, 
			\item $\cA_{2, \pi}:=C^*( \{  [\sigma_\pi (G),  V_{\xi} ]  \mid \xi \in \alpha_\pi  \} )$ for $\pi \neq \mathbf{0}_\sC$, 
			\item $\Delta_{\mathbf{0}_\sC} := \overline{\rm span}\{ F_1 \hotimes 1, G, \phi(A) \}$ and $\Delta_\pi := \overline{\rm span}\{  V_{\xi} \mid \xi \in \alpha_\pi  \}$ for $\pi \neq \mathbf{0}_\sC$. 
		\end{itemize}
		Indeed, $\cA_1 \cdot \cA_{2,\mathbf{0}_\sC} \subset \cJ_{\mathbf{0}_\sC}=\cJ$ and $[\Delta_{\mathbf{0}_\sC}, \cA_1]  \subset \sigma_{\mathbf{0}_\sC}(\cA_1)=\cA_1$ are obvious from the assumption, and  
		\cref{lem:Kasprodbimod} shows that $\sigma_\pi (\cA_1) \cdot \cA_{2,\pi} \subset \cJ_\pi $. Moreover, by \eqref{eqn:technical.check}, we have $[\Delta_\pi, \sigma_\pi (\cA_1)] \subset \overline{\sigma_\pi(\cA_1)\cM(\cJ_\pi) \sigma_\pi(\cA_1)}$. 
		Therefore, we get an even positive operator $M \in \cM(\cJ) = \cL(E)$ with $0 \leq M \leq 1$ such that 
		\begin{enumerate}
			\item $M (\cK(E_1) \hotimes 1), (1-M)(G^2-1), (1-M)[G,\phi(A)], (1-M)[G,F_1\hotimes 1] \subset \cK(E)$, 
			\item $[F_1 \hotimes 1, M], [G, M], [\phi(A), M] \subset \cK(E)$, and
			\item $((1-M) \hotimes 1_{\beta_\pi})[G, \bbmv_\pi T_\xi], (M \hotimes 1_{\beta_\pi})[F_1 \hotimes 1, \bbmv_\pi T_\xi], [M, \bbmv_\pi T_\xi] \subset \cK(E,E \otimes_B \beta_\pi)$, 
		\end{enumerate}
		for any $\pi \in \Obj \sC$ and $\xi \in \alpha_\pi$. 
		Set 
		\[ F:= M^{1/4}(F_1 \hotimes 1)M^{1/4} + (1-M)^{1/4}G(1-M)^{1/4}.\]
		Then (1) and (2) show that $(\sfE,F)$ satisfies the conditions of (non-equivariant) Kasparov product, and  
		(3) shows that 
		\begin{align*}
			[F,\bbmv_\pi T_\xi] 
			\equiv& [M^{1/2}(F_1 \hotimes 1) + (1-M)^{1/2}G, \bbmv_\pi T_\xi]\\
			=& (-1)^{|\xi|} [M^{1/2} , \bbmv_\pi T_\xi](F_1 \hotimes 1) + (M^{1/2}\hotimes 1_{\beta_\pi}) [F_1 \hotimes 1 , \bbmv_\pi T_\xi] \\
			& \quad  +(-1)^{|\xi|} [(1-M)^{1/2}, \bbmv_\pi T_\xi]G + ((1-M)^{1/2}\hotimes 1_{\beta_\pi})[G,\bbmv_\pi T_\xi] 
			\equiv  0 
		\end{align*}
		modulo $\cK(E, E \hotimes \beta_\pi)$. 
		
		The above construction of the operator $M$ also shows that $(\sfE_1, F_1) \# (\sfE_2, F_2)$ is path-connected and the Kasparov product is associative, just in the same way as Kasparov's original work \cite{kasparovOperatorFunctorExtensions1980}. We omit this part. 
	\end{proof}
	
		A by-product of \cref{lem:Kasprodbimod} is that, for a proper $\sC$-Hilbert $A$-$B$ bimodule $\sfM$ and a $\sC$-Kasparov $D$-$B$ bimodule $(\sfE, F)$, any odd self-adjoint $F$-connection $G \in \cL(\sfM \otimes_D \sfE)$ is in $(\sfM,0) \# (\sfE,F)$. In particular, for a $\sC$-Kaspraov $A$-$B$ bimodule $(\sfE, F)$ and an $F$-connection $F_{\rm es} \in \cL(A \otimes_A E)$ (recall that $A \otimes _A \sfE$ is denoted by $\sfE_{\rm es} $, cf.~\cref{ex:essential.bimodule}), the pair $(\sfE_{\rm es},  F_{\rm es})$ is a Kasparov product $ (\id_A, \mathbbm{1}) \hotimes_A (\sfE, F)$. 
	\begin{lem}\label{lem:KK.unit}
		Any $\sC$-Kasparov bimodule $(\sfE, F) \in \bfE^\sC(A,B)$ is homotopic to $(\sfE_{\rm es}, F_{\rm es})$. That is, $(\id_A, \mathbbm{1})$ is the identity morphism in $\Kas^\sC$. 
	\end{lem}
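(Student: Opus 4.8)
The plan is to derive this from two ingredients: first, that $(\id_A,\mathbbm{1})$ acts as the identity on \emph{essential} $\sC$-Kasparov bimodules, which is immediate from the uniqueness of the Kasparov product; and second, that every $\sC$-Kasparov bimodule is homotopic to an essential one, which carries the real content. For the first ingredient: given $(\sfE,F)\in\bfE^\sC(A,B)$ with $\sfE$ essential, the module $A\otimes_A E$ is canonically $E$, the operator $F$ is an $F$-connection on it because $[\phi(A),F]\subset\cK(E)$, and the positivity condition is vacuous since the left factor $(A,\id,\mathbbm{1})$ carries the zero operator; hence $(\sfE,F)$ itself lies in $(\id_A,\mathbbm{1})\#(\sfE,F)$ and $(\id_A,\mathbbm{1})\hotimes_A(\sfE,F)=[\sfE,F]$ by \cref{thm:Kasparov.product}. (The symmetric statement $[\sfF,G]\hotimes_A(\id_A,\mathbbm{1})=[\sfF,G]$ is proved the same way, using $\sfF\otimes_A A\cong\sfF$.) Since the Kasparov product descends to $\KK^\sC$-classes, it then suffices to show that an arbitrary $(\sfE,F)\in\bfE^\sC(A,B)$ is homotopic to a $\sC$-Kasparov bimodule with essential underlying bimodule, for which $(\sfE_{\rm es},\,\cdot\,)$ is the natural candidate (\cref{ex:essential.bimodule}).

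To build the homotopy, I would take the $\sC$-Hilbert $A$-$B[0,1]$ bimodule $\widetilde{\sfE}$ whose underlying Hilbert $B[0,1]$-module is the closed submodule of $E\otimes C([0,1])$ consisting of paths $\xi$ with $\xi(0)\in E_{\rm es}$, equipped with the left $A$-action and the cocycle obtained by applying $\phi$ and $\bbmv$ pointwise. These are well defined on $\widetilde{\sfE}$: the left action preserves the condition at $t=0$ because $\phi(A)E_{\rm es}\subseteq E_{\rm es}$, and the cocycle does so because $\Im\bbmv_\pi=E_{\rm es}\otimes_B\beta_\pi$ by \cref{lem:condbimod}; the coherence diagrams of \cref{defn:cocycle.hom} hold fibrewise. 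For the operator, I would choose a norm-continuous path $(F_t)_{t\in[0,1]}$ of odd self-adjoint contractions in $\cL_B(E)$ with $F_1=F$, with $F_0\in\cL_E(E_{\rm es})$, with $F_t-F\in\fC^\sC(\sfE)$ for all $t$, and with $(\sfE,F_t)\in\bfE^\sC(A,B)$; this is the ``reduction to essential representations'' of ordinary KK-theory applied to the underlying Kasparov bimodule, the extra commutator condition with $\bbmv_\pi T_\xi$ being stable under locally compact perturbations by \cref{lem:self.adjoint.Cinvariance}. Since $F_0$ preserves $E_{\rm es}$, the path $(t\mapsto F_t)$ restricts to an odd operator $\widetilde{F}$ on $\widetilde{\sfE}$, and one verifies $(\widetilde{\sfE},\widetilde{F})\in\bfE^\sC(A,B[0,1])$ fibrewise; the single point that uses the tensor-category structure is the compactness of $[\widetilde{F},\widetilde{\bbmv}_\pi T_\xi]$, which is obtained from the corresponding statement for each $(\sfE,F_t)$ together with the adjointability formula of \cref{lem:cocycle_range} (2). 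Then $\eval_1(\widetilde{\sfE},\widetilde{F})=(\sfE,F)$ and $\eval_0(\widetilde{\sfE},\widetilde{F})=(\sfE_{\rm es},F_0|_{E_{\rm es}})$, and the latter is essential, so by the first paragraph $(\id_A,\mathbbm{1})\hotimes_A(\sfE,F)=(\id_A,\mathbbm{1})\hotimes_A(\sfE_{\rm es},F_0|_{E_{\rm es}})=[\sfE_{\rm es},F_0|_{E_{\rm es}}]=[\sfE,F]$.

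The hard part is the construction of the path $(F_t)$ with endpoint $F_0$ preserving the essential submodule $E_{\rm es}$: since $E_{\rm es}\subseteq E$ need not be complemented and the isometries $\bbmv_\pi$ need not be adjointable, one cannot simply compress $F$ by a projection, and the non-equivariant essentialization argument must be run through the operator algebras $\cL_E(E_{\rm es})$, $\fD^\sC(\sfE)$ and $\fC^\sC(\sfE)$, using the structural facts about $\bbmv_\pi T_\xi$ from \cref{lem:condbimod} and \cref{lem:cocycle_range} to keep everything compatible with the cocycle. In particular one has to check that $\widetilde{F}$ on the non-complemented module $\widetilde{\sfE}$ is genuinely locally compact where the Kasparov conditions demand it, and that the connection property persists at the essential end. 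Once a suitable path $(F_t)$ is available, the remaining verifications --- that all diagrams of \cref{defn:cocycle.hom} and the conditions of \cref{def:Kasparovbimodule} hold fibrewise over $[0,1]$ --- are routine.
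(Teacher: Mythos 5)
Your first paragraph is correct and agrees with the remark preceding the lemma in the paper: for essential $\sfE$ one has $A\otimes_A\sfE\cong\sfE$, the operator $F$ is itself an $F$-connection, the positivity condition is vacuous, and so $(\id_A,\mathbbm{1})\hotimes_A(\sfE,F)=[\sfE,F]$ by \cref{thm:Kasparov.product}. The gap is in the second step. Your homotopy only makes sense once you have produced an odd self-adjoint $F_0\in\cL_E(E_{\rm es})$ with $F_0-F\in\fC^\sC(\sfE)$ (this is forced at $t=0$ by your requirement that $F_t-F\in\fC^\sC(\sfE)$ for all $t$), and you never construct it; you defer to ``the reduction to essential representations of ordinary KK-theory.'' But that is not what the non-equivariant argument does: degeneracy of $\phi$ is removed there by adding degenerate summands and stabilizing, or by Meyer's $\bM_2$-trick, precisely because there is in general no locally compact perturbation of $F$ mapping $E$ into the non-complemented submodule $E_{\rm es}=\overline{\phi(A)E}$. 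When $E_{\rm es}$ is complemented by a projection $p$ one can take $F_0=pFp+(1-p)F(1-p)$ and verify $F_0-F\in\fC^\sC(\sfE)$ using $\phi(a)=p\phi(a)p$ and $\bbmv_\pi T_\xi=(p\otimes 1)\bbmv_\pi T_\xi\,p$; in the general case no candidate is offered, and nothing in the paper or in the standard references supplies one. This is exactly the obstruction your last paragraph names as ``the hard part'' without resolving it, so the proof is incomplete at its central step.

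The paper's proof avoids the issue entirely by following \cite{meyerEquivariantKasparovTheory2000}*{Lemma 3.3}: it forms the $\bM_2(A)$-$B$ Kasparov bimodule $\bigl(E\oplus E_{\rm es},\,\tilde\phi,\,\diag(\bbmv_\pi,\bbmv_{{\rm es},\pi}),\,\diag(F,F_{\rm es})\bigr)$, where the off-diagonal entries of $\tilde\phi$ are the restrictions of $\phi(a)$ viewed in $\cL(E_{\rm es},E)$ and $\cL(E,E_{\rm es})$, and where $F_{\rm es}$ is merely an $F$-connection on $E_{\rm es}=A\otimes_A E$, whose existence is automatic. Pulling back along the two corner embeddings $i_1,i_2\colon A\to\bM_2(A)$, which are homotopic, identifies $(\sfE,F)$ and $(\sfE_{\rm es},F_{\rm es})$ up to degenerate summands; no perturbation of $F$ preserving $E_{\rm es}$ is ever needed. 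To salvage your route you would have to either prove the existence of $F_0$ in general or replace that step by this $\bM_2$-argument, at which point you have reproduced the paper's proof.
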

	\begin{proof}
		The proof follows the line of \cite{meyerEquivariantKasparovTheory2000}*{Lemma 3.3}. 
		Let 
		\[
		\tilde{\phi}:=\begin{pmatrix}\phi_{11} & \phi_{12 }\\ \phi_{21} & \phi_{22}\end{pmatrix} \colon \bM_2(A) \to \begin{pmatrix} \cL(E) & \cL(E_{\rm es}, E) \\ \cL(E, E_{\rm es}) & \cL(E_{\rm es}) \end{pmatrix} = \cL(E \oplus E_{\rm es}),
		\]
		where $\phi_{ij}(a)$ is the suitable restriction of $\phi(a) \in \cL(E)$. Then, by \cite{meyerEquivariantKasparovTheory2000}*{Lemma 3.3} and that $[\diag (F,F_{\rm es}) , \diag (\bbmv_\pi, \bbmv_{{\rm es}, \pi})]$ is compact, 
		the quadruple
		\[
		(\widetilde{\sfE}, F):=\big( E \oplus E_{\rm es}, \tilde{\phi}, \diag (\bbmv_\pi, \bbmv_{{\rm es},\pi}), \diag (F,F_{\rm es}) \big) 
		\]
		is a $\sC$-Kasparov $\bM_2(A)$-$B$ bimodule. 
		Therefore, the $\sC$-Kasparov $A$-$B$ bimodules
		\begin{align*}
			(\sfE,F) \sim &
			(\sfE, F) \oplus (E_{\rm es}, 0, 0, 0) \simeq 
			i_1 \hotimes_A (\widetilde{E},F),\\
			(\sfE_{\rm es}, F_{\rm es}) \sim &
			(E, 0,0,0) \oplus (\sfE_{\rm es}, F_{\rm es}) \simeq
			i_2 \hotimes_A (\widetilde{E},F), 
		\end{align*} 
		are homotopic, where $i_1 , i_2 \colon A \to \bM_2(A)$ denote the upper-left and lower-right corner embeddings respectively.    
	\end{proof}

	A consequence of this lemma is the recovery of discrete quantum group equivariant KK-theory \cite{baajCalgebresHopfTheorie1989} as an extension of \cref{ex:CQG.action}.
    For a compact quantum group $G$, $\hat{G}^{\rm op}$ denotes the discrete quantum group with the comultiplication opposite to that of $\hat{G}$. Especially, $\Gamma^{\rm op}$ is the opposite group of $\Gamma$.
    We write $\KK^{\hat{G}^{\rm op}}(A,B)$ for $\hat{G}^{\rm op}$-equivariant $KK$-group for separable $\hat{G}^{\rm op}$-C*-algebras (in other words, right $\hat{G}$-C*-algebras).  
	\begin{thm}\label{thm:RepCQG.KK}
		Let $G$ be a compact quantum group. Then, under the identification in \cref{ex:CQG.action}, there exists a natural isomorphism
		\[ 
		\KK^{\Rep(G)}(A,B) \cong \KK^{\hat{G}^{\rm op}}(A,B) \cong \KK^{G}( A\rtimes \hat{G}, B\rtimes \hat{G} )
		\]
		for any right $\hat{G}$-C*-algebras $A$ and $B$, inducing $\Kas^{\Rep(G)} \simeq \Kas^{\hat{G}^{\rm op}} \simeq \Kas^{G}$. 
		In particular, for a countable discrete group $\Gamma$ and separable right $\Gamma$-C*-algebras $A$ and $B$, we get $\Kas^{\Hilb_\Gamma^{\rm f}} \simeq \Kas^{\Gamma^{\rm op}}$ given by the natural isomorphism
		\[
		\KK^{\Hilb_\Gamma^{\rm f} } ( A, B ) \cong \KK^{\Gamma^{\rm op}} (A,B).
		\]
	\end{thm}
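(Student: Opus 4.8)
The plan is to obtain the first isomorphism $\KK^{\Rep(G)}(A,B)\cong\KK^{\hat G^{\rm op}}(A,B)$ by promoting the equivalence of correspondence categories $\Corr^{\Rep(G)}_{\rm es}\simeq\Corr^{\hat G^{\rm op}}_{\rm es}$ of \cref{ex:CQG.action} (the right-hand side being the category of right cocycle $\hat G$-C*-algebras) to the level of Kasparov bimodules, and to deduce the second isomorphism $\KK^{\hat G^{\rm op}}(A,B)\cong\KK^{G}(A\rtimes\hat G,B\rtimes\hat G)$, together with $\Kas^{\hat G^{\rm op}}\simeq\Kas^{G}$, from the Baaj--Skandalis duality \cite{baajCalgebresHopfTheorie1989} (which is in any case subsumed by the weak Morita invariance of \cref{section:Morita}). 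The discrete-group statement is then the special case $G=\hat\Gamma$.

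\textbf{Reduction and dictionary.} For the first isomorphism I would first pass to a convenient presentation. Since the standard Morita $A$-$(A\otimes\cK)$ bimodule carries an evident $\sC$-equivariant structure and is invertible in $\Corr^\sC_{\rm es}$, hence in $\Kas^\sC$, one may replace $A$ and $B$ by $A\otimes\cK$ and $B\otimes\cK$; then after a cocycle twist (\cref{ex:cocycle.twist}) and using \cref{lem:endomorphism.realization}(i) one may assume the $\Rep(G)$-actions have the form $\alpha_\pi\cong A\otimes\sH_\pi$ arising from genuine (cocycle) $\hat G$-actions as in \cref{ex:CQG.action} and \cref{rem:appendix.CQG.action}. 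Given a $\Rep(G)$-Kasparov $A$-$B$ bimodule $(E,\phi,\bbmv,F)$, I would pass to its essential part $(\sfE_{\rm es},F_{\rm es})$, which is homotopic to it by \cref{lem:KK.unit}; now $\phi$ is essential, each $\bbmv_\pi$ is a genuine unitary, and $(E,\phi,\bbmv)$ is precisely an essential $\hat G$-Hilbert $A$-$B$ bimodule $(E,\phi,\delta)$ under \cref{ex:CQG.action}, with $\delta$ recovered from $X=\bigoplus_{\pi\in\Irr}\bbmv_\pi^*$.

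\textbf{The key verification.} The crucial point is then that the remaining Kasparov condition, compactness of $[F,\bbmv_\pi T_\xi]$ for all $\pi\in\Obj\sC$ and homogeneous $\xi\in\alpha_\pi$, is equivalent to the Baaj--Skandalis $\hat G$-continuity condition on $F$. Indeed, as $\hat G$ is discrete one has $C^*(G)=c_0\text{-}\bigoplus_{\pi\in\Irr}\cB(\sH_\pi)$, so the $\hat G$-continuity of $F$ decomposes over $\pi\in\Irr$, and its $\pi$-component is exactly the compactness of the commutators of $F$ with the operators $\bbmv_\pi T_\xi$ as $\xi$ runs over $\alpha_\pi$. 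This yields a bijection between essential, normal-form $\Rep(G)$-Kasparov $A$-$B$ bimodules and $\hat G^{\rm op}$-Kasparov $A$-$B$ bimodules, which visibly respects direct sums and, applied over $B[0,1]$ with its pointwise $\hat G$-action, respects homotopies; hence it descends to a group isomorphism $\KK^{\Rep(G)}(A,B)\cong\KK^{\hat G^{\rm op}}(A,B)$, natural in both variables by construction.

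\textbf{Multiplicativity, the discrete case, and the main obstacle.} To promote this to a categorical equivalence $\Kas^{\Rep(G)}\simeq\Kas^{\hat G^{\rm op}}$ one checks multiplicativity: the correspondence of \cref{ex:CQG.action} already intertwines the interior tensor products of equivariant Hilbert bimodules, and an $F_2$-connection on the $\Rep(G)$ side corresponds to one on the $\hat G^{\rm op}$ side, so by \cref{thm:Kasparov.product} and the analogous Baaj--Skandalis construction the Kasparov products match. Composing with \cite{baajCalgebresHopfTheorie1989} supplies $\Kas^{\hat G^{\rm op}}\simeq\Kas^{G}$ and the third term. For a discrete group $\Gamma$, taking $G=\hat\Gamma$, using the monoidal equivalence $\Hilb_\Gamma^{\rm f}\simeq\Rep(\hat\Gamma)$ (which induces $\Kas^{\Hilb_\Gamma^{\rm f}}\simeq\Kas^{\Rep(\hat\Gamma)}$ by \cref{rem:monoidal.invariance}) together with the identification of $\widehat{\hat\Gamma}{}^{\rm op}$ with $\Gamma^{\rm op}$ from \cref{ex:group.action} yields $\KK^{\Hilb_\Gamma^{\rm f}}(A,B)\cong\KK^{\Gamma^{\rm op}}(A,B)$. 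The main obstacle I expect is the verification in the third paragraph: pinning down precisely the translation between our $F$-equivariance axiom and the Baaj--Skandalis continuity condition, and ensuring that the possible non-adjointability of the maps $\bbmv_\pi$ in the general (non-essential) case does not obstruct the argument — which is exactly why the reduction to essential bimodules via \cref{lem:KK.unit} and to the endomorphism form via \cref{lem:endomorphism.realization} is carried out first.
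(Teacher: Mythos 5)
Your proposal is correct and follows essentially the same route as the paper: the paper's proof likewise reduces to essential Kasparov bimodules via \cref{lem:KK.unit}, identifies essential $\Rep(G)$-Kasparov bimodules with $\hat{G}^{\rm op}$-Kasparov bimodules through the dictionary of \cref{ex:CQG.action} and \cref{rem:appendix.CQG.action}, and invokes Baaj--Skandalis duality for the crossed-product isomorphism. Your extra steps (Morita stabilization, \cref{lem:endomorphism.realization}, and the explicit matching of the Fredholm condition with the $\hat G$-continuity condition via $C^*(G)=c_0\text{-}\bigoplus_\pi\cB(\sH_\pi)$) merely flesh out details the paper leaves implicit.
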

    Here we have used  the Baaj--Skandalis--Takesaki--Takai duality in the second isomorphism above (see e.g.~\cite{nestEquivariantPoincareDuality2010}*{Theorem~4.2} for this version). 
    
		\begin{proof}
			By \cref{lem:KK.unit}, every $\sC$-Kasparov $A$-$B$ bimodule is equivalent to an essential one. This shows the theorem since, as is noted in \cref{rem:appendix.CQG.action}, an essential $\sC$-Kasparov $A$-$B$ bimodule is identified with a $\hat{G}^{\rm op}$-Kasparov $A$-$B$ bimodule. 
		\end{proof}
	
		\subsection{Basic properties}
		The following standard properties of $\KK$-theory immediately extend to the $\sC$-equivariant version.
		\begin{prop}\label{lem:split_exact}
			The following hold.
			\begin{enumerate}
				\item If $\sfE$ is a $\sC$-equivariant imprimitivity $A$-$B$ bimodule in the sense of \cref{subsubsection:Morita}, then $[\sfE,0]\in \KK^\sC(A,B)$ is a $\KK^\sC$-equivalence. 
				Especially, if $(\phi,\mathbbm{1}) \colon A \to B$ is a full-corner embedding, then the KK-class $[\phi , \mathbbm{1}]$ is a $\KK^\sC$-equivalence.
				\item The $\KK^\sC$-bifunctor is split exact in both the first and second variables. That is, for any split exact sequence $0 \to I \to A \to A/I \to 0$, we have split exact sequences
				\begin{align*}
					0 \to  \KK^\sC(B,I) \to \KK^\sC(B,A) \to \KK^\sC(B,A/I) \to 0, \\
					0 \to  \KK^\sC(A/I,B) \to \KK^\sC(A,B) \to \KK^\sC(I,B) \to 0. 
				\end{align*}
				\item For a cocycle $\sC$-$\ast$-homomorphism $(\phi, \bbmv) \colon A \to B$, one has the following Puppe exact sequences
				\begin{align*}
					\KK^\sC(D,SA) \to \KK^\sC(D,SB) \to \KK^\sC(D,\cone (\phi,\bbmv) ) \to \KK^\sC(D,A) \to \KK^\sC(D,B), \\
					\KK^\sC(B,D) \to \KK^\sC(A,D) \to \KK^\sC(\cone (\phi,\bbmv) ,D) \to \KK^\sC(SB,D) \to \KK^\sC(SA,D) ,
				\end{align*}
				where $\cone(\phi, \bbmv)$ denotes the mapping cone (cf.\ \cref{ex:mapping.cone}). 
				\item The Bott periodicity isomorphisms: $\KK^\sC(A,S^2B) \cong \KK^\sC(A,B) \cong \KK^\sC(S^2A,B)$. 
				\item The category $\Kas^\sC$ admits direct sums of countably many objects. 
			\end{enumerate}
		\end{prop}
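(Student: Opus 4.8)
The plan is to re-run, in the $\sC$-equivariant setting, the classical proofs of these facts (Kasparov \cite{kasparovOperatorFunctorExtensions1980,kasparovEquivariantKKTheory1988}, Cuntz--Skandalis, Baaj--Skandalis \cite{baajCalgebresHopfTheorie1989}, Meyer \cite{meyerEquivariantKasparovTheory2000}), the recurring task being to check that every auxiliary construction carries a canonical $\sC$-structure and that the auxiliary operators graded-commute with the (possibly non-adjointable) cocycles $\bbmv_\pi T_\xi$ modulo $\cK$. For this last point one always uses the same computation as in \cref{lem:cocycle_range} and \cref{lem:Kasprodbimod}, together with the explicit formula for the adjoint of $\bbmv_\pi T_\xi$ given there. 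The relevant $\sC$-structures are easy to produce: tensoring a $\sC$-C*-algebra $(B,\beta,\fv)$ by a trivially graded $C_0$-algebra $D$ gives $(B\otimes D,\beta\otimes\id_D,\fv\otimes 1)$, so suspensions $SB$, cones, and mapping cones $\cone(\phi,\bbmv)$ (\cref{ex:mapping.cone}) are $\sC$-C*-algebras; a countable $c_0$-direct sum carries $\bigoplus_n\alpha^{(n)}$; and \cref{lem:dual.bomodule} equips the conjugate $\overline{\sfE}$ of an imprimitivity bimodule with conjugate data. For (1), I would put on $\overline{\sfE}$ the $\sC$-Hilbert $B$-$A$ bimodule structure for which the canonical unitaries $\sfE\otimes_B\overline{\sfE}\cong A$ and $\overline{\sfE}\otimes_A\sfE\cong B$ are intertwiners; since $\sfE$ and $\overline{\sfE}$ are proper and the Kasparov product of proper $\sC$-Hilbert bimodules with zero operators is their interior tensor product (the observation preceding \cref{lem:KK.unit}), $[\sfE,0]$ and $[\overline{\sfE},0]$ are mutually inverse in $\Kas^\sC$ by \cref{lem:KK.unit}. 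A full-corner embedding $(\phi,\mathbbm 1)\colon A\xrightarrow{\ \sim\ }pBp$ — the corner projection $p$ being $\sC$-invariant by triviality of the cocycle (cf.\ \cref{ex:inner.auto}) — is the class of the $\sC$-equivariant imprimitivity bimodule $pB$, hence a $\KK^\sC$-equivalence.

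For (2) I would run Cuntz's difference construction. Given a split exact sequence $0\to I\to A\xrightarrow{p}A/I\to 0$ of $\sC$-C*-algebras with $\sC$-equivariant section $s$, both $\id_A$ and $s\circ p$ are $\sC$-$\ast$-homomorphisms $A\to A$ agreeing modulo the $\sC$-invariant ideal $I$; the two-copies bimodule $(I\oplus I,\ \id_A\oplus(s\circ p),\ \text{diagonal cocycle},\ \text{odd flip})$ is an honest $\sC$-Kasparov $A$-$I$ bimodule — the off-diagonal commutators land in $I=\cK_I(I)$, and those with $\bbmv_\pi T_\xi$ are compact by the check above — so it defines a class $\delta\in\KK^\sC(A,I)$, using no part of the Cuntz picture of \cref{section:Cuntz.category}. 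One then verifies $\delta\hotimes_I[\iota]=1_A-[p]\hotimes_{A/I}[s]$ in $\KK^\sC(A,A)$ and $[\iota]\hotimes_A\delta=1_I$; composing with $\delta$ shows that in the second variable $\iota_*$ is injective with image $\ker p_*$, and dually in the first variable, with $s_*$ (resp.\ $s^*$) providing the splitting.

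Items (3)--(5) are then essentially formal. For (4) I would construct the Bott and Dirac $\sC$-Kasparov bimodules $\mathfrak b_B\in\KK^\sC(B,S^2B)$ and $\mathfrak d_B\in\KK^\sC(S^2B,B)$, all of whose operator data lives on the trivially graded $C_0(\bR^2)$-leg, the $\sC$-cocycle of $B$ being carried along untouched; the Kasparov products $\mathfrak b_B\hotimes\mathfrak d_B$ and $\mathfrak d_B\hotimes\mathfrak b_B$ then reduce to the non-equivariant identities $\mathfrak b\hotimes\mathfrak d=1$, $\mathfrak d\hotimes\mathfrak b=1$, so $B\simeq S^2B$ in $\Kas^\sC$ and both stated isomorphisms follow. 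For (3): the extension $0\to SB\to\cone(\phi,\bbmv)\to A\to 0$ is (equivariantly) c.p.-split, and the $\sC$-equivariant analogue of Kasparov's half-exactness theorem for semisplit extensions — proved by his argument using the technical theorem \cref{thm:technicalappendix} — together with (4) yields the Puppe sequences in both variables by the standard iterated-mapping-cone argument. For (5): a $\sC$-Kasparov $\big(\bigoplus_n A_n\big)$-$B$ bimodule is the same datum as a countable family of $\sC$-Kasparov $A_n$-$B$ bimodules — assemble $E=\bigoplus_n E_n$, $F=\bigoplus_n F_n$, $\bbmv_\pi=\bigoplus_n\bbmv_{n,\pi}$, using that a $c_0$-collection of compact operators is compact — and likewise for homotopies, so $\KK^\sC\big(\bigoplus_n A_n,B\big)\cong\prod_n\KK^\sC(A_n,B)$, exhibiting $\bigoplus_n A_n$ with the inclusions as the coproduct in $\Kas^\sC$.

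I expect the main obstacle to be in (2): unlike suspensions and direct sums, split exactness does not merely transport a trivial $\sC$-structure — it requires the extension and its section to be $\sC$-equivariant, and one must confirm that the flip on $I\oplus I$ graded-commutes modulo $\cK$ with the diagonal cocycles. This works precisely because $\id_A$ and $s\circ p$ agree modulo $I$, so that the difference $\bbmv_\pi^{(-)}T_\xi-\bbmv_\pi^{(+)}T_\xi$ factors through the $\sC$-invariant ideal $I$; spelling this out is the one place where equivariance is a real constraint rather than automatic. Everywhere else the $\sC$-structure is carried along for free, and the only genuine subtlety — non-adjointability of the $\bbmv_\pi$ — is dispatched verbatim as in \cref{lem:cocycle_range} and \cref{lem:Kasprodbimod}.
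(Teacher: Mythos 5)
Your proposal is correct and follows essentially the same route as the paper: the paper's own proof of this proposition is a one-line reference to the non-equivariant arguments of Skandalis, Cuntz--Skandalis and Blackadar (Sections 18--19), and your write-up is exactly those arguments carried over to the $\sC$-equivariant setting, with the extra compactness checks against the operators $\bbmv_\pi T_\xi$ dispatched as in \cref{lem:cocycle_range} and \cref{lem:Kasprodbimod}, and with the $\sC$-structures on suspensions, cones, conjugate bimodules and $c_0$-sums supplied exactly as the paper does. Your identification of item (2) as the one place where equivariance is a genuine constraint (the section must be a cocycle $\sC$-$\ast$-homomorphism, and the two cocycles of the Cuntz difference bimodule must agree modulo the $\sC$-invariant ideal) is accurate and consistent with the paper's treatment of $\sC$-invariant ideals in the appendix.
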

		\begin{proof}
			The proof is given in the same way as in the non-equivariant case. See e.g. \cite{skandalisRemarksKasparovTheory1984}, \cite{cuntzMappingConesExact1986} or \cite{blackadarTheoryOperatorAlgebras1998}*{Sections 18, 19}. 
		\end{proof}

		Another basic operation of the $\KK^\sC$-theory is the exterior tensor product. 
		Let $\sC$, $\sD$ be rigid tensor categories with countably many objects, let $A$, $B$ be $\sC$-C*-algebras, and let $A'$, $B'$ be $\sD$-C*-algebras. 
		Recall that the (minimal) tensor product $A \otimes B$ is endowed with the action of the Deligne tensor product $\sC \boxtimes \sD$ (cf.\ \cref{subsubsection:external.tensor}). 
		For $(\sfE_1, F_1) \in \bfE^\sC(A_1, B_1)$ and $(\sfE_2, F_2) \in \bfE^\sD(A_2, B_2)$, 
		we define their exterior tensor product as 
		\begin{align*}
			(\sfE_1 ,F_1) \otimes (\sfE_2, F_2):=(E_1 \otimes E_2, \phi_1 \otimes \phi_2, \bbmv_1 \otimes \bbmv_2, F_1 \otimes F_2)\in \bfE^{{\sC} \boxtimes {\sD}}(A \otimes A', B \otimes B') . 
		\end{align*}
		
\begin{thm}
	The above exterior tensor product induces a group homomorphism  
	\[
	\blank \otimes \blank  \colon \KK^\sC(A,B) \otimes \KK^\sD(A',B') \to \KK^{\sC \boxtimes \sD}(A \otimes A', B \otimes B')
	\]
	that gives rise to the bifunctor
	\[ 
	\blank \otimes \blank  \colon \Kas^\sC \times \Kas^\sD \to \Kas^{\sC \boxtimes \sD}
	\]
	such that $\KK^{\sC \boxtimes \sD} \circ (\blank \otimes \blank ) \cong  (\blank \otimes \blank ) \circ (\KK^\sC \times \KK^\sD)$ as bifunctors $\Calg^\sC \times \Calg^\sD  \to \Kas^{\sC \otimes \sD}$. 
\end{thm}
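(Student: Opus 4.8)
The plan is to follow the classical construction of the external product in Kasparov theory (see \cite{kasparovOperatorFunctorExtensions1980}, \cite{kasparovEquivariantKKTheory1988} and \cite{blackadarTheoryOperatorAlgebras1998}*{Sections 18, 19}), reducing everything to the non-equivariant statements together with a check of the conditions that are new in the tensor-categorical setting: that $\bbmv_1\otimes\bbmv_2$ is a cocycle for the $\sC\boxtimes\sD$-action on $A\otimes A'$ built in \cref{subsubsection:external.tensor}, and that the commutator condition of \cref{def:Kasparovbimodule} holds for the operator of $\sfE_1\otimes\sfE_2$. The key simplification is that every object of $\sC\boxtimes\sD$ is isomorphic to a (countable, $\ell^2$-) direct sum of objects of the form $\pi\boxtimes\rho$ with $\pi\in\Obj\sC$, $\rho\in\Obj\sD$, that the conditions of \cref{defn:cocycle.hom} and \cref{def:Kasparovbimodule} are natural and additive in the category variable, and that all the functors in sight are countably additive; hence it suffices to verify everything on the objects $\pi\boxtimes\rho$, where the $\sC\boxtimes\sD$-action sends $\pi\boxtimes\rho$ to the exterior product bimodule $\alpha_\pi\otimes\alpha'_\rho$ and $\bbmv_1\otimes\bbmv_2$ becomes $\bbmv_{1,\pi}\otimes\bbmv_{2,\rho}$ after the evident flip identifications.

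First I would check that $\sfE_1\otimes\sfE_2$, equipped with its operator $F$ as in the non-equivariant external product, is a $\sC\boxtimes\sD$-Kasparov bimodule. The two cocycle squares of \cref{defn:cocycle.hom} for $\bbmv_1\otimes\bbmv_2$ are tensor products of the corresponding squares for $\bbmv_1$ and $\bbmv_2$, hence commute; properness of the essential summand is routine. The operator conditions involving only $F^*$, $F^2-1$ and $[\phi(A\otimes A'),F]$ are exactly the non-equivariant statement. For the remaining, new condition, the graded Leibniz rule applied to the commutator of $F$ with $(\bbmv_{1,\pi}\otimes\bbmv_{2,\rho})T_{\xi\otimes\eta}$ produces, modulo $\cK$, a sum of a term involving $[F_1,\bbmv_{1,\pi}T_\xi]$ and a term involving $[F_2,\bbmv_{2,\rho}T_\eta]$, each of which is compact by the Kasparov-bimodule hypothesis on the corresponding factor, by the same argument that handles the non-equivariant commutator conditions (here one uses \cref{lem:cocycle_range}~(2) to make sense of the relevant adjoints and absorb the cocycles $\fv$, $\fv'$). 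Invariance under unitary equivalence is immediate, and invariance under homotopy follows by exterior-tensoring a homotopy $(\widetilde{\sfE},\widetilde{F})\in\bfE^\sC(A,B[0,1])$ with a fixed $(\sfE_2,F_2)$ and using $B[0,1]\otimes B'\cong(B\otimes B')[0,1]$ (and symmetrically in the first variable). Biadditivity is immediate from the compatibility of the exterior product with direct sums, so the induced map descends to a group homomorphism out of $\KK^\sC(A,B)\otimes_\bZ\KK^\sD(A',B')$; and the naturality statement $\KK^{\sC\boxtimes\sD}\circ(\blank\otimes\blank)\cong(\blank\otimes\blank)\circ(\KK^\sC\times\KK^\sD)$ holds because $[f,\mathbbm{1}]\otimes[f',\mathbbm{1}]$ is represented by $f\otimes f'$ with zero operator.

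It then remains to promote $\blank\otimes\blank$ to a bifunctor $\Kas^\sC\times\Kas^\sD\to\Kas^{\sC\boxtimes\sD}$. Since $[\id_A,\mathbbm{1}]\otimes[\id_{A'},\mathbbm{1}]=[\id_{A\otimes A'},\mathbbm{1}]$ is the unit by \cref{lem:KK.unit}, this reduces to the identity $(x_2\circ x_1)\otimes(y_2\circ y_1)=(x_2\otimes y_2)\circ(x_1\otimes y_1)$. I would introduce, for a $\sD$-C*-algebra $D$ and $x\in\KK^\sC(A,B)$, the class $\tau^\sC_D(x):=x\otimes[\id_D,\mathbbm{1}]\in\KK^{\sC\boxtimes\sD}(A\otimes D,B\otimes D)$, and symmetrically $\tau^\sD_D$ for $\sC$-C*-algebras $D$. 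As in the non-equivariant theory (cf.\ \cite{blackadarTheoryOperatorAlgebras1998}*{Section 18}, \cite{kasparovEquivariantKKTheory1988}*{Section 2}) one verifies: (i) $x\otimes y=\tau^\sC_{B'}(x)\circ\tau^\sD_A(y)=\tau^\sD_B(y)\circ\tau^\sC_A(x)$, by choosing representatives and exhibiting the operator of $\sfE_1\otimes\sfE_2$ as an element of the relevant $\#$-set; and (ii) each of $\tau^\sC_D$, $\tau^\sD_D$ is compatible with the Kasparov product. Granting (i)--(ii), the associativity of the Kasparov product (\cref{thm:Kasparov.product}), and the evident commutation of the operations $\tau^\sC$ and $\tau^\sD$, the bifunctoriality identity is a formal diagram chase.

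I expect (ii) to be the main obstacle. It amounts to showing that, for a composable pair of $\sC$-Kasparov bimodules $(\sfE_1,F_1)$, $(\sfE_2,F_2)$, if $F$ lies in $(\sfE_1,F_1)\#(\sfE_2,F_2)$ then $F\otimes 1_D$ lies in $(\sfE_1\otimes D,F_1\otimes 1)\#(\sfE_2\otimes D,F_2\otimes 1)$ computed $\sC\boxtimes\sD$-equivariantly. The $(F_2\otimes 1)$-connection property and the $(F_1\otimes 1)$-positivity condition survive $\blank\otimes 1_D$ by the standard arguments, and the remaining $\sC\boxtimes\sD$-Kasparov-bimodule condition is precisely the first step above applied to one factor carrying the zero operator, so the only genuinely new point is again the cocycle-commutator bound; but $F$ already satisfies it $\sC$-equivariantly (this is built into the construction in the proof of \cref{thm:Kasparov.product}, cf.\ \cref{lem:Kasprodbimod}), and the Leibniz reduction of the first step then applies verbatim after restriction to generating objects $\pi\boxtimes\mathbf{1}_\sD$. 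All remaining verifications run in parallel with the cited non-equivariant references and introduce no new phenomenon.
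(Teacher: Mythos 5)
The paper offers no proof of this theorem (it is left as an instance of the standard Kasparov-theoretic arguments), so your proposal has to be judged on its own terms. Your overall route --- reduce to the generating objects $\pi\boxtimes\rho$ of $\sC\boxtimes\sD$, verify the cocycle axioms for $\bbmv_1\otimes\bbmv_2$ factorwise, and deduce bifunctoriality from the operations $\tau^\sC_D$, $\tau^\sD_D$, the two factorizations of $x\otimes y$ through them, and \cref{thm:Kasparov.product} --- is exactly the intended one, and the reductions you invoke (finite direct sums of $\pi_i\boxtimes\rho_i$, homotopy invariance via $B[0,1]\otimes B'\cong(B\otimes B')[0,1]$, the formal diagram chase) are all fine.

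The step that would fail as written is the direct verification in your first paragraph. The Leibniz expansion of $[F,(\bbmv_{1,\pi}T_\xi)\otimes(\bbmv_{2,\rho}T_\eta)]$ produces terms such as $[F_1,\bbmv_{1,\pi}T_\xi]\otimes(\bbmv_{2,\rho}T_\eta)$, and the exterior tensor product of a compact operator with a merely adjointable one is \emph{not} compact in general: already $\cK(E_1)\otimes\cL(E_2)\not\subset\cK(E_1\otimes E_2)$ (take $1_{E_2}$ with $E_2$ infinite-dimensional). This is the same obstruction that prevents $[\phi_1(a)\otimes\phi_2(a'),F_1\hotimes 1]$ from being compact, and it is precisely why the non-equivariant external product is not given by a naive sum formula but is realized as an intersection product. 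So the first paragraph cannot stand alone; the cycle on $\sfE_1\otimes\sfE_2$ must be produced as a Kasparov product of $\tau^\sD_{A'}(\sfE_1,F_1)$ and $\tau^\sC_{B}(\sfE_2,F_2)$ --- your step (i) --- after which all commutator conditions are supplied by \cref{thm:Kasparov.product} and \cref{lem:Kasprodbimod}. The genuinely new verification is then only that $\tau^\sC_D$ and $\tau^\sD_D$ send cycles to cycles, and there the argument does close: for $\eta\in\delta_\rho$ the creation operator $T_\eta\colon D\to\delta_\rho\otimes_D D\cong\delta_\rho$ is \emph{compact} (it is $\eta$ under $\cK_D(D,\delta_\rho)\cong\delta_\rho$), so $[F\otimes 1,(\bbmv_\pi T_\xi)\otimes(\bbmw_\rho T_\eta)]=\pm[F,\bbmv_\pi T_\xi]\otimes(\bbmw_\rho T_\eta)$ lands in $\cK\otimes\cK\subset\cK$. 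Note that for this you must treat the generators $\mathbf{1}_\sC\boxtimes\rho$ as well, not only $\pi\boxtimes\mathbf{1}_\sD$ as in your step (ii); the case of general $\pi\boxtimes\rho$ then follows because the cocycle-commutator condition is multiplicative under $\fu_{\sigma,\sigma'}$ by a Leibniz argument with bounded factors.
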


\subsection{\texorpdfstring{$\sC$}{C}-module category picture}
We reformulate the $\KK^\sC$-theory in the language of $\sC$-module categories. Here $\sC$-Kasparov bimodule is understood as a generalization of $\sC$-module functor. 
Let $\sA$, $\sB$ be nonunital $\sC$-module categories in the sense of \cref{def:separable.category}. 
\begin{df}\label{def:equivariant.KK.category}
	A \emph{$\sC$-Kasparov $\sA$-$\sB$ bimodule} is the pair $(\cE, F)$, where $\cE=(\cE_+,\cE_-)$ is a pair of $\sC$-module functors from $\sA$ to $\sB$ in the sense of \cref{def:nonunital.functor} (we regarded it as a functor taking value in $\bZ_2$-graded object $\cE(X):=\cE_+(X) \oplus \cE_-(X)$ in $\sB$) and $F=(F_X)_{X \in \Obj \sA}$ is a family of odd adjointable operators on $\cE(X)$ such that
	\begin{enumerate}
		\item $F_Y \cE(x) - \cE(x) F_X, \cE(x)(F_X^* F_X -1), \cE(x)(F_X - F_X^*) \in \cK_\sB(\cE(X),\cE(Y))$ for any $X,Y \in\Obj\sA$ and $x \in \cK_\sA(X,Y)$,
		\item the diagram
		\[
		\xymatrix{
			\cE(X \otimes \pi) \ar[r]^-{\mathsf{v}_{X,\pi}}\ar[d]_{F_{X \otimes \pi}} & \cE(X) \otimes \pi \ar[d]^{F_X \otimes 1_\pi} \\ 
			\cE(X \otimes \pi) \ar[r]_-{\mathsf{v}_{X,\pi}} & \cE(X) \otimes \pi
				}
		\]
		commutes up to compact after multiplying $\cK_\sB (\cE(X \otimes \pi) )$ from the right.
	\end{enumerate}
\end{df}

Notice that any pair of proper functors $\cE_\pm \colon \sA \to \sB$ gives a Kasparov module by $F = 0$.
		
We say that two $\sC$-Kasparov $\sA$-$\sB$ bimodules $(\cE_{i},F_i)$ ($i=1,2$) are unitary equivalent if there are natural unitaries $u = u_+ \oplus u_- \colon \cE_{1} \to \cE_{2}$ such that each $u_{X} \colon \cE_{1,\pm}(X) \to \cE_{2,\pm}(X)$ satisfies $F_{2,X} u_X=u_X F_{1,X}$. We say $(\cE_1,F_1)$ and $(\cE_2,F_2)$ are homotopic if each of them is unitary equivalent to $(\cE , F) \otimes \eval_i$ for some $\sC$-Kasparov $\sA$-$\sB[0,1]$ bimodule $(\cE, F)$, where the tensor product $\eval_i$ with respect to a $\sC$-$\ast$-homomorphism is defined in a natural way. 
The KK-group $\KK^\sC(\sA,\sB)$ is the set of homotopy classes of Kasparov modules.
		
\begin{prop}\label{prp:category.algebra.KK}
	For separable $\sC$-C*-algebras $A$ and $B$, we have an isomorphism 
	\[ \KK^\sC(\Mod(A),\Mod(B)) \simeq \KK^\sC(A,B).\]
\end{prop}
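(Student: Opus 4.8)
The plan is to identify the two groups via evaluation at the distinguished dominant object $A\in\Obj\Mod(A)$. First I would build the map $\Psi\colon\KK^\sC(\Mod(A),\Mod(B))\to\KK^\sC(A,B)$. Given a $\sC$-Kasparov $\Mod(A)$-$\Mod(B)$ bimodule $(\cE,F)$ with $\cE=(\cE_+,\cE_-)$, set $E:=\cE(A)=\cE_+(A)\oplus\cE_-(A)$, let $\phi:=\cE_{A,A}\colon A=\cK_{\Mod(A)}(A)\to\cL_B(E)$ (essential, by the construction recalled after \cref{thm:algebra.category}), let $\bbmv_\pi$ be the composite $\alpha_\pi\otimes_A E\cong\cE(\alpha_\pi)\xrightarrow{\mathsf{v}_{A,\pi}}\cE(A)\otimes_\beta\pi=E\otimes_\beta\pi$ coming from the identifications $\cE_\pm\cong\blank\otimes_A\cE_\pm(A)$ of \cref{prop:functor_bimodule}, and let $F:=F_A$ (self-adjoint after a locally compact perturbation, cf.\ \cref{rem:normalization.F}). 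One then checks $(E,\phi,\bbmv,F_A)\in\bfE^\sC(A,B)$: the conditions on $\phi$ and $F_A$ are exactly \cref{def:equivariant.KK.category}(1) with $X=Y=A$ and $x\in\cK_A(A)=A$; and compactness of $[F_A,\bbmv_\pi T_\xi]$ is obtained by combining \cref{def:equivariant.KK.category}(1) for $X=A$, $Y=\alpha_\pi$ (which forces $F_{\alpha_\pi}$ to be an $F_A$-connection on $\alpha_\pi\otimes_A E$, using that the element of $\cK_A(A,\alpha_\pi)\cong\alpha_\pi$ attached to $\xi$ corresponds under $\cE$ to $T_\xi$) with \cref{def:equivariant.KK.category}(2). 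Since a homotopy in the module-category picture evaluates at $A$ to a homotopy in $\KK^\sC(A,B[0,1])$, $\Psi$ descends to $\KK$-groups.

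Next I would construct the reconstruction $\Theta$ and show $\Psi$ is onto. Given $(\sfE,F)\in\bfE^\sC(A,B)$, by \cref{lem:KK.unit} and \cref{rem:normalization.F} we may assume $\sfE$ essential and $F$ an odd self-adjoint contraction. Put $\cE_\pm:=\blank\otimes_A E_\pm\colon\Mod(A)\to\Mod(B)$, a $\sC$-module functor by \cref{prop:functor_bimodule} with $\mathsf{v}_{X,\pi}:=1_X\otimes\bbmv_\pi$, choose for every $X$ an $F$-connection $F_X$ on $X\otimes_\phi E$ with $F_A:=F$, and verify $(\cE,F_\bullet)\in\bfE^\sC(\Mod(A),\Mod(B))$. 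For \cref{def:equivariant.KK.category}(1), a rank-one $x=|y\rangle\langle x'|\in\cK_A(X,Y)$ gives $\cE(x)=T_yT_{x'}^*$, and the claim reduces via the connection identities $F_YT_y\equiv(-1)^{|y|}T_yF$, $T_{x'}^*F_X\equiv(-1)^{|x'|}FT_{x'}^*$ together with the Kasparov-module conditions on $(\sfE,F)$ (e.g.\ $T_y(F^2-1)\in\cK$, using essentiality). For \cref{def:equivariant.KK.category}(2), writing $K=|z\otimes w\rangle\langle z'\otimes w'|=T_z\,|w\rangle\langle w'|\,T_{z'}^*$ with $z=z_X\otimes z_\pi\in X\otimes_A\alpha_\pi$ and using the connection identities for $F_X$ and $F_{X\otimes\pi}$, one gets $(F_X\otimes1_{\beta_\pi})\mathsf{v}_{X,\pi}K-\mathsf{v}_{X,\pi}F_{X\otimes\pi}K\equiv\pm(T_{z_X}\otimes1_{\beta_\pi})[F,\bbmv_\pi T_{z_\pi}]\,|w\rangle\langle w'|\,T_{z'}^*$ modulo $\cK$, which is compact since $[F,\bbmv_\pi T_{z_\pi}]\in\cK(E,E\otimes_B\beta_\pi)$ by definition of $(\sfE,F)$; here \cref{lem:cocycle_range}(2) and \cref{lem:Kasprodbimod} are used to handle the possibly non-adjointable $\bbmv_\pi$, exactly as in the proof of \cref{thm:Kasparov.product}. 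This defines $\Theta(\sfE,F):=(\cE,F_\bullet)$, and evaluating at $A$ returns $(\sfE_{\rm es},F_{\rm es})$, which by the by-product of \cref{lem:Kasprodbimod} recorded before \cref{lem:KK.unit} and by \cref{lem:KK.unit} represents $[\sfE,F]$ in $\KK^\sC(A,B)$; thus $\Psi\circ\Theta=\id$.

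To close the argument I would show $\Theta\circ\Psi\simeq\id$. Given $(\cE,F)$, by \cref{prop:functor_bimodule} we have $\cE\cong\blank\otimes_A\cE(A)$ as $\sC$-module functors, and \cref{def:equivariant.KK.category}(1) with $Y=A$ and $x\in\cK_A(X,A)$ shows that each $F_X$ is automatically an $F_A$-connection on $\cE(X)\cong X\otimes_A\cE(A)$. Since the $F_A$-connection families form a path-connected set — the straight-line path between two of them is again a family of $F_A$-connections, which assembles over the $\sC$-module functor $\blank\otimes_A(\cE(A)\otimes C[0,1])$ into $\Mod(B[0,1])$ — we conclude $\Theta\Psi(\cE,F)\simeq(\cE,F)$. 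In particular $\Theta$ is independent (up to homotopy) of the chosen connections, and $\Psi$ is a bijection with inverse $\Theta$; as it is additive (direct sums evaluate to direct sums), it is the asserted group isomorphism.

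The step I expect to be the main obstacle is the faithful translation of the compactness conditions in both directions, and specifically verifying \cref{def:equivariant.KK.category}(2): one has to thread the non-adjointable cocycles $\bbmv_\pi$ through the computation (relying on \cref{lem:cocycle_range}(2)) and keep careful track of the ``multiply by $\cK_\sB$ from the right'' clause, which for a non-proper functor $\cE$ is precisely what makes the correspondence work and no more — so the argument genuinely uses the full form of the conditions rather than just their ``compactly supported'' restrictions. The $\bZ_2$-grading contributes only routine sign bookkeeping throughout.
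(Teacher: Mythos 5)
Your argument is correct and follows essentially the same route as the paper: evaluate at the dominant object $A$ to get one map, extend an (essential, normalized) $\sC$-Kasparov $A$-$B$ bimodule over all of $\Mod(A)$ by choosing $F$-connections $F_X$ on $X\hotimes_A E$ for the inverse, and close the loop with a linear operator homotopy between connections. The only immaterial difference is that the paper produces the connections by the explicit formula $F_X=(V_X^*\hotimes_A 1_E)(1_\sH\hotimes F)(V_X\hotimes_A 1_E)$ for fixed even embeddings $V_X\colon X\to\sH\otimes A$, whereas you invoke their abstract existence and path-connectedness.
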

\begin{proof}
	Let $(\cE, F)$ be a $\sC$-Kasparov $\Mod(A)$-$\Mod(B)$ bimodule. Then, as is mentioned below \cref{thm:algebra.category} (2), $\cE(A)$ has a canonical structure of an essential $\sC$-Hilbert $A$-$B$ bimodule. Now $F_A$ satisfies the assumptions for $(\cE(A), F_A)$ being a $\sC$-Kasparov $A$-$B$ bimodule. 
			
	Conversely, for an essential $\sC$-Kasparov $A$-$B$ bimodule $(E,\varphi,\bbmv, F)$, we get a $\sC$-Kasparov $\Mod(A)$-$\Mod(B)$ bimodule in the following way. 
	Let $\sH:=\ell^2\bZ$, and let us fix an even embedding $V_X \colon X \to \sH \otimes A $ for each Hilbert $A$-module $X$. 
	Set $F_\infty:=1_{\sH} \hotimes F  \in \cL(\sH \hotimes_\bC E)$.
	Then 
	\begin{align}
		F_X:= (V_X^* \hotimes_A 1_{E}) F_\infty (V_X \hotimes_A 1_{E}) \in \cL_B(X \hotimes_A E) \label{eqn:category.Fredholm}
	\end{align}  
	satisfies (1), (2) of \cref{def:equivariant.KK.category}.  
			
	By construction, these assignments preserve the homotopy of bimodules, and hence induce homomorphisms
	\begin{align*}
		&\cX \colon \KK^\sC(\Mod(A), \Mod(B)) \to \KK^\sC(A,B), \quad [\cE, F] \mapsto [\cE(A), F_A], \\
		&\cY \colon \KK^\sC(A,B) \to \KK^\sC(\Mod(A), \Mod(B)) ,\quad [\sfE , F] \mapsto [ \blank \hotimes_A \sfE, (F_X)_X ].
	\end{align*}
	The equality $\cX \circ \cY =\id$ is obvious from the definition. 
	To see $\cY \circ \cX =\id$, set $ (\cE', F'):=(\cY \circ \cX) (\cE, F)$. Then, by \cref{thm:algebra.category} (2), we have a natural isomorphism $u \colon \cE \to \cE$ such that $u_A F_A u_A^*=F'_A$. Now $t F_X + (1-t)u_X^* F_X'u_X$ gives an operator homotopy connecting $(\cE, F)$ and $(\cE', F')$. 
\end{proof}
\begin{ex}
	If $\sC = \Rep(G)$ for some compact quantum group $G$ (including the case that $\sC=\Hilb_\Gamma^{\rm f}$ for some discrete group $\Gamma$), then there exists a natural isomorphism
	\[ 
	\KK^\sC(\Mod(A),\Mod(B)) \simeq \KK^{\hat{G}^{\rm op}}(A,B)
	\]
	by \cref{thm:RepCQG.KK} and \cref{prp:category.algebra.KK}. 
\end{ex}

We describe the Kasparov product in this picture. Again, let $\sA$, $\sB$, and $\sD$ be separable $\sC$-C*-categories and $\cE \colon \sA \to \sB$ and $\cF \colon \sB \to \sD$ be $\sC$-module functors. 
We say that a family $b = (b_X \in \cL_\sD(\cF (X)))_{X \in \Obj \sB}$ is a \emph{natural transformation up to compact operators} on $\cF$ if
	\[
	b_Y\cF(x)-(-1)^{|\cF x||b|}\cF(x)b_X \in\cK_{\sD}(\cF (X),\cF (Y)) \text{  for any } x \in\cK_{\sB}(X,Y).
	\] 
		
\begin{df}\label{def:connprod}
	Let $b=(b_X)_{X \in \Obj \sB}$ be as above. 
	A natural transformation up to compact $(\tilde{b}_X )_{ X \in \Obj \sA}$ on $\cF \circ \cE$ is a \emph{$b$-connection} if 
	\[
	\tilde{b}_X \cF(a)- (-1)^{|\cF a||b|}\cF(a)b_Z \in \cK_\sD(\cF(Z),\cF \cE(X))
	\]
	for any $Z\in \Obj \sB$, $X \in \Obj \sA$, and $a\in\cK_\sA(Z,\cE(X))$. 
\end{df}
		
\begin{df}
	Let $(\cE_1,F_1) \in \bfE^\cC(\sA,\sD)$ and $(\cE_2,F_2) \in \bfE^\sC(\sD,\sB)$. 
	Let $F_1 \# F_2$ denote the set of $F_2$-connections $F=(F_X)_{ X \in \Obj \sA}$ such that 
	$(\cE,F)\in\bfE^\sC(\sA,\sB)$ and 
	\begin{align*}
				\cE(a^*)[\cE_2(F_{1,X}),F_X]\cE (a) \geq 0 \text{ modulo $\cK_\sB(\cE(X))$}, 
			\end{align*}
			where $\cE:=\cE_2 \circ \cE_1$, for any $ X \in \Obj \sA$ and $ a \in \cK_\sA(X)$.
			We say a pair $(\cE ,F)$ for $F \in F_1 \# F_2$ is a Kasparov product of $(\cE_1,F_1)$ and $(\cE_2,F_2)$. 
		\end{df}
		
		\begin{ex}
			For a proper $\sC$-module functor $\cF \colon \sB \to \sD$, $(\cF,0)\in\bfE^\sC(\sB,\sD)$. 
			It is easy to see that the Kasparov product $(\cF, 0) \circ (\cE, F)$ for $(\cE,F) \in \bfE^\sC(\sA,\sB)$ is given by $(\cF \circ \cE, \cF(F))$. Here, $\cF(F)$ denotes the family of operators $\cF(F_X) \in \cL(\cF \circ \cE(X))$. 
			For a proper $\sC$-module functor $\cF \colon \sD \to \sA$, the Kasparov product $(\cE,F) \circ (\cF,0)$ is given by $(\cE \circ \cF, F)$.
		\end{ex}

		\begin{prop}
			Let $(\cE_1, F_1) \in \bfE^\sC(\Mod(A) , \Mod(D))$ and $(\cE_2, F_2) \in \bfE^\sC(\Mod(D), \Mod(B))$. Then the map
			\[ (\cE_1, F_1) \# (\cE_2,F_2) \to (\cE_1(A), F_{1,A}) \# (\cE_2(D), F_{2,D}), \quad (F_X)_{X} \mapsto F_A, \]
			is well-defined and induces a homotopy equivalence. 
		\end{prop}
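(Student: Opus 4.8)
The plan is to push everything through the equivalence $\cE\mapsto\cE(A)$ of \cref{prp:category.algebra.KK} and to check that it carries the module-category set $F_1\#F_2$ bijectively, up to homotopy, onto the C*-algebraic set $(\cE_1(A),F_{1,A})\#(\cE_2(D),F_{2,D})$. First I would fix the translation dictionary. Write $\cE:=\cE_2\circ\cE_1$. By \cref{thm:algebra.category}~(2) the functor $\cE_2$ is naturally unitarily isomorphic to $\blank\otimes_D\cE_2(D)$ via \eqref{eqn:functor.to.bimodule}, and this isomorphism is compatible with the $\sC$-module coherence maps, so it identifies the $\sC$-Hilbert $A$-$B$ bimodule $\cE(A)=\cE_2(\cE_1(A))$ with $\cE_1(A)\otimes_D\cE_2(D)$ as in \eqref{eqn:tensor.C-bimodule}. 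Under this identification I would record the three facts I need: for $a\in A=\cK_A(A)$ the operator $\cE(a)=\cE_2(\cE_1(a))$ becomes the left action $\phi_1(a)\otimes_{\phi_2}1$; the operator $\cE_2(F_{1,A})$ becomes $F_{1,A}\otimes_D 1$; and for $\xi\in\cE_1(A)\cong\cK_D(D,\cE_1(A))$ the operator $\cE_2(T_\xi)\colon\cE_2(D)\to\cE_2(\cE_1(A))$ becomes $T_\xi\colon\cE_2(D)\to\cE_1(A)\otimes_D\cE_2(D)$. All three are formal consequences of \eqref{eqn:functor.to.bimodule}.

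For well-definedness, let $(F_X)_X\in F_1\#F_2$ and set $F':=F_A$. The hypothesis $(\cE,(F_X)_X)\in\bfE^\sC(\Mod(A),\Mod(B))$ restricted to $X=A$ says $(\cE(A),F')\in\bfE^\sC(A,B)$, exactly as in the proof of \cref{prp:category.algebra.KK}. The $F_2$-connection condition of \cref{def:connprod}, specialized to $X=A$, $Z=D$ and $a=T_\xi$ with $\xi$ ranging over $\cK_D(D,\cE_1(A))\cong\cE_1(A)$, becomes through the dictionary precisely the statement that $F'$ is an $F_{2,D}$-connection on $\cE_1(A)\otimes_D\cE_2(D)$ in the sense recalled in \cref{subsection:Kasparov.product} (the adjoint clause being automatic once $F'$ and $F_{2,D}$ are normalized to be self-adjoint by \cref{rem:normalization.F}, or directly from \cref{lem:cocycle_range}~(2)). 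Finally the positivity condition $\cE(a)^*[\cE_2(F_{1,X}),F_X]\cE(a)\geq 0$ modulo $\cK$, specialized to $X=A$ and $a\in A$, becomes $\phi(a)^*[F_{1,A}\otimes 1,F']\phi(a)\geq 0$ modulo $\cK$. Hence $F'\in(\cE_1(A),F_{1,A})\#(\cE_2(D),F_{2,D})$, and the map is well defined.

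For the homotopy equivalence I would exhibit a section up to homotopy using the recipe of \cref{prp:category.algebra.KK}. Given $F'$ in the C*-algebraic $\#$-set, fix even embeddings $V_X\colon X\to\sH\otimes A$ with $V_A$ the inclusion of the $0$th summand and put $F_X:=(V_X^*\otimes 1)(1_\sH\hotimes F')(V_X\otimes 1)\in\cL(X\otimes_A\cE(A))=\cL(\cE(X))$; \cref{prp:category.algebra.KK} makes $(\cE,(F_X)_X)$ a $\sC$-Kasparov $\Mod(A)$-$\Mod(B)$ bimodule with $F_A=F'$ on the nose, so the composite $F'\mapsto(F_X)_X\mapsto F_A$ is the identity. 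The other composite is connected to the identity by the operator homotopy $tF_X+(1-t)u_X^*F_X'u_X$ (together with the unitary equivalence $u$) used in the proof of \cref{prp:category.algebra.KK}; since the connection and positivity conditions are affine in the operator family they are preserved along this path, so it remains inside $F_1\#F_2$. By \cref{thm:Kasparov.product}~(1) the C*-algebraic $\#$-set is nonempty and path-connected, so the section and the retraction together exhibit the map $(F_X)_X\mapsto F_A$ as a homotopy equivalence.

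The hard part will be verifying that the family $(F_X)_X$ produced by the recipe really lies in $F_1\#F_2$: one must check the $F_2$-connection and positivity conditions at \emph{every} object $X\in\Obj\Mod(A)$, whereas the construction controls them directly only at $X=A$. Here I would use that $A$ is a dominant object of $\Mod(A)$, together with the fact that $F_X$, $F_{1,X}$ and the coherence unitaries $\mathsf{v}_{X,\pi}$ are all obtained from their values at $A$ by compression along $V_X$, so that the required containments modulo $\cK$ at a general $X$ follow from those at $X=A$. Everything else in the argument is a bookkeeping translation through \eqref{eqn:functor.to.bimodule}.
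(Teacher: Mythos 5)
Your argument is correct and follows essentially the same route as the paper: well-definedness comes from the identification $\cE_2(\cE_1(A))\cong\cE_1(A)\hotimes_D\cE_2(D)$ via \eqref{eqn:functor.to.bimodule} and restriction of the conditions at $X=A$, and the homotopy equivalence is obtained by showing that the compression recipe $F_X=(V_X^*\otimes 1)F_\infty(V_X\otimes 1)$ of \cref{prp:category.algebra.KK} gives a section landing in $(\cE_1,F_1)\#(\cE_2,F_2)$. The paper writes out explicitly the two computations (with the auxiliary embeddings $W_Z\colon Z\to\sH\otimes D$) verifying the connection and positivity conditions at arbitrary objects, which is exactly the step you flag as the "hard part" and resolve by the same compression mechanism.
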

		\begin{proof}
			The map is well-defined since $\cE_1(A) \hotimes_D \cE_2(D) \cong \cE(A)$ as $\sC$-Hilbert $A$-$B$ bimodules, which can be seen by applying \eqref{eqn:functor.to.bimodule} for $X=\cE_1(A)$. To see homotopy equivalence, it suffices to show that the map $\Psi_{A,D}$ in \cref{prp:category.algebra.KK} sends $(\cE_1(A), F_{1,A}) \# (\cE_2(D), F_{2,D})$ to $(\cE_1, F_1) \# (\cE_2,F_2)$. Indeed, by letting $E:=\cE(A)$, $E_1:=\cE_1(A)$, and $E_2:=\cE_2(D)$, the conditions are checked as
			\begin{align*}
				&(V_X^* \otimes 1_E)F_\infty (V_X \otimes 1_E) (a \otimes 1_{E_2}) \\
				=& (V_X^* \otimes 1_E)F_\infty (V_X \otimes 1_E) (a \otimes 1_{E_2}) (W_Z^* \otimes 1_{E_2})(W_Z \otimes 1_{E_2}) \\
				\equiv & (-1)^{|a|} (a \otimes 1_{E_2}) (W_Z^* \otimes 1_{E_2})F_{2,\infty }(W_Z \otimes 1_{E_2}) \quad \text{mod $\cK_B(X \otimes _A E)$},
			\end{align*}
			for $a\in\cK(Z,\cE_1(X))$, where $W_Z \colon Z \to \sH \otimes D$ is a fixed choice of embedding for each $Z \in\Obj\Mod(D)$, and for $a\in\cK(X)$,
			\begin{align*}
				&(a^* \otimes 1)[(V_{X}^* \otimes 1_{E_1})F_{1,\infty} (V_{X} \otimes 1_{E_1}) \otimes 1_{E_2}, (V_X^* \otimes 1_E)F_\infty (V_X \otimes 1_E)](a \otimes 1)\\
				=&(a^*V_X^* \otimes 1)[F_{1,\infty} \otimes 1_{E_2}, F_\infty](V_Xa \otimes 1) \geq 0 \quad \text{mod $\cK_B(X \otimes_A E)$. } \qedhere 
			\end{align*}
		\end{proof}
		
This, together with \cref{thm:Kasparov.product}, shows the well-definedness of the Kasparov product at the level of the module category picture. 
Furthermore, we get an equivalence of Kasparov categories. Let $\Kas^\sC_{\rm cat}$ denote the category whose objects are separable $\sC$-module categories, morphisms are $\KK^\sC$-groups, and the composition is given by the above Kasparov product. 
\begin{prop}\label{prop:Kasparov.category.algebra}
    There is an equivalence of categories $\Kas^\sC$ and $\Kas^\sC_{\rm cat}$ that is an extension of $\Corr^\sC_{\rm es} \cong \Ccat^\sC $ given in \cref{thm:equivalence.category.algebra}.   
\end{prop}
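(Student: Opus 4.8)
The plan is to build an explicit functor $\widetilde{\cM}\colon\Kas^\sC\to\Kas^\sC_{\rm cat}$ by promoting the isomorphisms $\cY\colon\KK^\sC(A,B)\xrightarrow{\ \cong\ }\KK^\sC(\Mod(A),\Mod(B))$ of \cref{prp:category.algebra.KK} to functoriality. Write $\iota\colon\Corr^\sC_{\rm es}\to\Kas^\sC$ and $\iota_{\rm cat}\colon\Ccat^\sC\to\Kas^\sC_{\rm cat}$ for the canonical functors sending a bimodule class $[\sfE]$ to $[\sfE,0]$ and a functor class $[\cF]$ to $[\cF,0]$ (these are functors by \cref{lem:KK.unit} together with $(\sfE_1,0)\hotimes_D(\sfE_2,0)=(\sfE_1\otimes_D\sfE_2,0)$, and their module-category analogue $(\cF,0)\circ(\cE,0)=(\cF\circ\cE,0)$). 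Define $\widetilde{\cM}$ on objects by $A\mapsto\Mod(A)$, exactly as in \cref{thm:equivalence.category.algebra}, and on morphisms by $\widetilde{\cM}:=\cY$. I would then verify, in turn, that $\widetilde{\cM}$ is a functor, that it is fully faithful and essentially surjective (hence a categorical equivalence), and that $\widetilde{\cM}\circ\iota=\iota_{\rm cat}\circ\Mod$, which is the asserted extension statement.

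Functoriality splits into two checks. For units, the construction of $\cY$ in the proof of \cref{prp:category.algebra.KK}, applied with $F=0$ in \eqref{eqn:category.Fredholm}, gives $\cY([\id_A,\mathbbm{1}])=[\blank\otimes_A A,0]$, which agrees with $[\id_{\Mod(A)},0]$ via the natural unitary $\blank\otimes_A A\cong\id_{\Mod(A)}$; and $[\id_{\Mod(A)},0]$ is the identity of $\Mod(A)$ in $\Kas^\sC_{\rm cat}$ because, in the module-category picture, pre- and post-composing a Kasparov bimodule with the identity functor acts on the nose as the identity — the analogue of \cref{lem:KK.unit}, which here is immediate rather than needing the essentialization argument. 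Compatibility with the Kasparov product, $\cY(x\hotimes_D y)=\cY(x)\hotimes_D\cY(y)$, is precisely the content of the proposition immediately preceding this one: it shows that $\cY$ carries $(\sfE_1,F_1)\#(\sfE_2,F_2)$ into $(\blank\otimes_A\sfE_1,(F_{1,X})_X)\#(\blank\otimes_D\sfE_2,(F_{2,X})_X)$, which together with \cref{thm:Kasparov.product} yields the required equality of $\KK^\sC$-classes.

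Full faithfulness of $\widetilde{\cM}$ is immediate, since each $\cY$ is a group isomorphism by \cref{prp:category.algebra.KK}. For essential surjectivity, let $\sB$ be a separable $\sC$-module category; by \cref{thm:equivalence.category.algebra} (which rests on \cref{thm:algebra.category} (1)) there are a separable $\sC$-C*-algebra $A$ and a $\sC$-module categorical equivalence $\cF\colon\Mod(A)\to\sB$ with a proper quasi-inverse $\cG$ and natural unitaries $\cG\circ\cF\cong\id_{\Mod(A)}$, $\cF\circ\cG\cong\id_\sB$. Then $[\cF,0]\in\KK^\sC(\Mod(A),\sB)$ is invertible with inverse $[\cG,0]$: the Kasparov products $(\cG,0)\circ(\cF,0)=(\cG\circ\cF,0)$ and $(\cF,0)\circ(\cG,0)=(\cF\circ\cG,0)$ are unitarily equivalent, via those natural unitaries, to the identity Kasparov bimodules. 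Hence $\Mod(A)\cong\sB$ in $\Kas^\sC_{\rm cat}$. Finally, $\widetilde{\cM}\circ\iota=\iota_{\rm cat}\circ\Mod$ holds because $\cY([\sfE,0])=[\blank\otimes_A\sfE,0]$ while $\blank\otimes_A\sfE=\Mod([\sfE])$ in $\Ccat^\sC$. The whole argument is essentially bookkeeping on top of \cref{prp:category.algebra.KK} and the preceding proposition; the one point that genuinely deserves care is the observation used in essential surjectivity, namely that a $\sC$-module categorical equivalence descends to a $\KK^\sC$-equivalence.
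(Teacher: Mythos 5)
Your proof is correct and follows the same route as the paper: the paper's (one-line) proof is precisely the assignment $A\mapsto\Mod(A)$ together with the isomorphisms $\cY$ from \cref{prp:category.algebra.KK}, with compatibility with the Kasparov product supplied by the proposition immediately preceding and essential surjectivity by \cref{thm:algebra.category}. You have simply spelled out the bookkeeping (units, full faithfulness, essential surjectivity, and the extension property) that the paper leaves implicit.
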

By this equivalence, $\Kas^\sC_{\rm cat}$ inherits nice properties of $\Kas^\sC$ such as \cref{lem:split_exact}.

	Finally, we observe the invariance of the $\KK^\sC$-theory under the monoidal equivalence of the tensor category (cf.~\cref{rem:monoidal.invariance}). 
	This will be extended to weak Morita invariance in \cref{section:Morita}. 
	\begin{prop}\label{prop:KK.monoidal.invariance}
	    A monoidal functor $(\cS , \mathsf{s}) \colon \sC \to \sD$ induces the pull-back homomorphism
	 \[
	 \cS^* \colon \KK^\sD (\sA,\sB) \to \KK^\sC (\cS^*\sA, \cS^*\sB),
	 \]
	 which is an isomorphism if $\cS$ is a monoidal equivalence.
	\end{prop}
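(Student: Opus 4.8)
The plan is to define $\cS^*$ on the level of Kasparov bimodules by \emph{relabelling the module structure along $\cS$}, and to produce the inverse, when $\cS$ is a monoidal equivalence, from its inverse monoidal functor. I work in the $\sC$-module category picture of \cref{def:equivariant.KK.category} (the C*-algebra picture is handled identically through \cref{prop:Kasparov.category.algebra} and \cref{rem:monoidal.invariance}). Given a $\sD$-Kasparov $\sA$-$\sB$ bimodule $(\cE, F)$, with $\cE = (\cE_+, \cE_-)$ and module-functor coherences $\mathsf{v}^{\cE_\pm}_{X,\rho}$ for $\rho \in \Obj \sD$, set $\cS^*(\cE, F) := (\cS^*\cE, F)$, where $\cS^*\cE$ is the same pair of $\ast$-functors equipped, as in \cref{rem:monoidal.invariance}, with the coherences $\mathsf{v}^{\cS^*\cE_\pm}_{X,\pi} := \mathsf{v}^{\cE_\pm}_{X, \cS(\pi)}$ for $\pi \in \Obj \sC$, and $F$ unchanged. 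First one checks that each $\cS^*\cE_\pm$ is a $\sC$-module functor $\cS^*\sA \to \cS^*\sB$: its associativity constraint at $(X, \pi, \sigma)$ follows from the module-functor axiom for $\cE_\pm$ at $(X, \cS(\pi), \cS(\sigma))$, the naturality of $\mathsf{v}^{\cE_\pm}$ along $\mathsf{s}_{\pi,\sigma} \colon \cS(\pi) \otimes \cS(\sigma) \to \cS(\pi\otimes\sigma)$, and the hexagon of $(\cS, \mathsf{s})$ --- precisely the data entering the cocycles $\fu^{\cS^*\sA}$, $\fv^{\cS^*\sB}$ of \cref{rem:monoidal.invariance}. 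Conditions (1) and (2) of \cref{def:equivariant.KK.category} for $(\cS^*\cE, F)$ are immediate: (1) does not mention the module structure, and (2) at $\pi \in \Obj \sC$ is nothing but (2) for $(\cE, F)$ at $\cS(\pi) \in \Obj \sD$. So $\cS^*(\cE, F) \in \bfE^\sC(\cS^*\sA, \cS^*\sB)$; the same relabelling carries a unitary equivalence, resp.\ a homotopy over $\sB[0,1] = (\cS^*\sB)[0,1]$, to one of the pulled-back bimodules, so $\cS^*$ descends to a map $\KK^\sD(\sA,\sB) \to \KK^\sC(\cS^*\sA, \cS^*\sB)$, which is additive since it commutes with direct sums. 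Moreover, since condition (2) is merely re-indexed, $F \in F_1 \# F_2$ implies $F \in (\cS^*F_1) \# (\cS^*F_2)$, so $\cS^*$ preserves Kasparov products and the units $[\id, \mathbbm{1}]$; hence it is an additive functor $\Kas^\sD \to \Kas^\sC$.

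Now suppose $\cS$ is a monoidal equivalence, with inverse $(\cT, \mathsf{t}) \colon \sD \to \sC$ and monoidal natural isomorphisms $u \colon \id_\sC \to \cT \circ \cS$, $u' \colon \id_\sD \to \cS \circ \cT$. By \cref{rem:monoidal.invariance}, the module functors $\jmath_\sA := (\id_\sA, \{1 \otimes u'_\rho\}_{\rho \in \Obj \sD}) \colon \sA \to \cT^*\cS^*\sA$ are strictly invertible (inverse $(\id_\sA, \{1 \otimes (u'_\rho)^{-1}\})$) and natural in $\sA \in \Ccat^\sD$, and likewise $\iota_{\sA'} \colon \sA' \to \cS^*\cT^*\sA'$ for $\sA' \in \Ccat^\sC$; as proper invertible module functors they represent $\KK$-equivalences $[\jmath_\sA, 0]$ and $[\iota_{\sA'}, 0]$. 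I claim
\[
\cT^* \circ \cS^*(x) = [\jmath_\sB, 0] \circ x \circ [\jmath_\sA, 0]^{-1}, \qquad x \in \KK^\sD(\sA, \sB),
\]
and symmetrically $\cS^* \circ \cT^*(y) = [\iota_{\sB'}, 0] \circ y \circ [\iota_{\sA'}, 0]^{-1}$ for $y \in \KK^\sC(\sA', \sB')$. Granting these, $\cT^*\cS^*$ and $\cS^*\cT^*$ are bijections (composition with $\KK$-equivalences is bijective); taking $\sA' = \cS^*\sA$, $\sB' = \cS^*\sB$, bijectivity of $\cT^*\cS^*$ gives injectivity of $\cS^*$, and bijectivity of $\cS^*\cT^*$ gives injectivity of $\cT^* \colon \KK^\sC(\cS^*\sA, \cS^*\sB) \to \KK^\sD(\cT^*\cS^*\sA, \cT^*\cS^*\sB)$, whence $\cS^*$ is also surjective and the proposition follows. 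To prove the first identity, represent $x$ by $(\cE, F)$ and expand the right-hand side using the composition rules of the Example after \cref{def:connprod}, namely $(\cF, 0) \circ (\cG, G) = (\cF \circ \cG, \cF(G))$ and $(\cG, G) \circ (\cF, 0) = (\cG \circ \cF, G)$: since $\jmath_\bullet$ is the identity on the underlying C*-categories, both sides are represented by Kasparov bimodules with underlying pair of functors $\cE$ and operator family $F$, and checking that their module-functor coherences agree reduces exactly to the naturality of $\mathsf{v}^{\cE_\pm}$ along $u'_\rho \colon \rho \to \cS\cT(\rho)$ together with $\mathsf{v}^{\cT^*\cS^*\cE_\pm}_{X, \rho} = \mathsf{v}^{\cE_\pm}_{X, \cS\cT(\rho)}$. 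The second identity is symmetric.

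No step here is deep; the work is entirely in coherence bookkeeping, and the one place needing genuine care is the interaction of the monoidal structure constraint $\mathsf{s}$ with the module-functor hexagon (that $\cS^*\cE$ is a $\sC$-module functor), and dually the naturality computation behind $\cT^*\cS^*(x) = [\jmath_\sB, 0] \circ x \circ [\jmath_\sA, 0]^{-1}$. Everything else --- homotopy invariance, additivity, compatibility with the Kasparov product, and the two-sided-inverse argument --- is formal once the relabelling is in place. One could equally run the argument in the C*-algebra picture, at the cost of threading the explicit $\mathsf{s}$-twisted cocycles $(1 \otimes_\alpha \mathsf{s}_{\pi\otimes\sigma})\fu_{\cS(\pi),\cS(\sigma)}$ through \cref{defn:cocycle.hom} and \cref{lem:cocycle_range}.
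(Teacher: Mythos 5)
Your proof is correct and follows essentially the same route as the paper: pull back the coherences along $\cS$ as in \cref{rem:monoidal.invariance}, observe that the Kasparov-bimodule axioms are merely re-indexed, and, for a monoidal equivalence, identify $\cT^*\cS^*$ and $\cS^*\cT^*$ with conjugation by the invertible module functors coming from the unit isomorphisms. You are somewhat more explicit than the paper about the formal two-sided-inverse argument (the paper records only one of the two composite bijections), but this is elaboration of the same method, not a different one.
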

	\begin{proof}
	    Following \cref{rem:monoidal.invariance}, we define as $\cS^* (\cE_\pm,F):=((\cE_+ \circ \cS ,\mathsf{v}_{+,\cS(\blank)}), (\cE_- \circ \cS ,\mathsf{v}_{-,\cS(\blank)}), F)$. 
	    Then $\cS^*$ sends a $\sD$-Kasparov $\sA$-$\sB$ bimodule to a $\sC$-Kasparov $\cS^*\sA $-$\cS^*\sB$ bimodule. It induces a homomorphism between KK-groups since it clearly preserves the homotopy. Moreover, if there is a inverse monoidal functor $\cT$ by the natural unitary $u \colon \id \to \cT \circ \cS$, then 
	    \[(\cE_\pm ,F) \mapsto \big( (\id_\sB, \{1 \otimes_\beta u_\pi\}_\pi) \circ (\cE_\pm ,\mathsf{v}) \circ (\id_\sA,\{1 \otimes_\alpha u_\pi\}_\pi )^{-1} , F\big) \]
	    induces a bijection $\bfE^\sC(\sA,\sB) \cong \bfE ^\sC(\cS^*\cT^*\sA, \cS^*\cT^*\sB )$ preserving the homotopy, and hence induces the isomorphism of $\KK^\sC$-groups.
	\end{proof}
	This, together with \cref{thm:RepCQG.KK}, recovers \cite{voigtBaumConnesConjectureFree2011}*{Theorem 8.5} claiming that a monoidal equivalence of the representation categories of two compact quantum groups $G_1$ and $G_2$ induces a categorical equivalence $\Kas^{G_1} \simeq \Kas^{G_2}$.

		\section{Cuntz's picture and the categorical aspect of \texorpdfstring{$\KK^\sC$}{KKC}-theory} \label{section:Cuntz.category}
		
		\subsection{\texorpdfstring{$\sC$}{C}-equivariant specialization}\label{subsection:equiv.stab}
		In this and the next subsection, we discuss a reduction of $\sC$-Kasparov bimodules to the one such that $F$ is trivial, following Cuntz's quasihomomorphism picture of $\KK$-theory. 
		As the first step, we produce a $\sC$-equivariant version of Meyer's specialization trick, given in ~\cite{meyerEquivariantKasparovTheory2000}*{Section 3}, replacing a $G$-Kasparov bimodule to the one whose Fredholm operator is $G$-invariant.  
		Its key idea is to replace a $G$-Kasparov $A$-$B$ bimodule with a $A \otimes \cK(\ell^2(G))$-$B$ bimodule through the Morita equivalence, and then average the Fredholm operator. 
		
		\begin{df}
			Let $A$, $B$ be $\sC$-C*-algebras such that $A$ is realized by endomorphisms. A $\sC$-Kasparov $A$-$B$ bimodule $(E, \phi, \bbmv,F)$ is said to be \emph{special} if the following hold; 
			\begin{enumerate}
                \item $F=F^*$ and $\| F\| \leq 1$, 
				\item each $\bbmv_\pi$ extends to a unitary $\bv_\pi \colon E\to E \otimes_B \beta_\pi$, for $\pi\neq \mathbf{0}_\sC$, and
				\item $F = \bv_\pi^* (F \otimes 1_{\beta_\pi}) \bv_\pi$ for any $\pi \in \Obj \sC\setminus\{\mathbf{0}_\sC\}$. 
			\end{enumerate}
		\end{df}
		We remark that the condition (2) is automatically satisfied if $(\sfE, F)$ is essential.

		Consider the objects
		\begin{align}
			\cH'_\sC:= \bigoplus_{\sigma \in \Irr \sC} \sigma ^{\oplus \infty}, \quad \cH_\sC:=(\cH_\sC')^{\oplus \infty} \in \Obj \tilde{\sC}, \label{eq:universal_repn}
		\end{align}
        where $\tilde{\sC}$ denotes the separable envelope (\cref{rem:separable.envelope}).
        We also fix a family of unitaries 
        \[
		U'_\pi \colon \cH'_\sC \otimes \pi \to \cH'_\sC \qquad \text{for $\pi \in \Obj \sC \setminus \{\mathbf{0}_\sC\}$}
		\]
		by rearranging direct summands of the codomain of $\bigoplus_\sigma U_{\sigma,\pi}^{\oplus\infty}$ (in the way that $U_{\mathbf{1}}=1$), and set $U_\pi:={U'_\pi}^{\oplus\infty}\colon \cH_\sC \otimes \pi \to \cH_\sC$. This choice of $\cH_{\sC}$ and $U_\pi$, as a stabilization of $\cH_{\sC}'$ and $U_{\pi}'$, will be used later in the proof of \cref{thm:quasihom.replacement}. 
		
		For a $\sC$-C*-algebra $A$, the image $\alpha (\cH_\sC)$ of $\cH_\sC$ by the functor $\alpha$ makes sense as a full Hilbert $A$-bimodule. As 
		\begin{align}
			\bU_{A,\pi}:=\fu(\cH_\sC, \pi)^*\alpha(U_\pi)^*  \colon  \alpha(\cH_\sC) \to \alpha(\cH_\sC) \otimes \alpha_\pi
		\end{align}
		is a unitary, we get the endomorphism $\sC$-C*-algebra 
		\begin{align} 
			\kC A = ( \cK(\alpha(\cH_\sC)),  \alpha_\pi^{\rm k} , \fu_{\pi, \sigma}^{\rm k}  ):= (\cK(\alpha(\cH_\sC)), \alpha_\pi^{\alpha(\cH_\sC)}, \fu_{\pi,\sigma}^{\alpha(\cH_\sC)})_{\bU_{A}^*\otimes 1_{\alpha(\cH_\sC)^*}}, 
		\end{align}
		by a cocycle twist (\cref{ex:cocycle.twist}) of the $\sC$-C*-algebra defined as in \eqref{eqn:module.cocycle}. 
		That is, 
		\begin{align*}
			\alpha _\pi^{\rm k}  &:= \Ad (\bU_{A,\pi}^*) \circ \alpha^{\alpha(\cH_\sC)}_\pi  \colon \cK(\alpha(\cH_\sC)) \to \cK(\alpha(\cH_\sC)), \\
			\fu_{\pi, \sigma}^{\rm k} &:=\bU_{A,\pi \otimes \sigma}^* \cdot (1 \otimes \fu_{\pi, \sigma} ^{\alpha(\cH_\sC)}) \cdot (\bU_{A,\pi} \otimes 1_{\alpha_\sigma})\bU_{A,\sigma} \in \cU( \cK(\alpha (\cH_\sC))). 
		\end{align*}
		The proper $\sC$-Hilbert $\kC A$-$A$ bimodule 
		\begin{align} 
			\sfH_{\sC,A}:=(\id_{\cK(\alpha(\cH_\sC))} , \bU_{A}) \otimes_{\cK(\alpha(\cH_\sC))}  (\alpha(\cH_\sC), m, \mathbbm{1}) \cong (\alpha(\cH_\sC), m, \bU_{A}),\label{eqn:hilbert.bimodule.stabilize}
		\end{align}
		where $m$ denotes the multiplication, gives a $\sC$-Morita equivalence of $\kC A$ and $A$ (cf.\ \cref{subsubsection:Morita}). 
		
		Let $(A, \alpha , \fu)$ and $(B, \beta, \fv)$ be $\sigma$-unital $\sC$-C*-algebras, and let $(\sfE,F)=(E,\phi,\bbmv, F)$ be a $\sC$-Kasparov $A$-$B$ bimodule. 
		Let 
		\[
		\phi_{\cH_\sC} \colon \cK(\alpha(\cH_\sC)) \to \cL(E \otimes \beta(\cH_\sC))
		\]
		be the $\ast$-homomorphism as \cref{lem:condbimod} (more precisely, it is defined as the inductive limit of $\phi_{\pi}$ for subobjects $\pi \leq \cH_\sC$ contained in $\sC$), which is characterized by the equality 
		\[
		\phi_{\cH_\sC} (x) |_{E_{\rm es} \otimes \beta(\cH_\sC)}= \bbmv_{\cH_\sC} (x \otimes 1)\bbmv_{\cH_\sC}^*.
		\]
		\begin{lem}\label{lem:specialization}
			Let $A$, $B$, and $(\sfE, F)$ be as above. Then the quadruple
			\begin{align}
				(\sfE, F)^{\rm s}= (E^{\rm s}, \phi^{\rm s}, \bbmv^{\rm s}, F^{\rm s}) :=
				( E \hotimes_B \beta(\cH_\sC), \phi_{\cH_\sC} , 1_E \otimes \bU_{B,\pi} ,  F \otimes 1_{\beta(\cH_\sC)})   \label{eqn:Kasparov.stabilized}
			\end{align}
			is a special $\sC$-Kasparov $\kC A$-$B$ bimodule, whose essential part is unitary equivalent to the Kasparov product $\sfH_{\sC,A} \hotimes_A  (\sfE, F)$ up to locally compact perturbation. 
		\end{lem}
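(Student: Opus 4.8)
The strategy is to identify $(\sfE,F)^{\rm s}$, on its essential part, with the Kasparov product $\sfH_{\sC,A}\hotimes_A(\sfE,F)$ through the isometry $\bbmv_{\cH_\sC}$, reading off all the asserted properties from this identification together with the Kasparov conditions on $(\sfE,F)$. First I would invoke \cref{rem:normalization.F} to assume, after a locally compact perturbation (which changes neither the $\KK$-class nor the statement), that $F=F^*$ and $\|F\|\le 1$, so that condition~(1) of specialness holds. All the data attached to $\cH_\sC\in\Obj\tilde{\sC}$ arise as inductive limits over the subobjects $\pi\le\cH_\sC$ lying in $\sC$, so \cref{lem:condbimod} and \cref{lem:cocycle_range} extend: $\bbmv_{\cH_\sC}$ is a unitary $\alpha(\cH_\sC)\otimes_\phi E\xrightarrow{\ \sim\ }E_{\rm es}\otimes_B\beta(\cH_\sC)=E^{\rm s}_{\rm es}$, each $\bbmv_{\cH_\sC}T_\zeta$ is adjointable, $\phi_{\cH_\sC}(\zeta\otimes\eta^*)=(\bbmv_{\cH_\sC}T_\zeta)(\bbmv_{\cH_\sC}T_\eta)^*$ on $E^{\rm s}_{\rm es}$, and the Kasparov condition on $(\sfE,F)$ becomes the connection identity $(F\otimes 1_{\beta(\cH_\sC)})\bbmv_{\cH_\sC}T_\zeta\equiv(-1)^{|\zeta|}\bbmv_{\cH_\sC}T_\zeta F$ modulo $\cK$ for homogeneous $\zeta\in\alpha(\cH_\sC)$.

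Next I would check that $(E^{\rm s},\phi^{\rm s},\bbmv^{\rm s})$ is a $\bZ_2$-graded $\sC$-Hilbert $\kC A$-$B$ bimodule. By \cref{lem:condbimod}, $\phi^{\rm s}=\phi_{\cH_\sC}$ is a $\ast$-homomorphism $\kC A\to\cL(E^{\rm s})$ which along $\bbmv_{\cH_\sC}$ corresponds on $E^{\rm s}_{\rm es}$ to the canonical left $\kC A$-action $m\otimes_\phi 1_E$ of $\sfH_{\sC,A}\otimes_A\sfE$. The core computation is the identity
\[
(\bbmv_{\cH_\sC}\otimes 1_{\beta_\pi})\bigl((1_{\alpha(\cH_\sC)}\otimes_A\bbmv_\pi)(\bU_{A,\pi}\otimes_\phi 1_E)\bigr)=(1_E\otimes\bU_{B,\pi})\,\bbmv_{\cH_\sC},
\]
asserting that $\bbmv_{\cH_\sC}$ intertwines $\bbmv^{\rm s}_\pi=1_E\otimes\bU_{B,\pi}$ with the cocycle \eqref{eqn:tensor.C-bimodule} of $\sfH_{\sC,A}\otimes_A\sfE$. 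I would prove it by a diagram chase: substitute $\bU_{A,\pi}=\fu(\cH_\sC,\pi)^*\alpha(U_\pi)^*$; use the second cocycle diagram of $\sfE$ at $(\cH_\sC,\pi)$ to rewrite $(\bbmv_{\cH_\sC}\otimes 1_{\beta_\pi})(1_{\alpha(\cH_\sC)}\otimes\bbmv_\pi)(\fu(\cH_\sC,\pi)^*\otimes 1_E)$ as $(1_E\otimes\fv(\cH_\sC,\pi)^*)\bbmv_{\cH_\sC\otimes\pi}$; then apply the naturality diagram of $\sfE$ to the morphism $U_\pi\colon\cH_\sC\otimes\pi\to\cH_\sC$ to rewrite $\bbmv_{\cH_\sC\otimes\pi}(\alpha(U_\pi)^*\otimes_\phi 1_E)$ as $(1_E\otimes\beta(U_\pi)^*)\bbmv_{\cH_\sC}$; the composite $(1_E\otimes\fv(\cH_\sC,\pi)^*)(1_E\otimes\beta(U_\pi)^*)=1_E\otimes\bU_{B,\pi}$ is the right-hand side. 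Granting this, $\bbmv_{\cH_\sC}$ is an isomorphism of the $\sC$-Hilbert bimodule $(E^{\rm s}_{\rm es},\phi^{\rm s}|_{E^{\rm s}_{\rm es}},\bbmv^{\rm s})$ onto $\sfH_{\sC,A}\otimes_A\sfE$; since the diagrams of \cref{defn:cocycle.hom} only involve the images $\Im\bbmv^{\rm s}_\pi\subseteq E^{\rm s}_{\rm es}$, they hold for $(E^{\rm s},\phi^{\rm s},\bbmv^{\rm s})$ as well.

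For the operator $F^{\rm s}=F\otimes 1_{\beta(\cH_\sC)}$: condition~(2) of specialness holds since $1_E\otimes\bU_{B,\pi}$ is a unitary on $E^{\rm s}$ restricting to $\bbmv^{\rm s}_\pi$, and condition~(3) holds since $F^{\rm s}$ and $1_E\otimes\bU_{B,\pi}$ act on disjoint tensor legs, whence $(F^{\rm s}\otimes 1_{\beta_\pi})(1_E\otimes\bU_{B,\pi})=(1_E\otimes\bU_{B,\pi})F^{\rm s}$. As $F^{\rm s}$ is odd, self-adjoint and contractive, $\phi^{\rm s}(\kC A)(F^{\rm s}-F^{\rm s*})=0$; writing elements of $\kC A=\cK(\alpha(\cH_\sC))$ as limits of finite sums $\sum\zeta_i\otimes\eta_i^*$ and using the connection identity and adjointability from the first paragraph gives $[\phi^{\rm s}(\kC A),F^{\rm s}]\subset\cK(E^{\rm s})$, while a Cohen factorization $\zeta=\zeta'a$ together with $\phi(A)(F^2-1)\subset\cK(E)$ gives $\phi^{\rm s}(\kC A)((F^{\rm s})^2-1)\subset\cK(E^{\rm s})$. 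Finally, for $\pi\in\Obj\sC$ and $\xi\in\alpha^{\rm k}_\pi=\kC A$, the map $T_\xi\colon E^{\rm s}\to\alpha^{\rm k}_\pi\otimes_{\kC A}E^{\rm s}=E^{\rm s}_{\rm es}$ is, under the canonical identification, simply $\phi^{\rm s}(\xi)$, so $\bbmv^{\rm s}_\pi T_\xi=(1_E\otimes\bU_{B,\pi})\phi^{\rm s}(\xi)$ and, by condition~(3),
\[
[F^{\rm s},\bbmv^{\rm s}_\pi T_\xi]=(1_E\otimes\bU_{B,\pi})\,[F^{\rm s},\phi^{\rm s}(\xi)]\in\cK(E^{\rm s},E^{\rm s}\otimes_B\beta_\pi),
\]
so $(\sfE,F)^{\rm s}\in\bfE^\sC(\kC A,B)$ is special. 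For the last claim, by the previous paragraph $\bbmv_{\cH_\sC}$ identifies the $\sC$-Hilbert $\kC A$-$B$ bimodules $\sfH_{\sC,A}\otimes_A\sfE$ and $\sfE^{\rm s}_{\rm es}$ and carries $G:=\bbmv_{\cH_\sC}^*(F\otimes 1_{\beta(\cH_\sC)})\bbmv_{\cH_\sC}$ on the former to the operator $\bbmv_{\cH_\sC}G\bbmv_{\cH_\sC}^*$ on $E^{\rm s}_{\rm es}$, which by $[\phi^{\rm s}(\kC A),F^{\rm s}]\subset\cK$ is a connection, hence a legitimate operator for $(\sfE,F)^{\rm s}_{\rm es}=(\id_{\kC A},\mathbbm{1})\hotimes_{\kC A}(\sfE,F)^{\rm s}$; meanwhile the connection identity shows $G$ is an odd self-adjoint $F$-connection on the \emph{proper} bimodule $\sfH_{\sC,A}\otimes_A\sfE$, so by the remark following \cref{lem:Kasprodbimod} one has $G\in(\sfH_{\sC,A},0)\#(\sfE,F)$, i.e. $(\sfH_{\sC,A}\otimes_A\sfE,G)$ represents $\sfH_{\sC,A}\hotimes_A(\sfE,F)$. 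As any two choices of operator in these constructions differ by a locally compact perturbation, $(\sfE,F)^{\rm s}_{\rm es}$ is unitarily equivalent, up to such a perturbation, to that Kasparov product. I expect the main obstacle to be the diagram chase of the second paragraph, together with the systematic handling of the $\cH_\sC$-indexed inductive limits and of the possibly non-adjointable isometric maps, for which \cref{lem:cocycle_range} is the decisive tool.
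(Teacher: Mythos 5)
Your overall route coincides with the paper's: normalize $F$, establish the key cocycle identity $\bbmv_{\cH_\sC}\,\bbmv_\pi(\bU_{A,\pi}\otimes 1_E)=(1_E\otimes\bU_{B,\pi})\bbmv_{\cH_\sC}$ by exactly the diagram chase you describe (second cocycle diagram at $(\cH_\sC,\pi)$, then naturality at $U_\pi$), deduce specialness from the $B$-bilinearity of $\bU_{B,\pi}$, and identify the essential part with $\sfH_{\sC,A}\hotimes_A(\sfE,F)$ via \cref{lem:KK.unit} and the remark after \cref{lem:Kasprodbimod}. Your explicit verification of the Kasparov-bimodule conditions for $(\sfE,F)^{\rm s}$ is in fact more detailed than the paper's and is correct.

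The one genuine problem is the operator $G:=\bbmv_{\cH_\sC}^*(F\otimes 1_{\beta(\cH_\sC)})\bbmv_{\cH_\sC}$ in your last paragraph. The isometric map $\bbmv_{\cH_\sC}$ is a unitary onto $E_{\rm es}\otimes_B\beta(\cH_\sC)$, but as a map into $E\otimes_B\beta(\cH_\sC)$ it has no adjoint unless $E_{\rm es}\otimes\beta(\cH_\sC)$ is complemented, which fails for a general non-essential left action; moreover $F\otimes 1$ need not preserve $E_{\rm es}\otimes\beta(\cH_\sC)$, since $[F,\phi(a)]\in\cK(E)$ does not give $F\,\overline{\phi(A)E}\subset\overline{\phi(A)E}$. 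So $G$ is not well defined --- this is precisely the non-adjointability issue you name as an expected obstacle, and it is why the paper only ever composes $\bbmv_\pi$ with $T_\xi$. The repair is what the paper does: take an \emph{arbitrary} odd self-adjoint $F$-connection $F'$ on $\sfH_{\sC,A}\hotimes_A E$ (it exists by the standard theory, and lies in $(\sfH_{\sC,A},0)\#(\sfE,F)$ by the remark after \cref{lem:Kasprodbimod}), transport it by the genuine unitary $\bbmv_{\cH_\sC}\colon\alpha(\cH_\sC)\otimes_A E\to E_{\rm es}\otimes\beta(\cH_\sC)=(E^{\rm s})_{\rm es}$, and use $\phi_{\cH_\sC}(\xi\otimes\eta^*)=\bbmv_{\cH_\sC}T_\xi(\bbmv_{\cH_\sC}T_\eta)^*$ together with $[F,\bbmv_\pi T_\eta]\in\cK$ to check that $\bbmv_{\cH_\sC}F'\bbmv_{\cH_\sC}^*$ is an $F^{\rm s}$-connection on $(E^{\rm s})_{\rm es}$, which is all that \cref{lem:KK.unit} requires. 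With that substitution your argument closes.
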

		\begin{proof}
			Let $F' \in \cL(\sfH_{\sC,A} \hotimes _A E)$ be an odd self-adjoint $F$-connection.
			The Kasparov product $\sfH_A \hotimes_A (\sfE,F)$ is unitary equivalent to 
			\[ 
			(E_{\rm es} \hotimes \beta (\cH_\sC), \Ad(\bbmv_{\cH_\sC}) \circ (m \otimes_A 1_E), \bbmv_{\cH_\sC} \bbmv_\pi (\bU_{A,\pi} \otimes 1_E) \bbmv_{\cH_\sC}^* , \bbmv_{\cH_\sC} F'\bbmv_{\cH_\sC}^* )
			\]
			by the unitary $\bbmv_{\cH_\sC} \colon \alpha(\cH_\sC) \otimes_A E \to E_{\rm es} \otimes \beta(\cH_\sC)$. The cocycle relations, i.e., the diagrams in \cref{def:action.of.tensor.category.explicit} and \cref{defn:cocycle.hom}, show that
			\begin{align*}
				\bbmv_{\cH_\sC} \bbmv_\pi (\bU_{A,\pi} \otimes 1_E) 
				&=\bbmv_{\cH_\sC} \bbmv_\pi (\fu(\cH_\sC,\pi)^* \otimes 1_E)(  \alpha(U_\pi)^* \otimes 1_E) \\
				&=(1_E \otimes \fv(\cH_\sC, \pi)^*) \bbmv_{\cH_\sC \otimes \pi} (\alpha(U_\pi)^* \otimes 1_E) \\
				&= (1_E \otimes (\fv(\cH_\sC, \pi)^* \beta(U_\pi)^*)) \bbmv_{\cH_\sC} =(1_E \otimes \bU_{B,\pi}) \bbmv_{\cH_\sC} . 
			\end{align*}
			This shows that $\sfH_{\sC,A} \hotimes \sfE$ coincides with the essential part of $(E \hotimes _B \beta(\cH_\sC), \phi_{\cH_\sC}, 1_E \otimes \bU_{B})$. 
			
			Since $\bU_{B,\pi}$ is $B$-bilinear, $\Ad(1_E\otimes\bU_{B,\pi}^*)(F\otimes1_{\beta(\cH_\sC)\otimes\beta_\pi})=F\otimes1_{\beta(\cH_\sC)}$. 
			Finally, we show that $\bbmv_{\cH_\sC} F'\bbmv_{\cH_\sC}^*$ is an $(F \otimes 1_{\beta(\cH_\sC)})$-connection on 
			\[ \cK(\alpha(\cH_\sC)) \otimes (E \otimes \beta(\cH_\sC )) =(E \otimes \beta(\cH_\sC))_{\rm es} \cong  E_{\rm es} \otimes \beta(\cH_\sC),\]
			which will finish the proof by \cref{lem:KK.unit}. 
            For any subobject $\pi \leq \cH_\sC$ lying in $\Obj \sC$ and $\xi,\eta \in \alpha_\pi $, we have 
				\begin{align*}
					& \bbmv_{\cH_\sC} F' \bbmv_{\cH_\sC}^*\phi_{\cH_\sC}(\xi \otimes \eta^*)  -\phi_{\cH_\sC}(\xi \otimes \eta^*) (F \otimes 1_{\beta(\cH_\sC)}) \\
					= & \bbmv_{\cH_\sC} F'T_\xi (\bbmv_{\cH_\sC}T_\eta)^*  -\bbmv_{\cH_\sC} T_\xi (\bbmv_{\pi}T_\eta)^* (F \otimes 1_{\beta_\pi}) \\
					\equiv  & \bbmv_{\cH_\sC} T_\xi F (\bbmv_{\cH_\sC}T_\eta)^* -\bbmv_{\cH_\sC} T_\xi F (\bbmv_{\pi}T_\eta)^* = 0
				\end{align*}
	        modulo $\cK(E \otimes \beta(\cH_\sC), E_{\rm es} \otimes \beta(\cH_\sC))$. 
		\end{proof}

		\begin{prop}\label{prop:special.perturbation}
			Any essential $\sC$-Kasparov $\kC A$-$B$ bimodule $(\sfE , F)$ is a locally compact perturbation of a special one. 
		\end{prop}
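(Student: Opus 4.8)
The plan is to reduce $(\sfE,F)$ to an $A$-$B$ bimodule via the $\sC$-Morita equivalence $\sfH_{\sC,A}$ between $\kC A$ and $A$, apply the specialization of \cref{lem:specialization}, and transport the result back. First I would normalize: by \cref{rem:normalization.F} we may assume $F=F^{*}$ and $\|F\|\leq 1$ after a locally compact perturbation, so it remains only to arrange condition (3) of being special (condition (1) then holds, and (2) holds automatically since $\sfE$ is essential). Next, form the $\sC$-Kasparov $A$-$B$ bimodule $(\sfE_{0},F_{0}):=\overline{\sfH_{\sC,A}}\hotimes_{\kC A}(\sfE,F)$, where $\overline{\sfH_{\sC,A}}$ is the conjugate of the $\sC$-imprimitivity bimodule of \eqref{eqn:hilbert.bimodule.stabilize} and $F_{0}$ is a self-adjoint $F$-connection (which may be taken a contraction). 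Because $\overline{\sfH_{\sC,A}}$ is an $A$-$\kC A$ imprimitivity bimodule, its interior tensor product with the essential left $\kC A$-module $E$ is essential, so the underlying Hilbert $A$-$B$ bimodule of $\sfE_{0}$ has essential left $A$-action.

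Now apply \cref{lem:specialization} to $(\sfE_{0},F_{0})$: the quadruple $(\sfE_{0},F_{0})^{\rm s}$ is a \emph{special} $\sC$-Kasparov $\kC A$-$B$ bimodule. I claim it is moreover essential. Its left $\kC A$-action is $\phi_{0,\cH_\sC}$, and since the left $A$-action on $E_{0}$ is essential, \cref{lem:condbimod} shows that the isometry $\bbmv_{0,\cH_\sC}$ is a unitary onto $E_{0}\hotimes_{B}\beta(\cH_\sC)$; using fullness of $\alpha(\cH_\sC)$ and essentiality of $E_{0}$ one then checks $\overline{\phi_{0,\cH_\sC}(\cK(\alpha(\cH_\sC)))(E_{0}\hotimes_{B}\beta(\cH_\sC))}=E_{0}\hotimes_{B}\beta(\cH_\sC)$. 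Hence $(\sfE_{0},F_{0})^{\rm s}$ coincides with its own essential part, and \cref{lem:specialization} gives that $(\sfE_{0},F_{0})^{\rm s}$ is unitarily equivalent, up to a locally compact perturbation, to the Kasparov product $\sfH_{\sC,A}\hotimes_{A}(\sfE_{0},F_{0})=\sfH_{\sC,A}\hotimes_{A}\overline{\sfH_{\sC,A}}\hotimes_{\kC A}(\sfE,F)$. Since $\sfH_{\sC,A}$ is a $\sC$-imprimitivity bimodule, $\sfH_{\sC,A}\hotimes_{A}\overline{\sfH_{\sC,A}}$ is the identity $\kC A$-$\kC A$ bimodule, so this equals $(\id_{\kC A},\mathbbm{1})\hotimes_{\kC A}(\sfE,F)=(\sfE,F_{\rm es})$ for a self-adjoint $F$-connection $F_{\rm es}$ on $E$ (using \cref{lem:KK.unit}); and since $\sfE$ is essential, both $F$ and $F_{\rm es}$ commute with $\phi(\kC A)$ modulo $\cK(E)$ and are $\sC$-Kasparov operators, so $F_{\rm es}-F$ lies in $\fC^\sC(\sfE)$, i.e. is locally compact.

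Finally, transport the special structure of $(\sfE_{0},F_{0})^{\rm s}$ along the unitary equivalence of the previous paragraph. On an essential bimodule $\bbmv_\pi$ is defined on the whole module and is its own unique unitary cocycle, so by the computation in \eqref{eqn:intertwiner.unitary} a unitary intertwiner carries a special bimodule to a special bimodule; transporting, we obtain a special $\sC$-Kasparov bimodule with underlying $\sC$-Hilbert bimodule $\sfE$ whose Fredholm operator is $F_{\rm es}+K$ for some locally compact $K$, hence a locally compact perturbation of $F$. This $(\sfE,F_{\rm es}+K)$ is the required special bimodule.

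I expect the most delicate point to be the claim that $(\sfE_{0},F_{0})^{\rm s}$ is essential — equivalently, that specialization sends a bimodule with essential left action to an essential one — which rests precisely on \cref{lem:condbimod} (to identify $\bbmv_{0,\cH_\sC}$ as a unitary) and on $\sfH_{\sC,A}$ being an imprimitivity bimodule; everything else is bookkeeping, with the ``up to homotopy'' ambiguities in the Kasparov products collapsing to genuine locally compact perturbations of $F$ exactly because $\sfE$ is essential. (Alternatively, one can skip the essentiality claim and instead verify directly that the essential part of any special bimodule is again special, in which case one need not track essentiality of $E_{0}$; a third route, closer to Meyer's argument, is to decompose $E\cong\alpha(\cH_\sC)\otimes_{A}E_{0}$ and use $[F,\phi(\kC A)]\subset\cK(E)$ together with the commutant relation for $\cK(\alpha(\cH_\sC))$ to see that $F$ equals $1_{\alpha(\cH_\sC)}\otimes F_{0}$ modulo $\cK(E)$.)
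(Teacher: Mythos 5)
Your proof is correct and follows essentially the same route as the paper: tensor with the conjugate imprimitivity bimodule $\sfH_{\sC,A}^*$ to obtain an $A$-$B$ bimodule, apply \cref{lem:specialization}, and use the Morita equivalence $\sfH_{\sC,A}\hotimes_A\sfH_{\sC,A}^*\simeq \id_{\kC A}$ to identify the resulting special bimodule with $(\sfE,F)$ up to locally compact perturbation. Your explicit verification that the specialization of an essential bimodule is again essential (via \cref{lem:condbimod}) and that unitary equivalence preserves specialness fills in details the paper leaves implicit.
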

		\begin{proof}
			Apply \cref{lem:specialization} to $(\sfE_0, F_0) := \sfH_{\sC,A} ^* \hotimes_{\kC A} (\sfE, F)$, where the $F$-connection $F_0$ is chosen to be an odd self-adjoint contraction. We get a special $\sC$-Kasparov $\kC A$-$B$ bimodule $(\sfE_0^{\rm s}, F_0^{\rm s})$ that is unitary equivalent to $\sfH_{\sC,A} \hotimes_A \sfH_{\sC ,A}^* \hotimes_{\kC A} (\sfE , F) \simeq (\sfE , F)$ up to locally compact perturbation. 
		\end{proof}
        We also remark that such a special perturbation is unique up to operator homotopy, since the set of operators $F$ making $(\sfE,F)$ special is a convex subset of $\cL(E)$.

		\begin{prop}\label{cor:specialization}
			Let $(\phi,\bbmv)\colon A\to B$ be a cocycle $\sC$-$*$-homomorphism of $\sigma$-unital $\sC$-C*-algebras. 
			Then $\kC(\phi,\bbmv) := (\phi_{\cH_\sC}, \mathbbm{1}) \colon \kC A\to \kC B$ is a $\sC$-$*$-homomorphism such that the $\sC$-Hilbert $\kC A $-$B$ bimodule $(\beta(\cH_\sC), \phi_{\cH_\sC} , \mathbbm{1}) $ satisfies
			\[ (\beta(\cH_\sC), \phi_{\cH_\sC} , \mathbbm{1})_{\rm es} \cong \sfH_{\sC,A}\otimes_A(B,\phi,\bbmv). \] 
		\end{prop}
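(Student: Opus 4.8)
The plan is to derive both halves of the statement from \cref{lem:specialization}, specialized to the operator-free $\sC$-Kasparov bimodule $(B,\phi,\bbmv,0)$, together with the compactness furnished by \cref{lem:cocycle_range}~(2). So I would first apply \cref{lem:specialization} with $(\sfE,F)=(B,\phi,\bbmv,0)$: the operator part of the conclusion becomes vacuous, and the lemma produces a unitary equivalence of $\sC$-Hilbert $\kC A$-$B$ bimodules
$\sfH_{\sC,A}\otimes_A(B,\phi,\bbmv)\cong(B\otimes_B\beta(\cH_\sC),\phi_{\cH_\sC},1_B\otimes\bU_{B,\pi})_{\rm es}=(\beta(\cH_\sC),\phi_{\cH_\sC},\bU_B)_{\rm es}$,
where $\phi_{\cH_\sC}$ is exactly the $\ast$-homomorphism of \cref{lem:condbimod} in the case $E=B$. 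What remains for the bimodule identification is to see that $(\beta(\cH_\sC),\phi_{\cH_\sC},\bU_B)$ is what the statement denotes $(\beta(\cH_\sC),\phi_{\cH_\sC},\mathbbm 1)$, i.e.\ that the cocycle $\bU_B=\{\bU_{B,\pi}\colon\beta(\cH_\sC)\to\beta(\cH_\sC)\otimes_B\beta_\pi\}$ is trivial relative to the endomorphism $\sC$-C*-algebra structure $(\beta^{\rm k},\fv^{\rm k})$ of $\kC B$. This is built into the construction of $\kC B$: by \eqref{eqn:module.cocycle} and \cref{ex:cocycle.twist}, $(\beta^{\rm k},\fv^{\rm k})$ is precisely the cocycle twist of the module-cocycle structure on $\cK(\beta(\cH_\sC))$ by the family $\bU_B^{*}\otimes 1_{\beta(\cH_\sC)^{*}}$, so the $\bU_{B,\pi}$ are exactly the unitaries trivializing the twisted cocycles, while the left $\kC A$-action $\phi_{\cH_\sC}$ is untouched by the twist.

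For the first assertion I would check separately that $\phi_{\cH_\sC}$ lands in $\kC B=\cK(\beta(\cH_\sC))$: it is a $\ast$-homomorphism by \cref{lem:condbimod}, and since the left $A$-action on $B$ through $\phi$ is by compacts, $T_\xi=\xi\otimes_\phi 1_B$ is compact and hence so is $\bbmv_\pi T_\xi\in\cK_B(B,\beta_\pi)$ ($\bbmv_\pi$ being $B$-linear), whence $\phi_\pi(\xi\otimes\eta^{*})=(\bbmv_\pi T_\xi)(\bbmv_\pi T_\eta)^{*}\in\cK_B(\beta_\pi)$; this also drops out of \cref{lem:specialization} with $F=0$, which already asserts $\phi_{\cH_\sC}(\kC A)\subset\cK(\beta(\cH_\sC))$. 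The compatibility of $(\phi_{\cH_\sC},\mathbbm 1)$ with the $\sC$-structures and the triviality of its cocycle --- that is, the relations \eqref{eqn:cocycle.hom.endo} for the endomorphism $\sC$-C*-algebras $\kC A$, $\kC B$ --- I would then read off either by transporting the $\sC$-Hilbert $\kC A$-$B$ bimodule just obtained back across the $\sC$-Morita equivalence $\sfH_{\sC,B}$ of \eqref{eqn:hilbert.bimodule.stabilize} (so that $(\beta(\cH_\sC),\phi_{\cH_\sC},\mathbbm 1)_{\rm es}$ becomes the essential part of $(\kC B,\phi_{\cH_\sC},\mathbbm 1)\otimes_{\kC B}\sfH_{\sC,B}$), or directly, since $\beta^{\rm k}_\pi\circ\phi_{\cH_\sC}=\phi_{\cH_\sC}\circ\alpha^{\rm k}_\pi$ and $\phi_{\cH_\sC}(\fu^{\rm k}_{\pi,\sigma})=\fv^{\rm k}_{\pi,\sigma}$ unwind, via $\alpha^{\rm k}_\pi=\Ad(\bU_{A,\pi}^{*})\circ\alpha^{\alpha(\cH_\sC)}_\pi$ and $\beta^{\rm k}_\pi=\Ad(\bU_{B,\pi}^{*})\circ\beta^{\beta(\cH_\sC)}_\pi$, to the very identity $\bbmv_{\cH_\sC}\bbmv_\pi(\bU_{A,\pi}\otimes 1_E)=(1_E\otimes\bU_{B,\pi})\bbmv_{\cH_\sC}$ already established in the proof of \cref{lem:specialization} (read with $E=B$). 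Thus the two assertions are two aspects of the same computation.

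The hard part will be purely the bookkeeping: keeping straight the several cocycle structures attached to $\cK(\beta(\cH_\sC))$ --- the module-cocycle structure of \eqref{eqn:module.cocycle}, its $\bU_B$-twist $(\beta^{\rm k},\fv^{\rm k})$ defining $\kC B$, and the induced cocycle on the Hilbert module $\beta(\cH_\sC)$ --- together with the translation between the $\sC$-Hilbert $\kC A$-$B$ bimodule picture that \cref{lem:specialization} outputs and the cocycle $\sC$-$*$-homomorphism ($\sC$-Hilbert $\kC A$-$\kC B$ bimodule) picture the statement asks for, which is mediated by $\sfH_{\sC,B}$. There is no analytic content beyond \cref{lem:specialization} and \cref{lem:cocycle_range}.
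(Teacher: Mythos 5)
Your proposal is correct and takes essentially the same route as the paper: apply \cref{lem:specialization} to $(B,\phi,\bbmv,0)$ to obtain the unitary cocycle $\sC$-$\ast$-homomorphism $(\phi_{\cH_\sC},\bU_B)\colon \kC A\to\cK(\beta(\cH_\sC))$, then observe that composing with the cocycle twist $(\id,\bU_B^*)$ defining $\kC B$ trivializes the cocycle, the second assertion being the special case $F=0$ of the same lemma. Your extra remarks on properness of $\phi_{\cH_\sC}$ and the unwinding of the relations via $\bbmv_{\cH_\sC}\bbmv_\pi(\bU_{A,\pi}\otimes 1)=(1\otimes\bU_{B,\pi})\bbmv_{\cH_\sC}$ only make explicit what the paper's one-line composition implicitly uses.
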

		\begin{proof}
		By applying \cref{lem:specialization} to $(B,\phi,\bbmv,0)$, we obtain that $(\beta(\cH_\sC), \phi_{\cH_\sC} , \bU_{B,\pi})$ is a unitary $\sC$-Hilbert $\kC A$-$B$ bimodule, in other words, $(\phi_{\cH_\sC}, \bU_B)$ is a unitary cocycle $\sC$-$\ast$-homomorphism from $\kC A$ to $\cK(\beta(\cH_\sC))$. 
		Therefore, $(\phi_{\cH_\sC}, \mathbbm{1}) = (\id , \bU_B^*) \circ (\phi_{\cH_\sC}, \bU_B)$ is as well a unitary cocycle $\sC$-$\ast$-homomorphism from $\kC A$ to $\kC B$. 
		
		The latter claim is a special case of \cref{lem:specialization} for $(\sfE, F):=(B,\phi,\bbmv,0)$. 
		\end{proof}

\subsection{\texorpdfstring{$\sC$}{C}-quasihomomorphisms}
Hereafter, we write as $\sH_B:=\sH \otimes_\bC B$, where $\sH \cong \ell^2(\bZ)$ denotes the separable infinite dimensional Hilbert space.
\begin{df}
	Let $(A, \alpha, \fu)$ and $(B, \beta, \fv)$ be $\sC$-C*-algebras such that $A$ is realized by endomorphisms. 
	\begin{enumerate}
	    \item If $(\sH_B,\phi,\bv)$ forms a unitary $\sC$-Hilbert $A$-$B$ bimodule (in the sense of \cref{defn:trivial.cocycle} (1)), we call a pair $(\phi,\bv)$ a \emph{$\sC$-$\ast$-representation} of $A$ on $\sH_B$. 
	    \item A \emph{$\sC$-quasihomomorphism} from $A$ to $B$ consists of a triple $(\phi_+, \phi_-, \bv)$, or $(\phi_\pm, \bv)$ in short, if both $(\phi_+,\bv)$ and $(\phi_-,\bv)$ are $\sC$-$\ast$-representations of $A$ on $\sH_B$ and $\phi_+(a)-\phi_-(a) \in \cK(\sH_B)$ for any $a \in A$. 
	    We write $\qHom ^\sC(A,B)$ for the set of $\sC$-quasihomomorphisms from $A$ to $B$. 
	\end{enumerate}
	Two $\sC$-$\ast$-representations $(\phi_i,\bv_i)$ (resp.~$(\phi_{i,\pm},\bv_i) \in \qHom^\sC(A,B)$) are said to be homotopic if there is a $\sC$-$\ast$-representation $(\tilde{\phi},\tilde{\bv})$ of $A$ onto $\sH_{B[0,1]}$ (resp.~$(\tilde{\phi}_\pm,\tilde{\bv}) \in \qHom^\sC(A,B[0,1])$) whose evaluations at $t = i$ are identical with $(\phi_i,\bv_i)$ (resp.~$(\phi_{i,\pm},\bv_i)$) for $i=0,1$.
\end{df}

		\begin{rem}
			Let $(\phi_\pm, \bv )$ be a $\sC$-quasihomomorphism from $A$ to $B$ and let $(\psi ,\bw )$ be a unitary cocycle $\sC$-$\ast$-homomorphism from $B$ to $D$. Assume that $\psi$ is essential. Then the composition
			\[(\phi_\pm, \bv) \otimes_B (\psi, \bw) = (\psi,\bw) \circ (\phi_\pm, \bv):=( \phi_\pm \otimes_\psi 1_D,  (1\otimes_B\bw)(\bv\otimes_\psi 1_D ) )\]
			is also a cocycle $\sC$-quasihomomorphism. Similarly, for a unitary cocycle $\sC$-$\ast$-homomorphism $(\psi, \bw) $ from $D$ to $A$, the composition $(\psi,\bw) \otimes_A (\phi_\pm, \bv) $ is also a $\sC$-quasihomomorphism if $\phi_\pm$ are essential. 
		\end{rem}

		To a $\sC$-quasihomomorphism $(\phi_\pm, \bv)$, the special $\sC$-Kasparov $A$-$B$ bimodule 
		\begin{align}
			\Phi (\phi_\pm, \bv):=\bigg( \sH_B \oplus \sH_B^{\op}, \phi_+ \oplus \phi_-, \begin{pmatrix}\bv & 0 \\ 0 & \bv \end{pmatrix}, \begin{pmatrix}0 & 1 \\ 1 & 0 \end{pmatrix} \bigg)   \label{eq:KK_quasihom}
		\end{align}
		is associated. Since a homotopy of $\sC$-quasihomomorphisms is mapped to that of the associated $\sC$-Kasparov bimodules, this $\Phi $ gives a map 
		\[\Phi  \colon \qHom ^\sC (A,B)/\sim_{h} \, \to \KK^\sC(A,B), \]
        where $\sim_h$ denotes the homotopy equivalence relation. 
		
\begin{lem}\label{lem:quasihom_degeneration}
	Let $A$, $B$ be $\sC$-C*-algebras such that $A$ is realized by endomorphisms. 
	\begin{enumerate}
		\item Two $\sC$-$\ast$-representations $(\phi_0,\bv_0)$ and $(\phi_1, \bv_1)$ of $A$ on $\sH_B$ are homotopic if the essential part of $(\sH_B, \phi_0,\bv_0)$ and $(\sH_B, \phi_1,\bv_1)$ are equivalent. Moreover, if $B$ is also realized by endomorphisms and $\bv_0$, $\bv_1$ are trivial, then the homotopy is chosen to be with trivial cocycle.
		\item Two $\sC$-quasihomomorphisms $(\phi_{0,\pm},\bv_0)$ and $(\phi_{1,\pm},\bv_1)$ are homotopic if the essential part of $\Phi (\phi_{0,\pm},\bv_0)$ and $\Phi (\phi_{1,\pm},\bv_1)$ are unitarily equivalent up to locally compact perturbation.
	\end{enumerate}
\end{lem}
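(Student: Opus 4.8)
This is the $\sC$-equivariant counterpart of the classical facts behind Cuntz's quasihomomorphism picture (cf.\ \cite{cuntzNewLookKK1987}, \cite{meyerEquivariantKasparovTheory2000}*{Section 3}), and the plan is to import the classical mechanisms — Eilenberg swindles and rotation homotopies — while carrying the unitary cocycles $\bv_\pi$ along at every step. Three elementary observations make this possible: (a) $\sC$-$\ast$-representations on $\sH_B$ are closed under countable $\ell^2$-direct sums, the cocycle being the direct sum of the cocycles; (b) a norm-continuous path $(U_t)_t$ of unitaries of a $\sC$-Hilbert $A$-$B$ bimodule $(E,\phi,\bv)$ yields, via \eqref{eqn:intertwiner.unitary}, a homotopy between $(\phi,\bv)$ and $(\Ad(U_1)\circ\phi,\,(U_1\otimes 1)\bv(1\otimes U_1^*))$, so the cocycle may be rotated along with the representation; (c) $\sH_B$ admits a degenerate $\sC$-$\ast$-representation (zero $A$-action), coming from the $\sC$-C*-algebra structure of $B$ together with the Hilbert module identifications $\sH_B\cong\ell^2(\bZ)\otimes\beta_\pi$ furnished by Kasparov stabilisation, and — using (a) — any degenerate $\sC$-$\ast$-representation of $A$ on $\sH_B$ becomes $\sC$-equivariantly isomorphic to a standard one after adding infinitely many copies of it.

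\emph{Part (1).} I would fix, for an essential $\sC$-Hilbert $A$-$B$ bimodule $\sfE$, a standard model $\mathrm{St}(\sfE)$ by embedding (Kasparov stabilisation) the underlying module of $\sfE$ as a complemented submodule of $\sH_B$ with complement $\cong\sH_B$, and setting $\mathrm{St}(\sfE):=\sfE\oplus(\text{degenerate rep of (c) on }\sH_B)$; a swindle using (b), (c) shows its homotopy class depends only on the isomorphism class of $\sfE$. Then I would show that an arbitrary $\sC$-$\ast$-representation $(\phi,\bv)$ on $\sH_B$ is homotopic to $\mathrm{St}(\sfE)$ with $\sfE=(\sH_B,\phi,\bv)_{\rm es}$, by running the $2\times 2$-matrix argument of \cref{lem:KK.unit}: build a $\sC$-$\ast$-representation of $\bM_2(A)=A\otimes\bM_2$ (again a $\sC$-C*-algebra) on $\sH_B\oplus(E_{\rm es}\oplus\sH_B)$ with diagonal corners $(\phi,\bv)$ and $\mathrm{St}(\sfE)$ and off-diagonal entries the restrictions of $\phi$ — these are adjointable precisely because $\phi(a)$ always maps into $E_{\rm es}$, so no complementation of $E_{\rm es}$ in $\sH_B$ is needed — equipped with the direct-sum cocycle; pulling back along the two corner embeddings $A\to\bM_2(A)$, homotopic as $\sC$-$\ast$-homomorphisms via a rotation inside the $\bM_2$-factor, gives $(\phi,\bv)\oplus(\text{deg})\sim_h(\text{deg})\oplus\mathrm{St}(\sfE)$, and a last swindle absorbing the degenerate summands yields $(\phi,\bv)\sim_h\mathrm{St}(\sfE)$. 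Then $(\phi_0,\bv_0)\sim_h\mathrm{St}(\sfE_0)=\mathrm{St}(\sfE_1)\sim_h(\phi_1,\bv_1)$ by $\sfE_0\cong\sfE_1$. The last sentence of Part (1) follows by repeating the argument with the degenerate rep of (c) replaced by the trivial cocycle $\mathbbm{1}$ on $\sH_B$ (legitimate when $B$ is realised by endomorphisms), checking that the rotations preserve triviality of the cocycle.

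\emph{Part (2).} I would first reduce to the essential case: by a manipulation of the type used in \cref{lem:KK.unit}, carried out simultaneously on the $+$ and $-$ legs so that $\phi_+-\phi_-\in\cK(\sH_B)$ is preserved throughout, each $(\phi_{i,\pm},\bv_i)$ is homotopic (as a quasihomomorphism) to one whose two $\sC$-$\ast$-representations share the same essential subspace, so that $\Phi$ of the new quasihomomorphism coincides with its own essential part. The hypothesis then says $\Phi$ of two such essential quasihomomorphisms are unitarily equivalent up to a locally compact perturbation of the Fredholm operator (the flip); that perturbation is undone by a straightforward operator homotopy (the set of operators making $\Phi(\cdots)$ special is convex, cf.\ the remark after \cref{prop:special.perturbation}), leaving a genuine unitary equivalence. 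A unitary intertwiner commuting with the flip is block-diagonal, hence packages compatible unitary equivalences of the essential parts of $(\phi_{0,+},\bv_0)$ with $(\phi_{1,+},\bv_1)$ and of $(\phi_{0,-},\bv_0)$ with $(\phi_{1,-},\bv_1)$; applying Part (1) to $\phi_{\bullet,+}$ and $\phi_{\bullet,-}$ with the homotopies run in tandem produces the desired homotopy of quasihomomorphisms.

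\emph{Main obstacle.} The delicate point is not any single classical argument but the equivariant bookkeeping: every direct sum, rotation and swindle must stay inside the class of $\sC$-$\ast$-representations, the degenerate complements attached to different representations must be taken $\sC$-equivariantly isomorphic so that the rotation homotopies of (b) apply, and in Part (2) all homotopies must be performed simultaneously on the $+$ and $-$ legs to preserve $\phi_+-\phi_-\in\cK(\sH_B)$. A recurring minor nuisance is that $\overline{\phi(A)\sH_B}$ need not be complemented in $\sH_B$, which is why the swindles are phrased through the abstract essential part and Kasparov stabilisation rather than through literal orthogonal complements — exactly as in the matrix trick of \cref{lem:KK.unit}.
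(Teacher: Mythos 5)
Your Part (1) is essentially the paper's argument in a slightly reorganized form: the paper runs the Kasparov-stabilization absorption $(\phi_i,\bv_i)\sim(\Ad(S_1)\circ(\phi_i\oplus 0),\dots)$ and then a single $\bM_2(A)$ matrix trick whose off-diagonal entries are $W^*\phi_1(a_{12})$ and $W\phi_0(a_{21})$ for the given intertwiner $W$ of the essential parts, while you route through a ``standard model'' and put restrictions of $\phi$ in the off-diagonal as in \cref{lem:KK.unit}. Both are fine, and your observation about $\sC$-invariance of $S_t$ and the rotation for the trivial-cocycle refinement matches the paper.

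Part (2), however, has a genuine gap. Whichever way you arrange the homotopies ``in tandem'' --- the paper's single matrix trick with $W=W_+\oplus W_-$ in the off-diagonal, or your rotation connecting $\mathrm{St}(\sfE_{0,\pm})$ to $\mathrm{St}(\sfE_{1,\pm})$ via $W_\pm$ --- the intermediate pair of representations is a quasihomomorphism only if
\[
W_+\phi_{0,+}(a)-W_-\phi_{0,-}(a)\in\cK(\sH_B), \qquad W_+^*\phi_{1,+}(a)-W_-^*\phi_{1,-}(a)\in\cK(\sH_B),
\]
and this is \emph{not} automatic from $W_\pm$ intertwining the two legs separately together with $\phi_{i,+}-\phi_{i,-}\in\cK$: e.g.\ take all four representations equal to some essential $\phi$, $W_+=1$ and $W_-$ a unitary commuting with $\phi(A)$ but with $(W_--1)\phi(a)\notin\cK$. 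The point where the Fredholm-operator half of the hypothesis must enter is exactly here: one uses that the essential parts carry flip-connections $F_i$ with $\phi_{i,-}(a)-F_i\phi_{i,+}(a)\in\cK$ and that $W$ conjugates $F_0$ to $F_1$ up to locally compact perturbation, whence $W_+\phi_{0,+}(ab)\equiv W_+F_0^*\phi_{0,-}(ab)=F_1^*\phi_{1,-}(a)W_-\phi_{0,-}(b)\equiv W_-\phi_{0,-}(ab)$ modulo $\cK(\sH_B)$. Your proposal acknowledges that the legs must be treated simultaneously but never supplies this computation; the phrase ``applying Part (1) to $\phi_{\bullet,+}$ and $\phi_{\bullet,-}$ with the homotopies run in tandem'' assumes the conclusion. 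Relatedly, your claim that the perturbation can be ``undone'' to leave a unitary intertwiner commuting with the flip (which would force $W_+=W_-$) overstates what the hypothesis gives; only the approximate intertwining above is available, and it is what must be exploited.
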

\begin{proof}
	We first show (1). 
	By the Kasparov stabilization theorem, there is a unitary $s \colon C([0,1] , \sH) \oplus C_0((0,1],\sH) \to C([0,1],\sH)$, corresponding to the family $s_t:=s \otimes_{\eval_t}1$. By taking the product with $s_0^*$ if necessary, we may choose it as $s_0 = 1_\sH$. 
    Set $S_t:=s_t \otimes \id_B$. 
    Then, for $i=0,1$ and any family $\bw=(\bw_\pi)_{\pi\in\Obj\sC\setminus\{\mathbf{0}\}}$ of unitaries $\bw_\pi\colon \sH_B\to \sH_B\otimes\beta_\pi$, we get a homotopy 
    \[ 
    \bigr(\phi_i, \bv_i) \sim (\Ad(S_1) \circ (\phi_i \oplus 0), (S_1\otimes 1_{\beta})(\bv_i \oplus \bw)S_1^* \bigl).
    \] 

	Let $W \colon \overline{\phi_{0}(A)\sH_B} \to \overline{\phi_{1}(A)\sH_B}$ be a unitary intertwiner. 
	Then
	\[
	\tilde{\phi}
	\begin{pmatrix}
		a_{11} & a_{12} \\
	    a_{21} & a_{22}
	\end{pmatrix}
	= 
	S_1\begin{pmatrix} 
		\phi_{0}(a_{11}) &  W^* \phi_{1}(a_{12}) \\ 
		W \phi_{0}(a_{21}) & \phi_{1}(a_{22}) 
	\end{pmatrix}S_1^*,
	\quad \tilde{\bv}_\pi = (S_1 \otimes1_{\beta_{\pi}}) \begin{pmatrix} \bv_{0,\pi} & 0 \\ 0 & \bv_{1,\pi} \end{pmatrix}S_1^*
	\]
	forms a unitary cocycle $\sC$-$\ast$-homomorphism from $\bM_2(A)$ to $\cL(\sH_B)$. Here, note that $W\phi_0(a)\colon \sH_B\to \sH_B$ is adjointable with $(W\phi_0(a))^*=W^*\phi_1(a^*)$ for $a\in A$. 
	Composed with the corner embeddings $A \to \bM_2(A)$, we get a homotopy connecting $(\Ad(S_1) \circ (\phi_{0} \oplus 0), (S_1 \otimes 1_\beta)(\bv_0 \oplus \bv_1)S_1^*)$ and $(\Ad(S_1) \circ (0 \oplus \phi_{1}), (S_1 \otimes 1_\beta) (\bv_0 \oplus \bv_1)S_1^*)$. This finishes the proof. 
	Indeed, the last claim follows from the construction, since both $S_t$ and the rotation matrix $R_t \in \bM_2 \subset \bM_2(\cM(A))$ are $\sC$-invariant (cf.~\cref{ex:inner.auto}). Here, the former for $t \in (0,1]$ precisely means that $\beta_\pi(S_t)=S_t \in \cL_B(\sH_B^{\oplus 2}, \sH_B)$ for any $\pi \in \Obj \sC \setminus \{ \mathbf{0}\}$.

				For (2), consider the above construction for both $(\phi_{i,+}, \bv_i)$ and $(\phi_{i,-}, \bv_i)$ simultaneously by using a unitary intertwiner $W=W_+ \oplus W_- $ of Kasparov bimodules up to locally compact perturbation. 
				Then the resulting $\sC$-$\ast$-homomorphisms $(\tilde{\phi}_\pm, \tilde{\bv})$ form a $\sC$-quasihomomorphism from $\bM_2(A)$ since 
				\[ 
				W_+ \phi_{0,+}(a) - W_- \phi_{0,-}(a) \in \cK(\sH_B), \quad W_+^* \phi_{1,+}(a) - W_-^* \phi_{1,-}(a) \in \cK(\sH_B),
				\]
                which proves the claim. 
				Indeed, for $i=0,1$, an $\big( \begin{smallmatrix}0 & 1 \\ 1 & 0 \end{smallmatrix}\big)$-connection $\Big( \begin{smallmatrix}0 & F_{i}^* \\ F_{i} & 0 \end{smallmatrix}\Big) $ of the essential part $(A,\id,\mathbbm{1}) \hotimes_A \Phi(\phi_{i,\pm},\bv_i)$ satisfies $\phi_{i,-}(a) - F_{i} \phi_{i,+}(a), \phi_{i,+}(a) - F_{i}^* \phi_{i,-}(a) \in \cK(\sH_B)$. Thus, for any $a,b \in A$, we get
                \[ W_+ \phi_{0,+}(ab) \equiv W_+ F_{0}^* \phi_{0,-}(ab) = F_1^* \phi_{1,-}(a)W_-\phi_{0,-}(b) \equiv \phi_{1,-}(a)W_-\phi_{0,-}(b) = W_- \phi_{0,-}(ab) \]
                modulo $\cK(\sH_B)$. The latter is checked in the same way. 
			\end{proof}

		\begin{prop}\label{lem:quasihom_represent}
			For any separable $\sC$-C*-algebras $A$ and $B$, the map $\Phi$ in \eqref{eq:KK_quasihom} for the pair $\kC A$, $B$ is an isomorphism.
		\end{prop}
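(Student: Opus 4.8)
\textbf{Proof plan for \cref{lem:quasihom_represent}.}
The statement asserts that $\Phi\colon\qHom^\sC(\kC A,B)/\!\sim_h\to\KK^\sC(\kC A,B)$ is a bijection. The plan is to prove surjectivity and injectivity separately, exploiting the reductions already available: \cref{prop:special.perturbation}, which replaces any essential $\sC$-Kasparov $\kC A$-$B$ bimodule by a special one up to locally compact perturbation; \cref{lem:KK.unit}, which shows every $\KK^\sC$-class is represented by an essential bimodule; and \cref{lem:quasihom_degeneration}, which controls when two $\sC$-quasihomomorphisms are homotopic.

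\emph{Surjectivity.} Given a class in $\KK^\sC(\kC A,B)$, first use \cref{lem:KK.unit} to represent it by an essential $\sC$-Kasparov bimodule $(\sfE,F)$, then apply \cref{prop:special.perturbation} (and \cref{rem:normalization.F}) to assume $(\sfE,F)$ is special: $F=F^*$, $\|F\|\le1$, each $\bbmv_\pi$ extends to a unitary $\bv_\pi\colon E\to E\otimes_B\beta_\pi$, and $F=\bv_\pi^*(F\otimes1)\bv_\pi$. Since the class is unchanged under adding degenerate bimodules and under Kasparov stabilization, I would absorb $\sH_B\oplus\sH_B^{\op}$ (with a trivial special structure) to arrange $E\cong\sH_B$ as a Hilbert $B$-module; here the key point is that the left $\kC A$-action on $\sfH_{\sC,A}$-type modules, being of the endomorphism-algebra flavor, makes such stabilization $\sC$-equivariantly available. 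Next, replace $F$ by its ``symbol'': since $F=F^*$, $\|F\|\le1$ and $F^2-1\in\cK$ modulo the left action, the operator $P=\tfrac12(F+1)$ is a projection modulo compacts, commuting with $\bv_\pi$; decomposing $\sH_B=\sH_B^+\oplus\sH_B^-$ accordingly and using that $P$ intertwines $\phi$ with a $\pm$-graded representation yields a pair $(\phi_+,\phi_-,\bv)$ with $\phi_+-\phi_-\in\cK$, i.e.\ an element of $\qHom^\sC(\kC A,B)$, whose image under $\Phi$ is the original class. The cocycle conditions for $(\phi_\pm,\bv)$ follow from those for $(\sfE,F)$ because $\bv_\pi$ is a genuine unitary commuting with the graded decomposition.

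\emph{Injectivity.} Suppose $(\phi_{0,\pm},\bv_0)$ and $(\phi_{1,\pm},\bv_1)$ have $\Phi$-images that agree in $\KK^\sC(\kC A,B)$. A homotopy of $\sC$-Kasparov bimodules between $\Phi(\phi_{0,\pm},\bv_0)$ and $\Phi(\phi_{1,\pm},\bv_1)$ can, by \cref{prop:special.perturbation} again applied over $B[0,1]$, be taken special, hence is itself (after the stabilization step above) of the form $\Phi(\tilde\phi_\pm,\tilde\bv)$ for some $\sC$-quasihomomorphism from $\kC A$ to $B[0,1]$, up to a locally compact perturbation. Then \cref{lem:quasihom_degeneration}(2) finishes the argument: the essential parts being unitarily equivalent up to locally compact perturbation gives the needed homotopy of $\sC$-quasihomomorphisms. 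One still must check that the ``essential part'' bookkeeping does not lose information — but since $\kC A$ is realized by endomorphisms, $\cO_\infty$-style stabilization (or rather the $\cK(\alpha(\cH_\sC))$ stabilization built into $\kC$) guarantees that passing to essential parts is harmless, which is exactly the role of \cref{cor:specialization}.

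\emph{Main obstacle.} The delicate point is the stabilization step in both halves: turning a special $\sC$-Kasparov bimodule on an abstract $E$ into one on the fixed $\sH_B$ while keeping the family $\bv_\pi$ unitary and compatible with $\fv$. In the non-equivariant Cuntz picture this is routine Kasparov absorption, but here one must verify that the absorbed ``infinite-dimensional trivial'' piece carries a special $\sC$-structure and that the absorbing unitary can be chosen $\sC$-equivariant; this is precisely where the hypothesis that $\kC A$ is an endomorphism $\sC$-C*-algebra is used, via the unitary cocycles $\bv_\pi$ of \cref{defn:trivial.cocycle} and the machinery around $\sfH_{\sC,A}$ and $\kC(\phi,\bbmv)$ in \cref{cor:specialization}. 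I expect the proof to cite \cref{lem:quasihom_degeneration}(1) to dispose of the ambiguity in the choice of $\sC$-$\ast$-representation structure, reducing everything to the two clean inputs \cref{prop:special.perturbation} and \cref{lem:quasihom_degeneration}.
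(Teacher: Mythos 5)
Your overall strategy---represent the class by an essential special bimodule via \cref{lem:KK.unit}, \cref{rem:normalization.F} and \cref{prop:special.perturbation}, stabilize onto $\sH_B\oplus\sH_B^{\op}$, bring $F$ into standard form so that $(\phi_\pm,\bv)$ can be read off, and then obtain injectivity by running the same construction over $B[0,1]$ and invoking \cref{lem:quasihom_degeneration}---is the same as the paper's, which constructs an explicit inverse $\Psi$ of $\Phi$ along exactly these lines. However, the step where you ``replace $F$ by its symbol'' does not work as written. The operator $P=\tfrac12(F+1)$ is a projection only modulo locally compact operators (one only has $\phi(a)(F^2-1)\in\cK$, not $F^2=1$), so it does not determine a direct sum decomposition of the Hilbert module at all; and even if $F^2=1$ held exactly, $P$ would be an inhomogeneous operator (since $F$ is odd), so its range is not a graded submodule and is not preserved by $\phi$ unless $[F,\phi]=0$ exactly. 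Hence ``decomposing $\sH_B=\sH_B^+\oplus\sH_B^-$ accordingly'' is not a well-defined operation and does not produce two representations on a common module differing by compacts with the flip as Fredholm operator.

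The paper closes precisely this gap with the doubling-and-rotation trick: after adding a degenerate $(\sH_B\oplus\sH_B^{\op},0,\diag(\bw,\bw),0)$, it forms $(E_2,F_2)$ with off-diagonal entries $(1-F_1^2)^{1/2}$, adds the degenerate $(E_2^{\op},0,\bv_2,-F_2)$, conjugates by the unitary $U_{F_2}=\tfrac1{\sqrt2}\bigl(\begin{smallmatrix}1&-F_2\\F_2&1\end{smallmatrix}\bigr)$ to turn the operator into the exact flip $\bigl(\begin{smallmatrix}0&1\\1&0\end{smallmatrix}\bigr)$, and only then applies a graded Kasparov stabilization $T\colon E_3\to\sH_B\oplus\sH_B^{\op}$ chosen to commute with the flip. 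At that point the conjugated representation $\Ad(TU_{F_2})\circ\phi_3$ splits into even and odd parts $\phi_{4,\pm}$, and speciality guarantees the conjugated cocycle is again diagonal, yielding a genuine element of $\qHom^\sC(\kC A,B)$. You should replace your projection argument by this rotation-and-stabilization argument; with that substitution, the remainder of your outline (well-definedness of the inverse up to the choices made, $\Psi\circ\Phi=\id$ via \cref{lem:quasihom_degeneration}, and injectivity via homotopies over $B[0,1]$) goes through as in the paper.
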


\begin{proof}
	We construct the inverse $\Psi$ of $\Phi$. 
	We fix a family $\bw = (\bw_\pi)_\pi$ of unitaries $\bw_\pi \colon \sH_B \to \sH_B \otimes \beta_\pi $. Let $(\sfE, F) \in \bfE^\sC ( \kC A,B)$. 
	By \cref{prop:special.perturbation}, we may choose its essential part $(\sfE_{\rm es}, F_{\rm es})$ as a special $\sC$-Kasparov bimodule. 
	Moreover, we may assume $\| F_{\rm es} \| \leq 1$ by \cref{rem:normalization.F}. 
	Let 
	\begin{align*}
		(E_1,\phi_1,\bv_1, F_1):=& (\sfE_{\rm es}, F_{\rm es}) \oplus \bigg( \sH_B\oplus\sH_B^{\op} ,0,\begin{pmatrix} \bw & 0 \\ 0 & \bw \end{pmatrix} ,0 \bigg), \\
		(E_2,\phi_2,\bv_2, F_2):=& \bigg( E_1 \oplus E_1^{\rm op}, \phi_1 \oplus 0, \begin{pmatrix}\bv_1 & 0 \\ 0 &  \bv_1\end{pmatrix}, \begin{pmatrix}F_1 & (1-F_1^2)^{1/2} \\  (1-F_1^2)^{1/2} & -F_1 \end{pmatrix} \bigg)_{\textstyle ,}\\
		(E_3,\phi_3,\bv_3, F_3):=& (E_2,\phi_2,\bv_2,F_2) \oplus (E_2^{\rm op}, 0, \bv_2, -F_2).
	\end{align*}
	Then $(\sfE_3, F_3)$ is a locally compact perturbation of the sum of $(\sfE_{\rm es} , F_{\rm es} )$ and degenerate $\sC$-Kasparov $\kC A$-$B$ bimodules. 
	Since $E_1$ contains $\sH_B \oplus \sH_B^{\rm op}$ as a direct summand, the Kasparov stabilization theorem shows that there is an even unitary 
	\[ T \colon E_3 \cong E_2 \oplus E_2^{\rm op} \to \sH_B \oplus \sH_B^{\rm op}\]
	such that $T \big( \begin{smallmatrix} 0 & 1 \\ 1 & 0 \end{smallmatrix}\big)T^* = \big( \begin{smallmatrix} 0 & 1 \\ 1 & 0 \end{smallmatrix}\big)$, which is unique up to multiplication with $\diag(U,U) \in \cU(\sH_B \oplus \sH_B^{\rm op})$. 
	Hence we get a unitary equivalence 
	\begin{align*}
		(\sfE_3, F_3) \simeq & \bigg( E_2 \oplus E_2^{\rm op}, \Ad(U_{F_2}) \circ \phi_3, U_{F_2}\begin{pmatrix}\bv_2 & 0 \\ 0 & \bv_2 \end{pmatrix}U_{F_2}^*, \begin{pmatrix}0 & 1 \\ 1 & 0 \end{pmatrix} \bigg) \\
		\simeq & \bigg( \sH_B \oplus \sH_B^{\rm op} , \Ad(TU_{F_2}) \circ \phi_2,TU_{F_2} \begin{pmatrix}\bv_2 & 0 \\ 0 & \bv_2 \end{pmatrix}U_{F_2}^* T^*,  \begin{pmatrix} 0 & 1 \\ 1 & 0 \end{pmatrix} \bigg)
	\end{align*}
	where $U_{F_2}:=\frac{1}{\sqrt{2}}\big( \begin{smallmatrix} 1 & -F_2 \\ F_2 & 1 \end{smallmatrix}\big)$. Since this $\sC$-Kasparov bimodule is special, we have
	\[ T U_{F}\diag(\bv_2, \bv_2)U_{F}^* T^* = \diag(\bv_4, \bv_4) \]
	for some unitary cocycle $\bv_4 $, and hence the right hand side is of the form 
	\[
	(\sfE_3, F_3):=
	\bigg( \sH_B \oplus \sH_B^{\rm op} , \phi_{4,+} \oplus \phi_{4,-}, \begin{pmatrix} \bv_4 & 0 \\ 0 & \bv_4 \end{pmatrix},  \begin{pmatrix} 0 & 1 \\ 1 & 0 \end{pmatrix} \bigg) = \Phi (\phi_{4,\pm}, \bv_4) ,\]
	where $\phi_{4,\pm}$ are the even and odd parts of $\Ad (TU_{F_1}) \circ \phi_3$. Then $\Psi([\sfE,F]):=[\phi_{4\pm},\bv_4]$ determines a well-defined map $\Psi \colon \KK^\sC(A,B) \to \qHom^\sC (\kC A ,B)/\sim_h$, which is a right inverse of $\Phi$. 
	Indeed, the homotopy class of the above $(\sfE_3,F_3)$ does not depend on the choice of $F_{\rm es}$ and $T$ (by the remark below \cref{prop:special.perturbation}). Moreover, 
	if $(\widetilde{\sfE}, \widetilde{F}) \in \bfE^\sC(\kC A,B[0,1])$ is a homotopy of $(\sfE,F)$ and $(\sfE',F)$, then $\Psi(\widetilde{\sfE}, \widetilde{F})$ gives a homotopy connecting $\Psi(\sfE,F)$ and $\Psi(\sfE',F')$, again by the remark below \cref{prop:special.perturbation}. 
			
	The remaining task is to show $\Psi \circ \Phi = \id$. For $(\phi_\pm, \bv) \in \qHom (\kC A,B)$, let $(\sfE, F):= \Phi(\phi_\pm, \bv)$. 
	Starting from it, we construct $(\sfE_1, F_1)$, $(\sfE_2, F_2)$, $(\sfE_3, F_3)$, and $(\phi_{4,\pm},\bv_4)$ as in the above paragraph. 
	Then, by construction, the essential part of $(\sfE_3, F_3)=\Phi (\phi _{4,\pm}, \bv_4)$ is unitary equivalent to $(\sfE_{\rm es}, F_{\rm es})$, i.e., the essential part of $\Phi(\phi_\pm, \bv)$. Therefore, by \cref{lem:quasihom_degeneration}, we obtain that $(\phi_\pm, \bv)$ and $(\phi_{4, \pm}, \bv_4)$ are homotopic. This finishes the proof.  
		\end{proof}
  
		\subsection{Free product over \texorpdfstring{$\sC$}{C}}
		\begin{df}
			Let $(A,\alpha,\fu)$ and $(B,\beta,\fv)$ be endomorphism $\sC$-C*-algebras. Their \emph{free product over $\sC$} is the quotient C*-algebra 
			\[A \ast_\sC B:= (A \ast B)/ I_\sC ,  \]
			where $I_\sC$ is the C*-ideal generated by the elements of the form 
			\begin{align}
				(a \fu_{\pi, \sigma}) \ast b - a \ast (\fv_{\pi, \sigma}b), \quad (a\alpha_f)  \ast b - a \ast  (\beta_f b), \label{eqn:ideal.generators}
			\end{align}
			for $a\in A$, $b \in B$, $\pi, \sigma \in \Obj \sC$ and $f \in \Hom(\pi, \sigma)$. 
		\end{df}
		The C*-algebra $A \ast_\sC B$ is the completion of the algebraic free product over $\sC$, i.e., the $\ast$-algebra $ A \ast_{\alg, \sC} B:= A \ast_\alg B / I_{\alg, \sC}$ where $I_{\alg, \sC}$ is the algebraic $*$-ideal generated by the elements of the form \eqref{eqn:ideal.generators}. Note that $(a \fu_{\pi, \sigma}^*) \ast b - a \ast (\fv_{\pi, \sigma}^*b) = (a \fu_{\pi, \sigma}^*) \ast (\fv_{\pi, \sigma}\fv_{\pi, \sigma}^*b) - (a \fu_{\pi, \sigma}^*\fu_{\pi, \sigma}) \ast (\fv_{\pi, \sigma}^*b) \in I_{\alg,\sC}$.
		
		\begin{lem}
			Let $(A,\alpha,\fu)$ and $(B,\beta,\fv)$ be endomorphism $\sC$-C*-algebras. \begin{enumerate}
				\item For $\pi \in \Obj \sC$, $\alpha_\pi \ast \beta_\pi \colon A \ast B \to A \ast B $ induces a $\ast$-endomorphism 
				\[ 
				(\alpha \ast_\sC \beta)_\pi \colon A \ast_\sC B \to A \ast_\sC B.
				\]
				\item The linear map $(\alpha \ast_\sC \beta)_f$ on $ A \ast_{\alg,\sC} B$ defined by 
				\[
				(\alpha \ast_{\alg, \sC} \beta)_{f}(x)=
				\begin{cases}
					(\alpha_f a) y & \text{if $x=ay$, where $a \in A$ and $y \in A \ast_{\alg, \sC} B$}, \\
					(\beta_f b)y & \text{if $x=by$, where $b \in B$ and $y \in A \ast_{\alg, \sC} B$,}
				\end{cases}
				\]
				extends to $(\alpha \ast_\sC \beta)_{f} \in \cM(A \ast_\sC B)$.
				\item The linear map $(\fu \ast_\sC \fv)_{\pi, \sigma} \colon A \ast_{\alg, \sC} B \to A \ast_{\alg, \sC} B$ defined by 
				\[
				(\fu \ast_{\alg , \sC} \fv)_{\pi, \sigma}(x)=
				\begin{cases}
					(\fu_{\pi, \sigma}a) y & \text{if $x=ay$, where $a \in A$ and $y \in A \ast_{\alg, \sC} B$}, \\
					(\fv_{\pi, \sigma}b)y & \text{if $x=by$, where $b \in B$ and $y \in A \ast_{\alg, \sC} B$,}
				\end{cases}
				\]
				extends to a unitary $(\fu \ast \fv)_{\pi, \sigma} \in \cM(A \ast_\sC B)$.
				\item The triple $(A \ast_\sC B, \alpha \ast_\sC \beta, \fu \ast_\sC \fv)$ forms a $\sC$-C*-algebra.
			\end{enumerate}
		\end{lem}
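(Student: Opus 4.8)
The plan is to exploit throughout that $A\ast_\sC B$ is generated as a C*-algebra by the images $\iota_A(A)$, $\iota_B(B)$ of its two corners, so that each ingredient of the $\sC$-structure is obtained by ``freely producting'' the corresponding ingredients on $A$ and on $B$ and then checking compatibility with the defining ideal. For (1): the $\ast$-endomorphism $\alpha_\pi\ast\beta_\pi$ of $A\ast B$ is the one supplied by the universal property of the free product from $\iota_A\circ\alpha_\pi$ and $\iota_B\circ\beta_\pi$, and it is essential because $\alpha_\pi,\beta_\pi$ are and $\iota_A(A)\cup\iota_B(B)$ generates $A\ast B$. To see it descends I would check that it carries the generators \eqref{eqn:ideal.generators} of $I_{\alg,\sC}$ into $I_{\alg,\sC}$. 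Extending $\alpha_\pi$ to $\cM(A)$, the $2$-cocycle identity in \cref{def:Caction.endo} rewrites $\alpha_\pi(\fu_{\rho,\sigma})$ as the word $\fu_{\rho\otimes\sigma,\pi}^*\,\alpha_{\ass(\rho,\sigma,\pi)}^*\,\fu_{\rho,\sigma\otimes\pi}\,\fu_{\sigma,\pi}$ in $\cM(A)$, and likewise $\alpha_\pi(\alpha_f)=\fu_{\sigma,\pi}^*\,\alpha_{f\otimes\id_\pi}\,\fu_{\rho,\pi}$ for $f\in\Hom(\rho,\sigma)$; applying $\alpha_\pi\ast\beta_\pi$ to $(a\fu_{\rho,\sigma})\ast b-a\ast(\fv_{\rho,\sigma}b)$ and moving each factor $\fu^{(\ast)}_{\ast,\ast}$, $\alpha^{(\ast)}_{\ast}$ across the junction one at a time by means of the relations \eqref{eqn:ideal.generators} and their adjoint forms converts the word into $\fv_{\rho\otimes\sigma,\pi}^*\,\beta_{\ass(\rho,\sigma,\pi)}^*\,\fv_{\rho,\sigma\otimes\pi}\,\fv_{\sigma,\pi}=\beta_\pi(\fv_{\rho,\sigma})$, and the other family of generators is handled the same way; hence the image of each generator lies in $I_{\alg,\sC}$, so $\alpha_\pi\ast\beta_\pi$ preserves $I_{\alg,\sC}$, hence $I_\sC$, and descends to an essential $\ast$-endomorphism of $A\ast_\sC B$.

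For (2) and (3): I would first define the prescribed linear map $\gamma_f$ (for $f\in\Hom(\rho,\sigma)$; the case of $(\fu\ast_\sC\fv)_{\rho,\sigma}$ is identical with $\alpha_f,\beta_f$ replaced by $\fu_{\rho,\sigma},\fv_{\rho,\sigma}$) on $A\ast_\alg B$ by the stated rule of multiplying the leftmost letter of a word by $\alpha_f$ or $\beta_f$. On the vector-space direct sum over alternating words this is visibly well defined and satisfies $\gamma_f(xy)=\gamma_f(x)y$; pair it with the mirror map $\gamma_f'$ multiplying the rightmost letter (on the right) by $\alpha_f$ or $\beta_f$, which satisfies $\gamma_f'(xy)=x\gamma_f'(y)$. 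Since $\gamma_f$ and $\gamma_f'$ each carry a generator in \eqref{eqn:ideal.generators} to another generator (its ``$a$'' slot replaced by $\alpha_f a$, resp.\ its ``$b$'' slot by $b\beta_f$), both descend to $A\ast_{\alg,\sC}B$; and on this dense $\ast$-subalgebra one checks the double-centralizer identity $\gamma_f'(x)y=x\gamma_f(y)$, whose only non-immediate case — $x$ ending in an $A$-letter, $y$ beginning in a $B$-letter, or vice versa — is exactly the relation \eqref{eqn:ideal.generators}. A double centralizer defined on a dense $\ast$-subalgebra extends to a multiplier (boundedness being the delicate point, addressed below), so $(\gamma_f,\gamma_f')$ defines $(\alpha\ast_\sC\beta)_f\in\cM(A\ast_\sC B)$; the analogous construction with $\fu^*_{\rho,\sigma},\fv^*_{\rho,\sigma}$ (legitimate since $(a\fu^*_{\rho,\sigma})\ast b-a\ast(\fv^*_{\rho,\sigma}b)\in I_{\alg,\sC}$, as noted after the definition) yields a multiplier, and evaluating the composite under left multiplication on $\iota_A(A)$ and $\iota_B(B)$ and using unitarity of $\fu_{\rho,\sigma},\fv_{\rho,\sigma}$ shows it is a two-sided inverse of $(\fu\ast_\sC\fv)_{\rho,\sigma}$, which is therefore unitary.

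For (4): with $\gamma_\pi:=(\alpha\ast_\sC\beta)_\pi$, $\gamma_f:=(\alpha\ast_\sC\beta)_f$, $\fw_{\pi,\sigma}:=(\fu\ast_\sC\fv)_{\pi,\sigma}$, the construction gives the restriction formulas $\gamma_\pi\circ\iota_A=\iota_A\circ\alpha_\pi$, $\iota_A(a)\gamma_f=\iota_A(a\alpha_f)$, $\gamma_f\iota_A(a)=\iota_A(\alpha_f a)$, $\fw_{\pi,\sigma}\iota_A(a)=\iota_A(\fu_{\pi,\sigma}a)$, and the same with $B,\beta,\fv$. Each axiom of \cref{def:Caction.endo} is an equality of two bounded maps on $A\ast_\sC B$ that are either $\ast$-homomorphisms (for $\ast$-functoriality of $\gamma$ and for $\Ad\fw_{\pi,\sigma}\circ\gamma_\sigma\circ\gamma_\pi=\gamma_{\pi\otimes\sigma}$) or compatible with left multiplication (for the identities $\gamma_\sigma(c)\gamma_f=\gamma_f\gamma_\pi(c)$ for all $c$, $\fw_{\pi',\sigma'}\gamma_\sigma(\gamma_f)\gamma_g=\gamma_{f\otimes g}\fw_{\pi,\sigma}$, and the $2$-cocycle identity); the locus where two such maps agree is a closed $\ast$-subalgebra, resp.\ a closed right ideal, of $A\ast_\sC B$, and it contains $\iota_A(A)$ and $\iota_B(B)$ because there each identity reduces, via the restriction formulas, to the corresponding axiom for $(A,\alpha,\fu)$, resp.\ $(B,\beta,\fv)$, which holds by hypothesis; hence it is all of $A\ast_\sC B$. (The normalizations $\fw_{\mathbf 1,\pi}=1=\fw_{\pi,\mathbf 1}$ are immediate from $\fu_{\mathbf 1,\pi}=1=\fv_{\mathbf 1,\pi}$ and the restriction formulas.) Together with (1)--(3) this exhibits $(A\ast_\sC B,\alpha\ast_\sC\beta,\fu\ast_\sC\fv)$ as an endomorphism $\sC$-C*-algebra.

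The main obstacle is the technical heart of (2)--(3): showing the ``multiply the outermost letter'' prescriptions are bounded on $A\ast_\sC B$ (consistency modulo $I_{\alg,\sC}$ being the easy bookkeeping above). I would establish boundedness either by checking that the decomposition of $A\ast_{\alg,\sC}B$ into words beginning in $A$ versus beginning in $B$ extends to a bounded direct-sum decomposition of $A\ast_\sC B$ — so that $\gamma_f$ is a sum of two maps bounded by $\|\alpha_f\|\le\|f\|$, resp.\ $\|\beta_f\|\le\|f\|$, on the two summands — or, more robustly, by representing $A\ast_\sC B$ on a Hilbert space and observing that on each summand $\gamma_f$ is implemented by the extension to the multiplier algebra of the corresponding $\alpha$- or $\beta$-representation. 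Everything else reduces to the cocycle identities of \cref{def:Caction.endo} and the principle that an identity on $A\ast_\sC B$ may be verified on the generating corners $\iota_A(A)$ and $\iota_B(B)$.
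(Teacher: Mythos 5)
Your (1) and (4) are essentially the paper's argument: rewrite $\alpha_\pi(\fu_{\sigma,\rho})$ and $\alpha_\pi(\alpha_f)$ via the cocycle identities and push the resulting factors across the junction using the generators \eqref{eqn:ideal.generators}, then verify the axioms of \cref{def:Caction.endo} on the dense subalgebra $A\ast_{\alg,\sC}B$. The double-centralizer bookkeeping for (2)--(3) is also fine. The genuine gap is precisely the point you defer to the end: boundedness of the ``multiply the outermost letter'' maps, and neither of your two proposed resolutions works as stated. The first requires that the closed right ideals $\overline{A\cdot(A\ast_\sC B)}$ and $\overline{B\cdot(A\ast_\sC B)}$ give a \emph{topological} direct sum decomposition of $A\ast_\sC B$; the algebraic grading by first letter has no reason to extend boundedly to the C*-completion of a free product, and you offer no argument. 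The second inherits the same defect: in a representation $\rho$ you obtain $\overline{\rho}_A(\alpha_f)$ on $\overline{\rho(A)H}$ and $\overline{\rho}_B(\beta_f)$ on $\overline{\rho(B)H}$, but these subspaces overlap, the ideal relation only yields agreement of compressions of the form $\rho(a)\,\overline{\rho}_A(\alpha_f)\rho(b)=\rho(a)\,\overline{\rho}_B(\beta_f)\rho(b)$, and you never produce a single bounded operator implementing $\gamma_f$.

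The paper closes this gap by a different, purely algebraic observation: boundedness is automatic from the relations the multiplier already satisfies on the dense $\ast$-subalgebra. Since $(\fu\ast_{\alg,\sC}\fv)_{\pi,\sigma}$, paired with its adjoint candidate built from $\fu^*_{\pi,\sigma},\fv^*_{\pi,\sigma}$, satisfies $v^*v=vv^*=1$ as a double centralizer of $A\ast_{\alg,\sC}B$, one gets $\|vx\|^2=\|x^*v^*vx\|=\|x\|^2$, so it is isometric and extends to a unitary in $\cM(A\ast_\sC B)$ with no decomposition into summands needed. Likewise, when $f\in\Hom(\pi,\sigma)$ is a partial isometry, $(\alpha\ast_{\alg,\sC}\beta)_f$ satisfies $(v^*v)^2=v^*v$ and $(vv^*)^2=vv^*$, and since a self-adjoint idempotent double centralizer $p$ is automatically contractive ($\|px\|^2=\|x^*px\|\le\|x\|\,\|px\|$), it follows that $\|vx\|\le\|x\|$; a general $f$ is then handled by the polar decomposition $f=s|f|$ together with writing the positive contraction $|f|/\||f|\|\in\End(\pi)$ as an average of two unitaries. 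You should replace your boundedness discussion with an argument of this kind; as written, (2)--(3) are incomplete.
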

		\begin{proof}
			For (1), it suffices to show that $\alpha_\pi \ast \beta_\pi(I_\sC) \subset I_\sC$.  This holds since
			\begin{align*}
				&\alpha_\pi (a \fu_{\sigma, \rho}) \ast \beta_\pi (b) 
				-\alpha_\pi(a) \ast \beta_\pi(\fv_{\sigma, \rho} b) \\
				{=} & \alpha_\pi(a) \fu_{\sigma \otimes \rho,\pi}^* \alpha_{\ass(\sigma,\rho,\pi)}^*\fu_{\sigma, \rho\otimes\pi} \fu_{\rho,\pi} \ast \beta_\pi(b) -  \alpha_\pi(a ) \ast \fv_{\sigma \otimes \rho,\pi}^* \beta_{\ass(\sigma,\rho,\pi)}^*\fv_{\sigma, \rho\otimes\pi} \fv_{\rho,\pi} \beta_\pi(b), \\
				&\alpha_\pi (a\alpha_f) \ast \beta_\pi(b) - \alpha_\pi (a) \ast \beta_\pi(\beta_f b) \\
				{=}& \alpha_\pi(a)\fu_{\rho,\pi}^* \alpha_{f \otimes 1} \fu_{\sigma,\pi} \ast \beta_\pi(b) - \alpha_\pi(a) \ast \fv_{\rho,\pi}^* \beta_{f \otimes 1} \fv_{\sigma,\pi} \beta_\pi(b),
			\end{align*}
			are contained in $I_\sC$ for any $a \in A$, $ b \in B$, $\pi, \sigma, \rho \in \Obj \sC$ and $f \in \Hom_\sC (\sigma, \rho)$.

			Next, we show (2) and (3). Since $(\fu \ast_{\alg , \sC} \fv)_{\pi, \sigma}$ is a unitary on $A \ast_{\alg, \sC} B$, it is bounded and extends to a unitary on the C*-completion. 
			In the same way, if $f \in \Hom(\pi,\sigma)$ is a partial isometry, then $(\alpha \ast_{\alg, \sC} \beta)_f$ also satisfies the relation of partial isometry (i.e., $(v^*v)^2=v^*v$ and $(vv^*)^2=vv^*$), and hence is bounded. For general $f$, consider the polar decomposition $f = s|f|$ and write $|f|$ as a linear combination of two unitaries.

			Finally, the relations of \cref{def:Caction.endo} are all satisfied on the dense subalgebra $A \ast _{\alg, \sC} B$, which concludes (4). 
		\end{proof}
		
		\begin{prop}\label{prop:universality.free.product}
			Let $A_1$, $A_2$, and $B$ be $\sC$-C*-algebras with $A_1$ and $A_2$ being realized by endomorphisms. 
			Let $(E,\phi_i, \bv)$ be a $\sC$-Hilbert $A_i$-$B$ bimodule for $i=1,2$ that shares the underlying Hilbert module $E$ and unitary cocycles $\{\bv_\pi\}$. 
			Then, there is a unique $*$-homomorphism 
			\[ \phi_1 \ast_\sC \phi_2 \colon A_1\ast_\sC  A_2 \to \cL(E)\]
			that makes $(E,\phi_1 \ast_\sC \phi_2,\bv)$ a $\sC$-Hilbert $A_1 \ast_\sC  A_2$-$B$-bimodule with unitary cocycles, and coincides with $\phi_i$ on the C*-subalgebra $A_i$.  
			Moreover, this Hilbert bimodule is proper if so are both $\sfE_1$ and $\sfE_2$. 
		\end{prop}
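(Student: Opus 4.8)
The plan is to obtain $\phi_1\ast_\sC\phi_2$ by descending the ordinary free product $\phi_1\ast\phi_2$ along the quotient map $A_1\ast A_2\twoheadrightarrow A_1\ast_\sC A_2$, and then to verify the cocycle conditions on the dense $*$-subalgebra generated by $A_1\cup A_2$. Write $(\alpha^{(1)},\fu^{(1)})$ and $(\alpha^{(2)},\fu^{(2)})$ for the $\sC$-structures of $A_1$ and $A_2$, $(\beta,\fv)$ for that of $B$, and $(\alpha^\ast,\fu^\ast)$ for the induced structure on $A_1\ast_\sC A_2$. Since $\phi_1$ and $\phi_2$ are $*$-homomorphisms into the common C*-algebra $\cL(E)$, the universal property of the free product C*-algebra gives a unique $*$-homomorphism $\phi_1\ast\phi_2\colon A_1\ast A_2\to\cL(E)$ restricting to $\phi_i$ on $A_i$; and once $\phi_1\ast_\sC\phi_2$ is shown to exist it is automatically unique, since $A_1\cup A_2$ generates $A_1\ast_\sC A_2$ as a C*-algebra. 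Thus the crux is to show that $\phi_1\ast\phi_2$ kills the generators \eqref{eqn:ideal.generators} of $I_\sC$, i.e.\ that
\[
\phi_1(a\,\fu^{(1)}_{\pi,\sigma})\,\phi_2(b)=\phi_1(a)\,\phi_2(\fu^{(2)}_{\pi,\sigma}b),\qquad \phi_1(a\,\alpha^{(1)}_f)\,\phi_2(b)=\phi_1(a)\,\phi_2(\alpha^{(2)}_f b)
\]
for all $a\in A_1$, $b\in A_2$, $\pi,\sigma\in\Obj\sC$, $f\in\Hom_\sC(\pi,\sigma)$.

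The mechanism I would use is to rewrite the data and the two commuting squares of \cref{defn:cocycle.hom}, for the $\sC$-Hilbert bimodule with unitary cocycle $(E,\phi_i,\bv)$, as operator identities in $\cL(E)$. Unwinding the identification of $\alpha^{(i)}_\pi\otimes_{A_i}E$ with $\overline{\phi_i(A_i)E}$ (on which $\bv_\pi$ restricts to the isometry $\bbmv_{i,\pi}$), the requirement that $\bbmv_{i,\pi}$ intertwine the left $A_i$-actions becomes $\bv_\pi\phi_i(\alpha^{(i)}_\pi(x))=(\phi_i(x)\otimes 1_{\beta_\pi})\bv_\pi$; the first square becomes $\phi_i(\alpha^{(i)}_f x)=D_f\,\phi_i(x)$ with $D_f:=\bv_\sigma^*(1_E\otimes\beta_f)\bv_\pi$; and the second (cocycle) square becomes $\phi_i(\fu^{(i)}_{\pi,\sigma}z)=K_{\pi,\sigma}\,\phi_i(z)$ with $K_{\pi,\sigma}:=\bv_{\pi\otimes\sigma}^*(1_E\otimes\fv_{\pi,\sigma})(\bv_\pi\otimes 1_{\beta_\sigma})\bv_\sigma$, for all $x,z\in A_i$. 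The decisive feature is that $D_f$ and $K_{\pi,\sigma}$ are built purely from $\bv$, $\beta$, $\fv$, hence are independent of $i$. Taking adjoints, using the $*$-functoriality of $\alpha^{(i)}$ and that $\fu^{(i)}_{\pi,\sigma}$ and $K_{\pi,\sigma}$ are unitary, one gets the companion identities $\phi_i(x\,\alpha^{(i)}_f)=\phi_i(x)D_f$ and $\phi_i(x\,\fu^{(i)}_{\pi,\sigma})=\phi_i(x)K_{\pi,\sigma}$; the two displayed equalities then follow immediately (e.g.\ $\phi_1(a\,\fu^{(1)}_{\pi,\sigma})\phi_2(b)=\phi_1(a)K_{\pi,\sigma}\phi_2(b)=\phi_1(a)\phi_2(\fu^{(2)}_{\pi,\sigma}b)$). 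Hence $\phi_1\ast\phi_2$ factors through $A_1\ast_\sC A_2$, producing $\phi_1\ast_\sC\phi_2$.

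It then remains to verify that $(E,\phi_1\ast_\sC\phi_2,\bv)$ is a $\sC$-Hilbert $(A_1\ast_\sC A_2)$-$B$ bimodule with unitary cocycle in the sense of \cref{defn:cocycle.hom} and \cref{defn:trivial.cocycle}~(1). The required intertwining $\bv_\pi\,(\phi_1\ast_\sC\phi_2)(\alpha^\ast_\pi(w))=\bigl((\phi_1\ast_\sC\phi_2)(w)\otimes 1_{\beta_\pi}\bigr)\bv_\pi$ and cocycle relation $(\phi_1\ast_\sC\phi_2)(\fu^\ast_{\pi,\sigma}\,z)=K_{\pi,\sigma}\,(\phi_1\ast_\sC\phi_2)(z)$ hold on $w\in A_1$ and $w\in A_2$ by the $i=1,2$ instances just obtained, and they pass to products of algebra elements (the intertwining because $\alpha^\ast_\pi$ and $\phi_1\ast_\sC\phi_2$ are multiplicative, the cocycle relation because of the way $\fu^\ast_{\pi,\sigma}$ acts on reduced words, as in the lemma preceding the statement); being norm-closed conditions, they extend to all of $A_1\ast_\sC A_2$. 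Finally, if both $(E,\phi_i)$ are proper then $\phi_i(A_i)\subset\cK(E)$, whence $(\phi_1\ast_\sC\phi_2)(A_1\ast_\sC A_2)=C^*\bigl(\phi_1(A_1)\cup\phi_2(A_2)\bigr)\subset\cK(E)$ because $\cK(E)$ is an ideal of $\cL(E)$; so the bimodule is proper.

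I expect the main obstacle to be the non-essentiality of $\phi_1$ and $\phi_2$. Because of it the isometric maps $\bbmv_{i,\pi}$ only live on $\overline{\phi_i(A_i)E}$, so one genuinely needs the unitary cocycle $\bv_\pi$ to speak of operators on all of $E$, and the operator identities above must be propagated from $\overline{\phi_i(A_i)E}$ to all of $E$; this is done by checking that both sides of each identity vanish on the orthogonal complement $\{\xi\in E:\phi_i(A_i)\xi=0\}$, which is complemented since an approximate unit of $A_i$ is carried by $\phi_i$ to a projection. A secondary nuisance is that $\fu^{(i)}_{\pi,\sigma}$ and $\alpha^{(i)}_f$ are multipliers of $A_i$ rather than elements; to apply the cocycle diagrams (phrased for genuine tensor products over $A_i$) one repeatedly factors an arbitrary $z\in A_i$ as $z=\alpha^{(i)}_\sigma(c)\,y$ with $c,y\in A_i$, using Cohen--Hewitt factorization for the essential left $\alpha^{(i)}_\sigma(A_i)$-module $A_i$, exactly in the spirit of \cref{lem:cocycle_range}.
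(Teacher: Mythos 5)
Your proof is correct and follows essentially the same route as the paper: the paper also obtains $\phi_1\ast_\sC\phi_2$ by killing the generators of $I_\sC$ under $\phi_1\ast\phi_2$, and its inner-product computation
$\langle\phi_1(\fu^{(1)}_{\pi,\rho}a)\xi,\phi_2(b)\eta\rangle=\langle\phi_1(a)\xi,\phi_2((\fu^{(2)}_{\pi,\rho})^*b)\eta\rangle$
is exactly your cancellation of the common unitary $K_{\pi,\sigma}=\bv_{\pi\otimes\sigma}^*(1_E\otimes\fv_{\pi,\sigma})(\bv_\pi\otimes 1_{\beta_\sigma})\bv_\sigma$ (and likewise $D_f$ for the morphism relation), written in sesquilinear rather than operator form; the verification of the bimodule axioms and of properness is the same in both.

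One correction to your last paragraph: for a non-essential $\ast$-homomorphism $\phi_i$ the submodule $\overline{\phi_i(A_i)E}$ need \emph{not} be orthogonally complemented, and $\phi_i(u_\lambda)$ need not converge strictly to a projection (e.g.\ $C_0((0,1])$ acting by multiplication on the Hilbert $C[0,1]$-module $C[0,1]$). Fortunately no propagation to a complement is needed: in each of the identities $\phi_i(\fu^{(i)}_{\pi,\sigma}z)=K_{\pi,\sigma}\phi_i(z)$ and $\phi_i(\alpha^{(i)}_fz)=D_f\phi_i(z)$, both sides are globally defined adjointable operators that begin by applying $\phi_i(\,\cdot\,)$, so for every $\xi\in E$ the relevant vectors already lie in $\overline{\phi_i(A_i)E}$, where $\bv$ restricts to $\bbmv_i$ and the diagrams of \cref{defn:cocycle.hom} apply directly. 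With that (unneeded) step removed, the argument is complete.
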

		
		\begin{proof}
			We write $(\alpha_i,\fu_i)$ and $(\beta,\fv)$ for the $\sC$-actions on $A_i$ and $B$. The $*$-homomorphism $\phi_1 \ast_\sC \phi_2$ is induced from $\phi_1 \ast \phi_2 \colon A_1\ast A_2\to \cL(E)$ since it vanishes on $I_\sC$. 
			Indeed, for any $\pi,\rho\in\Obj\sC$, $a \in A_1$, $b\in A_2$, $\xi,\eta\in E$, and $f\in\Hom_\sC(\pi, \rho)$, we see 
			\begin{align*}
				&
				\langle \phi_1(\fu_{1,\pi,\rho}a)\xi, \phi_2(b)\eta\rangle 
				= \langle \bv_{\pi\otimes\rho}\phi_1(\fu_{1,\pi,\rho}a)\xi, 
				\bv_{\pi\otimes\rho}\phi_2(b) \eta \rangle
				\\
				={}& \langle (1_E\otimes_B \fv_{\pi,\rho}) \bv_{\pi\otimes\rho}\phi_1(a)\xi, 
				(1_E\otimes_B \fv_{\pi,\rho}) \bv_{\pi\otimes\rho}\phi_2((\fu_{2,\pi,\rho})^*b)\eta\rangle
				\\
				={}&
				\langle \phi_1(a)\xi, \phi_2(\fu_{2,\pi,\rho}^* b)\eta\rangle, \\
				&
				\langle \phi_1(\alpha_{1,f} a)\xi, \phi_2(b)\eta\rangle 
				= \langle \bv_{\rho}\phi_1(\alpha_{1,f} a)\xi, 
				\bv_{\rho}\phi_2(b)\eta\rangle
				\\
				={}& \langle (1_E\otimes_B \beta_{f}) \bv_{\pi} \phi_1(a)\xi, 
				\bv_{\rho}\phi_2(b)\eta \rangle
				=
				\langle \bv_{\pi}\phi_1(a)\xi, 
				(1_E\otimes_B \beta_{f^*}) \bv_{\rho}\phi_2(b)\eta\rangle
				\\
				={}&
				\langle \bv_{\pi}\phi_1(a)\xi, 
				\bv_{\pi}\phi_2(\alpha_{2,f^*}b)\eta\rangle
				=
				\langle \phi_1(a)\xi, \phi_2(\alpha_{2,f^*} b)\eta\rangle. 
			\end{align*}
			Uniqueness follows from the construction, and now it is a routine work to show that $(E,\phi_1 \ast_\sC \phi_2, \bv)$ forms a $\sC$-Hilbert $A_1\ast_\sC A_2$-$B$-bimodule. 
			Finally, if $\phi(A_1),\phi_2(A_2)\subset \cK_B(E)$, then the range of $\phi_1 \ast_\sC \phi_2$ is also contained in $\cK_B(E)$. 
		\end{proof}

		\begin{prop}\label{prop:KK.free.product}
			There is a $\KK^\sC$-equivalence $A \oplus B \sim A \ast_\sC B$. 
		\end{prop}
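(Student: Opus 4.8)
The plan is to mimic the classical free-product argument (see e.g. Cuntz--Skandalis or \cite{blackadarTheoryOperatorAlgebras1998}*{Section 17}): one produces explicit cocycle $\sC$-$\ast$-homomorphisms in both directions and checks that the two composites are $\KK^\sC$-equivalent to the identity. First I would record the obvious split $\sC$-$\ast$-homomorphisms. There are the canonical inclusions $\iota_A\colon A\to A\ast_\sC B$ and $\iota_B\colon B\to A\ast_\sC B$, which are genuine $\sC$-$\ast$-homomorphisms (they manifestly intertwine the actions $\alpha,\beta$ and the cocycles $\fu,\fv$ with $\alpha\ast_\sC\beta,\fu\ast_\sC\fv$). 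Together these give a $\sC$-$\ast$-homomorphism $\iota:=\iota_A\oplus\iota_B\colon A\oplus B\to A\ast_\sC B$ (using that $A\oplus B$ carries the direct-sum $\sC$-C*-algebra structure). Conversely, the pair of projections $A\ast_\sC B\to A$, $A\ast_\sC B\to B$ obtained from the augmentation-type $\ast$-homomorphisms $A\to A\oplus B\to A$ extended by $\mathrm{id}_A$ and zero on the $B$-leg (and symmetrically) assemble to a $\sC$-$\ast$-homomorphism $\rho\colon A\ast_\sC B\to A\oplus B$; this is well-defined on the free product because it kills the generators \eqref{eqn:ideal.generators} (both $a\fu_{\pi,\sigma}\ast b$ and $a\ast\fv_{\pi,\sigma}b$ map to $(a\alpha_\pi\alpha_\sigma(\cdots),\,0)$ in a way that matches, using normalization $\fu_{\mathbf 1,\pi}=1$ etc.), and it is $\sC$-equivariant by construction. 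One checks immediately that $\rho\circ\iota=\mathrm{id}_{A\oplus B}$, so $[\rho]\circ[\iota]=\mathrm{id}$ in $\Kas^\sC$.

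The substance is the reverse composite $\iota\circ\rho\colon A\ast_\sC B\to A\ast_\sC B$. Here I would build a homotopy of $\sC$-$\ast$-homomorphisms (or at least a $\KK^\sC$-equivalence realized by an $\sC$-quasihomomorphism) from $\iota\circ\rho$ to $\mathrm{id}_{A\ast_\sC B}$. The classical trick is to use a "rotation" interpolating between the identity and the homomorphism that sends the $B$-leg to zero while keeping $A$; iterating, or doing $A$ and $B$ simultaneously on $(A\ast_\sC B)\oplus(A\ast_\sC B)$ with a path of rotation unitaries $R_t\in\bM_2$, one gets a homotopy. Concretely: consider on $\sH_D$ with $D:=A\ast_\sC B$ the $\sC$-$\ast$-representations coming from $\mathrm{id}_D$ and from $\iota_A\circ(\text{proj}_A)$ (and the analogous one for $B$), amplified so that Meyer-style stabilization (\cref{lem:quasihom_degeneration}, \cref{cor:specialization}) applies after passing to $\kC(-)$. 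The key algebraic input — and the main obstacle I anticipate — is verifying that the relevant rotation unitaries and the stabilizing isometry $S_t$ are $\sC$-invariant in the sense of \cref{ex:inner.auto}, so that the conjugations used in the homotopy do preserve the cocycle data; this is exactly the place where the non-adjointability subtleties of \cref{defn:cocycle.hom} could bite, and one must stay within endomorphism $\sC$-C*-algebras (replacing $A$, $B$, $A\ast_\sC B$ by their $\cK$-stabilizations via \cref{lem:endomorphism.realization} if necessary) so that cocycles are honest unitaries in multiplier algebras.

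Putting it together: after the $\cK$-stabilization reductions, $\iota\circ\rho$ and $\mathrm{id}_D$ become $\sC$-$\ast$-homomorphisms whose associated $\sC$-Kasparov bimodules have unitarily equivalent essential parts up to locally compact perturbation once amplified appropriately, so \cref{lem:quasihom_degeneration} gives a homotopy and hence $[\iota]\circ[\rho]=\mathrm{id}_D$ in $\Kas^\sC$. Alternatively, and perhaps more cleanly, one can present $\iota\circ\rho$ and $\mathrm{id}$ directly as the two evaluations of a single $\sC$-$\ast$-homomorphism $D\to D[0,1]\otimes\cK$ built from the rotation path, invoking that $D\to D\otimes\cK$ is a full-corner embedding and hence a $\KK^\sC$-equivalence by \cref{lem:split_exact}(1). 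Either way one concludes that $[\iota]$ and $[\rho]$ are mutually inverse $\KK^\sC$-equivalences, proving $A\oplus B\sim A\ast_\sC B$. The only genuinely new checking relative to the non-equivariant case is the bookkeeping of the cocycle-intertwining diagrams along the rotation homotopy, which is routine given \cref{prop:universality.free.product} (to define the homomorphism out of the free product at each time $t$) and the $\sC$-invariance observations above.
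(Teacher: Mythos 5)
There is a genuine gap at the very first step: the map you call $\iota=\iota_A\oplus\iota_B\colon A\oplus B\to A\ast_\sC B$ is not a $\ast$-homomorphism. In $A\oplus B$ one has $(a,0)\cdot(0,b)=0$, but in the free product $\iota_A(a)\iota_B(b)=a\ast b$ is generally nonzero (the two legs are free, not orthogonal), so $(a,b)\mapsto\iota_A(a)+\iota_B(b)$ is not multiplicative. Consequently the identity $\rho\circ\iota=\id_{A\oplus B}$ and the whole discussion of $\iota\circ\rho$ rest on an object that does not exist. The standard repair --- and what the paper does --- is to send $A\oplus B$ into $\bM_2(A\ast_\sC B)$ by $\diag(\iota_A\circ\mathrm{pr}_A,\iota_B\circ\mathrm{pr}_B)$, so the two images sit in orthogonal corners, and to use that the upper-left corner embedding $A\ast_\sC B\to\bM_2(A\ast_\sC B)$ is a $\KK^\sC$-equivalence (\cref{lem:split_exact}(1)). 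One then shows that both composites $\diag(\iota_A\circ\mathrm{pr}_A,\iota_B\circ\mathrm{pr}_B)\circ(j_A\ast_\sC j_B)$ and $\bM_2(j_A\ast_\sC j_B)\circ\diag(\iota_A\circ\mathrm{pr}_A,\iota_B\circ\mathrm{pr}_B)$ are homotopic to corner embeddings via rotation unitaries, which are scalar matrices and hence automatically $\sC$-invariant; no quasihomomorphism or $\kC$-stabilization machinery is needed for this proposition.

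Your map $\rho$ is essentially the correct $j_A\ast_\sC j_B$ furnished by \cref{prop:universality.free.product}, but your verification that it kills the ideal $I_\sC$ is off in detail: under $j_A\ast j_B$ both generators $(a\fu_{\pi,\sigma})\ast b$ and $a\ast(\fv_{\pi,\sigma}b)$ map to $(a\fu_{\pi,\sigma},0)\cdot(0,\fv_{\pi,\sigma}b)$-type products, which vanish outright in $A\oplus B$ rather than landing in the $A$-summand as you write; in any case the clean route is to invoke \cref{prop:universality.free.product}, which packages the cocycle compatibility. Once the target of the forward map is corrected to $\bM_2(A\ast_\sC B)$, your rotation-homotopy outline becomes exactly the paper's argument.
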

		\begin{proof}
			This is proved in the same way as the non-equivariant case \cite{cuntzNewLookKK1987}*{Proposition 3.1}.
			Let $\iota_A \colon A \to A \ast_\sC B$ and $\iota_B \colon B \to A \ast_\sC B$ be inclusions, forming $\sC$-$\ast$-homomorphisms. 
			Let $\mathrm{pr}_A \colon A \oplus B \to A$ and $\mathrm{pr}_B \colon A \oplus B \to B$ denote the projections, and $j_A \colon A \to A \oplus B$ and $j_B \colon B \to A \oplus B$ denote the inclusions. 
			By \cref{prop:universality.free.product}, we get a $\sC$-$\ast$-homomorphism
			\[ (j_A \ast_\sC j_B , \mathbbm{1}) \colon A \ast_\sC B\to A \oplus B. \]
			Then each of the compositions 
			\begin{align*}
				\diag(\iota_A \circ \mathrm{pr}_A, \iota_B\circ \mathrm{pr}_A) \circ (j_A \ast _\sC j_B) \colon& A \ast_\sC B \to A \oplus B \to \bM_2(A \ast_\sC B),\\
				\bM_2 (j_A \ast _\sC j_B, \mathbbm{1}) \circ \diag(\iota_A\circ \mathrm{pr}_A, \iota_B\circ \mathrm{pr}_A) \colon& A \oplus B \to \bM_2(A \ast_\sC B) \to \bM_2(A \oplus B), 
			\end{align*}
			are homotopic to the embeddings onto the upper left corner of $\bM_2$. Hence $j_A \ast _\sC j_B$ and $\diag(\iota_A\circ \mathrm{pr}_A, \iota_B\circ \mathrm{pr}_A)$  are mutually $\KK^\sC$-inverse. 
		\end{proof}

		\begin{df}
			For an endomorphism $\sC$-C*-algebra $A$, let $\rQ_\sC A:=A \ast_\sC A$, and let $\mrq_\sC A$ denote the kernel of the $\ast$-homomorphism $\id_A \ast _\sC \id_A \colon \rQ_\sC A \to A$. 
		\end{df}
		
		Since $\mrq_\sC A$ is the kernel of a cocycle $\sC$-$\ast$-homomorphism $(\id_A \ast _\sC \id_A , \mathbbm{1})$, it is $\sC$-invariant, and hence is endowed with the $\sC$-C*-algebra structure induced from that of $A \ast_\sC A $ (cf.\ \cref{subsubsection:ideal.exact}).
		
\begin{lem}\label{lem:qA.KKequiv}
	For any separable $\sC$-C*-algebra $A$, let $\iota_i \colon A \to \rQ_\sC A\to \cM(\mrq_\sC A)$ denote the canonical inclusion
    to the $i$-th component (for $i=1,2$). Then the $\sC$-Kasparov $A$-$\qC A$ bimodule 
	\begin{align*} 
		\Phi(\iota_1,\iota_2,\mathbbm{1}) =\bigg( \mrq_\sC  A \oplus \mrq _\sC A ^{\rm op}, \iota_1 \oplus \iota_2, \diag (\mathbbm{1}, \mathbbm{1}), \begin{pmatrix}0 & 1 \\ 1 & 0 \end{pmatrix} \bigg)		
	\end{align*}
	(where we follow \eqref{eq:KK_quasihom} by abuse of notation)
 induces a $\KK^\sC$-equivalence.
\end{lem}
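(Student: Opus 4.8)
The plan is to run the standard ``abstract'' version of Cuntz's argument that $qA$ is $\KK$-equivalent to $A$ (cf.\ \cite{cuntzNewLookKK1987}), using only facts already established: the split exactness of $\KK^\sC$ in each variable (\cref{lem:split_exact}(2)) and the $\KK^\sC$-equivalence $A\oplus A\simeq\rQ_\sC A$ of \cref{prop:KK.free.product}. (We may assume $A$ is a separable endomorphism $\sC$-C*-algebra, so that $\rQ_\sC A$ and $\qC A$ are defined.) No new analytic input is needed; the argument becomes formal once a single preliminary identity is in hand.

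First I would fix notation: write $\varpi:=\id_A\ast_\sC\id_A\colon\rQ_\sC A\to A$ for the fold map and $\kappa\colon\qC A\hookrightarrow\rQ_\sC A$ for the inclusion of the ideal, so that
\[
0\longrightarrow\qC A\xrightarrow{\ \kappa\ }\rQ_\sC A\xrightarrow{\ \varpi\ }A\longrightarrow 0
\]
is a short exact sequence of $\sC$-C*-algebras split by either of the $\sC$-$\ast$-homomorphisms $\iota_1,\iota_2$. Since $\varpi\circ\iota_i=\id_A$, the class $\theta:=[\iota_1]-[\iota_2]\in\KK^\sC(A,\rQ_\sC A)$ satisfies $\varpi_*\theta=0$, and by split exactness $\kappa_*$ is injective with image $\ker\varpi_*$; hence there is a unique $z\in\KK^\sC(A,\qC A)$ with $\kappa_*z=\theta$. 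The preliminary identity to be verified is
\[
\kappa_*\,[\Phi(\iota_1,\iota_2,\mathbbm{1})]=[\iota_1]-[\iota_2],
\]
i.e.\ $[\Phi(\iota_1,\iota_2,\mathbbm{1})]=z$. This is proved as in the non-equivariant case: the Kasparov product $\Phi(\iota_1,\iota_2,\mathbbm{1})\hotimes_{\qC A}[\kappa]$ is the $\sC$-Kasparov bimodule with underlying module $\qC A\oplus\qC A^{\rm op}$ (viewed over $\rQ_\sC A$), left action $\iota_1\oplus\iota_2$, trivial cocycle, and Fredholm operator the flip $\big(\begin{smallmatrix}0&1\\1&0\end{smallmatrix}\big)$; one connects it to $(\rQ_\sC A\oplus\rQ_\sC A^{\rm op},\,\iota_1\oplus\iota_2,\,\mathbbm{1},\,0)$, a representative of $[\iota_1]-[\iota_2]$, by first enlarging the Hilbert module from $\qC A$ to $\rQ_\sC A$ with the flip fixed (this is a Kasparov $A$-$\rQ_\sC A[0,1]$ bimodule because $\iota_1(a)-\iota_2(a)\in\qC A=\cK(\rQ_\sC A)$) and then rotating the flip to $0$ through $\cos\theta\cdot\big(\begin{smallmatrix}0&1\\1&0\end{smallmatrix}\big)$. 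Both homotopies respect the cocycle data because the flip matrix is $\sC$-invariant (cf.\ \cref{ex:inner.auto} and the proof of \cref{lem:quasihom_degeneration}). It remains to show that $z$ is a $\KK^\sC$-equivalence.

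By the Yoneda lemma it suffices to check that $z_*\colon\KK^\sC(D,A)\to\KK^\sC(D,\qC A)$, $w\mapsto w\hotimes_A z$, is bijective for every separable $\sC$-C*-algebra $D$. From $\kappa_*z=\theta$ one gets $\kappa_*\circ z_*=(\iota_1)_*-(\iota_2)_*$ as maps $\KK^\sC(D,A)\to\KK^\sC(D,\rQ_\sC A)$, and since $\kappa_*$ is injective with image $\ker\varpi_*$, bijectivity of $z_*$ is equivalent to bijectivity of $(\iota_1)_*-(\iota_2)_*\colon\KK^\sC(D,A)\to\ker\varpi_*$. Now transport along the $\KK^\sC$-equivalence $[j_1\ast_\sC j_2]\colon\rQ_\sC A\xrightarrow{\ \sim\ }A\oplus A$ of \cref{prop:KK.free.product}: it sends $[\iota_i]$ to the inclusion of the $i$-th summand and $[\varpi]$ to the $\KK^\sC$-morphism $A\oplus A\to A$ which is the identity on each summand. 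Under the resulting identification $\KK^\sC(D,\rQ_\sC A)\cong\KK^\sC(D,A)^{\oplus 2}$, the map $(\iota_1)_*-(\iota_2)_*$ becomes $w\mapsto(w,-w)$, the subgroup $\ker\varpi_*$ becomes $\{(a,b):a+b=0\}$, and bijectivity of $w\mapsto(w,-w)$ onto this subgroup is obvious. Hence $z$, and therefore $[\Phi(\iota_1,\iota_2,\mathbbm{1})]$, is a $\KK^\sC$-equivalence. (Concretely, one checks that the $\sC$-$\ast$-homomorphism $\id_A\ast_\sC 0\colon\rQ_\sC A\to A$ restricts to a one-sided inverse $(\id_A\ast_\sC 0)|_{\qC A}$ of $[\Phi(\iota_1,\iota_2,\mathbbm{1})]$; the Yoneda argument upgrades it to a two-sided inverse.)

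I expect the main obstacle to be the preliminary identity $\kappa_*[\Phi(\iota_1,\iota_2,\mathbbm{1})]=[\iota_1]-[\iota_2]$ — not because it is deep, but because it needs care: one must unwind \cref{prop:universality.free.product} and the definition of the $\sC$-structure on $\rQ_\sC A$ to see that $\iota_1,\iota_2,\varpi$ are genuine $\sC$-$\ast$-homomorphisms with trivial cocycle, verify that the abuse-of-notation bimodule $\Phi(\iota_1,\iota_2,\mathbbm{1})$ is a legitimate $\sC$-Kasparov bimodule (again using $\iota_1(a)-\iota_2(a)\in\qC A$), and track the cocycles through the two homotopies above. Everything else relies only on \cref{lem:split_exact}, \cref{prop:KK.free.product}, and the Yoneda lemma.
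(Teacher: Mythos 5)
Your proposal is correct, and it uses the same two structural inputs as the paper (split exactness from \cref{lem:split_exact} and the equivalence $\rQ_\sC A \sim A\oplus A$ of \cref{prop:KK.free.product}), but it organizes them differently. The paper exhibits an explicit $\KK^\sC$-equivalence $[\pr_1]\circ[(j_1\ast_\sC j_2)|_{\qC A}]\in\KK^\sC(\qC A,A)$ --- which is exactly the class of $(\id_A\ast_\sC 0)|_{\qC A}$ that you mention in your closing parenthesis --- and verifies directly that its Kasparov product with $\Phi(\iota_1,\iota_2,\mathbbm{1})$ is $[\id_A]$; the point is that pushing $\Phi$ forward along $(j_1\ast_\sC j_2)|_{\qC A}$ replaces the module $\qC A\oplus\qC A^{\rm op}$ by $\overline{(j_1\ast_\sC j_2)(\qC A)(A\oplus A)}=A\oplus A$ automatically, so no interpolation homotopy is needed. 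Your route instead characterizes $[\Phi(\iota_1,\iota_2,\mathbbm{1})]$ as the unique $\kappa_*$-preimage of $[\iota_1]-[\iota_2]$ and then runs a Yoneda argument; this is perfectly valid but costs you the extra homotopy from $(\qC A\oplus\qC A^{\rm op},\,\text{flip})$ to $(\rQ_\sC A\oplus\rQ_\sC A^{\rm op},\,\text{flip})$, since $\kappa_*$ only yields $\overline{\qC A\cdot\rQ_\sC A}=\qC A$ rather than all of $\rQ_\sC A$. Two small corrections there: the identity $\qC A=\cK(\rQ_\sC A)$ is wrong ($\cK(\rQ_\sC A)=\rQ_\sC A$; one only has $\qC A\subset\rQ_\sC A$ as an ideal), and what the enlargement step actually requires is the interpolating Hilbert $\rQ_\sC A[0,1]$-module $\{f\in \rQ_\sC A[0,1]\mid f(0)\in\qC A\}$, on which multiplication by $\iota_1(a)-\iota_2(a)\in\qC A$ is indeed compact because $\qC A$ is an ideal --- so the step goes through, but not for the reason you wrote. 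With that repaired, your argument is a correct, if slightly longer, alternative to the paper's proof.
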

\begin{proof}
	Let $j_i \colon A \to A \oplus A$ denote the inclusion to $i$-th direct summand. 
	By \cref{prop:KK.free.product}, $j_1 \ast_\sC j_2$ gives a $\KK^\sC$-equivalence of  $\rQ_\sC A$ and $A \oplus A$. 
	Moreover, since the exact sequence $0 \to \qC A \to \rQ _\sC A \to A \to 0$ splits $\sC$-equivariantly by $\iota_2$, the element $[\pr_1] \circ [(j_1 \ast_\sC j_2)|_{\qC A}] \in \KK^\sC(\qC A,A)$ is a $\KK^\sC$-equivalence. 
	On the other hand, we have
	\begin{align*}
		&[\pr_1] \circ [(j_1 \ast_\sC j_2)|_{\qC A}] \circ \bigg[ \mrq_\sC  A \oplus \mrq_\sC A ^{\rm op}, \iota_1 \oplus \iota_2, \diag (\mathbbm{1}, \mathbbm{1}), \begin{pmatrix}0 & 1 \\ 1 & 0 \end{pmatrix} \bigg]  \\
		=&[\pr_1] \circ  \bigg[ A \oplus A^{\rm op}, j_1 \oplus j_2, \diag (\mathbbm{1}, \mathbbm{1}), \begin{pmatrix}0 & 1 \\ 1 & 0 \end{pmatrix} \bigg] = [\id_A] \in \KK^\sC (A , A).  
	\end{align*}
	This finishes the proof. 
\end{proof}

		For endomorphism $\sC$-C*-algebras $A,B$, let $[A,B]^\sC$ (resp.~$[A,B]^\sC_{\rm triv}$) denote the set of homotopy classes of cocycle $\sC$-$\ast$-homomorphisms with unitary (resp.~trivial) cocycles from $A$ to $B$. 
        Here, the homotopy equivalence relation $\phi_0 \simeq \phi_1$ is given by a cocycle $\sC$-$*$-homomorphism $\phi \colon A\to B[0,1]$ (resp.~a $\sC$-$*$-homomorphism) such that $\eval_i \circ \phi=\phi_i$ for $i=0,1$. 
		Note that there are canonical maps $[A,B]_{\rm triv}^\sC \to [A,B]^\sC\to \KK^{\sC}(A,B)$. 
  
        \begin{thm}\label{thm:quasihom.replacement}
			Let $A$ and $B$ be $\sC$-C*-algebras. 
        \begin{enumerate}
            \item There is a bijection between $[\qC \kC A, \kC B]^\sC $ and $\qHom ^\sC(\kC A,B)/\sim_h \cong \KK^\sC(\kC A,B)$. 
            \item The canonical map  $\KK^\sC \colon [\kC \qC \kC A, \kC B]^\sC_{\rm triv} \to \KK^\sC(\kC \qC \kC A,\kC B) \cong \KK^\sC(A,B)$ sending $[\psi,\mathbbm{1}]$ to its $\KK^\sC$-class is an isomorphism. 
        \end{enumerate}
		\end{thm}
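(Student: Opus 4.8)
My plan for (1) is to factor the claim as $[\qC\kC A,\kC B]^\sC\cong\qHom^\sC(\kC A,B)/\sim_h\cong\KK^\sC(\kC A,B)$, the second isomorphism being exactly \cref{lem:quasihom_represent}, so that only the first, purely algebraic bijection is new; it is the $\sC$-equivariant analogue of Cuntz's identification of quasihomomorphisms with $\ast$-homomorphisms out of the ideal $\qC$. Starting from a $\sC$-quasihomomorphism $(\phi_\pm,\bv)$ of $\kC A$ on $\sH_B$, I will feed the two $\sC$-$\ast$-representations $(\phi_+,\bv)$ and $(\phi_-,\bv)$ — which share the module $\sH_B$ and the unitary cocycle $\bv$ — into \cref{prop:universality.free.product} to get $\phi_+\ast_\sC\phi_-\colon \rQ_\sC\kC A\to\cL(\sH_B)$ with unitary cocycle $\bv$; restricting to $\qC\kC A$, the kernel of $\id\ast_\sC\id$, which is generated as an ideal by the elements $\iota_1(a)-\iota_2(a)$, lands in $\cK(\sH_B)$ since $\phi_+(a)-\phi_-(a)\in\cK(\sH_B)$, so this restriction is a cocycle $\sC$-$\ast$-homomorphism with unitary cocycle into $\cK(\sH_B)$, which I will then identify $\sC$-equivariantly with $\kC B$. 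In the reverse direction I will compose a given cocycle $\sC$-$\ast$-homomorphism $\qC\kC A\to\kC B$ with the standard $\sC$-Morita equivalence $\sfH_{\sC,B}\cong(\beta(\cH_\sC),m,\bU_B)$ and read off the induced representations of the two corner copies $\iota_\pm\kC A$ on $\beta(\cH_\sC)\cong\sH_B$, together with the resulting cocycle. That these two passages are mutually inverse modulo homotopy — and that the first one is well defined on homotopy classes, using $\kC(B[0,1])=(\kC B)[0,1]$ — I will check with the help of \cref{lem:quasihom_degeneration}, which shows that the answer depends only on the essential part of the associated Kasparov bimodule up to unitary equivalence and locally compact perturbation.

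For (2), the composite isomorphism $\KK^\sC(\kC\qC\kC A,\kC B)\cong\KK^\sC(A,B)$ will be assembled from the Morita invariance of $\KK^\sC$ (\cref{lem:split_exact}(1), applied to $\sfH_{\sC,\qC\kC A}$, $\sfH_{\sC,A}$ and $\sfH_{\sC,B}$) and the $\KK^\sC$-equivalence $\qC\kC A\sim\kC A$ of \cref{lem:qA.KKequiv}. To show the canonical map $[\kC\qC\kC A,\kC B]^\sC_{\rm triv}\to\KK^\sC(\kC\qC\kC A,\kC B)$ is surjective, I will take a class, represent it by part (1) as a cocycle $\sC$-$\ast$-homomorphism $(\psi,\bw)\colon\qC\kC A\to\kC B$ with unitary cocycle, and apply $\kC$: by \cref{cor:specialization}, $\kC(\psi,\bw)=(\psi_{\cH_\sC},\mathbbm{1})\colon\kC\qC\kC A\to\kC\kC B\cong\kC B$ is a $\sC$-$\ast$-homomorphism with \emph{trivial} cocycle, and the description of its essential part as $\sfH_{\sC,\qC\kC A}\otimes_{\qC\kC A}(\kC B,\psi,\bw)$ shows that the Morita identification carries $[\kC(\psi,\bw)]$ onto the given class. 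For injectivity, given two trivial-cocycle $\sC$-$\ast$-homomorphisms $\kC\qC\kC A\to\kC B$ with equal $\KK^\sC$-class, I will transport them through the Morita equivalences and part (1) into $\sC$-quasihomomorphisms of $\kC A$ on $\sH_B$ with equal class in $\qHom^\sC(\kC A,B)/\sim_h$, and then invoke \cref{lem:quasihom_degeneration} — crucially the last clause of its part (1), which produces a homotopy of $\sC$-$\ast$-representations with trivial cocycle because the implementing unitaries (the stabilization unitary $S_t$ and the rotation matrices) are $\sC$-invariant — to obtain a homotopy of trivial-cocycle $\sC$-$\ast$-homomorphisms $\kC\qC\kC A\to\kC B$ connecting the two.

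The step I expect to be the main obstacle is the $\sC$-equivariant bookkeeping of the stabilizations used throughout — above all, the identification, appearing in both parts, of $\cK(\sH_B)$ carrying the $\sC$-structure determined by an \emph{arbitrary} unitary cocycle $\bv$ with the fixed model $\kC B=\cK(\beta(\cH_\sC))$, and likewise of $\kC\kC B$ with $\kC B$. These amount to a $\sC$-equivariant Kasparov stabilization statement: $(\beta(\cH_\sC),\bU_B)$ absorbs every countably generated full $\sC$-equivariant Hilbert $B$-module with a unitary cocycle (which holds by the $\oplus\infty$-construction of $\cH_\sC$), and one may, as in \cref{lem:quasihom_degeneration}, need to enlarge a quasihomomorphism by a degenerate summand before this applies — harmless up to homotopy. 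The second delicate point is keeping cocycles \emph{trivial} along the homotopies in the injectivity half of (2); this is precisely where the $\sC$-invariance of the unitaries in the proof of \cref{lem:quasihom_degeneration}(1) is essential, and it is the reason the statement is formulated with the extra layer of $\kC$ and with $[\,\cdot\,,\,\cdot\,]^\sC_{\rm triv}$ in place of $[\,\cdot\,,\,\cdot\,]^\sC$.
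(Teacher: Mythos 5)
Your proposal follows essentially the same route as the paper: part (1) via $\phi_+\ast_\sC\phi_-$ restricted to $\qC\kC A$ (landing in compacts, then identified with $\kC B$ via $\bU_B$), with the inverse built from the corner restrictions plus Kasparov stabilization and both round-trips controlled by \cref{lem:quasihom_degeneration}; part (2) by applying $\kC$ and \cref{cor:specialization} to trivialize the cocycle, identifying $\kC\kC B\cong\kC B$, and using the trivial-cocycle clause of \cref{lem:quasihom_degeneration}(1) for injectivity. You have also correctly isolated the two genuinely delicate points (the equivariant stabilization identifying $(\cK(\sH_B),\bv)$ with the fixed model $\kC B$, and preserving triviality of cocycles along homotopies), so the proposal is sound.
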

\begin{proof}
    We first show (1). We show that $\qHom (A,B)/\sim_h \cong [\qC A, \kC B]$ if $A$ is realized by endomorphisms, and then apply it to $\kC A$ and $B$ in which case \cref{lem:quasihom_represent} can be applied. 
	The map $\Theta \colon \qHom^\sC (A,B) \to \Hom^\sC (\qC A, \kC B)$ is given by using \cref{ex:cocycle.twist} and \cref{prop:universality.free.product} as
	\[ 
	\Theta (\phi_\pm, \bv) := (\id, \bU_B^*) \circ \big( (\phi_+ \ast _\sC \phi_-)|_{\qC A}, \bv \big).
	\]
	Here, we apply $\bU_B$ to $\sH_B$ by fixing an identification $\sH_B \cong \beta(\cH_\sC)$ as Hilbert $B$-modules. 
	Note that the restriction of $(\phi_+ \ast _\sC \phi_-)$ to $\qC A$ takes value in $\cK(\beta(\cH_\sC))$. 

	The inverse $\Xi \colon \Hom^\sC (\qC A, \kC B) \to \qHom^\sC (A,B)$ is given by 
	\[ \Xi (\psi, \bw) := (\Ad (U) \circ (\psi \circ \iota_1  \oplus 0),\Ad (U) \circ (\psi \circ \iota_2  \oplus 0) , U(\diag (\bw, \mathbbm{1}) U^* ), \]
	where $E:=\phi(\qC A ) \cdot \beta(\cH_\sC)$ and $U \colon E \oplus \beta(\cH_\sC) \to \beta(\cH_\sC)$ is a unitary taken by the Kasparov stabilization theorem. Note that $\Xi (\psi,\bw)$ is well-defined only up to unitary equivalence coming from the ambiguity of the choice of $U$. 
	
	The equivalence $\Xi \circ \Theta (\phi_{\pm},\bv)\sim_{h} (\phi_{\pm},\bv)$ follows from \cref{lem:quasihom_degeneration}. Similarly, $\Theta \circ \Xi (\psi,\bw) \sim_{h} (\psi,\bw)$ also follows from the same homotopy, considered for $\sC$-$\ast$-homomorphisms from $\qC A$ to $\cK(\beta (\cH_\sC))$.

    To see (2), we first give an isomorphism similar to (1). Pick a unitary $V \colon \sH \otimes \cH_\sC \to \cH_\sC$ such that $U_\pi V = V (1 \otimes U_{\pi})$ for any $\pi$ by using $\cH_\sC \cong (\cH_\sC')^{\oplus \infty}$ and $U_\pi = (U_\pi')^{\oplus \infty}$. 
    Then, $\beta (V) \in \cL(\beta(\cH_\sC))$ satisfies $ \beta(V) = \bU_{B,\pi}^*(\beta(V) \otimes 1_{\beta_\pi}) (1_\sH \otimes \bU_{B,\pi})$ for any $\pi \in \Obj \sC \setminus \{ \mathbf{0}\}$, and hence $(\Ad (\beta(V)), \mathbbm{1})$ forms a $\sC$-$\ast$-isomorphism of $\cK \otimes \kC B \to \kC B$ by \cref{ex:inner.auto}. 
    Now, the map $\Theta_{\rm triv} \colon \qHom^\sC (\kC A,B) \to \Hom^\sC _{\rm triv}(\kC \qC \kC A , \kC B)$ is given (where $\Hom^\sC_{\rm triv}(\blank , \blank)$ denotes the set of $\sC$-$\ast$-homomorphisms) by using \cref{cor:specialization} as
	\[ 
	\Theta_{\rm triv} (\phi_\pm, \bv) := (\Ad \beta(V) , \mathbbm{1}) \circ \kC\big( (\phi_+ \ast _\sC \phi_-)|_{\qC\kC A}, \bv \big).
	\]
    The map $\Xi_{\rm triv}$ of converse direction is defined as
	\[ \Xi_{\rm triv}(\psi, \mathbbm{1}) := \Xi ((\id,\bU_B) \circ (\psi,\mathbbm{1}) \circ (\id , \bU_{\qC\kC A}^*) \circ (\iota, \mathbbm{1})) , \]
	where $(\iota , \mathbbm{1}) \colon \qC \kC A \to \cK( \sH_{\qC \kC A} ) $ denote the full corner embedding into a rank $1$ subspace. 
	Again by \cref{lem:quasihom_degeneration} (1), they are inverse to each other up to homotopy. 
	
	Now, the desired isomorphism follows from
	\begin{align*}
	    	&\Phi(\iota_1,\iota_2,\mathbbm{1}) \hotimes_{\qC \kC A} \sfH_{\sC, \qC \kC A}^* \hotimes_{\kC \qC \kC A} \big( \KK^\sC \circ \Theta_{\rm triv} (\phi_{\pm},\bv)\big)  \hotimes_{\kC B} \sfH_{\sC , B}\\
	    	={} & \Phi(\iota_1,\iota_2,\mathbbm{1}) \hotimes_{\qC \kC A} [\sH_B, \phi_+ \ast \phi_-, \bv]
	    	= \Phi (\phi_\pm,\bv),
	\end{align*} 
    together with bijectivity of $\Theta_{\rm triv}$ and $\Phi$ (cf.~\cref{lem:quasihom_represent}).
\end{proof}

		\subsection{Universality of the \texorpdfstring{$\KK^\sC$}{KKC}-functor}
		We consider the following three categories of $\sC$-C*-algebras: 
		\begin{itemize}
		    \item The category $\Corr^\sC_{\rm es}$ of separable $\sC$-C*-algebras and unitary equivalence classes of essential proper $\sC$-Hilbert bimodules (\cref{defn:category.CorrC}).
		    \item The category $\Calg^\sC$ of separable $\sC$-C*-algebras and cocycle $\sC$-$\ast$-homomorphisms (\cref{defn:category.Calg}).
		    \item The subcategory $\Ctriv^\sC$ of $\Calg^\sC$ consisting of separable endomorphism $\sC$-C*-algebras and $\sC$-$*$-homomorphisms (with trivial cocycle). 
		\end{itemize}
		They are all essentially small categories. 
		
		We have canonical functors 
        \[ \Ctriv^\sC \to \Calg^\sC \to \Corr^\sC_{\rm es} \xrightarrow{\KK^\sC} \Kas^\sC, \]
		where the last functor $\KK^\sC $ is given by $\KK^\sC(\sfE, \phi,\bbmv) := [\sfE, \phi,\bbmv] \in \KK^\sC(A,B)$. 
		We use the same letter $\KK^\sC$ for the compositions $\Calg^\sC \to \Kas^\sC$ and $\Ctriv^\sC \to \Kas^\sC$. 
        
		Let $\fC$ be either $\Calg^\sC_{\rm triv}$, $\Calg^\sC$ or $\Corr_{\rm es}^\sC$. A covariant functor $\rF \colon \fC  \to \fA$ to an additive category $\fA$ is said to be
		\begin{itemize}
			\item C*-stable if $\rF (\phi, \bbmv)$ is an isomorphism for any morphism $(\phi,\bbmv)$ with a full corner embedding $\phi$, 
			\item homotopy invariant if $\rF (\eval_0,\mathbbm{1}) = \rF (\eval_1,\mathbbm{1})$, where $\eval_t \colon A[0,1] \to A$ denotes the evaluation at $t \in [0,1]$, and 
			\item split exact if $F(\iota)\oplus F(s)\colon \rF(I) \oplus \rF(A/I)\to \rF(A)$ is an isomorphism for any sequence of morphisms $0 \to I \xrightarrow{\iota} A \to A/I \to 0$ that is split exact with a morphism $s\colon A/I\to A$.
		\end{itemize}
		For example, the functors $\KK^\sC$, $\KK^\sC(A, \blank )$, and $\KK^\sC(\blank, B)$ satisfy these properties taking values in the categories $\Kas^\sC$, $\mathfrak{Ab}$, and $\mathfrak{Ab}^{\op}$, respectively.

\begin{lem}\label{lem:Morita.invert}
    Let $\fC$ be either $\Calg^\sC$ or $\Calg_{\rm triv}^\sC$ and let $\rF \colon \fC \to \fA$ be a C*-stable functor preserving finite direct sums. Then $\rF$ uniquely factors through $\widetilde{\rF} \colon \Corr_{\rm es}^\sC \to \fA$. 
\end{lem}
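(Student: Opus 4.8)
The plan is to mimic the classical argument (due essentially to Higson, and in this generality to Meyer--Nest) that a C*-stable, split-exact (or here merely additive and C*-stable) functor on C*-algebras extends to the correspondence category, adapting every step to keep track of the $\sC$-equivariant cocycle data. The point of departure is \cref{rem:appendix.CQG.action} and, more importantly, \cref{rmk:fullHilbmod}: a proper essential $\sC$-Hilbert $A$-$B$ bimodule $\sfE = (E,\phi,\bbmv)$ with $E$ countably generated is, after adding $B$, a direct summand of $\sH_B$, so $\cK(E)$ carries a canonical $\sC$-C*-algebra structure $(\beta^E,\fv^E)$ as in \eqref{eqn:module.cocycle}, and $(\phi, \bbmv)$ becomes an \emph{honest cocycle $\sC$-$\ast$-homomorphism} $A \to \cK(E)$ together with a full-corner inclusion $B \to \cK(E \oplus B)$ (and $\cK(E)$ sits as a full corner in $\cK(E\oplus B)$ when $E$ is full; in general one works with $E \oplus \sH_B$, which is always full). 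Thus every morphism of $\Corr_{\rm es}^\sC$ factors as $\KK^\sC$ of a zig-zag $A \xrightarrow{(\phi,\bbmv)} \cK(E\oplus\sH_B) \xleftarrow{\text{full corner}} B$ of morphisms of $\fC$, where the backward arrow is C*-stable-invertible by hypothesis on $\rF$. So the only real content is \emph{well-definedness}: different factorizations must give the same arrow in $\fA$.

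First I would set up the category $\Corr^\sC_{\rm es}$ concretely via such spans and define $\widetilde{\rF}$ on objects by $\widetilde{\rF}(A) := \rF(A)$ and on a morphism $[\sfE]$ by $\widetilde{\rF}[\sfE] := \rF(\iota_{E\oplus\sH_B})^{-1} \circ \rF(\phi \oplus 0 \oplus 0)$, using that $\rF$ inverts the full-corner embedding $\iota\colon B \to \cK(E \oplus \sH_B)$. Then I would verify, in order: (i) \emph{invariance under isomorphism of $\sC$-Hilbert bimodules} — a unitary intertwiner $u\colon \sfE \to \sfE'$ induces an inner (hence, after stabilizing, homotopically trivial, or directly C*-stably absorbed) automorphism, cf.\ \eqref{eqn:intertwiner.unitary} and \cref{ex:inner.auto}, so both factorizations are conjugate by something $\rF$ sends to the identity; (ii) \emph{independence of the chosen stabilization} $\sH_B$ versus $\sH_B \oplus \sH_B$, etc., which again reduces to C*-stability of $\rF$ and the standard fact that any two such corner embeddings are conjugate by a unitary that is homotopic to the identity after one more stabilization — here I must check the unitary can be taken $\sC$-invariant, which follows as in the proof of \cref{lem:quasihom_degeneration}(1) because the rotation-matrix and stabilization unitaries there are $\sC$-invariant; (iii) \emph{compatibility with composition}, i.e.\ $\widetilde{\rF}(\sfE_1 \otimes_D \sfE_2) = \widetilde{\rF}(\sfE_2) \circ \widetilde{\rF}(\sfE_1)$, using the formula \eqref{eqn:tensor.C-bimodule} for the tensor product of $\sC$-Hilbert bimodules and the naturality/multiplicativity of the corner-embedding inversions — this is the Cuntz-style diagram chase with $\cK(E_1) \otimes_\cdots \cK(E_2)$, $\cK(E_1 \otimes_D E_2)$ and the relevant full corners, now decorated with the induced $\sC$-structures $\fv^{E_i}$ and their obvious compatibility under tensoring. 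Uniqueness of the factorization is forced because $\fC \to \Corr^\sC_{\rm es}$ is identity on objects and every morphism of $\Corr^\sC_{\rm es}$ is of the above span form with the backward leg sent to an isomorphism.

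The main obstacle, as in all such arguments, is step (ii)/(iii): making sure that the \emph{ambiguity in the various unitaries} (Kasparov stabilization unitaries, choices of $E \hookrightarrow \sH_B$, the passage $\cK(E) \rightsquigarrow \cK(E \oplus \sH_B)$) is killed by C*-stability \emph{equivariantly}, i.e.\ that all the unitaries implementing these identifications can be chosen $\sC$-invariant, so that the induced $\ast$-isomorphisms are genuine morphisms in $\fC$ (with trivial cocycle, when $\fC = \Calg^\sC_{\rm triv}$) and are sent by $\rF$ to isomorphisms. For $\fC = \Calg^\sC$ this is automatic since cocycle $\sC$-$\ast$-homomorphisms already absorb inner-cocycle corrections (\cref{ex:inner.auto}); for $\fC = \Calg^\sC_{\rm triv}$ one invokes that the relevant unitaries — conjugation by $\beta(V)$ for a $U_\pi$-equivariant $V\colon \sH\otimes\cH_\sC \to \cH_\sC$, exactly as in the proof of \cref{thm:quasihom.replacement}(2) — are $\sC$-invariant, so $(\Ad\beta(V),\mathbbm{1})$ is a morphism of $\Ctriv^\sC$. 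Everything else is the standard bookkeeping of the non-equivariant proof (e.g.\ \cite{blackadarTheoryOperatorAlgebras1998}*{Section 22} or Meyer--Nest), and I would compress it accordingly.
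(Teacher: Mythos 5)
Your overall strategy---define $\widetilde{\rF}[\sfE]$ as $\rF(\text{full corner } B\hookrightarrow \cK(E\oplus\,\cdot\,))^{-1}\circ\rF(\phi\oplus 0,\bbmv\oplus 0)$ and then check unitary invariance and functoriality---is exactly the paper's, and your step (iii) (the Cuntz-style diagram through $\cK(E_1\otimes_D E_2\oplus E_2\oplus D)$) matches the paper's functoriality argument. But there are two concrete problems with the argument as you have written it. First, in steps (i) and (ii) you invoke homotopies: you want the ambiguity unitaries to be ``homotopically trivial after one more stabilization'' and the inner automorphism induced by an intertwiner to be ``sent by $\rF$ to the identity.'' The lemma only assumes $\rF$ is C*-stable and preserves finite direct sums; homotopy invariance is \emph{not} available (the remark following the lemma makes clear it is an additional hypothesis). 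What is actually needed, and what suffices, is weaker: for a unitary intertwiner $U\colon\sfE_1\to\sfE_2$, the $\ast$-isomorphism $\Ad(U\oplus 1)\colon\cK(E_1\oplus B)\to\cK(E_2\oplus B)$ strictly intertwines the two corner embeddings of $B$ and the two maps from $A$, so $\rF$ of it (an isomorphism, not the identity) makes the relevant triangle commute on the nose. If you fix the linking algebra as $\cK(E\oplus B)$ rather than $\cK(E\oplus\sH_B)$, the stabilization ambiguity of your step (ii) does not arise at all, and no homotopy is ever needed.

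Second, your treatment of the case $\fC=\Calg^\sC_{\rm triv}$ misses the actual difficulty. The issue is not whether certain implementing unitaries are $\sC$-invariant (the $\Ad\beta(V)$ device from the proof of \cref{thm:quasihom.replacement}(2) addresses the identification $\cK\otimes\kC B\cong\kC B$, which is a different point). The issue is that the forward leg $A\to\cK(E\oplus B)$, with the $\sC$-structure of \cref{rmk:fullHilbmod}, is a priori only a \emph{cocycle} $\sC$-$\ast$-homomorphism into a $\sC$-C*-algebra that need not be realized by endomorphisms, hence is not a morphism of $\Calg^\sC_{\rm triv}$ at all. One must equip the linking algebra with an explicit endomorphism $\sC$-action---built from $\bbmv_\pi$, $\Ad\bbmv_\pi^*$ and the transpose $\bbmv_\pi^{\rm t}$ on $E^*$ as in \cref{lem:linking.trivcocycle}---for which both $A\to D$ and the full-corner embedding $B\to D$ have trivial cocycle and $\sfE\otimes_B D\cong A\otimes_\phi D$. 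Without this construction (or an equivalent one) the trivial-cocycle case of the lemma does not follow from your outline.
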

\begin{proof}
    For separable $\sC$-C*-algebras $A , B $ and a proper essential $\sC$-Hilbert $A$-$B$ bimodule $\sfE=(E,\phi , \bbmv)$, let $\iota \colon B \to \cK(E \oplus B)$ denote the full corner embedding, and define 
    \[ \widetilde{\rF}(\sfE):= \rF (\iota, \mathbbm{1})^{-1} \circ \rF (\phi \oplus 0, \bbmv \oplus 0) \colon \rF(A) \to \rF(B).\] 
    Note that unitary equivalent $\sC$-Hilbert bimodules determine the same morphism since, if $(E_i,\phi_i,\bbmv_i)$ are unitary equivalent by $U$, the following diagram commutes;
    \[
    \xymatrix@C=4em@R=1.5ex{
    &\rF(\cK(E_1 \oplus B)) \ar[dd]^{\Ad(U \oplus 1)}& \\
    \rF(A) \ar[ru]^{\rF(\phi_1 \oplus 0,\bbmv_1 \oplus 0)  \hspace{2em}} \ar[rd]_{\rF(\phi_2 \oplus 0,\bbmv_2 \oplus 0) \hspace{2em} } && \rF(B). \ar[lu]_\cong  \ar[ld]^\cong  \\
    & \rF(\cK(E_2 \oplus B))&
    }
    \]
    It is seen that $\widetilde{\rF}$ is functorial by applying $\rF $ to the following commutative diagram;
    \[ 
    \xymatrix{
    A \ar[r] \ar@/_18pt/[rrd] & \cK(E_1 \oplus B) \ar[rd] & B \ar[l] \ar[r] & \cK(E_2 \oplus D) \ar[ld] & D \ar@/^18pt/[lld] \ar[l] \\
    && \cK(E_1 \otimes _B E_2 \oplus E_2 \oplus D). && 
    }
    \]

    The same holds for C*-stable functors from $\Calg_{\rm triv}^\sC$, by using the $\sC$-C*-algebra $D$ given in \cref{lem:linking.trivcocycle} instead of $\cK(E \oplus B)$.
\end{proof}
Also, it is not hard to see if moreover $\rF$ is split exact and homotopy invariant, so is $\widetilde{\rF}$. 
\begin{thm}\label{thm:Kasparov.universality}
	Let $\fC$ be either $\Calg^\sC_{\rm triv}$, $\Calg^\sC$ or $\Corr_{\rm es}^\sC$. 
	The functor $\KK^\sC \colon \fC \to \mathfrak{KK}^\sC $ is universal among functors to an additive category which is C*-stable, homotopy invariant, and split exact. 
\end{thm}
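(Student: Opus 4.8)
The plan is to follow the classical Higson–Cuntz strategy adapting it to the $\sC$-equivariant setting, using the quasihomomorphism replacement \cref{thm:quasihom.replacement} as the central tool. By \cref{lem:Morita.invert} it suffices to treat the cases $\fC = \Calg^\sC$ and $\fC = \Ctriv^\sC$, since any C*-stable, homotopy invariant, split exact functor on $\Corr^\sC_{\rm es}$ restricts to one on $\Calg^\sC$, and conversely such a functor on $\Calg^\sC$ (or $\Ctriv^\sC$) factors uniquely through $\Corr^\sC_{\rm es}$; moreover $\KK^\sC$ itself factors this way, so the three universality statements are equivalent once we know $\KK^\sC$ is the universal such functor on, say, $\Ctriv^\sC$. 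So fix a C*-stable, homotopy invariant, split exact functor $\rF \colon \Ctriv^\sC \to \fA$; I must produce a unique additive functor $\bar\rF \colon \Kas^\sC \to \fA$ with $\bar\rF \circ \KK^\sC = \rF$.

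First I would record the standard consequences of the three axioms for $\rF$, proved exactly as in the non-equivariant case (see \cite{cuntzNewLookKK1987}, \cite{higsonTechnicalTheoremKasparov1987}, or \cite{blackadarTheoryOperatorAlgebras1998}*{Section~22}): split exactness plus homotopy invariance forces $\rF$ to send a $\sC$-equivariant inner automorphism $\Ad u$ (for $\sC$-invariant unitary $u$, cf.~\cref{ex:inner.auto}) to the identity, and more generally to be \emph{additive} in the sense that $\rF$ applied to a direct sum of $\sC$-$\ast$-homomorphisms is the sum; and homotopy invariance together with C*-stability gives that $\rF$ kills $\KK^\sC$-contractible algebras and, crucially, that the canonical map $\iota_2 \colon A \to \rQ_\sC A$ induces an isomorphism $\rF(\rQ_\sC A) \cong \rF(A) \oplus \rF(\qC A)$, with $\rF(\qC A) \to \rF(A)$ an isomorphism (using the split exact sequence $0 \to \qC A \to \rQ_\sC A \to A \to 0$ and \cref{prop:KK.free.product}). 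Likewise $\rF(\kC A) \cong \rF(A)$ via the full-corner-type embedding underlying $\sfH_{\sC,A}$ (here I use C*-stability in the $\sC$-equivariant form: $\rF$ inverts $\sC$-$\ast$-homomorphisms that are full corner embeddings, and $\kC A$ is built from such by \cref{cor:specialization} and the stabilization picture). So $\rF(\kC \qC \kC A) \cong \rF(A)$ naturally.

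Next, define $\bar\rF$ on morphisms. Given $x \in \KK^\sC(A,B)$, by \cref{thm:quasihom.replacement}(2) there is a $\sC$-$\ast$-homomorphism $\psi \colon \kC \qC \kC A \to \kC B$ with trivial cocycle whose $\KK^\sC$-class corresponds to $x$ under the isomorphism $\KK^\sC(\kC \qC \kC A, \kC B) \cong \KK^\sC(A,B)$; set
\[
\bar\rF(x) := \theta_B^{-1} \circ \rF(\psi, \mathbbm{1}) \circ \theta_A,
\]
where $\theta_A \colon \rF(A) \xrightarrow{\cong} \rF(\kC \qC \kC A)$ and $\theta_B \colon \rF(B) \xrightarrow{\cong} \rF(\kC B)$ are the natural isomorphisms from the previous step. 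Well-definedness requires: (i) $\rF(\psi,\mathbbm{1})$ depends only on the homotopy class of $(\psi,\mathbbm{1})$ — immediate from homotopy invariance; (ii) the bijection of \cref{thm:quasihom.replacement}(2) is compatible with the identification — that is, $[\psi_1,\mathbbm{1}] = [\psi_2,\mathbbm{1}]$ in $[\kC\qC\kC A, \kC B]^\sC_{\rm triv}$ whenever they have the same $\KK^\sC$-class, which is precisely the injectivity half of \cref{thm:quasihom.replacement}(2). On $\KK^\sC$-classes coming from genuine $\sC$-$\ast$-homomorphisms $\phi \colon A \to B$ one checks $\bar\rF(\KK^\sC(\phi,\mathbbm{1})) = \rF(\phi,\mathbbm{1})$ by naturality of $\theta$ and of $\kC, \qC$ (the composite $\kC\qC\kC A \to \kC \qC \kC B \to \kC B$ representing $\KK^\sC(\phi)$ is homotopic to the evident one). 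For cocycle $\sC$-$\ast$-homomorphisms, one first replaces the source by its $\kC$, uses \cref{cor:specialization} to untwist, and invokes \cref{lem:Morita.invert}.

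The main work — and the step I expect to be the real obstacle — is \textbf{functoriality}: $\bar\rF(y \circ x) = \bar\rF(y) \circ \bar\rF(x)$ for composable $\KK^\sC$-classes. The argument mirrors Cuntz's: given $x \in \KK^\sC(A,B)$, $y \in \KK^\sC(B,D)$ represented by trivial-cocycle $\sC$-$\ast$-homomorphisms $\psi \colon \kC\qC\kC A \to \kC B$ and $\chi \colon \kC\qC\kC B \to \kC D$, one must build a trivial-cocycle $\sC$-$\ast$-homomorphism representing the Kasparov product and relate $\rF$ of it to $\rF(\chi)\circ\rF(\psi)$. The mechanism is: the Kasparov product of two quasihomomorphisms is again (homotopic to) a quasihomomorphism, and at the level of $\qC$-algebras this corresponds to the functoriality of $\qHom^\sC(\blank,\blank)$ under composition spelled out in the remark preceding \cref{lem:quasihom_degeneration} and encoded in the bijections $\Theta, \Xi$ (resp.\ $\Theta_{\rm triv}, \Xi_{\rm triv}$) in the proof of \cref{thm:quasihom.replacement}. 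Concretely one forms the composite
\[
\kC \qC \kC A \xrightarrow{\;\text{specialized }\kC\text{ of }\psi\;} \kC \qC \kC B \xrightarrow{\;\chi'\;} \kC D
\]
after suitably stabilizing, shows it has the correct $\KK^\sC$-class using \cref{thm:Kasparov.product} and the identities at the end of the proof of \cref{thm:quasihom.replacement}, and then applies $\rF$, using additivity/split exactness of $\rF$ to identify $\rF$ of the various stabilization and corner maps with identities. The delicate bookkeeping is that the two applications of $\kC(\blank)$ and $\qC(\blank)$ do not literally compose on the nose — one must pass through \cref{lem:quasihom_degeneration} several times to absorb the discrepancies into homotopies, and then invoke homotopy invariance of $\rF$. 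Once functoriality is established, additivity of $\bar\rF$ follows from additivity of $\rF$ and the fact that $\oplus$ in $\Kas^\sC$ is computed by the $\oplus$ of the representing homomorphisms (using \cref{prop:KK.free.product} to handle $\qC$ of a direct sum). Uniqueness of $\bar\rF$ is immediate: any extension is determined on the image of $\KK^\sC$, and \cref{thm:quasihom.replacement}(2) shows every morphism of $\Kas^\sC$ (between $\kC\qC\kC$-type objects, hence, via the natural isomorphisms, between arbitrary objects) is a composite of $\KK^\sC$-images of $\sC$-$\ast$-homomorphisms and inverses of $\KK^\sC$-images of corner embeddings.
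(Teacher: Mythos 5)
Your overall architecture coincides with the paper's: reduce among the three categories via \cref{lem:Morita.invert}, use \cref{thm:quasihom.replacement} (2) to trade $\KK^\sC$-classes for homotopy classes of trivial-cocycle $\sC$-$\ast$-homomorphisms, define $\overline{\rF}$ by conjugating $\rF$ of a representative with canonical isomorphisms coming from C*-stability and split exactness, and deduce the extension property and uniqueness. All of those pieces are correct and match the paper.

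The genuine gap is exactly where you flag it: functoriality. With your normalization, $x\in\KK^\sC(A,B)$ is represented by $\psi\colon\kC\qC\kC A\to\kC B$ and $y\in\KK^\sC(B,D)$ by $\chi\colon\kC\qC\kC B\to\kC D$; these do not compose, and the composite $\kC\qC\kC A\to\kC\qC\kC B\to\kC D$ you propose, whose first arrow is a ``specialized $\kC$ of $\psi$'', is not something \cref{cor:specialization} or \cref{prop:universality.free.product} produces: applying $\kC$ or $\qC$ to $\psi$ changes its source as well as its target and does not land in $\kC\qC\kC B$. Repairing this is a genuine Cuntz-style composition lemma, and ``pass through \cref{lem:quasihom_degeneration} several times to absorb the discrepancies into homotopies'' does not supply it. The paper dissolves the problem by a different normalization: apply \cref{thm:quasihom.replacement} (2) with $B$ replaced by $\qC\kC B$, so the bijection becomes $[\kC\qC\kC A,\kC\qC\kC B]^\sC_{\rm triv}\cong\KK^\sC(A,B)$, realized as conjugation by the fixed $\KK^\sC$-equivalences $\sfK_A=\sfH_{\sC,\qC\kC A}\otimes({}_{\varpi_A}\kC A)\otimes\sfH_{\sC,A}$. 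Since source and target now have the same functorial form $\kC\qC\kC(\blank)$, representatives compose literally as $\ast$-homomorphisms; the bijection intertwines this composition with the Kasparov product because it is conjugation by fixed equivalences; and functoriality of $\overline{\rF}$ (with $\rF(\sfK_A)$ invertible by the hypotheses on $\rF$) is then immediate. Adopting this normalization, the rest of your argument goes through essentially unchanged.
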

\begin{proof}
	By \cref{lem:Morita.invert} and the remark after that, the problem is reduced to the case of $\fC=\Corr_{\rm es}^\sC$. Take a functor $\rF \colon \Corr_{\rm es}^\sC \to \fA$ to an additive category $\fA$ that is stable, homotopy invariant, and split exact. 

	As in \cref{lem:qA.KKequiv}, set $\varpi_A:= \pr_1 \circ (j_1\ast _\sC j_2)|_{\qC \kC A} \colon \qC \kC A \to \kC A$ for a $\sC$-C*-algebra $A$. 
	Note that, by the assumption on $\rF$, the same argument as \cref{prop:KK.free.product} and \cref{lem:qA.KKequiv} shows that $\rF (\varpi_A)$ is an isomorphism. 
	Put the proper $\sC$-Hilbert $\kC \qC \kC A$-$A$ bimodule 
	\[ 
	\sfK_A:= \sfH_{\sC,\qC \kC A} \otimes ({}_{\varpi _A} \kC A ) \otimes  \sfH_{\sC,A}.
	\] 
	By \cref{prop:KK.free.product}, it induces a $\KK^\sC$-equivalence. In the same way, we also have that $\rF(\sfK_B ) = \rF(\sfH_{\sC,\qC \kC A}) \circ \rF (\varpi _A , \mathbbm{1}) \circ \rF(\sfH_{\sC,A})$ is an isomorphism. 

	Now, the map 
	\[ 
	\mathcal{KK}^\sC :=(\sfK_A)^{-1} \hotimes_{\kC \qC \kC A} \KK^\sC( \blank ) \hotimes_{\kC \qC \kC B} \sfK_B \colon [\kC\qC\kC A, \kC\qC\kC B]_{\rm triv}^\sC \to \KK^\sC( A, B)
	\]
	is defined, and is bijective by \cref{thm:quasihom.replacement} (2). 
    The desired functor is constructed as 
	\[ 
	\overline{\rF} (\xi ):= \rF (\sfK_B) \circ \rF((\mathcal{KK}^\sC) ^{-1}(\xi)) \circ \rF(\sfK_A)^{-1}.  
	\]
    This is indeed an extension of $\rF$, i.e., $\overline{\rF}\circ\KK^\sC = \rF$. 
    To see this, notice that $\kC$ and $\qC$ are functorial on $\Calg_{\rm triv}^\sC$ (cf.~\cref{cor:specialization} and \cref{prop:universality.free.product}), and they satisfy $\sfK_B \circ \kC\qC\kC (\phi,\mathbbm{1})\cong (\phi,\mathbbm{1})\circ\sfK_A$ in $\Corr^{\sC}_{\rm es}$. This shows that $\overline{\rF}([\phi,\mathbbm{1}]) = \rF (\phi,\mathbbm{1})$.  
    More generally, for a proper essential $\sC$-Hilbert $A$-$B$ bimodule $(E,\phi,\bbmv)$, we have $\sfE \otimes_D(\iota, 1) \cong (\phi ,\mathbbm{1} )$ as  \cref{lem:linking.trivcocycle}, 
    and hence 
    \begin{align*}
        \overline{F}([E,\phi,\mathbbm{v}]) 
        = \overline{\rF}([\iota,\mathbbm{1}])^{-1} \circ \overline{\rF}([\phi ,\mathbbm{1}]) = \rF(\iota,\mathbbm{1})^{-1} \circ \rF(\phi,\mathbbm{1}) = \rF(E,\phi,\bv).
    \end{align*}
    
    Finally, the uniqueness of $\overline{\rF}$ also follows from the construction.
\end{proof}
		
		We also describe the universality in terms of the module category picture. 
		
		\begin{cor}\label{cor:universality.categorical}
			A functor ${\mathrm F} \colon \Ccat^\sC \to \fA$ (cf.~\cref{defn:category.Ccat}) to an additive category that is homotopy invariant, and split exact factors through $\KK^\sC \colon \Ccat^\sC \to \Kas^\sC_{\rm cat} $.
		\end{cor}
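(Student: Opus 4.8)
The plan is to deduce this from the universality theorem \cref{thm:Kasparov.universality} in the case $\fC=\Corr_{\rm es}^\sC$, by transporting $\rF$ along the categorical equivalence $\Mod\colon\Corr_{\rm es}^\sC\xrightarrow{\simeq}\Ccat^\sC$ of \cref{thm:equivalence.category.algebra} and then pulling the resulting extension back along the equivalence $\Kas^\sC\simeq\Kas^\sC_{\rm cat}$ of \cref{prop:Kasparov.category.algebra}. So I would first set $\rF':=\rF\circ\Mod\colon\Corr_{\rm es}^\sC\to\fA$ and check that $\rF'$ meets the hypotheses of \cref{thm:Kasparov.universality}. The point that lets the weaker hypothesis of the corollary suffice is that \emph{$\rF'$ is automatically C*-stable}: if $(\phi,\mathbbm1)\colon B\to\cK(E\oplus B)$ is a full corner embedding, then the $\sC$-module functor $\Mod(\phi,\mathbbm1)=\blank\otimes_\phi\cK(E\oplus B)$ is an equivalence in $\Ccat^\sC$ — the image of $B$ is again a dominant object, so this is (the $\sC$-equivariant form of) the equivalence furnished by \cref{thm:algebra.category}(1) — whence $\rF'(\phi,\mathbbm1)=\rF(\Mod(\phi,\mathbbm1))$ is invertible in $\fA$. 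Homotopy invariance and split exactness of $\rF'$ follow from those of $\rF$, once one observes that $\Mod$ intertwines the path objects $A\mapsto A[0,1]$ with the evaluations $\eval_t$, and carries $\sC$-equivariantly split exact sequences of $\sC$-C*-algebras to split exact sequences of $\sC$-module categories; this compatibility is part of, or an immediate consequence of, \cref{thm:equivalence.category.algebra}.

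Granting that, \cref{thm:Kasparov.universality} (for $\fC=\Corr_{\rm es}^\sC$) yields an additive functor $\overline{\rF'}\colon\Kas^\sC\to\fA$ with $\overline{\rF'}\circ\KK^\sC=\rF'$, unique up to natural isomorphism. I would then invoke the equivalence $\Psi\colon\Kas^\sC\xrightarrow{\simeq}\Kas^\sC_{\rm cat}$ of \cref{prop:Kasparov.category.algebra}, which extends $\Mod$ in the precise sense that the square
\[
\xymatrix{
\Corr_{\rm es}^\sC\ar[r]^-{\KK^\sC}\ar[d]_{\Mod} & \Kas^\sC\ar[d]^{\Psi} \\
\Ccat^\sC\ar[r]^-{\KK^\sC} & \Kas^\sC_{\rm cat}
}
\]
commutes up to natural isomorphism. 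Setting $\overline{\rF}:=\overline{\rF'}\circ\Psi^{-1}\colon\Kas^\sC_{\rm cat}\to\fA$, a short diagram chase through the square, using $\rF'=\rF\circ\Mod$ and that $\Mod$ is an equivalence, gives $\overline{\rF}\circ\KK^\sC\cong\rF$, which is exactly the asserted factorization. Moreover the uniqueness of $\overline{\rF'}$ transports to $\overline{\rF}$, so that $\KK^\sC\colon\Ccat^\sC\to\Kas^\sC_{\rm cat}$ is in fact universal among homotopy invariant, split exact functors out of $\Ccat^\sC$.

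The only content beyond \cref{thm:Kasparov.universality} is the bookkeeping in the first paragraph, and I expect the main (though still routine) obstacle to be the verification that a full corner embedding becomes a genuine categorical equivalence — not merely a $\KK^\sC$-equivalence — in $\Ccat^\sC$, since it is precisely this that allows the C*-stability hypothesis to be omitted from the statement; the identification of the notions ``homotopy invariant'' and ``split exact'' on $\Ccat^\sC$ with their algebraic counterparts under $\Mod$ is the other point that must be pinned down, but should follow directly from the constructions in \cref{thm:equivalence.category.algebra} and \cref{prop:Kasparov.category.algebra}.
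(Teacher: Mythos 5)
Your proposal is correct and follows essentially the same route as the paper, which precomposes $\rF$ with $\Mod\colon\Corr_{\rm es}^\sC\to\Ccat^\sC$, observes that the resulting functor is C*-stable (automatically, since a full corner embedding becomes a genuine equivalence of module categories), homotopy invariant and split exact, and then combines \cref{thm:Kasparov.universality} with the equivalence of \cref{prop:Kasparov.category.algebra}. Your elaboration of why C*-stability comes for free is exactly the point the paper's one-line proof leaves implicit.
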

		\begin{proof}
			The composition $\Corr_{\mathrm{es}}^\sC \xrightarrow{\Mod(\blank )} \Ccat^\sC \xrightarrow{\rF} \fA$
			is a functor that is C*-stable, homotopy invariant, and split exact. 
			Now the corollary follows from \cref{thm:Kasparov.universality} and \cref{prop:Kasparov.category.algebra}.
		\end{proof}

		\subsection{Triangulated category structure}
		Following the work of Meyer--Nest \cite{meyerBaumConnesConjectureLocalisation2006}*{Appendix A}, we prove that the category $\Kas^\sC$ is endowed with the structure of the triangulated category.   
		Recall that a triangulated category is an additive category $\fT$ equipped with the additive automorphism $\Sigma \colon \fT \to \fT$ called the suspension, and a class of triangles (a sequence of the form $A \to B \to C \to \Sigma A$) called distinguished triangles, satisfying some axioms.
		
		The stabilized Kasparov category $\widetilde{\Kas}{}^\sC$ is the category whose objects are the pairs $(A,n) \in \Calg \times \bZ$ and $\Hom _{\widetilde{\Kas}{}^\sC} ((A,n),(B,m)):= \KK^\sC_{n-m}(A,B)$. Note that the inclusion $A \mapsto (A,0)$ gives a category equivalence $\Kas^\sC \cong \widetilde{\Kas}{}^\sC$. 
		The suspension automorphism is defined by $\Sigma(A,n)=(A,n+1)$ and the set of distinguished triangles of the Kasparov category $\Kas^\sC$ as the ones equivalent to the mapping cone triangle (cf. \cref{ex:mapping.cone})
		\[ SB \to  \cone (\phi, \bbmv) \to A \xrightarrow{(\phi, \bbmv)} B. \]
		
		\begin{thm}\label{thm:triangulated}
			The above $\Sigma$ and distinguished triangles determine a triangulated category structure on $\widetilde{\mathfrak{KK}}{}^\sC$. 
		\end{thm}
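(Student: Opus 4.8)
The plan is to follow the standard strategy of Meyer–Nest \cite{meyerBaumConnesConjectureLocalisation2006}*{Appendix~A}, verifying each axiom of a triangulated category for $\widetilde{\Kas}{}^\sC$ while leaning on the structural results already established, namely the homotopy invariance, split exactness, and $C^*$-stability of $\KK^\sC$ (\cref{lem:split_exact}), the Puppe exact sequences (\cref{lem:split_exact}~(3)), and the universality theorem (\cref{thm:Kasparov.universality}). Since $\widetilde{\Kas}{}^\sC$ is additive (by \cref{lem:split_exact}~(5), it even has countable direct sums), it remains only to check the octahedral axiom and the three rotation/completion axioms (TR1)--(TR3). The suspension $\Sigma$ is an automorphism because of Bott periodicity (\cref{lem:split_exact}~(4)), which gives $\Sigma^2 \cong \mathrm{id}$.

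First I would reduce every distinguished triangle to a mapping cone triangle. The key input is that for a cocycle $\sC$-$\ast$-homomorphism $(\phi,\bbmv)\colon A \to B$, the mapping cone $\cone(\phi,\bbmv)$ fits into the Puppe sequence of \cref{lem:split_exact}~(3), which upon splicing gives the long exact sequences in both variables; exactness of these sequences is precisely what is needed to verify (TR1)--(TR3) once one knows that every morphism in $\widetilde{\Kas}{}^\sC$ can be represented, up to $\KK^\sC$-equivalence on the source, by an actual cocycle $\sC$-$\ast$-homomorphism. This last point is exactly \cref{thm:quasihom.replacement} (together with the $\KK^\sC$-equivalences $A \sim \kC A$ and $A \sim \kC\qC\kC A$ coming from \cref{prop:special.perturbation}, \cref{prop:KK.free.product}, and \cref{lem:qA.KKequiv}): any element of $\KK^\sC(A,B)$ is, after replacing $A$ by $\kC\qC\kC A$, represented by a genuine $\sC$-$\ast$-homomorphism with trivial cocycle, so its mapping cone is literally defined. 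Then (TR1) (every morphism completes to a triangle, $\mathrm{id}_A$ completes with a zero object, and isomorphic-to-distinguished is distinguished) is immediate; (TR2) (rotation) follows from the standard mapping cone comparison $\cone(\phi,\bbmv) \simeq$ appropriate shift, together with the Puppe sequence which provides the needed $\KK^\sC$-equivalences between the cone of a map and the cone of its rotation; and (TR3) (the morphism-of-triangles completion) follows by the usual argument filling in the third arrow using exactness of $\KK^\sC(D,\blank)$ applied to the mapping cone triangles, exactly as in \cite{meyerBaumConnesConjectureLocalisation2006}.

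For the octahedral axiom I would use the mapping cone description: given composable $\sC$-$\ast$-homomorphisms $A \xrightarrow{f} B \xrightarrow{g} C$, one has natural cocycle $\sC$-$\ast$-homomorphisms $\cone(f) \to \cone(gf) \to \cone(g)$, and one must show $\cone(f) \to \cone(gf) \to \cone(g) \to \Sigma\cone(f)$ is distinguished, i.e.\ $\cone(gf) \sim \cone(\cone(f) \to \cone(g))$ in $\widetilde{\Kas}{}^\sC$. This is the classical ``cone of a map of cones'' computation; it goes through verbatim over $\sC$ because the mapping cone construction commutes with the $\sC$-C*-algebra structure (the cone of a cocycle $\sC$-$\ast$-homomorphism is again a $\sC$-C*-algebra, cf.\ \cref{ex:mapping.cone}), and because all the comparison maps are again cocycle $\sC$-$\ast$-homomorphisms inducing $\KK^\sC$-equivalences by homotopy invariance and split exactness. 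Concretely one checks that the relevant square of cones has contractible total cofiber, using that $\KK^\sC$ kills contractible $\sC$-C*-algebras.

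\textbf{The main obstacle} I anticipate is bookkeeping rather than conceptual: because arbitrary morphisms in $\KK^\sC$ are not represented by honest $\sc$-$\ast$-homomorphisms on the nose but only after passing through the chain of $\KK^\sC$-equivalences $A \rightsquigarrow \kC\qC\kC A$, one must be careful that these replacements are \emph{natural enough} to be compatible with the diagrams appearing in (TR3) and the octahedron. This is handled exactly as in Meyer--Nest: one observes that $\kC$, $\qC$, and hence $\kC\qC\kC$ are functorial on $\Ctriv^\sC$ (as used in the proof of \cref{thm:Kasparov.universality}), and the $\sC$-Hilbert bimodules implementing the equivalences ($\sfH_{\sC,A}$, $\sfK_A$, etc.) are natural in the appropriate sense. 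Once this naturality is in place, every diagram-chase reduces to the non-equivariant case and one simply invokes the corresponding step of \cite{meyerBaumConnesConjectureLocalisation2006}*{Appendix~A}. I would therefore organize the proof as: (i) reduce to mapping cone triangles via \cref{thm:quasihom.replacement}; (ii) verify (TR1)--(TR3) using the Puppe sequences of \cref{lem:split_exact}~(3); (iii) verify the octahedral axiom by the cone-of-cones argument; and in each case note that the only new ingredient over the classical proof is the $\sC$-equivariance of the mapping cone and suspension constructions, which was recorded earlier.
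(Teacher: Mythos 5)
Your overall strategy coincides with the paper's: both proofs first use \cref{thm:quasihom.replacement} to represent an arbitrary class in $\KK^\sC(A,B)$ by a cocycle $\sC$-$\ast$-homomorphism (so that TR1 holds and every distinguished triangle is a mapping cone triangle of an honest morphism), and then verify the rotation, completion, and octahedral axioms by the standard mapping cone/cylinder manipulations of Meyer--Nest, the only new input being that cones, cylinders, and suspensions of cocycle $\sC$-$\ast$-homomorphisms again carry $\sC$-actions. Your worry about naturality of the $\kC\qC\kC$-replacement is also resolved the way you suggest: once TR1 is known, the remaining axioms only need to be checked on mapping cone triangles of genuine homomorphisms.

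One point where your described mechanism would not work if taken literally: for TR3 you say the third arrow is filled in ``using exactness of $\KK^\sC(D,\blank)$ applied to the mapping cone triangles.'' Exactness of the Puppe sequences only produces \emph{some} morphism $h\colon D \to D'$ making one of the remaining squares commute; it does not guarantee that the same $h$ makes the other square commute, which is what TR3 requires. The paper (following Meyer--Nest) instead constructs $h$ explicitly: it introduces the double mapping cylinder $\mathsf{Cyl}((\phi,\bbmv),(\psi,\bbmw))$ and observes that a \emph{homotopy} witnessing the commutativity of the first square, i.e.\ a homotopy between $(\phi,\bbmv)\hotimes_{A'}(\sfE',F')$ and $(\sfE,F)\hotimes_B(\phi',\bbmv')$, assembles into a Kasparov bimodule between the cylinders, which then restricts to the desired morphism of cones compatible with the whole diagram. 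Since you defer to Meyer--Nest for the details this is recoverable, but the homotopy-to-cylinder construction is the actual content of that step and should be stated as such rather than attributed to exactness.
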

		\begin{proof}
			We check the following axioms of the triangulated category. 
			\begin{description}
				\item[TR0] By definition, a triangle that is equivalent to a distinguished one is also distinguished, and a triangle of the form $A \xrightarrow{\id_A} A \to 0 \to \Sigma A$ is distinguished. 
				\item[TR1] By \cref{thm:quasihom.replacement}, any morphism $f \in \widetilde{\Kas}{}^\sC(A,B)$ can be represented by a cocycle $\sC$-$\ast$-homomorphism $(\phi,\bbmv)$, and hence $f$ is a part of the distinguished triangle $A \xrightarrow{f} B \to S\cone (\phi,\bbmv) \to \Sigma A$.  
				\item[TR2] We show that a triangle $A \xrightarrow{f} B \xrightarrow{g} D \xrightarrow{h}\Sigma A$ is distinguished if and only if so is $B \xrightarrow{-g} D \xrightarrow{-h}\Sigma A \xrightarrow{-\Sigma f} \Sigma B$. To see this, it suffices to show that $SA \to SB \to \cone(\phi, \bbmv) \to A$ is equivalent to the triangle $SA \to \cone (\cone(\phi,\bbmv) \to A) \to \cone(\phi,\bbmv) \to A$, which follows from the homotopy equivalence of $SB \to \cone (\cone (\phi,\bbmv) \to A)$.
				\item[TR3] For any distinguished triangles $A \to B \to D \to \Sigma A$ and $A' \to B' \to D' \to \Sigma A'$ and $f \colon A \to A'$, $g \colon B \to B'$, there exists $h \colon D \to D'$ such that the diagram 
				\[
				\xymatrix{A \ar[r] \ar[d]^f & B \ar[d]^g \ar[r] & D \ar[r] \ar@{.>}[d]^{\exists h}  & \Sigma A \ar[d] \\ A' \ar[r] & B' \ar[r] & D' \ar[r] & \Sigma A'}
				\]
				commutes. To see this, we introduce a variation of the mapping cone.
				For $(\phi,\bbmv) \colon A \to B$ and $(\psi, \bbmw) \colon B \to D$, we define
				\begin{align*}
					&\mathsf{Cyl}((\phi, \bbmv), (\psi, \bbmw)):= A \oplus _A B[0,1] \oplus _B \cone D\\
					=& \{ (a,f,g) \in A \oplus B[0,1] \oplus D[0,1] \mid f(0)=\phi(a) , f(1)=\psi(g(0)), g(1)=0 \},
				\end{align*}
				which is equipped with the $\sC$-C*-algebra structure as fibered sum.
				Note that $\mathsf{Cyl}((\phi, \bbmv), (\psi,\bbmw))$ is homotopy equivalent to the mapping cone of $(\psi,\bbmw) \circ (\phi,\bbmv)$, and there is an exact sequence
				\begin{align*}
					0 \to \cone (\phi,\bbmv) \to \mathsf{Cyl}((\phi,\bbmv), (\psi,\bbmw)) \to \cone(\psi,\bbmw) \to 0.
				\end{align*} 
				Now, for $[\sfE ,F] \in \KK^\sC(A,A')$ and $[\sfE',F'] \in \KK^\sC(B,B')$, a homotopy of Kasparov $A$-$B'$ bimodules $(\phi,\bbmv) \hotimes _{A'}(\sfE',F')$ and $(\sfE,F) \hotimes_B (\phi',\bbmv')$ gives rise to a Kasparov bimodule in 
				$\bfE^\sC( \mathsf{Cyl}(\id_A, (\phi,\bbmv)), \mathsf{Cyl}((\phi',\bbmv'), \id_{B'}) )$.
				\item[TR4] The cocycle $\sC$-$\ast$-homomorphisms $(\phi,\bbmv) \colon A \to B$ and $(\psi,\bbmw) \colon B \to D$ extends to the commutative diagram
				\[\xymatrix{
					S^2D \ar[r] \ar[d] & S\cone (\psi,\bbmw) \ar[r] \ar[d] & SB \ar[r] \ar[d] & SD \ar[d] \\
					0 \ar[r] \ar[d] & \cone (\phi,\bbmv)  \ar[r] \ar[d] &  \cone (\phi,\bbmv) \ar[r] \ar[d] & 0 \ar[d] \\
					SD \ar[r] \ar[d] &  \mathsf{Cyl}((\phi,\bbmv),(\psi,\bbmw)) \ar[r] \ar[d] & A \ar[r] \ar[d] & D \ar[d] \\
					SD \ar[r]  & \cone (\psi,\bbmv) \ar[r] & B \ar[r] & D
				}\]
				such that the vertical and horizontal sequences are all distinguished triangles. 
				This diagram satisfies the octahedral axiom, i.e., the compositions $SB \to \cone (\phi,\bbmv) \to \mathsf{Cyl}((\phi,\bbmv),(\psi,\bbmw))$ and $SB \to SD \to \mathsf{Cyl}((\phi,\bbmv),(\psi,\bbmw))$ are identical in $\Kas^\sC$ (indeed, they are homotopic). \qedhere
			\end{description}
		\end{proof}
		
\begin{rem}
	Since the mapping cone of $\sC$-module functors is defined as a $\sC$-module category (see \cref{subsubsection:ideal.exact,subsubsection:fiber.sum}), the triangulated category structure on $\widetilde{\Kas}{}^\sC_{\rm cat}$ is also defined in the internal language of $\sC$-module categories.
\end{rem}

In the spirit of Meyer--Nest \cite{meyerBaumConnesConjectureLocalisation2006}, this triangulated structure could potentially be used for formulating the Baum--Connes type conjecture in terms of semi-orthogonal decomposition of the category. 
Recall that a relative homological algebra of the restriction-induction adjunction \cites{meyerHomologicalAlgebraBivariant2008,meyerHomologicalAlgebraBivariant2010} provides a semi-orthogonal decomposition. We just take the first step toward this direction.

\begin{prop}\label{prop:adjunction.tensor}
	Let $\sC$ be a rigid C*-tensor full subcategory of $\sC$. 
	Then we have a left adjoint $\Ind^\sC$ to the restriction functor $\Res_\sC\colon \Ccat^\sC\to \Ccat$ forgetting the $\sC$-action. 
	The pair $(\Ind^\sC, \Res_\sC)$ induces an adjoint pair of Kasparov categories. 
\end{prop}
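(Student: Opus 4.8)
The plan is to realise $\Ind^\sC$ as the extension of scalars of module categories, $\Ind^\sC\sA:=\sA\boxtimes_\sD\sC$, where $\sC$ is regarded as a $(\sD,\sC)$-bimodule category via the restriction of the regular left action and the regular right action, and then to transport the resulting $\Ccat$-adjunction across the universal functors $\KK^\sD$ and $\KK^\sC$. The restriction side is painless: a $\sC$-module C*-category $(\sB,\beta,\fv)$ becomes a $\sD$-module C*-category by retaining $\{\beta_\pi,\fv_{\pi,\sigma}\}$ only for $\pi,\sigma\in\Obj\sD$, and a $\sC$-module functor is a fortiori a $\sD$-module functor, so $\Res_\sD\colon\Ccat^\sC\to\Ccat^\sD$ is a functor which manifestly sends $\sC$-Kasparov bimodules to $\sD$-Kasparov bimodules and homotopies to homotopies, hence descends to $\Res_\sD\colon\Kas^\sC_{\rm cat}\to\Kas^\sD_{\rm cat}$.

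The substance is the construction of $\Ind^\sC\sA$ for a separable $\sD$-module C*-category $(\sA,\alpha,\fu)$. I would take objects $X\boxtimes\pi$ for $X\in\Obj\sA$, $\pi\in\Obj\sC$, with morphism spaces given by the coend
\[
\Hom(X\boxtimes\pi,\,X'\boxtimes\pi')\;=\;\int^{\rho\in\sD}\cK_\sA(X,X'\otimes_\alpha\rho)\otimes_{\bC}\Hom_\sC(\rho\otimes\pi,\pi'),
\]
which by semisimplicity of $\sD$ is the completed direct sum over $\rho\in\Irr\sD$; one equips this with the evident composition, involution, and $\Hom_\sC$-valued norm coming from the conjugates in $\sD$, completes, and then passes to the separable envelope of \cref{rem:separable.envelope} to secure idempotent completeness and $\ell^2$-direct sums. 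The right $\sC$-action is $(X\boxtimes\pi)\otimes_\gamma\sigma:=X\boxtimes(\pi\otimes\sigma)$ with cocycle inherited from the associator of $\sC$, and $\Ind^\sC$ is made functorial by $\cF\mapsto(X\boxtimes\pi\mapsto\cF(X)\boxtimes\pi)$ on morphisms, sending proper (countably additive) $\sD$-module functors to proper (countably additive) $\sC$-module functors. One then has the unit $\eta_\sA\colon\sA\to\Res_\sD\Ind^\sC\sA$, $X\mapsto X\boxtimes\mathbf1_\sC$, and the counit $\varepsilon_\sB\colon\Ind^\sC\Res_\sD\sB\to\sB$, $X\boxtimes\pi\mapsto X\otimes_\beta\pi$; the triangle identities follow directly from the construction, yielding the natural isomorphism $\Hom_{\Ccat^\sC}(\Ind^\sC\sA,\sB)\cong\Hom_{\Ccat^\sD}(\sA,\Res_\sD\sB)$.

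To pass to the Kasparov categories I would invoke universality. The functor $\Ind^\sC$ commutes (up to the natural identifications) with $\blank[0,1]$, with mapping cones, with countable direct sums, and with full-corner inclusions, so the composite $\Ccat^\sD\xrightarrow{\Ind^\sC}\Ccat^\sC\xrightarrow{\KK^\sC}\Kas^\sC_{\rm cat}$ is homotopy invariant and split exact; by \cref{cor:universality.categorical} (using \cref{thm:Kasparov.universality} and \cref{prop:Kasparov.category.algebra}) it factors as $\KK^\sC\circ\Ind^\sC=\Ind^\sC\circ\KK^\sD$ through a functor $\Ind^\sC\colon\Kas^\sD_{\rm cat}\to\Kas^\sC_{\rm cat}$, and likewise for $\Res_\sD$. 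Consequently $\KK^\sC(\eta_\sA)$ and $\KK^\sD(\varepsilon_\sB)$ are natural transformations $\id\Rightarrow\Res_\sD\Ind^\sC$ and $\Ind^\sC\Res_\sD\Rightarrow\id$ in the Kasparov categories, and the triangle identities, being equalities of morphisms in $\Ccat$, persist after applying $\KK$. Hence $(\Ind^\sC,\Res_\sD)$ is an adjoint pair of Kasparov categories, i.e. $\KK^\sC(\Ind^\sC\sA,\sB)\cong\KK^\sD(\sA,\Res_\sD\sB)$ naturally.

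The main obstacle is entirely analytic and confined to the construction of $\sA\boxtimes_\sD\sC$: verifying that the coend morphism spaces carry a genuine (pre-)C*-structure — positivity of the $\Hom_\sC$-valued form, associativity of the twisted composition through $\fu$ — and that the separable envelope of the completion still satisfies the three conditions of \cref{def:separable.category}, together with checking that $\Ind^\sC$ is proper, countably additive, C*-stable, homotopy-compatible, and split-exact-preserving; once that is in place the adjunction and its descent to $\Kas$ are formal. A fallback, if one prefers to avoid the relative Deligne product, is to transport everything through \cref{thm:equivalence.category.algebra}: write $\sA=\Mod(A)$ for a $\sD$-C*-algebra $A$ and build $\Ind^\sC A$ as the corresponding $\sC$-C*-algebra directly (the categorical analogue of Green's induced algebra), but the bookkeeping of the cocycles $\fu$ and $\fv$ there is at least as delicate.
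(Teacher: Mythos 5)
Your proposal follows essentially the same route as the paper: induction is tensoring with (the separable envelope of) $\sC$, the unit is $X \mapsto X \boxtimes \mathbf{1}_\sC$, the counit is $X \boxtimes \pi \mapsto X \otimes_\beta \pi$ with coherence supplied by $\fv^*$, and the descent to the Kasparov categories is by universality via \cref{thm:Kasparov.universality} and \cref{cor:universality.categorical}, with the triangle identities persisting because they already hold at the level of $\Ccat^\sC$. The one divergence is that you read the (garbled) hypothesis as allowing an arbitrary intermediate rigid full subcategory $\sD \subset \sC$ and accordingly build the relative product $\sA \boxtimes_\sD \sC$ with coend morphism spaces. The functor the paper actually treats is $\Res_\sC \colon \Ccat^\sC \to \Ccat$, restriction all the way down to plain C*-categories; there the relative product degenerates to the ordinary Deligne product $\sA \boxtimes \tilde{\sC}$, which is already constructed in \cite{antounBicolimitsCategories2020} and recalled in the appendix, so no new analytic input is required. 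In your extra generality the C*-structure on the coend morphism spaces (positivity of the $\Hom_\sC$-valued form, the completion, and separability of the envelope) is exactly the point you flag as unverified, so as written that portion is a genuine gap --- but it is a gap only for the generalization you introduced, not for the statement the paper proves, and your argument specializes correctly to the paper's case when $\sD$ is trivial.
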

This yields a semi-orthogonal decomposition $(\langle \cT \cI\rangle _{\loc}, \cT \cC )$ of $\Kas^\sC$, where $\cT\cI$ denotes the class of objects of the form $\Ind^\sC A$ for some $A \in \Obj \Calg$, and $\cT\cC$ denotes the full subcategory of separable $\sC$-C*-algebras with $\Res_\sC A \sim_{\KK} 0$.

\begin{proof}
	We let $\Ind^\sC\colon \Ccat\to \Ccat^\sC$ as $\sA\mapsto \sA\boxtimes \tilde{\sC}$, where $\tilde{\sC}$ denotes the separable envelope of $\sC$ (\cref{rem:separable.envelope}). It induces the functor of Kasparov categories $\Ind^\sC \colon  \Kas\to\Kas^\sC $ by universality (\cref{thm:Kasparov.universality}). 
	For any nonunital C*-category $\sA$ and any $\sC$-module category $\sB$,  we take 
	\begin{align*}
	    &&\eta_{\sA} &{}\colon \sA \to \sA \boxtimes \tilde{\sC} , && \eta_{\sA} (X) = X \boxtimes \mathbf{1}_\sC, && \\
	    &&\epsilon_{\sB} &{}\colon \sB \boxtimes \tilde{\sC} \to \sB, && \epsilon_{\sB} (X \boxtimes \pi):=X \otimes_\beta \pi. &&
	\end{align*}
    The latter assignment $\epsilon_\sB$ forms a $\sC$-module functor by the coherence map 
    \[ 
    \mathsf{v}_{\pi,X} \colon \epsilon_{\sB}(X\boxtimes (\rho \otimes \pi))=X\otimes_\beta (\rho \otimes \pi) \xrightarrow{\fv_{\rho,\pi}^*} (X \otimes_\beta \rho) \otimes_\beta \pi = \epsilon_{\sB}(X \boxtimes \rho) \otimes_\beta \pi.
    \]
	It is routine to check $\eta_{\sA}$ and $\epsilon_{\sB}$ satisfy the unit and counit relations. 
\end{proof}

\section{Weak Morita invariance of the Kasparov category}\label{section:Morita}
In this section, we prove the following theorem.
\begin{thm}\label{thm:Morita.invariance}
	Let $\sC$ and $\sD$ be weakly Morita equivalent rigid C*-tensor categories. Then there exists a categorical equivalence
	\[ \Kas^\sC \simeq \Kas^\sD.\]
\end{thm}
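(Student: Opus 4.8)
The plan is to argue in the $\sC$-module C*-category picture, via the equivalence $\Kas^\sC\simeq\Kas^\sC_{\mathrm{cat}}$ of \cref{prop:Kasparov.category.algebra}, and to realize the change of symmetries by a relative tensor product over the tensor category. First I would make the hypothesis concrete: a weak Morita equivalence between $\sC$ and $\sD$ is implemented by an invertible bimodule category $\sM$ carrying commuting actions of $\sC$ and $\sD$, together with its inverse $\bar\sM$, such that $\sM\boxtimes_\sD\bar\sM\simeq\sC$ and $\bar\sM\boxtimes_\sC\sM\simeq\sD$ as bimodule categories (equivalently $\sM$ is a $\sC$-module category with $\sD\simeq\Func_\sC(\sM,\sM)$, $\bar\sM=\Func_\sC(\sM,\sC)$; equivalently $\sM$ comes from a Q-system in $\sC$). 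A preliminary observation is that, since $\sC$ and $\sD$ have countably many objects, $\sM$ and $\bar\sM$ may be chosen to be separable C*-module categories in the sense of \cref{def:separable.category}: replace $\sM$ by the separable envelope (\cref{rem:separable.envelope}) of a countable full subcategory containing a generator, and check the bimodule structure survives.

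The core construction sends a separable $\sC$-module C*-category $\sX$ to the relative tensor product $\sX\boxtimes_\sC\sM$, a separable $\sD$-module C*-category. I would construct it as a bicolimit --- a two-sided bar construction $\sX\boxtimes\sC^{\boxtimes\bullet}\boxtimes\sM$ --- using the bicolimit calculus for nonunital C*-categories of \cite{antounBicolimitsCategories2020}; the residual $\sD$-action on $\sM$ descends to $\sX\boxtimes_\sC\sM$, where the coherence maps are in general only isometric and non-adjointable, handled just as the cocycles $\bbmv_\pi$ of \cref{defn:cocycle.hom}. One then checks that $\blank\boxtimes_\sC\sM$ is functorial on $\Ccat^\sC$ (proper $\sC$-module functors go to proper $\sD$-module functors, natural unitaries to natural unitaries), preserves separability, satisfies $\sX[0,1]\boxtimes_\sC\sM\cong(\sX\boxtimes_\sC\sM)[0,1]$ compatibly with the evaluation maps, and carries split exact sequences of $\sC$-module categories to split exact sequences. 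Hence $\KK^\sD\circ(\blank\boxtimes_\sC\sM)\colon\Ccat^\sC\to\Kas^\sD_{\mathrm{cat}}$ is homotopy invariant and split exact, so by the categorical universality of $\KK^\sC$ (\cref{cor:universality.categorical}, resting on \cref{thm:Kasparov.universality}) it factors uniquely through $\KK^\sC$, producing an additive functor $\sM_*\colon\Kas^\sC_{\mathrm{cat}}\to\Kas^\sD_{\mathrm{cat}}$ with $\sM_*\circ\KK^\sC=\KK^\sD(\blank\boxtimes_\sC\sM)$.

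Applying the same construction to $\bar\sM$ gives $\bar\sM_*\colon\Kas^\sD_{\mathrm{cat}}\to\Kas^\sC_{\mathrm{cat}}$, and it remains to see the two are mutually inverse. By functoriality of the relative tensor product, $\bar\sM_*\circ\sM_*$ extends the functor $\sX\mapsto\KK^\sC\bigl((\sX\boxtimes_\sC\sM)\boxtimes_\sD\bar\sM\bigr)$; associativity of $\boxtimes_\sC$ together with the fixed equivalence $\sM\boxtimes_\sD\bar\sM\simeq\sC$ of $\sC$-bimodule categories yields a natural isomorphism $(\sX\boxtimes_\sC\sM)\boxtimes_\sD\bar\sM\simeq\sX\boxtimes_\sC\sC\simeq\sX$ in $\Ccat^\sC$, so this functor is naturally isomorphic to $\KK^\sC$ itself; the uniqueness in the universal property then forces $\bar\sM_*\circ\sM_*\cong\id_{\Kas^\sC_{\mathrm{cat}}}$, and symmetrically $\sM_*\circ\bar\sM_*\cong\id_{\Kas^\sD_{\mathrm{cat}}}$. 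Transporting along $\Kas^\sC\simeq\Kas^\sC_{\mathrm{cat}}$ and $\Kas^\sD\simeq\Kas^\sD_{\mathrm{cat}}$ gives $\Kas^\sC\simeq\Kas^\sD$.

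I expect the main obstacle to lie in the relative tensor product $\sX\boxtimes_\sC\sM$ in the nonunital, separable setting: showing the bicolimit exists and stays separable, that the induced $\sD$-module structure is genuinely coherent even though its coherence maps are merely isometric (the technical point flagged in the introduction for non-essential $\sC$-Hilbert bimodules), and --- most delicately --- that associativity of $\boxtimes$ and the invertibility equivalences are compatible tightly enough that the composites descend to honest identity functors up to natural isomorphism on the Kasparov categories; this last point is exactly where the uniqueness clause of \cref{thm:Kasparov.universality} carries the weight.
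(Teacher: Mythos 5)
Your overall strategy---build a functor $\Ccat^\sC\to\Ccat^\sD$ from the invertible bimodule category, check it preserves homotopies and split exact sequences, and invoke the universality of $\KK^\sC$ (\cref{cor:universality.categorical}) to descend to an equivalence of Kasparov categories---is exactly the skeleton of the paper's proof. The difference is in how the functor is implemented. You propose the relative tensor product $\sX\boxtimes_\sC\sM$, realized as a bar-construction bicolimit of nonunital C*-categories, and then deduce invertibility from associativity of $\boxtimes_\sC$ plus $\sM\boxtimes_\sD\bar\sM\simeq\sC$. The paper instead uses the \emph{functor category} $\Func_\sC(\sM,\blank)$ (\cref{thm:weakMorita}), which requires no completion or colimit at all, and then makes it computable by passing to the Q-system picture: $\Func_\sC(\sM,\sA)\simeq\sA_\rho$, the category of $\rho$-module objects in $\sA$ (\cref{lem:functor.Qsystem}), whose nonunitality and separability are checked directly in \cref{lem:Qmodule.nonunital}. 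Invertibility is then the concrete statement $(\sA_\rho)_{\iiEnd(\rho)}\simeq\sA$, verified by hand with the Frobenius identities, rather than an appeal to associativity of a tensor product.

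The gap in your version is precisely the part you flag at the end, and it is not a small one: essentially all of the analytic content of the theorem is concentrated in the unproved assertions that the bicolimit $\sX\boxtimes_\sC\sM$ exists as a separable nonunital C*-category, carries a coherent $\sD$-module structure, is functorial and compatible with $\blank[0,1]$ and split exact sequences, and satisfies an associativity tight enough to identify $(\sX\boxtimes_\sC\sM)\boxtimes_\sD\bar\sM$ with $\sX$ naturally in $\Ccat^\sC$ (note that the last identification must be produced in $\Ccat^\sC$ \emph{before} applying universality; the uniqueness clause of \cref{thm:Kasparov.universality} only helps once you have a genuine natural isomorphism of functors on $\Ccat^\sC$, not merely an object-wise one). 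Also, your remark that the induced $\sD$-coherences on $\sX\boxtimes_\sC\sM$ are ``only isometric and non-adjointable'' is misplaced: the non-adjointable isometries of \cref{defn:cocycle.hom} occur for non-essential \emph{morphisms} (Hilbert bimodules), whereas the module-category structure maps of the target category must be honest unitaries, as they are for $\blank\otimes_\rho\sigma$ on $\sA_\rho$ in the paper. If you want to complete your route you would essentially have to reprove, for the bar construction, the statements that the paper gets for free from $\Func_\sC(\sM,\blank)\simeq(\blank)_\rho$; the cleaner fix is to replace the tensor product by the functor category, which is the adjoint description of the same operation and avoids the bicolimit entirely.
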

\begin{cor}
	Let $G,H$ be compact quantum groups. If they are categorically Morita equivalent, i.e., $\Rep(G)$ and $\Rep(H)$ are weakly Morita equivalent,  then $\Kas^{G}$ and $\Kas^{H}$ are categorically equivalent.
\end{cor}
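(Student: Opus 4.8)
The plan is to deduce this corollary directly from \cref{thm:Morita.invariance} together with \cref{thm:RepCQG.KK}. The only real content is to match up the two notions of ``Morita equivalence'' appearing in the statement and to identify the Kasparov categories $\Kas^{G}$ and $\Kas^{H}$ with those of the representation categories.

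First I would recall the setup: by \cref{ex:CQG.action} the category $\Corr_{\rm es}^{\Rep(G)}$ of $\Rep(G)$-C*-algebras is equivalent to the category of right cocycle $\hat G$-C*-algebras, and by \cref{thm:RepCQG.KK} this upgrades to an equivalence of Kasparov categories
\[
\Kas^{\Rep(G)} \simeq \Kas^{\hat G^{\rm op}} \simeq \Kas^{G},
\]
and likewise $\Kas^{\Rep(H)} \simeq \Kas^{H}$. Thus it suffices to produce a categorical equivalence $\Kas^{\Rep(G)} \simeq \Kas^{\Rep(H)}$.

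Next I would unwind the hypothesis. By definition, ``$G$ and $H$ are categorically Morita equivalent'' means precisely that the rigid C*-tensor categories $\sC := \Rep(G)$ and $\sD := \Rep(H)$ are weakly Morita equivalent, i.e.\ there exists a C*-bimodule category implementing the equivalence (equivalently, $\sC$ and $\sD$ admit a common module category, or are linked by a Morita context). This is exactly the hypothesis of \cref{thm:Morita.invariance}, applied to $\sC = \Rep(G)$ and $\sD = \Rep(H)$, which yields a categorical equivalence $\Kas^{\Rep(G)} \simeq \Kas^{\Rep(H)}$. Composing the three equivalences
\[
\Kas^{G} \simeq \Kas^{\Rep(G)} \simeq \Kas^{\Rep(H)} \simeq \Kas^{H}
\]
gives the desired conclusion.

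There is essentially no obstacle here once \cref{thm:Morita.invariance} is in hand; the corollary is a formal consequence. The only point requiring a line of care is that both $\Rep(G)$ and $\Rep(H)$ are rigid C*-tensor categories with countably many objects (so that \cref{thm:Morita.invariance} applies), which follows from the standing assumptions on compact quantum groups in the conventions (separable $C(G)$, faithful Haar state), and that the composite of categorical equivalences is again one.
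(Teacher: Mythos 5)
Your proof is correct and is exactly the intended argument: the paper treats this as an immediate consequence of \cref{thm:Morita.invariance} applied to $\sC=\Rep(G)$, $\sD=\Rep(H)$, combined with the identifications $\Kas^{\Rep(G)}\simeq\Kas^{G}$ and $\Kas^{\Rep(H)}\simeq\Kas^{H}$ from \cref{thm:RepCQG.KK}. Your remarks on matching the two notions of Morita equivalence and on the standing countability/rigidity hypotheses are the only points of care, and you handle them correctly.
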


\subsection{Weak Morita equivalence of tensor categories}
We start with a short review of the generalities of the weak Morita equivalence.
Just for simplicity of discussion, in the rest of this section, we assume that the C*-tensor category $\sC$ and any nonunital $\sC$-module category $\sA$ is strict, by taking their strictifications if necessary. 
Indeed, for a C*-tensor category $\sC$ and its strictification $\sC^{\wr}$ (cf.~\cite{etingofTensorCategories2015}*{Theorem 2.8.5}), the categories $\Ccat^{\sC}$ and $\Ccat^{\sC^{\wr}}$ are equivalent by \cref{rem:monoidal.invariance}. Moreover, a $\sC^{\wr}$-module category $\sA$ can be strictified (cf.~\cite{etingofTensorCategories2015}*{Remark 7.2.4}, whose proof works for our C*-setting). 
\begin{df}[{\cite{neshveyevCategoricallyMoritaEquivalent2018}*{Definition~3.1}}]
	Let $\sC$ and $\sD$ be rigid C*-tensor categories. They are said to be \emph{weakly Morita equivalent} if there exists a rigid C*-$2$-category $\tilde \sC$ such that
	\begin{enumerate}
		\item The $0$-morphism of $\tilde \sC$ is $\{0,1\}$,
	    \item $\tilde \sC(0,0) \simeq \sC$ and $\tilde \sC(1,1) \simeq \sD$ as C*-tensor categories.
		\item $\tilde \sC(0,1) \neq 0$.
	\end{enumerate}
\end{df}
Weak Morita equivalence is rephrased in terms of $Q$-system (see e.g.~\cite{bischoffTensorCategoriesEndomorphisms2015}), namely, an algebra object $\rho \in \Obj \sC$ (with multiplication $m_\rho$ and unit $\eta_\rho$) that satisfies the Frobenius condition $(m_\rho \otimes 1_\rho)(1_\rho \otimes m_\rho^*) = m_\rho^*m_\rho = (1_\rho \otimes m_\rho)(m_\rho^* \otimes 1_\rho )$ and $m_\rho m_\rho ^* \in \bR_{>0} \cdot 1_\rho$. In this paper, we normalize $m_\rho$ and always assume that a $Q$-system satisfies $m_\rho m_\rho^* =1_\rho$. 

A right $\rho$-module in $\sC$ is an object $X \in \Obj \sC$ with a coisometry $m_X \colon X \otimes \rho \to X$ such that the diagrams
    \begin{align}
    \begin{split}
    \xymatrix@C=2.5em{X \otimes \rho \otimes \rho \ar[r]^{m_X \otimes 1_\rho} \ar[d]^{1_X \otimes m_\rho} & X \otimes \rho \ar[d]^{m_X} \\ X \otimes \rho \ar[r]^{m_X} & X,} \quad  
	\xymatrix@C=1em{X \otimes 1 \ar[rr]^{1_X \otimes \eta_\rho} \ar[rd] && X \otimes \rho \ar[ld]^{m_X} \\ & X, & } \quad
	\xymatrix@C=2.5em{X \otimes \rho  \ar[r]^{m_X^* \otimes 1_\rho \; \; } \ar[d]^{m_X} & X \otimes \rho \otimes \rho \ar[d]^{1_X \otimes m_\rho} \\ X  \ar[r]^{m_X^*} & X \otimes \rho,}
	\end{split}
	\label{eqn:Qsystem.module}
	\end{align}
commute. 
A morphism $T \colon X \to Y$ between right $\rho$-modules is said to be \emph{$\rho$-equivariant} if it satisfies $ T m_X = m_Y(T \otimes 1_\rho)$. 
The notion of right $\rho$-module and $\rho$-$\rho$ bimodule are also defined in the same way. 
We write $\sMod_{\sC}^l (\rho)$, $\sMod_\sC^r(\rho)$, and $\sBimod_\sC(\rho)$ for the C*-category of left, right $\rho$-modules and $\rho$-$\rho$ bimodule objects in $\sC$ respectively. 
\begin{prop}[{\cite{neshveyevCategoricallyMoritaEquivalent2018}*{Section 2.2, Theorem~3.2}}]\label{prop:weak.Morita}
	The following are equivalent:
	\begin{enumerate}[(1)]
		\item The tensor categories $\sC$ and $\sD$ are weakly Morita equivalent.
    	\item There exists a non-zero semisimple cofinite $\sC$-module (unital) C*-category $\sM$ such that $\sD \simeq \Func_\sC(\sM,\sM)$. In this case, the C*-$2$-category is given so that $\tilde \sC(0,1) = \sM$ and $\tilde{\sC}(1,0) = \sM^*:= \Func_\sC(\sM,\sC )$.
    	\item There exists a $Q$-system $(\rho, m_\rho, \eta_\rho)$ in $\sC$ such that $\sD \simeq  \sBimod(\rho)$. In this case, the C*-$2$-category is given so that $\tilde \sC(0,1)=\sMod^{l}_\sC (\rho)$ and $\sC(1,0)=\sMod^r_\sC (\rho)$. 
	\end{enumerate}
\end{prop}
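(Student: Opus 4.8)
The plan is to establish the cycle of implications $(1)\Rightarrow(2)\Rightarrow(3)\Rightarrow(1)$, passing between the three incarnations of the data — a two-object rigid C*-$2$-category, a semisimple cofinite $\sC$-module C*-category, and a $Q$-system in $\sC$ — through the standard dictionary relating module categories and algebra objects. For $(1)\Rightarrow(2)$, I would set $\sM:=\tilde\sC(0,1)$ and use the horizontal composition of $\tilde\sC$ to equip $\sM$ with a $\sC\simeq\tilde\sC(0,0)$-module structure and a commuting action of $\sD\simeq\tilde\sC(1,1)$; rigidity and semisimplicity of the $2$-category make $\sM$ semisimple and cofinite, and it is nonzero by the third axiom in the definition of weak Morita equivalence. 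The action of $\sD$ then induces a monoidal functor $\sD\to\Func_\sC(\sM,\sM)$, which I would show is an equivalence: faithfulness and fullness come from the Frobenius reciprocity afforded by the conjugates in $\tilde\sC$, and essential surjectivity from the fact that every $\sC$-module endofunctor of a semisimple cofinite module category is realized by an object of $\sD$ via the internal-hom description below.

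For $(2)\Rightarrow(3)$, cofiniteness allows me to fix a generator $M\in\Obj\sM$ (the direct sum of representatives of the finitely many simple objects), and rigidity of $\sC$ together with semisimplicity guarantees that the internal endomorphism object $\rho:=\iEnd(M)\in\Obj\sC$, characterized by $\Hom_\sC(\pi,\rho)\cong\Hom_\sM(M\otimes\pi,M)$, exists. The evaluation and coevaluation coming from a conjugate of $M$ supply a multiplication $m_\rho$, a unit $\eta_\rho$, and a comultiplication $m_\rho^*$ obeying the Frobenius and specialness relations; after rescaling $m_\rho$ I arrange $m_\rho m_\rho^*=1_\rho$, so that $(\rho,m_\rho,\eta_\rho)$ is a $Q$-system in the normalization of the excerpt. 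The functor sending $X\in\Obj\sM$ to the internal hom from $M$ to $X$, which is naturally a right $\rho$-module, is then an equivalence $\sM\simeq\sMod_\sC^r(\rho)$, a module-category version of the Eilenberg--Watts theorem; transporting $\Func_\sC(\sM,\sM)$ across it identifies $\sC$-module endofunctors with $\rho$-$\rho$ bimodules acting by the relative tensor product over $\rho$, whence $\sD\simeq\sBimod_\sC(\rho)$.

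For $(3)\Rightarrow(1)$, I would assemble $\tilde\sC$ directly: put $\tilde\sC(0,0):=\sC$, $\tilde\sC(1,1):=\sBimod_\sC(\rho)$, $\tilde\sC(0,1):=\sMod_\sC^l(\rho)$, and $\tilde\sC(1,0):=\sMod_\sC^r(\rho)$, with horizontal composition given by $\otimes$ over $\sC$ on the corner $(0,0)$ and by the relative tensor product $\otimes_\rho$ in the remaining cases. The relative tensor product is defined as the range of the self-adjoint idempotent on $X\otimes Y$ built from the module maps and the comultiplication $m_\rho^*$, which exists because $\sC$ is idempotent complete and $\rho$ is special Frobenius; the $Q$-system axioms \eqref{eqn:Qsystem.module} then yield the associativity and unit coherence of the horizontal composition, and conjugates of $\rho$-modules (constructed from conjugates in $\sC$) give rigidity of $\tilde\sC$. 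Since $\rho$ is itself a left $\rho$-module, $\tilde\sC(0,1)\neq0$, so $\sC$ and $\sBimod_\sC(\rho)\simeq\sD$ are weakly Morita equivalent, closing the cycle.

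The main obstacle I anticipate is the careful bookkeeping around the relative tensor product and the $Q$-system normalization. One must check that the idempotent cutting out $X\otimes_\rho Y$ is genuinely a self-adjoint projection compatible with the C*-structure, so that splitting it stays within the (semisimple, idempotent-complete) category; verify that the multiplication and comultiplication extracted from the conjugate of the generator $M$ satisfy the Frobenius identity with exactly the normalization $m_\rho m_\rho^*=1_\rho$; and confirm that the two equivalences $\sM\simeq\sMod_\sC^r(\rho)$ and $\Func_\sC(\sM,\sM)\simeq\sBimod_\sC(\rho)$ are compatible with these structures. These coherence and normalization verifications, rather than any single conceptual difficulty, are where the bulk of the work lies.
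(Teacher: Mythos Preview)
The paper does not supply its own proof of this proposition: it is stated with attribution to \cite{neshveyevCategoricallyMoritaEquivalent2018}*{Section~2.2, Theorem~3.2} and used as a black box, with only the remark that the identifications $\sBimod(\rho)\simeq\Func_\sC(\sM,\sM)$ and $\sMod_\sC^r(\rho)\simeq\Func_\sC(\sM,\sC)$ are realized by $\pi\mapsto\pi\otimes_\rho\blank$. So there is nothing in the paper to compare against.

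Your outline is the standard route through this equivalence and matches what one finds in the cited reference. The cycle $(1)\Rightarrow(2)\Rightarrow(3)\Rightarrow(1)$, the internal-hom construction of the $Q$-system from a generator of $\sM$, and the assembly of the $2$-category from $\rho$-modules and $\rho$-bimodules with composition by $\otimes_\rho$ are all correct. One small point of care: in the paper's conventions $\sM=\tilde\sC(0,1)=\sMod_\sC^l(\rho)$ carries a \emph{right} $\sC$-action (and a left $\sD$-action), so in your $(2)\Rightarrow(3)$ step the Eilenberg--Watts equivalence should land in $\sMod_\sC^l(\rho)$ rather than $\sMod_\sC^r(\rho)$; this is only a left/right convention issue but worth keeping straight. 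Your anticipated obstacles---checking that the idempotent defining $\otimes_\rho$ is a genuine projection in the C*-sense and that the normalization $m_\rho m_\rho^*=1_\rho$ is achieved---are exactly the places where the work lies, and the paper's \cref{lem:Qsystem.module.tensor} carries out the first of these in the generality needed later.
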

        Both the identifications $\sBimod (\rho) \simeq \Func_\sC(\sM,\sM)$ and $\sMod_\sC^r(\rho) \simeq \Func_\sC(\sM , \sC)$ are given by sending a module $\pi$ to the tensor product functor $\pi \otimes_\rho \blank$ (cf.~\cref{lem:Qsystem.module.tensor}). 

\begin{rem}\label{rem:Qsystem.Morita}
    In \cref{prop:weak.Morita}, the role of $\sC$ and $\sD$ are symmetric. In (2), one can identify $\sC$ with $\Func_\sD (\sM^*,\sM^*)$ by sending $\pi \in \Obj \sC$ to the functor $\Obj \sM^* \ni F \mapsto (\pi \otimes \blank ) \circ F $. Then we have $\sM \simeq \Func_{\sD}(\sM^*,\sD)$  as $\sD$-$\sC$ bimodule categories. 

	In (3), consider the internal endomorphism object
	\[ 
	\iEnd(\rho):=(\rho \otimes \rho , \underline{m}_{\iEnd(\rho)}:=1_\rho \otimes (\|\eta_\rho\|^{-1} \eta_\rho ^* m_\rho) \otimes 1_\rho , \eta_{\iEnd(\rho)}:= m_\rho^* \eta_\rho), 
	\]
	which is a priori a $Q$-system in $\sC$. Since it respects the $\rho$-bimodule structure, $\underline{m}_{\iEnd(\rho)}$ reduces to $\iEnd(\rho) \otimes_\rho \iEnd(\rho) \to \iEnd(\rho)$ (we also remark that $\underline{m}_\rho \colon \rho \otimes_\rho \iEnd(\rho) \to \rho$ is defined in the same way). 
	This gives a $Q$-system in $\sBimod_\sC (\rho)$, denoted by $\iiEnd(\rho)$ in this paper. 
	This $Q$-system recovers the same C*-$2$-category by the following lemma. 
\end{rem}

\begin{lem}\label{lem:linking.bimodule}
	The left (resp.~right) $\sD$-module categories $\sMod^{l}_\sC(\rho)$ and $\sMod^{r}_\sD (\iiEnd (\rho))$ (resp.~$\sMod^{r}_\sC(\rho)$ and $\sMod^{l}_\sD (\iiEnd (\rho))$) are equivalent. Moreover, these equivalences are also $\sC$-equivariant under the monoidal equivalence $\sC \simeq \sBimod (\iiEnd (\rho ) )$.
\end{lem}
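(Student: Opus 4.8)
The plan is to construct the equivalence $\sMod^{l}_\sC(\rho) \simeq \sMod^{r}_\sD(\iiEnd(\rho))$ explicitly via the tensor product over $\rho$, and then check compatibility with all the relevant module structures; the other two cases will be entirely symmetric. Write $R := \iEnd(\rho) = \rho\otimes\rho$ for the $Q$-system, which is simultaneously an object of $\sC$, a $\rho$-$\rho$ bimodule, and (as $\iiEnd(\rho)$) a $Q$-system in $\sD \simeq \sBimod_\sC(\rho)$. First I would define the functor $\Phi\colon \sMod^{l}_\sC(\rho)\to\sMod^{r}_\sD(\iiEnd(\rho))$ by $\Phi(X) := \rho\otimes_\rho X \cong \rho\otimes_\rho (\rho\otimes\rho)\otimes_\rho X$ — more precisely, send a left $\rho$-module $X$ (with action $n_X\colon \rho\otimes X\to X$) to the object $\overline\rho\otimes_\rho X$ or equivalently to $X$ viewed through the bimodule $\rho$, equipped with the right $\iiEnd(\rho)$-action coming from $\underline m_{\iEnd(\rho)}$ acting on the right tensor leg. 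The key structural input is the observation already flagged in \cref{rem:Qsystem.Morita} that $\underline m_\rho\colon \rho\otimes_\rho \iEnd(\rho)\to\rho$ makes $\rho$ a right $\iiEnd(\rho)$-module object in $\sBimod_\sC(\rho)=\sD$, so that $\rho$ itself plays the role of the "linking bimodule" $\tilde\sC(0,1)$ restricted appropriately.

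The main steps, in order, would be: (i) verify that $\Phi$ lands in $\sMod^{r}_\sD(\iiEnd(\rho))$, i.e.\ that the candidate right action satisfies the three diagrams of \eqref{eqn:Qsystem.module} — this reduces, by the Frobenius and normalization conditions on $m_\rho$, to the already-established associativity/unit of $\iiEnd(\rho)$; (ii) construct a quasi-inverse $\Psi$ sending a right $\iiEnd(\rho)$-module $Y$ in $\sD$ to $\overline\rho\otimes_\rho Y$ (using $\overline\rho \cong \rho$ as a $\rho$-bimodule via the $Q$-system conjugation, together with the left $\rho$-action on the outer leg), and exhibit natural unitaries $\Psi\Phi\cong\id$ and $\Phi\Psi\cong\id$ — these come from the standard $Q$-system computations $\rho\otimes_\rho X\cong X$ (unit axiom) and from $m_\rho m_\rho^* = 1_\rho$; (iii) equip $\Phi$ with the coherence data of a left $\sD$-module functor, where $\sD$ acts on $\sMod^l_\sC(\rho)$ by $\pi\otimes_\rho(\blank)$ for $\pi\in\sBimod_\sC(\rho)$ and on $\sMod^r_\sD(\iiEnd(\rho))$ by left multiplication in $\sD$ — the required natural unitary $\Phi(\pi\otimes_\rho X)\cong\pi\otimes_\rho\Phi(X)$ is essentially associativity of $\otimes_\rho$; (iv) check the $\sC$-equivariance: $\sC$ acts through the monoidal equivalence $\sC\simeq\sBimod_\sD(\iiEnd(\rho))$ of \cref{rem:Qsystem.Morita}, and one traces that the two $\sC$-actions match under $\Phi$, which amounts to the compatibility of the $\rho$-bimodule and $\iiEnd(\rho)$-bimodule structures on $R$.

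I expect step (iv), the simultaneous $\sC$- and $\sD$-equivariance bookkeeping, to be the main obstacle — not because any single identity is hard, but because one must juggle three layers of module structure ($\rho$-modules in $\sC$, $\iiEnd(\rho)$-modules in $\sD$, and the $\sC$-action on the latter factoring through $\sBimod_\sD(\iiEnd(\rho))$) and keep the coherence isomorphisms mutually compatible. The cleanest route is probably the $2$-categorical one: identify both sides with hom-categories $\tilde\sC(0,1)$ and $\tilde\sC'(0,1)$ of the rigid C*-$2$-categories attached to $\rho$ and to $\iiEnd(\rho)$ respectively (via \cref{prop:weak.Morita}(3)), observe that these two $2$-categories are biequivalent (the biequivalence being the content of \cref{rem:Qsystem.Morita}), and deduce the equivalence of hom-categories together with their one- and two-sided module structures functorially from the biequivalence. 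That reduces the whole lemma to the biequivalence statement, whose verification is the genuinely computational $Q$-system manipulation but is standard (cf.\ \cite{bischoffTensorCategoriesEndomorphisms2015}); I would carry out (i)–(iii) by hand as a warm-up and then invoke the $2$-categorical packaging for (iv).
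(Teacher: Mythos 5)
Your overall strategy---exhibit an explicit functor built from tensoring with $\rho$, write down a quasi-inverse, and then read off the $\sD$- and $\sC$-module structures---is exactly the paper's approach. However, the central formula is wrong in a way that breaks the argument. You set $\Phi(X):=\rho\otimes_\rho X$ (equivalently $\overline\rho\otimes_\rho X$), but by the unit axiom this is naturally isomorphic to $X$ itself, which is only a \emph{left} $\rho$-module: it carries no right $\rho$-action, hence is not an object of $\sD\simeq\sBimod_\sC(\rho)$ at all, and there is no ``right tensor leg'' on which $\underline{m}_{\iEnd(\rho)}$ could act. The whole content of the lemma is that one must \emph{add} a $\rho$-leg via the plain (non-relative) tensor product: the correct functor is $(X,m_X)\mapsto (X\otimes\rho,\,m_X\otimes 1_\rho,\,1_X\otimes m_\rho)$, where the freshly adjoined $\rho$ supplies the right $\rho$-action making $X\otimes\rho$ a $\rho$-bimodule, and the right $\iiEnd(\rho)$-action is $1_X\otimes\underline{m}_\rho$. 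The quasi-inverse is likewise not $\overline\rho\otimes_\rho Y$ (which again collapses) but the contraction $Y\mapsto Y\otimes_{\iEnd(\rho)}\rho$ retaining the left $\rho$-action $m_Y^l\otimes 1_\rho$. The relative tensor product $\rho\otimes_\rho(\blank)$ you propose is precisely the operation that does nothing.

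A second, smaller issue: your preferred route for step (iv) --- deducing the equivariance from a biequivalence of the two C*-$2$-categories ``whose content is \cref{rem:Qsystem.Morita}'' --- is circular in the paper's logical order, since \cref{rem:Qsystem.Morita} explicitly defers the identification of the two $2$-categories to this very lemma. (One could of course establish the biequivalence independently by $Q$-system calculus, but then one is just reproving the lemma.) With the corrected formula the equivariance is in any case immediate: under $\sC\simeq\sBimod_\sD(\iiEnd(\rho))$, $\pi\mapsto\rho\otimes\pi\otimes\rho$, one has $(X\otimes\rho)\otimes_{\iiEnd(\rho)}(\rho\otimes\pi\otimes\rho)\cong(X\otimes\pi)\otimes\rho$, and similarly $(\sigma\otimes_\rho X)\otimes\rho\cong\sigma\otimes_\rho(X\otimes\rho)$ for $\sigma\in\Obj\sD$, so no heavy coherence bookkeeping is needed.
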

\begin{proof}
	The desired categorical equivalence is given by 
	\[
	\sMod^{l}_\sC(\rho) \to \sMod^{r}_\sD (\iiEnd (\rho)), \quad (X,m_X) \mapsto ((X \otimes \rho , m_X \otimes 1_\rho, 1_X \otimes m_\rho), 1_X \otimes \underline{m}_\rho).
	\]
	Indeed, its inverse functor is given by 
	\[ 
	\sMod^{r}_\sD (\iiEnd (\rho)) \to \sMod^{l}_\sC(\rho), \quad ((Y,m_Y^l,m_Y^r), \underline{m}_Y) \mapsto (Y \otimes_{\iEnd(\rho)} \rho , m_Y^l\otimes 1_{\rho}).
	\]
	In the same way, $\sC \to \sBimod_\sC(\iiEnd(\rho))$ is given by sending $\pi \in \Obj \sC$ to $\rho \otimes \pi \otimes \rho$. Now the $\sD$-$\sC$ equivariance of the above functor is obvious from the definition.   
\end{proof}

		\begin{ex}\label{ex:Qsystem.group}
			Let $\Gamma$ be a countable discrete group and set $\sC=\Hilb_\Gamma^{\rm f}$. 
			Let $\Lambda$ be a finite subgroup of $\Gamma$. Then $\rho = \bC[\Lambda] \in\Obj\sC$ is a $Q$-system. 
			In this case, the bimodule category $\sBimod(\rho)$ is identified with the category of $\Gamma$-graded $\Lambda$-bimodules. 
			In particular, when $\Lambda = \Gamma$ is a finite group, the Q-system gives a weak Morita equivalence between $\Rep(\Gamma)$ and $\Hilbg$. 
			\cref{thm:Morita.invariance} for this weak Morita equivalence is just the Takesaki--Takai (Baaj--Skandalis) duality of equivariant $\KK$-theory \cite{baajCalgebresHopfTheorie1989}. 
		\end{ex}
		\begin{ex}
			Let $\sC$ be a fusion C*-category. 
			Then the Deligne tensor product $\sC \boxtimes \sC^{\rm rev}$ is weakly Morita equivalent to the categorical center $Z(\sC)$, via the left $\sC \boxtimes \sC^{\rm rev}$-module category $\sC$ (see e.g.~\cite{etingofTensorCategories2015}*{Section 7.13}). 
		\end{ex}
		\begin{ex}
		There are some examples of compact quantum groups with weakly Morita equivalent representation categories. 
		\begin{enumerate}
		    \item Two finite groups $\Gamma_1, \Gamma_2$ may have Morita equivalent representation categories (a necessary and sufficient condition is given in \cite{naiduCategoricalMoritaEquiavlence2007}*{Corollary~5.9}). 
		    In particular, this happens if there is an abelian normal subgroup $\Lambda \leq \Gamma_1$ such that $\hat{\Lambda} \rtimes \Gamma_1/\Lambda \cong \Gamma_2$ and $\omega =0$, where $\omega \in H^3(\hat{\Lambda} \rtimes \Gamma_1/\Lambda ; \bT)$ denotes the $3$-cocycle defined later in \eqref{eqn:cocycle.Takesaki}. Note that this is an extension of the construction of isocategorical groups given by \cite{etingofIsocategoricalGroups2001}.
		    \item For a finite quantum group $F$, a compact quantum group $H$, and an automorphism $\mathsf{m} \in \Aut(L^\infty(F)\otimes L^\infty(H))$ is called a \emph{matching} if $(\Delta_F\otimes\id)\mathsf{m}=\mathsf{m}_{23}\mathsf{m}_{13}(\Delta_F \otimes \id)$ and $(\id \otimes \Delta_H) \mathsf{m} = \mathsf{m}_{13} \mathsf{m}_{12} (\id \otimes \Delta_H )$. Associated to it, there are two constructions of a new compact quantum group, the bicrossed product  \cite{vaesExtensionsLocallyCompactQuantum2003} and the double crossed product \cite{baajDoubleCrossedProduct2005} (denoted by $G_1$ and $G_2$ here), whose underlying function C*-algebras are $C(G_1):=F \ltimes C(H)$ and $C(G_2):=C(F)\otimes C(H)$ respectively. 
		    They have weakly Morita equivalent representation categories. Indeed, 
		    $F^{\op}\ltimes C(G_2)$ with the left $G_1$-action by \cite{baajDoubleCrossedProduct2005}*{Proposition~6.1} and the right $G_2$-action by $\Delta_{G_2}$, forms a Morita--Galois object in the sense of \cite{neshveyevCategoricallyMoritaEquivalent2018}*{Definition 3.3} by \cite{neshveyevCategoricallyMoritaEquivalent2018}*{Theorem~3.7} and a similar argument as in \cite{neshveyevCategoricallyMoritaEquivalent2018}*{Example~4.6}. 
		\end{enumerate}
		\end{ex}

		Our \cref{thm:Morita.invariance} on KK-theory reduces to the following theorem on weakly Morita equivalent C*-tensor categories, which will be proved in the next subsection.
		\begin{thm}\label{thm:weakMorita}
			Let $\sC$ and $\sD$ be weakly Morita equivalent tensor categories with the implementing bimodule categories $\sM := \tilde{\sC}(0,1)$ and $\sM^*:= \tilde{\sC}(1,0)$. 
			\begin{enumerate}
				\item For a separable nonunital $\sC$-module category $\sA$, the $\sC$-module functor category $\Func _\sC(\sM, \sA)$ is equipped with the structure of a separable nonunital $\sD$-module category. 
				\item There is a natural isomorphism $\Func _\sD( \sM^*, \Func _\sC (\sM, \blank)) \cong \id$ of functors on $\Ccat^\sC$. 
			\end{enumerate}
		\end{thm}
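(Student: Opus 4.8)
The plan is to pass to the $Q$-system model of \cref{prop:weak.Morita}(3), where all the relevant module categories are concretely realized and the calculus of \eqref{eqn:Qsystem.module} is available. Fix a $Q$-system $(\rho, m_\rho, \eta_\rho)$ in $\sC$ with $\sD \simeq \sBimod_\sC(\rho)$, $\sM = \sMod_\sC^l(\rho)$ and $\sM^* = \sMod_\sC^r(\rho)$; I work strictly, as assumed throughout this section, and suppress coherence data. For a separable nonunital $\sC$-module category $\sA$, let $\sA_\rho$ be the C*-category of $\rho$-modules in $\sA$: objects are pairs $(a, m_a)$ with $a \in \Obj\sA$ and $m_a \colon a \otimes_\alpha \rho \to a$ a morphism in $\sA$ satisfying the module axioms \eqref{eqn:Qsystem.module} (with the $\sC$-action of $\rho$ on $a$ in place of $X \otimes \rho$), morphisms are the $\rho$-equivariant operators in $\sA$, and $\cK_{\sA_\rho}$ consists of those lying in $\cK_\sA$. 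The first step is the identification $\Func_\sC(\sM, \sA) \simeq \sA_\rho$, by sending a $\sC$-module functor $\cF$ to $(\cF(\rho), \cF(m_\rho))$ — here $\cF(\rho)$ acquires a $\rho$-action by applying $\cF$ to $m_\rho \colon \rho \otimes \rho \to \rho$ (a morphism of left $\rho$-modules) and using $\cF(\rho \otimes \pi) \cong \cF(\rho) \otimes \pi$. The inverse reconstructs $\cF$ on the free modules $\rho \otimes \pi$ and extends to all of $\sM$ by idempotent completeness, since every left $\rho$-module is a retract of a free one; countable additivity of $\cF$ is what makes this well defined.

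\textbf{Part (1).} Next I equip $\sA_\rho$ with a separable nonunital $\sD$-module structure. For $X \in \Obj\sD = \sBimod_\sC(\rho)$ and $(a, m_a) \in \sA_\rho$, the relative tensor product $a \otimes_\rho X$ is realized inside $\sA$ as the range of the idempotent on $a \otimes_\alpha X$ built from $m_a$, $m_a^*$ and the $\rho$-actions through the Frobenius relations (this range exists because $\sA$ is idempotent complete); with the residual right $\rho$-action from $X$ this gives the action of $\sD$, whose coherence isomorphisms descend from those of the relative tensor product of $\rho$-bimodules. Separability: because the $Q$-system $\rho$ is a genuine (dualizable) object of $\sC$ — precisely the cofiniteness of $\sM$ — every $(a, m_a)$ is a retract of its free module $(a \otimes \rho, 1_a \otimes m_\rho)$, and $a$ is a retract of $X_\sA^{\oplus\infty}$ for $X_\sA$ a dominant object of $\sA$; hence $(X_\sA \otimes \rho, 1_{X_\sA} \otimes m_\rho)$ is a dominant object of $\sA_\rho$. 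Idempotent completeness, countable $\ell^2$-direct sums (using countable additivity of $\blank \otimes_\alpha \rho$), separability of the morphism spaces (they embed into the separable spaces $\cK_\sA$), and countable additivity of the $\sD$-action all hold componentwise. This proves (1), with $\Func_\sC(\sM, \sA)$ being the asserted $\sD$-module category.

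\textbf{Part (2).} Now apply the identification twice. First $\Func_\sC(\sM, \sA) \simeq \sA_\rho$ as separable $\sD$-module categories. By \cref{lem:linking.bimodule}, $\sM^* = \sMod_\sC^r(\rho) \simeq \sMod_\sD^l(\iiEnd(\rho))$ as $\sD$-module categories, $\sC$-equivariantly, where $\iiEnd(\rho)$ is the $Q$-system in $\sD$ of \cref{rem:Qsystem.Morita} (implementing the weak Morita equivalence $\sD \sim \sBimod_\sD(\iiEnd(\rho)) \simeq \sC$); hence
\[
\Func_\sD(\sM^*, \Func_\sC(\sM, \sA)) \simeq \Func_\sD(\sMod_\sD^l(\iiEnd(\rho)), \sA_\rho) \simeq (\sA_\rho)_{\iiEnd(\rho)},
\]
the last equivalence being part (1) applied to $\sD$, the $Q$-system $\iiEnd(\rho)$ and the module category $\sA_\rho$, and the whole identification is $\sC$-equivariant under $\sC \simeq \sBimod_\sD(\iiEnd(\rho))$. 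It remains to exhibit a $\sC$-equivariant equivalence $(\sA_\rho)_{\iiEnd(\rho)} \simeq \sA$, natural in $\sA$. The functor $\sA \to (\sA_\rho)_{\iiEnd(\rho)}$ sends $a$ to the free $\rho$-module $(a \otimes \rho, 1_a \otimes m_\rho)$ equipped with its canonical right $\iiEnd(\rho)$-action $1_a \otimes \underline{m}_{\iEnd(\rho)}$; the inverse sends an $\iiEnd(\rho)$-module $(b, m_b, \underline{m}_b)$ to the relative tensor product $b \otimes_{\iiEnd(\rho)} \rho \in \sA$. That these are mutually inverse, compatible with the $\sC$-actions, and natural in $\sA$, is the standard $Q$-system absorption calculus — the graphical identities of \eqref{eqn:Qsystem.module} together with the definition of $\iiEnd(\rho)$ — formally identical to the computations behind \cref{lem:linking.bimodule} (the case $\sA = \sC$) and \cite{neshveyevCategoricallyMoritaEquivalent2018}. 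Naturality in $\sA$ holds because every construction used ($\cF \mapsto \cF(\rho)$, free modules, relative tensor products, idempotent splittings) is preserved by a proper $\sC$-module functor $\sA \to \sA'$, which is countably additive and carries idempotent-split ranges to idempotent-split ranges; so the composite equivalence is a natural isomorphism $\Func_\sD(\sM^*, \Func_\sC(\sM, \blank)) \cong \id_{\Ccat^\sC}$.

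\textbf{Main obstacle.} The crux is part (1): checking that $\Func_\sC(\sM, \sA)$, equivalently $\sA_\rho$, genuinely satisfies all clauses of \cref{def:separable.category} despite the nonunitality of $\sA$ — in particular that its endomorphism spaces have the correct multiplier structure $\cL_{\sA_\rho}(a) = \cM(\cK_{\sA_\rho}(a))$, which rests on the $\rho$-averaging conditional expectation $\cL_\sA(a) \to \cL_{\sA_\rho}(a)$ provided by the Frobenius property of $\rho$, and that the dominant-object and $\ell^2$-direct-sum axioms survive. By contrast the $Q$-system absorption $(\sA_\rho)_{\iiEnd(\rho)} \simeq \sA$ of part (2), although a lengthy diagram chase, is routine once the framework of part (1) is in place.
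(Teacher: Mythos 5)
Your proposal is correct and follows essentially the same route as the paper: pass to the $Q$-system picture, identify $\Func_\sC(\sM,\sA)\simeq \sA_\rho$ via $\cF\mapsto\cF(\rho)$ with the Frobenius averaging $x\mapsto m_Y(x\otimes 1_\rho)m_X^*$ handling the multiplier/separability clauses and $Y\otimes\rho$ as dominant object, then apply \cref{lem:linking.bimodule} and the absorption $(\sA_\rho)_{\iiEnd(\rho)}\simeq\sA$ for part (2). The only cosmetic differences are that the paper realizes the inverse of $\cF\mapsto\cF(\rho)$ directly as $X\mapsto X\otimes_\rho\blank$ and proves the absorption by factoring through $\sA_{\iEnd(\rho)}$, whereas you describe the same maps via free modules and the direct inverse $b\mapsto b\otimes_{\iiEnd(\rho)}\rho$.
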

        In (2), for a $\sC$-module category $\sA$, $ \Func _\sD( \sM^*, \Func _\sC (\sM, \sA))$ is regarded as a $\sC$-module category by $\sC \simeq \Func_{\sD}(\sM^*,\sM^*)$ stated in \cref{rem:Qsystem.Morita}.
		
		\begin{proof}[Proof of \cref{thm:Morita.invariance}]
			By \cref{thm:weakMorita} (1), the assignment $\sA \mapsto \Func_\sC(\sM,\sA)$ gives a functor 
			\[ \Func_\sC(\sM , \blank ) \colon \Ccat^\sC \to \Ccat^\sD.\]
			It has an inverse $\Func_\sD(\sM^*, \blank)$ by \cref{thm:weakMorita} (2). 
            In particular, the functor $\Func_{\sC}(\sM,\blank)$ preserves split exact sequences of module categories in the sense of \cref{def:exact.sequence.categories} thanks to the argument after \eqref{eq:exact.Cmodule.category} because the categorical equivalence preserves equalizers if exist. 
            
			Thus, the theorem follows from the universality of the $\KK^\sC$-functor (\cref{cor:universality.categorical}) since $\Func_\sC(\sM,\blank)$ preserves homotopies and split exact sequences. 
			Indeed, as will be checked in \cref{rem:Qsystem.algebra} (3), $\Func_\sC(\sM,\blank) $ respects the structure of $\Ccat$-module, i.e., the natural transform $\Func_\sC(\sM,\sA) \boxtimes \sB  \to \Func_\sC(\sM,\sA \boxtimes \sB)$ of bifunctors from $\Ccat^\sC \times \Ccat$ to $\Ccat^\sD$ is an isomorphism. 
			Homotopy invrariance of $\KK^\sD \circ \Func_\sC(\sM,\blank) $ is checked by applying this for $\sB = \Mod (C[0,1])$ and the evaluation $\ast$-homomorphisms. 
		\end{proof}
		
\subsection{C*-category of modules over a \texorpdfstring{$Q$}{Q}-system}
Here we prove \cref{thm:weakMorita} by comparing the module category picture and the Q-system picture of the weak Morita equivalence. 

As in the previous subsection, let $(\rho, m_\rho, \eta_\rho)$ be a $Q$-system in $\sC$ and let $\sM:=\sMod_\sC^l(\rho)$ be the connected cofinite C*-$\sC$-module category of left $\rho$-modules in $\sC$, which has a strict right $\sC$-module structure. 
For a separable nonunital right $\sC$-module category $\sA$, a $\rho$-module object $(X,m_X)$ in $\sA$ is defined by the same relation as \eqref{eqn:Qsystem.module}.
\begin{df}
    The category of right $\rho$-modules and $\rho$-equivariant morphisms in $\sA$ is denoted by $\sA_\rho$.
\end{df}
\begin{lem}\label{lem:Qmodule.nonunital}
	The C*-category $\sA_\rho$ is endowed with a structure of nonunital C*-category with the categorical ideal $\cK_\rho(X,Y)$ defined by the set of compact equivariant morphisms in $\cL_\rho(X,Y):=\cL_{\sA_\rho}(X,Y)$. 
	Moreover, if $\sA$ is separable, then so is $\sA_\rho$. 
\end{lem}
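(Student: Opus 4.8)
The plan is to carry every assertion back to the ambient category $\sA$ by means of the \emph{free $\rho$-module} functor, $X\mapsto\sfF(X):=(X\otimes\rho,\,1_X\otimes m_\rho)$, together with an associated conditional expectation. The relations \eqref{eqn:Qsystem.module} say precisely that, for a right $\rho$-module $(X,m_X)$ in $\sA$, the maps $m_X\colon\sfF(X)\to(X,m_X)$ and $m_X^*\colon(X,m_X)\to\sfF(X)$ are $\rho$-equivariant; since $m_X$ is a coisometry they exhibit $(X,m_X)$ as the range of the $\rho$-equivariant projection $q_X:=m_X^*m_X\in\cL_\rho(\sfF(X))$. Thus every object of $\sA_\rho$ is a direct summand of a free one, and via the partial isometry $m_X$ one gets $\cL_\rho(X)\cong q_X\cL_\rho(\sfF(X))q_X$ and $\cK_\rho(X)\cong q_X\cK_\rho(\sfF(X))q_X$. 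On the other hand $E_X(T):=m_X(T\otimes1_\rho)m_X^*$ is a unital completely positive idempotent on $\cL_\sA(X)$ whose range is exactly $\cL_\rho(X)$: it fixes equivariant operators because $m_Xm_X^*=1$, and \eqref{eqn:Qsystem.module} forces $E_X(T)$ to be equivariant in general; moreover it is $\cL_\rho(X)$-bimodular, restricts to a surjection $\cK_\sA(X)\twoheadrightarrow\cK_\rho(X)$, and is faithful, since $E_X(a^*a)=0$ gives $m_X(a^*\otimes1_\rho)=0$, whence $a^*=m_X(a^*\otimes1_\rho)(1_X\otimes\eta_\rho)=0$ by the unit axiom. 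All of this uses only that $\blank\otimes\rho$ is a proper $\ast$-functor, strictly continuous on bounded sets (which holds because $\sC$ is rigid, cf.\ \cref{lem:dual.bomodule}~(2)), and that $\rho$ is a $Q$-system.

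Granting this, the ideal axiom is immediate: $\cK_\rho(X,Y)=\cK_\sA(X,Y)\cap\cL_\rho(X,Y)$ is norm- and $\ast$-closed and is a two-sided categorical ideal, as $\cK_\sA$ is an ideal in $\cL_\sA$ and composition preserves $\rho$-equivariance. The substantive point is $\cL_\rho(X)\cong\cM(\cK_\rho(X))$. Since $q\cM(B)q=\cM(qBq)$ for a projection $q$, the corner reduction above lets us assume $(X,m_X)=\sfF(X)$ is free. For injectivity of $\cL_\rho(\sfF(X))\to\cM(\cK_\rho(\sfF(X)))$: if $(u_\lambda)$ is an approximate unit of $\cK_\sA(X\otimes\rho)$ converging strictly to $1$, then $E_{\sfF(X)}(u_\lambda)\in\cK_\rho(\sfF(X))$ is an approximate unit of $\cK_\rho(\sfF(X))$ (by bimodularity) that still converges strictly to $1$ in $\cM(\cK_\sA(X\otimes\rho))$ (by strict continuity of $\blank\otimes\rho$, using $m_\rho m_\rho^*=1_\rho$), so any equivariant operator annihilating $\cK_\rho(\sfF(X))$ is zero. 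Surjectivity — extending an abstract multiplier of $\cK_\rho(\sfF(X))$ to an honest $\rho$-equivariant adjointable operator — is where the finite-index nature of the $Q$-system enters: one uses a quasi-basis for the expectation $E_{\sfF(X)}$ built from $m_\rho$, $m_\rho^*$, $\eta_\rho$ to transport multipliers of $\cK_\rho(\sfF(X))$ to multipliers of $\cK_\sA(X\otimes\rho)=\cL_\sA(X\otimes\rho)$ and back, exactly as in the C*-algebraic theory of $Q$-systems and finite-index conditional expectations (cf.\ \cite{kajiwaraJonesIndexTheory2004}).

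When $\sA$ is separable, the conditions of \cref{def:separable.category} for $\sA_\rho$ reduce to those for $\sA$. Idempotent completeness: a $\rho$-equivariant projection $p$ on $(X,m_X)$ has an image $Y\hookrightarrow X$ in $\sA$, and the composite $Y\otimes\rho\hookrightarrow X\otimes\rho\xrightarrow{m_X}X\twoheadrightarrow Y$ is a coisometry $m_Y$ satisfying \eqref{eqn:Qsystem.module}. Closure under $\ell^2$-direct sums: for modules $(X_n,m_{X_n})$, the sum $\bigoplus X_n$ in $\sA$ carries $\bigoplus m_{X_n}$, using that $\blank\otimes\rho$ is countably additive so that $(\bigoplus X_n)\otimes\rho\cong\bigoplus(X_n\otimes\rho)$. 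A dominant object: if $X_0$ is dominant in $\sA$ then $\sfF(X_0)$ is dominant in $\sA_\rho$, since each $(Y,m_Y)$ is a summand of $\sfF(Y)$, $Y$ is a summand of $X_0^{\oplus\infty}$, and $\sfF$ preserves direct sums, so $\sfF(Y)$ is a summand of $\sfF(X_0)^{\oplus\infty}$. Separability of the morphism spaces is automatic, each $\cK_\rho(X,Y)$ being a closed subspace of the separable space $\cK_\sA(X,Y)$.

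The main obstacle is the surjectivity half of $\cL_\rho(X)\cong\cM(\cK_\rho(X))$: the approximate units $E_X(u_\lambda)$ are only strictly convergent, not norm-Cauchy, so recovering an operator from an abstract multiplier genuinely requires the finite-index quasi-basis machinery rather than a soft limiting argument. Everything else is a routine transfer of structure from $\sA$ across the free-module/conditional-expectation pair.
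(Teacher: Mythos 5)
Your construction is essentially the one the paper uses: the map $E_X(T)=m_X(T\otimes 1_\rho)m_X^*$, its landing in $\cK_\rho(X,Y)$ for compact $T$, and the fact that $E_X(u_\lambda)=m_X(u_\lambda\otimes 1_\rho)m_X^*$ is a net in $\cK_\rho(X)$ converging strictly to $m_Xm_X^*=1_X$ are exactly the paper's two ingredients, and your treatment of the separability claims (idempotent completeness, $\ell^2$-sums, the dominant object $X_0\otimes\rho$ via the equivariant isometry $(i\otimes 1_\rho)m_X^*$) is correct and agrees with the paper. The gap is precisely at the step you flag as ``the main obstacle'': you assert that producing an honest equivariant adjointable operator from an abstract multiplier of $\cK_\rho(X)$ cannot be done by a soft limiting argument and must instead invoke a quasi-basis for the finite-index expectation $E_X$ --- and you then do not carry that argument out, deferring to \cite{kajiwaraJonesIndexTheory2004}. (Your sketch also slips in $\cK_\sA(X\otimes\rho)=\cL_\sA(X\otimes\rho)$, which fails in the nonunital setting.) As written, the central assertion $\cL_\rho\cong\cM(\cK_\rho)$ is therefore not proved.

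In fact the soft argument does work, and the strict convergence you already established is exactly what makes it work. Since $\cK_\rho(X)$ contains a net converging strictly to $1_X$ in $\cL_\sA(X)=\cM(\cK_\sA(X))$, the inclusion $\cK_\rho(X)\subset\cK_\sA(X)$ is nondegenerate, i.e.\ $\overline{\cK_\rho(X)\cdot\cK_\sA(X)}=\cK_\sA(X)$; by the standard extension property of nondegenerate inclusions, it extends uniquely to a $\ast$-homomorphism $\cM(\cK_\rho(X))\to\cM(\cK_\sA(X))=\cL_\sA(X)$. The image of a multiplier $T$ is the strict limit of $Te_\mu\in\cK_\rho(X)$ for an approximate unit $(e_\mu)$ of $\cK_\rho(X)$; each $Te_\mu$ is $\rho$-equivariant, and equivariance passes to bounded strict limits because $\blank\otimes 1_\rho$ is strictly continuous on bounded sets (this is where properness of $\blank\otimes\rho$, cf.\ \cref{lem:dual.bomodule}, is used). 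Hence the extension lands in $\cL_\rho(X)$ and is inverse to the canonical map $\cL_\rho(X)\to\cM(\cK_\rho(X))$, whose injectivity follows from the same essentiality statement you proved. No quasi-basis, and no input from the finiteness of the index beyond $m_\rho m_\rho^*=1_\rho$, is required; your proposed quasi-basis route is plausible but both unnecessary and unexecuted.
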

\begin{proof}
	We need to show $\cL_\rho(X,Y)$ is the multiplier of $\cK_\rho(X,Y)$.
	For any $x \in \cL_\sA (X,Y)$, the morphism $m_Y(x \otimes 1_\rho) m_X^*$ is $\rho$-equivariant by 
	\begin{align*}
		\big( m_Y (x \otimes 1) m_X^*\big) m_X &{}= m_Y(x \otimes 1_\rho ) (1_X \otimes m_\rho)(m_X^* \otimes 1_\rho)  \\
		&{}= m_Y (1_X \otimes m_\rho)(x \otimes 1_\rho \otimes 1_\rho )(m_X^* \otimes 1_\rho)\\
		&{}= m_Y\big( (m_Y(x \otimes 1_\rho )m_X^*) \otimes 1_\rho \big).
	\end{align*}
	Hence we get a subspace $m_Y(\cK_\sA (X,Y) \otimes 1_\rho)m_X^* \subset \cK_\rho (X,Y) \subset \cK_\sA (X,Y)$, whose approximate unit strictly converges to the identity when $X=Y$. 
	This shows that the inclusion $\cK_\rho(X,Y) \subset \cK_\sA (X,Y)$ extends to $\cM (\cK_\rho (X,Y)) \to \cL_\sA(X,Y)$, whose image is contained in $\cL_\rho(X,Y)$. 
	The resulting functor $\cM(\cK_\rho) \to \cL_\rho $ is bijective since $\cK_\rho \subset \cL_\rho$ is an essential categorical ideal.
			
	The last claim follows from that, if $Y \in \Obj \sA$ is dominant,  then so is $Y \otimes \rho \in \Obj \sA_\rho$. 
	Indeed, any $(X,m_X) \in \Obj \sA_\rho$ is embedded to $Y^\infty \otimes \rho$ by the composition of $m_X^* \colon (X, m_X) \to (X \otimes \rho, 1_X \otimes m_\rho)$ and $i \otimes 1_\rho$ for some isometry $i \colon X \to Y^\infty$. 
\end{proof}

\begin{lem}[{\cite{neshveyevDrinfeldCenterRepresentation2016}*{Remark 6.5}}]\label{lem:Qsystem.module.tensor}
	Let $(X,m_X) \in \Obj \sA_\rho$ and a let $\pi$ be a left $\rho$-module in $\sC$.
	\begin{enumerate}
		\item There exists an object $X \otimes_\rho \pi$ in $\sA$, regarded as a direct summand of $X\otimes \pi$ via a coisometry $p_{X,\pi} \colon X \otimes \pi \to X \otimes_\rho \pi$ with $p_{X,\pi}(m_X \otimes 1_\pi) = p_{X,\pi}(1_X \otimes m_\pi)$ that satisfies the following universality:
		For any $Y \in\Obj\sA$ and any morphism $f \colon X \otimes \pi \to Y$ such that $f(m_X \otimes 1_\pi) = f(1_X \otimes m_\pi)$, there exists a unique morphism $\bar f \colon X \otimes_\rho \pi \to Y$ with $\bar f p_{X,\pi} = f$.
		\item The induced map $\bar{m}_X \colon X \otimes_\rho \rho \to X$ is a unitary.
	\end{enumerate}
\end{lem}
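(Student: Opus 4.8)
\textbf{Proof plan for \cref{lem:Qsystem.module.tensor}.}
The plan is to construct $X \otimes_\rho \pi$ as the range of an honest projection on $X \otimes \pi$ built from the $\rho$-actions on $X$ and $\pi$, and then verify the universal property and the unitarity of $\bar m_X$ directly from the $Q$-system relations \eqref{eqn:Qsystem.module}. First I would introduce the candidate idempotent
\[
e_{X,\pi} := (m_X \otimes 1_\pi)(1_X \otimes m_\pi^*) \colon X \otimes \pi \to X \otimes \pi,
\]
and check that $e_{X,\pi}$ is a self-adjoint idempotent. Self-adjointness is immediate since $m_X m_X^* = 1_X$ forces nothing extra but the Frobenius/module relations make $e_{X,\pi}^* = (1_X \otimes m_\pi)(m_X^* \otimes 1_\pi)$ equal to $e_{X,\pi}$ after inserting $m_X^* m_X$ vs.\ $1_X \otimes m_\rho m_\rho^*$; idempotency $e_{X,\pi}^2 = e_{X,\pi}$ follows from the associativity square in \eqref{eqn:Qsystem.module} together with $m_\rho m_\rho^* = 1_\rho$. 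Since $\sA$ is idempotent complete (part (1) of \cref{def:separable.category}, which holds for any separable module category, and in general we may pass to the separable envelope), there is an object $X \otimes_\rho \pi$ and a coisometry $p_{X,\pi} \colon X \otimes \pi \to X \otimes_\rho \pi$ with $p_{X,\pi}^* p_{X,\pi} = e_{X,\pi}$ and $p_{X,\pi} p_{X,\pi}^* = 1$.

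Next I would verify $p_{X,\pi}(m_X \otimes 1_\pi) = p_{X,\pi}(1_X \otimes m_\pi)$. It suffices to show the two morphisms $e_{X,\pi}(m_X \otimes 1_\pi)$ and $e_{X,\pi}(1_X \otimes m_\pi)$ from $X \otimes \rho \otimes \pi$ (resp.\ suitably bracketed sources) into $X \otimes \pi$ agree, which is again a diagram chase using the module associativity of $m_X$ and the cocompatibility of $m_\pi$ with $m_\rho$. For the universal property: given $f \colon X \otimes \pi \to Y$ with $f(m_X \otimes 1_\pi) = f(1_X \otimes m_\pi)$, one checks $f e_{X,\pi} = f$ — expand $e_{X,\pi} = (m_X \otimes 1_\pi)(1_X \otimes m_\pi^*)$ and use the hypothesis on $f$ to replace $(m_X \otimes 1_\pi)$ by $(1_X \otimes m_\pi)$, then collapse $(1_X \otimes m_\pi)(1_X \otimes m_\pi^*) = 1_X \otimes (m_\pi m_\pi^*) = 1_X \otimes 1_\pi$ since $m_\pi$ is a coisometry — so $\bar f := f p_{X,\pi}^*$ does the job, and uniqueness follows because $p_{X,\pi}$ is an epimorphism (being a coisometry).

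For part (2), I would apply (1) with $\pi = \rho$ (as a left $\rho$-module via $m_\rho$); then $m_X \colon X \otimes \rho \to X$ satisfies $m_X(m_X \otimes 1_\rho) = m_X(1_X \otimes m_\rho)$ by the associativity square of \eqref{eqn:Qsystem.module}, so it factors as $m_X = \bar m_X p_{X,\rho}$. To see $\bar m_X$ is unitary, I would exhibit an explicit inverse, namely the morphism induced by $m_X^* \colon X \to X \otimes \rho$: one must check $m_X^*$ lands in the range of $e_{X,\rho}$, i.e.\ $e_{X,\rho} m_X^* = m_X^*$, which is precisely the third (``coassociativity'') square of \eqref{eqn:Qsystem.module} combined with $m_\rho m_\rho^* = 1_\rho$; then $p_{X,\rho} m_X^*$ is a two-sided inverse of $\bar m_X$ after using $m_X m_X^* = 1_X$ and the identities just established. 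I expect the main obstacle to be bookkeeping: keeping the bracketings and the various instances of the module/Frobenius relations straight (there are three squares in \eqref{eqn:Qsystem.module} and each gets used in a slightly different position), rather than any conceptual difficulty — the structure is entirely parallel to the unital $Q$-system case in \cite{neshveyevCategoricallyMoritaEquivalent2018}, the only new point being that $\sA$ is a nonunital module category, which is handled by the idempotent-completeness already built into \cref{def:separable.category} (or the separable envelope of \cref{rem:separable.envelope}).
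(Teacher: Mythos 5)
Your proposal is correct and follows essentially the same route as the paper: both define the idempotent $(m_X\otimes 1_\pi)(1_X\otimes m_\pi)^*$, split it using idempotent completeness, verify the universal property by the identity $f\,e_{X,\pi}=f$, and deduce (2) from the Frobenius relation. The only cosmetic difference is in (2), where the paper observes directly that $p_{X,\rho}=m_X^*m_X$ (so $m_X$ itself implements the unitary onto the summand), whereas you build the inverse of $\bar m_X$ from $m_X^*$ by hand — these are the same computation.
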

\begin{proof}
	For each $X$ and $\pi$ above, the element $p_{X,\pi} = (m_X \otimes 1_{\pi})(1_X \otimes m_{\pi})^* \in \cL(X \otimes \pi)$ is a projection since
	\begin{align*}
				p_{X,\pi}^* p_{X,\pi} & = (1_X \otimes m_{\pi})(m_X \otimes 1_{\pi})^* (m_X \otimes 1_{\pi})(1_X \otimes m_{\pi})^* \\
				&= (1_X \otimes m_{\pi})(m_X \otimes 1_\rho \otimes 1_{\pi})(1_X \otimes m_\rho \otimes 1_{\pi})^*(1_X \otimes m_{\pi})^* \\
				&= (m_X \otimes 1_\pi)(1_X \otimes 1_\rho \otimes m_{\pi})(1_X \otimes 1_\rho \otimes m_{\pi})^*(1_X \otimes m_{\pi})^* \\
				&= p_{X,\pi}.
	\end{align*}
	Now take the direct summand $X \otimes_\rho \pi$ of $X \otimes \pi$ corresponding to $p_{X,\pi}$. By abuse of notation, the projection from $X \otimes \pi \to X \otimes_\rho \pi$ is also written by $p$. It is routine to show that $p_{X,\pi}(m_X \otimes 1_\pi) = p_{X,\pi}(1_X \otimes m_\pi)$.
	For $f \colon X \otimes \pi \to Y$ with the condition, put $\bar f$ to be the restriction of $f$ to $X \otimes_\rho \pi$. Then $f p_{X,\pi} = f$ shows the desired universality.
			
	For (2), since $m_X^* \colon X \to X \otimes \rho$ is isometry, the Frobenius condition $p_{X,\pi} = m_X m_X^*$ shows $X \otimes_\rho \rho \simeq X$.
\end{proof}

For a $\rho$-bimodule $\pi$ in $\sC$ and a right $\sC$-module functor $(F,\mathsf{v}) \colon \sMod^l_\sC(\rho) \to \sA$, $F(\pi)$ is equipped with the right $\rho$-action by $F(m_\pi^r) \circ \mathsf{v}_{\pi,\rho}^* \colon F(\pi) \otimes \rho \to F(\pi \otimes \rho) \to F(\pi)$, where $m_\pi^r$ denotes the right $\rho$-action on $\pi$. 

\begin{lem}\label{lem:bimodrholinear}
	For any $\sC$-module functor $(F, \mathsf{v}) \colon \sM \to \sA$, $\rho$-bimodule $\pi$ and left $\rho$-module $\sigma$, there is a unitary $\mathsf{u}_{F,\pi,\sigma} \colon F(\pi \otimes_\rho \sigma) \to F(\pi ) \otimes_\rho \sigma$. The family $\{ \mathsf{u}_{F,\pi,\sigma}\}$ forms a natural unitary between trifunctors from $\Func_\sC(\sM,\sA) \times \sBimod(\rho) \times \sM$ to $\sA$. 
\end{lem}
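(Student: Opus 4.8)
The plan is to construct the unitary $\mathsf{u}_{F,\pi,\sigma}$ directly from the universal properties of the two relative tensor products involved, namely $\pi \otimes_\rho \sigma$ in $\sC$ (via \cref{lem:Qsystem.module.tensor} applied to $\sM$, or rather its avatar in $\sC$) and $F(\pi) \otimes_\rho \sigma$ in $\sA$ (via \cref{lem:Qsystem.module.tensor} applied to $\sA$, using that $F(\pi)$ carries a right $\rho$-action from the $\rho$-bimodule structure on $\pi$ as explained just before the statement). First I would recall that $\pi \otimes_\rho \sigma$ is the direct summand of $\pi \otimes \sigma$ cut out by the projection $p_{\pi,\sigma} = (m_\pi^r \otimes 1_\sigma)(1_\pi \otimes m_\sigma)^*$, and likewise $F(\pi)\otimes_\rho \sigma$ is the summand of $F(\pi)\otimes\sigma$ cut out by $p_{F(\pi),\sigma}$. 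The coherence unitary $\mathsf{v}_{\pi,\sigma} \colon F(\pi\otimes\sigma) \to F(\pi)\otimes\sigma$ of the $\sC$-module functor $F$ intertwines these two projections: one checks $p_{F(\pi),\sigma}\, \mathsf{v}_{\pi,\sigma} = \mathsf{v}_{\pi,\sigma}\, F(p_{\pi,\sigma})$ using the associativity/unit axioms for $\mathsf{v}$ together with the definition of the induced right $\rho$-action on $F(\pi)$. Consequently $\mathsf{v}_{\pi,\sigma}$ restricts to a unitary between the corresponding summands, and applying the coisometries $F(p_{\pi,\sigma})$ and $p_{F(\pi),\sigma}$ we obtain the desired unitary $\mathsf{u}_{F,\pi,\sigma}\colon F(\pi\otimes_\rho\sigma)\to F(\pi)\otimes_\rho\sigma$ characterized by $\mathsf{u}_{F,\pi,\sigma}\circ F(p_{\pi,\sigma}) = p_{F(\pi),\sigma}\circ \mathsf{v}_{\pi,\sigma}$.

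Next I would verify naturality in each of the three variables. Naturality in $\sigma$ (for morphisms of left $\rho$-modules) and in $\pi$ (for morphisms of $\rho$-bimodules) both follow from naturality of $\mathsf{v}$ together with the universality clause of \cref{lem:Qsystem.module.tensor}, which says that a morphism out of a relative tensor product is determined by its precomposition with the structure coisometry $p$. Naturality in $F$ (for morphisms of $\sC$-module functors, i.e.\ natural transformations compatible with the $\mathsf{v}$'s) is similar: if $\theta\colon F \Rightarrow F'$ is such a morphism, then $\theta$ commutes with the module-coherence data by definition, hence commutes with the projections $p_{F(\pi),\sigma}$ and intertwines the $\mathsf{u}$'s after restriction to the summands. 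In each case the argument is the same two-line diagram chase, pre- and post-composing with the relevant coisometries and invoking uniqueness.

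The only mildly delicate point — and the step I expect to be the main obstacle — is the compatibility check $p_{F(\pi),\sigma}\,\mathsf{v}_{\pi,\sigma} = \mathsf{v}_{\pi,\sigma}\, F(p_{\pi,\sigma})$, because it requires carefully unwinding the definition of the right $\rho$-action on $F(\pi)$, namely $F(\pi)\otimes\rho \xrightarrow{\mathsf{v}_{\pi,\rho}^*} F(\pi\otimes\rho)\xrightarrow{F(m_\pi^r)} F(\pi)$, and feeding it through the hexagon-type associativity axiom relating $\mathsf{v}_{\pi,\sigma}$, $\mathsf{v}_{\pi\otimes\rho,\sigma}$ (or $\mathsf{v}_{\pi,\rho\otimes\sigma}$) and the associator; since everything is strict here the associator drops out and this becomes a bookkeeping identity among the $\mathsf{v}$'s. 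Once that identity is in place, the rest is routine. I would also remark that the construction is plainly functorial in a way that will make the later coherence statements (needed to promote $\Func_\sC(\sM,\sA)$ to a $\sD$-module category) go through, but I would not belabor this here.
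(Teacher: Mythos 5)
Your proposal is correct and follows essentially the same route as the paper: the paper defines $\mathsf{u}_{F,\pi,\sigma} := p_{F(\pi),\sigma}\,\mathsf{v}_{\pi,\sigma}\,F(p_{\pi,\sigma})$ between the two direct summands and verifies unitarity precisely via the intertwining identity $\mathsf{v}_{\pi,\sigma}F(p_{\pi,\sigma}) = p_{F(\pi),\sigma}\mathsf{v}_{\pi,\sigma}$, which it proves by the same three-step computation you outline (naturality of $\mathsf{v}$, the associativity axiom collapsing $(\mathsf{v}_{\pi,\rho}\otimes 1_\sigma)\mathsf{v}_{\pi\otimes\rho,\sigma}$ to $\mathsf{v}_{\pi,\rho\otimes\sigma}$, and the definition $m_{F(\pi)} = F(m_\pi^r)\mathsf{v}_{\pi,\rho}^*$). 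You correctly isolated the one nontrivial step, and your treatment of naturality is if anything more explicit than the paper's ``natural by construction.''
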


\begin{proof}
    By using the projection in \cref{lem:Qsystem.module.tensor} (1), set
	\[
	\mathsf{u}_{F,\pi,\sigma} := p_{F(\pi),\sigma } \mathsf{v}_{\pi,\sigma} F(p_{\pi,\sigma} ) \colon F(p_{\pi,\sigma}) \cdot F(\pi \otimes \sigma ) \to 
	p_{F(\pi),\sigma } \cdot (F(\pi ) \otimes \sigma ).
	\] 
    Then its domain and the codomain are canonically isomorphic to $F(\pi \otimes_\rho \sigma )$ and $F(\pi) \otimes_\rho \sigma$ respectively. 
    This is natural by construction, and is a unitary since
	\begin{align*}
	    \mathsf{v}_{\pi,\sigma} 
		F((m_\pi\otimes1_{\sigma})(1_\pi\otimes m_\sigma)^*) 
		&{}=
		(m_{F(\pi)}\otimes1_{\sigma})(\mathsf{v}_{\pi,\rho}\otimes1_\sigma)
		\mathsf{v}_{\pi\otimes\rho,\sigma} 
		F(1_\pi\otimes m_\sigma)^* \\
		&{}=
		(m_{F(\pi)}\otimes1_{\sigma})\mathsf{v}_{\pi,\rho\otimes\sigma}
		F(1_\pi\otimes m_\sigma)^* \\
		&{}=
		(m_{F(\pi)}\otimes1_{\sigma})(1_{F(\pi)}\otimes m_\sigma)^* \mathsf{v}_{\pi,\sigma}. \qedhere 
	\end{align*}
\end{proof}
		
		\begin{lem}\label{lem:functor.Qsystem}
			The $\ast$-functor
			\[\mathcal{F} \colon \Func_\sC(\sM,\sA) \to \sA_\rho, \quad F \mapsto F(\rho),\]
			is an equivalence of (unital) $\sD$-module categories  with the inverse functor
			\[\mathcal{G} \colon \sA_\rho \to \Func_\sC(\sM,\sA), \quad X \mapsto X \otimes_\rho \blank .\]
            Here, an object $\sigma $ of $\sD \cong \sBimod (\rho)$ acts on $\sA_\rho$ by $\blank \otimes _\rho \sigma $. 
		\end{lem}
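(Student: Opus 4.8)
The plan is to verify that $\mathcal{F}$ and $\mathcal{G}$ are well-defined (proper, nonunital) $\ast$-functors of $\sD$-module categories and that they are mutually quasi-inverse. First I would make the two functors precise. Viewing $\rho$ as a $\rho$-$\rho$ bimodule in $\sC$ via $m_\rho$, it becomes an object of $\sM=\sMod^l_\sC(\rho)$, and for a $\sC$-module functor $(F,\mathsf{v})$ the object $F(\rho)\in\Obj\sA$ carries the right $\rho$-action $F(m_\rho)\circ\mathsf{v}_{\rho,\rho}^*\colon F(\rho)\otimes\rho\to F(\rho)$ introduced before \cref{lem:bimodrholinear}; checking the right-module axioms \eqref{eqn:Qsystem.module} for it is a diagram chase from naturality of $\mathsf{v}$ and the module-functor coherence, and the same formula on components shows $\eta\mapsto\eta_\rho$ is $\rho$-equivariant, so $\mathcal{F}$ is functorial. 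For $\mathcal{G}$, \cref{lem:Qsystem.module.tensor}(1) furnishes the objects $X\otimes_\rho\sigma$ with projections $p_{X,\sigma}$ that are natural in $\sigma$ (by $\rho$-equivariance of morphisms of left $\rho$-modules), so $\mathcal{G}(X)=X\otimes_\rho\blank$ is a $\ast$-functor $\sM\to\sA$; since $\sC$ and $\sA$ are taken strict, $p_{X,\sigma\otimes\tau}=p_{X,\sigma}\otimes1_\tau$, hence $(X\otimes_\rho\sigma)\otimes\tau=X\otimes_\rho(\sigma\otimes\tau)$ on the nose and $\mathcal{G}(X)$ is a $\sC$-module functor with identity coherence. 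Strict continuity, countable additivity and properness of $\mathcal{F}$ and $\mathcal{G}$ are inherited from those of the functors $F$ and from the fact that $\otimes_\rho$ is cut out by a projection; the nonunital structure is handled as in \cref{lem:Qmodule.nonunital}.

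Next I would compute the two composites. For $X\in\Obj\sA_\rho$ one has $\mathcal{F}\mathcal{G}(X)=X\otimes_\rho\rho$, and \cref{lem:Qsystem.module.tensor}(2) gives a unitary $\bar m_X\colon X\otimes_\rho\rho\to X$; a short check shows $\bar m_X$ is right $\rho$-equivariant and natural in $X$, so $\mathcal{F}\mathcal{G}\cong\id_{\sA_\rho}$. For $(F,\mathsf{v})\in\Obj\Func_\sC(\sM,\sA)$ one has $\mathcal{G}\mathcal{F}(F)=F(\rho)\otimes_\rho\blank$. For a left $\rho$-module $\sigma$, the module Frobenius identity $(m_\rho\otimes1_\sigma)(1_\rho\otimes m_\sigma)^*=m_\sigma^*m_\sigma$ exhibits $\rho\otimes_\rho\sigma$ as the image of the projection $m_\sigma^*m_\sigma$, so $m_\sigma$ restricts to a unitary $\overline{m_\sigma}\colon\rho\otimes_\rho\sigma\to\sigma$ of left $\rho$-modules, natural in $\sigma$. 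Combining this with the unitary $\mathsf{u}_{F,\rho,\sigma}$ of \cref{lem:bimodrholinear} yields
\[
\theta_{F,\sigma}\colon\ F(\rho)\otimes_\rho\sigma\ \xrightarrow{\ \mathsf{u}_{F,\rho,\sigma}^{-1}\ }\ F(\rho\otimes_\rho\sigma)\ \xrightarrow{\ F(\overline{m_\sigma})\ }\ F(\sigma),
\]
and I would check that $\{\theta_{F,\sigma}\}_\sigma$ is a $\sC$-module natural isomorphism $\mathcal{G}\mathcal{F}(F)\to F$ which is moreover natural in $F$; both reductions come from the naturality and functoriality clauses of \cref{lem:bimodrholinear}. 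This gives $\mathcal{G}\mathcal{F}\cong\id$.

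It remains to upgrade this to an equivalence of $\sD$-module categories. Through the identification $\sD\simeq\sBimod_\sC(\rho)$, an object $\tau$ acts on $\sA_\rho$ by $\blank\otimes_\rho\tau$ and on $\Func_\sC(\sM,\sA)$ as in \cref{thm:weakMorita}(1) and \cref{rem:Qsystem.Morita}; the coherence isomorphism $\mathcal{F}(F\otimes\tau)\cong\mathcal{F}(F)\otimes_\rho\tau$ is once more assembled from $\mathsf{u}_{F,\tau,\rho}$, the right-module identification $\tau\otimes_\rho\rho\cong\tau$, and the isomorphisms $\theta$ above, and its compatibility with the associativity and unit constraints of the two $\sD$-actions reduces to the same naturality/functoriality of \cref{lem:bimodrholinear}; $\mathcal{G}$ is then automatically $\sD$-linear as a quasi-inverse. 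The main obstacle is bookkeeping rather than conceptual content: one must keep track of left- versus right-$\rho$-module structures, of the $\sC$-module coherence $\mathsf{v}$, and of the $\sD$-action simultaneously, and confirm that every isomorphism built out of \cref{lem:Qsystem.module.tensor,lem:bimodrholinear} is natural in all of its variables at once — it is easy to transpose a side of a module action and thereby break equivariance, so the relevant triangles and hexagons should be drawn out explicitly.
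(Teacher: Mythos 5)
Your proposal is correct and follows essentially the same route as the paper: $\mathcal{F}\mathcal{G}\cong\id$ via \cref{lem:Qsystem.module.tensor}~(2), $\mathcal{G}\mathcal{F}\cong\id$ via the unitary $\rho\otimes_\rho\sigma\cong\sigma$ combined with $\mathsf{u}_{F,\rho,\sigma}$ from \cref{lem:bimodrholinear}, and $\sD$-linearity from $F(\rho\otimes_\rho\sigma)\cong F(\rho)\otimes_\rho\sigma$. You simply spell out the well-definedness and naturality checks that the paper leaves implicit.
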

		\begin{proof}
			\cref{lem:Qsystem.module.tensor} (2) shows $\mathcal{F} \mathcal{G}(X) = X \otimes_\rho \rho \simeq X$. 
			Conversely, a similar consideration as in (2) shows $\rho \otimes_\rho \pi \simeq \pi$. Now applying $F$ in view of \cref{lem:bimodrholinear}, we get a natural unitary $\mathcal{G} \mathcal{F}(F)(\pi) = F(\rho) \otimes_\rho \pi \to F(\pi)$. 
            Similarly, for $\sigma \in \Obj \sD$, the composition functor $F \circ \sigma $ is sent by $\cF$ to $F(\rho \otimes_\rho \sigma) \cong F(\rho)\otimes_\rho \sigma$. Thus $\cF$ preserves the $\sD$-module structure.
		\end{proof}
		Under this categorical equivalence, the categorical ideal $\cK_\rho$ is sent to compact $\sC$-equivariant natural transformations. Together with \cref{lem:Qmodule.nonunital}, this proves \cref{thm:weakMorita} (1).

Next, we show \cref{thm:weakMorita} (2). 
Notice that \cref{lem:bimodrholinear} induces a $\sD$-module structure on $\Func_\sC(\sM,\sA)$, which is inherited by $\sA_\rho$ via \cref{lem:functor.Qsystem}. More explicitly, the tensor product of $X \in \Obj \sA_\rho$ and $\pi \in \Obj \sD$ is given by $X \otimes_\rho \pi$ on which $\rho$ acts from the right by $1 \otimes_\rho m_\pi$.

		By \cref{lem:functor.Qsystem} and \cref{lem:linking.bimodule}, we get an equivalence of (unital) $\Func_\sD(\sM^*,\sM^*)$-module categories
		\begin{align*}
			\Func_\sD(\sM^* , \Func_\sC(\sM, \sA)) \simeq & \Func_\sD(\sMod^l_\sD(\iiEnd(\rho)) , \Func_\sC(\sM, \sA))\\
			\simeq & (\Func_\sC (\sM, \sA))_{\iiEnd (\rho)} \cong (\sA_{\rho})_{\iiEnd (\rho)}
		\end{align*}
        sending a functor $F$ to 
        $(F(\rho))(\rho)$ via $\sM^* \simeq \sMod_\sC^r(\rho)$. 
    Note that the action of $\pi \in \Obj \sC$, identified with the $\iiEnd(\rho)$-bimodule $\rho \otimes \pi \otimes \rho$, on $(\sA_\rho)_{\iiEnd (\rho)}$ is given by $\blank \otimes _{\iiEnd(\rho)} (\rho \otimes \pi \otimes \rho)$.
		\begin{prop}
			There exists an equivalence of nonunital $\sC$-module categories $\sA \simeq (\sA_\rho)_{\iiEnd(\rho)}$, which gives a natural transform of functors $\id \xrightarrow{\sim} (\blank_\rho)_{\iiEnd(\rho)}$.
		\end{prop}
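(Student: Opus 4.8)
The plan is to construct an explicit $\sC$-module functor $\sA \to (\sA_\rho)_{\iiEnd(\rho)}$ that sends an object $X\in\Obj\sA$ to $X\otimes\rho$, equipped with the two $\rho$-actions it naturally carries, and to check that this is a (nonunital, separable) categorical equivalence preserving the $\sC$-module structure. First I would observe that for any $X\in\Obj\sA$, the object $X\otimes\rho$ becomes a right $\rho$-module via $1_X\otimes m_\rho\colon X\otimes\rho\otimes\rho\to X\otimes\rho$, so $X\otimes\rho\in\Obj\sA_\rho$; this is exactly the assignment used in \cref{lem:Qmodule.nonunital} to exhibit a dominant object. Next, viewing $\iEnd(\rho)=\rho\otimes\rho$ as a $Q$-system in $\sC$ (as in \cref{rem:Qsystem.Morita}), one checks that $X\otimes\rho=X\otimes_\rho(\rho\otimes\rho)$ carries a second, $\iEnd(\rho)$-equivariant, $\rho$-module structure coming from $1_X\otimes\underline{m}_{\iEnd(\rho)}$, and that this makes $X\otimes\rho$ into an object of $(\sA_\rho)_{\iiEnd(\rho)}$; the two $\rho$-actions are compatible because of the Frobenius relations satisfied by $m_\rho$. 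On morphisms the functor sends $f\colon X\to Y$ to $f\otimes 1_\rho$, which is manifestly equivariant for both structures. Call this functor $\cH\colon\sA\to(\sA_\rho)_{\iiEnd(\rho)}$.

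The inverse is the composite of the two "module over a $Q$-system" evaluation functors read backwards: given an $\iiEnd(\rho)$-module $(X,m_X)$ in $\sA_\rho$, form $X\otimes_\rho\rho$ inside $\sA_\rho$ (using \cref{lem:Qsystem.module.tensor}) — but here the relative tensor product is taken with respect to the $\iEnd(\rho)$-module structure on the left and the canonical $\iEnd(\rho)$-$\rho$ bimodule $\rho$ on the right (this is the $\underline{m}_\rho\colon\rho\otimes_\rho\iEnd(\rho)\to\rho$ of \cref{rem:Qsystem.Morita}), landing in $\sA$. Concretely this is the composite $(\sA_\rho)_{\iiEnd(\rho)}\xrightarrow{\ \cF^{-1}\ }\Func_\sD(\sMod^l_\sD(\iiEnd(\rho)),\Func_\sC(\sM,\sA))\xrightarrow{\sim}\sA$, where the second arrow uses \cref{lem:linking.bimodule}, identifying $\sMod^l_\sD(\iiEnd(\rho))\simeq\sMod^r_\sC(\rho)=\sM^*$ and then using that $\Func_\sD(\sM^*,\Func_\sC(\sM,\sA))$ should be $\sA$; but since that last identification is precisely what we are proving, the cleanest route is to verify directly that $X\mapsto(X\otimes\rho)\otimes_{\iEnd(\rho)}\rho$ (with the appropriate structures) is inverse to $\cH$ up to natural unitary. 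The two composites are handled by the "absorption" unitaries: $\cH$ followed by its inverse gives $X\otimes\rho\otimes_{\iEnd(\rho)}\rho\cong X\otimes(\rho\otimes_{\iEnd(\rho)}\rho)\cong X\otimes\rho\cong X$ using \cref{lem:Qsystem.module.tensor} (2) applied to the $Q$-system $\iEnd(\rho)$ and $\rho$ as module, together with $\bar m_X\colon X\otimes_\rho\rho\cong X$; the other composite is dual.

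Then I would check the two remaining structural points. Functoriality in $\sA$ (i.e., that these equivalences assemble into a natural transformation $\id\xrightarrow{\sim}(\blank_\rho)_{\iiEnd(\rho)}$ of functors on $\Ccat^\sC$) follows because all the operations involved — $X\mapsto X\otimes\rho$, relative tensor products $\otimes_\rho$ and $\otimes_{\iEnd(\rho)}$, and the coherence unitaries $p_{X,\pi}$ — are natural in $\sA$, being built functorially from the right $\sC$-action on $\sA$; so for a proper $\sC$-module functor $\cE\colon\sA\to\sB$ the obvious square commutes up to canonical natural unitary. Compatibility with the $\sC$-module structure (recall $\sC\simeq\sBimod(\iiEnd(\rho))$ acts on $(\sA_\rho)_{\iiEnd(\rho)}$ by $\blank\otimes_{\iiEnd(\rho)}(\rho\otimes\pi\otimes\rho)$) amounts to producing, for each $\pi\in\Obj\sC$, a natural unitary $\cH(X\otimes\pi)\cong\cH(X)\otimes_{\iiEnd(\rho)}(\rho\otimes\pi\otimes\rho)$, i.e. $(X\otimes\pi)\otimes\rho\cong(X\otimes\rho)\otimes_{\iEnd(\rho)}(\rho\otimes\pi\otimes\rho)$; this is again an instance of absorption, $\rho\otimes_{\iEnd(\rho)}(\rho\otimes\pi\otimes\rho)\cong\rho\otimes\pi\otimes\rho\otimes_\rho\rho\cong\rho\otimes\pi\otimes\rho$ reducing appropriately, combined with $\bar m$ for the leftmost $\rho$. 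Finally, separability of $(\sA_\rho)_{\iiEnd(\rho)}$ is inherited from $\sA$ by two applications of \cref{lem:Qmodule.nonunital}.

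The main obstacle I expect is bookkeeping: the object $\rho\otimes\rho$ plays three different roles here — as the $Q$-system $\iEnd(\rho)$ in $\sC$, as the $Q$-system $\iiEnd(\rho)$ in $\sBimod(\rho)=\sD$, and as a $\rho$-bimodule — and one must keep the $\rho$-actions straight, verifying that the relative tensor products $\otimes_\rho$ (coequalizing the $\sC$-side $\rho$-action) and $\otimes_{\iEnd(\rho)}$ (coequalizing the $\sD$-side action) are independent and that all the coherence unitaries $p_{X,\pi}$, $\mathsf{u}_{F,\pi,\sigma}$, $\bar m_X$ from \cref{lem:Qsystem.module.tensor} and \cref{lem:bimodrholinear} fit into commuting diagrams. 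Each individual absorption isomorphism is a short Frobenius-relation computation of the kind already carried out in the proofs of \cref{lem:Qsystem.module.tensor,lem:functor.Qsystem}, so the content is low, but making the naturality statement precise — that the whole thing is a natural transformation of functors $\Ccat^\sC\to\Ccat^\sC$, hence in particular that $\cH$ descends to a natural equivalence after passing to $\KK^\sC$ — is where care is needed.
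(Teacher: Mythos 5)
Your overall strategy coincides with the paper's: the equivalence is implemented by $X \mapsto X \otimes \rho$, the inverse is $\blank \otimes_{\iEnd(\rho)}\rho$, the $\sC$-module compatibility is the absorption isomorphism $(X \otimes \pi)\otimes \rho \cong (X \otimes \rho)\otimes_{\iiEnd(\rho)}(\rho \otimes \pi \otimes \rho)$, and naturality in $\sA$ is read off from the construction. The one place where your plan as written has a genuine gap is the claim that ``the other composite is dual.'' The composite $(\sA_\rho)_{\iiEnd(\rho)} \to \sA \to (\sA_\rho)_{\iiEnd(\rho)}$ is not dual to the first one: starting from an \emph{arbitrary} object $((X,m_X),\underline{m}_X)$, the absorption unitaries give you an isomorphism of the underlying $\iEnd(\rho)$-modules, but you must still check that this isomorphism intertwines the \emph{two} independent actions, i.e.\ that the original $\rho$-action $m_X$ agrees with the standard one on $Y \otimes \rho$. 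Equivalently, essential surjectivity of $X \mapsto X\otimes\rho$ requires knowing that on any object of $(\sA_\rho)_{\iiEnd(\rho)}$ the $\rho$-module structure is determined by the $\iiEnd(\rho)$-module structure; nothing in your absorption computations delivers this.

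The paper supplies exactly this missing ingredient by inserting an intermediate step: the forgetful functor $(\sA_\rho)_{\iiEnd(\rho)} \to \sA_{\iEnd(\rho)}$, $((X,m_X),\underline{m}_X)\mapsto (X,\underline{m}_X\cdot p_{X,\iEnd(\rho)})$, is an equivalence because $m_X$ is recovered as $m_X=\underline{m}_X(1_X\otimes m_\rho^*)$ (a short diagram chase using the unit $\eta_\rho$ and the Frobenius relations). Once that is in place, one only has to check that $\sA \to \sA_{\iEnd(\rho)}$, $X\mapsto X\otimes\rho$, is an equivalence with inverse $Y\mapsto Y\otimes_{\iEnd(\rho)}\rho$, which is the routine $Q$-system computation you describe. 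So you should either prove the recovery formula directly and use it to verify that your absorption isomorphism on the second composite is $\rho$-equivariant, or factor through $\sA_{\iEnd(\rho)}$ as the paper does; without one of these the proof of essential surjectivity is incomplete. The rest of your outline (naturality, $\sC$-equivariance via $\sC\simeq\sBimod(\iiEnd(\rho))$, separability via \cref{lem:Qmodule.nonunital}) matches the paper and is fine.
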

		\begin{proof}
			We show that the functor $\sA \to (\sA_\rho)_{\iiEnd(\rho)}$ given by $X \mapsto X \otimes \rho$ gives the desired categorical equivalence. 
			First, let us consider the forgetful functor 
			\[ 
			(\sA_\rho)_{\iiEnd ( \rho ) } \to \sA_{\iEnd ( \rho ) }, \quad ((X,m_X), \underline{m}_X) \mapsto (X,\underline{m}_X \cdot  p_{X,\iEnd(\rho)}),
			\]
			where $m_X$ and $\underline{m}_X$ denotes the action of $\rho$ and $\iiEnd(\rho)$, and $p_{X,\iEnd(\rho)}$ denotes the projection of \cref{lem:Qsystem.module.tensor} (1). 
			This is a categorical equivalence.
			Indeed, $m_X$ is recovered from $\underline{m}_X \cdot p_{X,\iEnd(\rho)}$ as $m_X = \underline{m}_X(1_X \otimes m_\rho^*)$ 
			by the commutativity of the diagram
			\begin{align*}
				\xymatrix@C=13ex{
					X \otimes \rho \ar[r]^-{1_X \otimes m_\rho^* \eta_\rho \otimes 1_\rho} \ar[rd]_{1_X \otimes m_\rho ^* } & X \otimes \rho \otimes \rho \otimes \rho \ar[r]^{\underline{m}_X \otimes 1_\rho} \ar[d]^{1_X \otimes 1_\rho \otimes m_\rho} & X \otimes \rho \ar[d]^{m_X} \\ 
					& X \otimes \rho \otimes \rho \ar[r] ^{\underline{m}_X}& X.
				}
	\end{align*}
	This shows that the functor is both fully faithful and essentially surjective.
	Besides, the composition functor $\sA \to (\sA_\rho)_{\iiEnd(\rho)} \to \sA_{\iEnd ( \rho ) }$ is also a categorical equivalence, with the inverse functor
	\[
	\sA_{\iEnd ( \rho ) } \to \sA, \quad Y \mapsto Y\otimes_{\iEnd ( \rho ) } \rho. 
	\]
	These categorical equivalences all preserve the $\sC$-module structures via the monoidal equivalence $\sC \simeq \sBimod^\sD (\iiEnd(\rho))$ given by $\pi\mapsto \rho\otimes\pi\otimes \rho$, which is checked as e.g.~ $(X \otimes \pi) \otimes \rho \simeq (X \otimes \rho) \otimes_{\iiEnd(\rho)} (\rho \otimes \pi \otimes \rho) $. 

    Finally, the naturality of the equivalence $\sA \simeq (\sA_{\rho})_{\iiEnd(\rho)}$ with respect to a $\sC$-module functor $(F,\mathsf{v}) \colon \sA \to \sB$ is obvious from the construction, as $F(X \otimes \rho) \in \Obj ((\sB_\rho)_{\iiEnd(\rho)})$ is identified with $F(X) \otimes \rho$ via $\mathsf{v}_{X,\rho}$.
\end{proof}

We finish this subsection by a remark on the $\sC$-C*-algebra picture of this weak Morita invariance, which also shows the remaining part of the proof of \cref{thm:Morita.invariance}.
	\begin{lem}\label{rem:Qsystem.algebra}
    Let $(A, \alpha,\fu)$ be a $\sC$-C*-algebra. We assume $(\alpha, \fu)$ is strict, i.e., $\alpha_{\pi \otimes \sigma}=\alpha_\pi \otimes_A \alpha_\sigma$ and $\fu_{\pi,\sigma} =1$ for any $\pi, \sigma \in \Obj \sC$.
    Let $\rho$ be a $Q$-system in $\sC$. Set $B:=\cK_\rho(\alpha_\rho)$. Then the following hold.
    \begin{enumerate}
        \item The following functor is a categorical equivalence;
    \[
    \Mod(B) \to \Mod(A)_\rho, \quad X \mapsto (X \otimes_B \alpha_\rho, 1_X \otimes \alpha_{m_\rho}).
    \]
    \item The map $\lambda_\rho \colon \alpha_\rho \to B$ given by $\lambda_\rho ( \xi ) =\alpha_{m_\rho}( \xi \otimes \blank)$ is bijective.
    \item For a C*-algebra $D$, there is an equivalence $\Mod(A)_\rho \boxtimes \Mod (D) \simeq \Mod(A \otimes D)_\rho$ as  $\sBimod(\rho)$-module categories. 
    \end{enumerate}
\end{lem}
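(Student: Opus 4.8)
The plan is to obtain (1) from the structure theory of separable nonunital C*-categories, to prove (2) as a ``$Q$-system commutation'' statement, and to deduce (3) from (1), (2) and the behaviour of the construction under exterior tensoring. Throughout I would identify, for $X\in\Obj\Mod(A)$, the object $X\otimes\rho$ with $X\otimes_A\alpha_\rho$; applying the strict monoidal functor $\alpha$ to the $Q$-system $(\rho,m_\rho,\eta_\rho)$ then turns $(\alpha_\rho,\alpha_{m_\rho})$ into a right $\rho$-module object in $\Mod(A)$, with $\alpha_{m_\rho}$ and $\alpha_{\eta_\rho}$ adjointable, $\alpha_{m_\rho}\alpha_{m_\rho}^*=1$, and $\alpha_{m_\rho}(\alpha_{\eta_\rho}\otimes_A 1_{\alpha_\rho})=1=\alpha_{m_\rho}(1_{\alpha_\rho}\otimes_A\alpha_{\eta_\rho})$ (every associator disappears because $(\alpha,\fu)$ is strict). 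I would also record that $\alpha_\rho$ is essential and has conjugate $\alpha_{\bar\rho}$, so \cref{lem:dual.bomodule} applies to it. For (1): by the argument in the proof of \cref{lem:Qmodule.nonunital}, $\alpha_\rho=A\otimes\rho$ (with action $\alpha_{m_\rho}$) is a dominant object of the separable nonunital C*-category $\Mod(A)_\rho$, since any right $\rho$-module $(Y,m_Y)$ embeds in it as a $\rho$-equivariant direct summand of $A^{\oplus\infty}\otimes\rho=\alpha_\rho^{\oplus\infty}$ via $m_Y^*$ followed by an isometry into $A^{\oplus\infty}$. Then \cref{thm:algebra.category} (1) yields $\Mod(A)_\rho\simeq\Mod(\cK_\rho(\alpha_\rho))=\Mod(B)$ through $Y\mapsto\cK_\rho(\alpha_\rho,Y)$, whose quasi-inverse is --- exactly as in the construction recalled after \cref{thm:algebra.category}, cf.~\eqref{eqn:functor.to.bimodule} --- the functor $X\mapsto X\otimes_B\alpha_\rho$, the comparison being the evaluation unitary $\cK_\rho(\alpha_\rho,Y)\otimes_B\alpha_\rho\cong Y$. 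Since this unitary is $\rho$-equivariant for the action $1\otimes\alpha_{m_\rho}$ on its domain, the $\rho$-module structure it transports to $X\otimes_B\alpha_\rho$ is $1_X\otimes\alpha_{m_\rho}$, which is (1).

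For (2) I would set $\lambda_\rho(\xi):=\alpha_{m_\rho}T_\xi$ and check three things. It is adjointable, as a composite of the adjointable operators $\alpha_{m_\rho}$ and $T_\xi$. It is compact: the left action of $A$ on $\alpha_\rho$ is proper by \cref{lem:dual.bomodule} (2), and $T_\xi^*T_\xi$ is the left action of $\langle\xi,\xi\rangle\in A$ on $\alpha_\rho$, so $T_\xi\in\cK_A(\alpha_\rho,\alpha_\rho\otimes_A\alpha_\rho)$. It is $\rho$-equivariant, by a direct computation that reduces to associativity $m_\rho(m_\rho\otimes 1_\rho)=m_\rho(1_\rho\otimes m_\rho)$. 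Hence $\lambda_\rho$ maps $\alpha_\rho$ into $B=\cK_\rho(\alpha_\rho)$. Injectivity follows from the unit relation $\alpha_{m_\rho}(1_{\alpha_\rho}\otimes_A\alpha_{\eta_\rho})=1$, which gives $\lambda_\rho(\xi)\alpha_{\eta_\rho}(a)=\xi a$ for $a\in A$. For surjectivity, given $b\in B$ I would take $\xi_b\in\alpha_\rho$ to be the vector corresponding to $b\circ\alpha_{\eta_\rho}\in\cK_A(A,\alpha_\rho)\cong\alpha_\rho$, and then, using the $\rho$-equivariance $b\,\alpha_{m_\rho}=\alpha_{m_\rho}(b\otimes_A 1_\rho)$ together with the section identity $\alpha_{m_\rho}(\alpha_{\eta_\rho}\otimes_A 1_{\alpha_\rho})=1$, compute $b=\alpha_{m_\rho}\bigl((b\circ\alpha_{\eta_\rho})\otimes_A 1_{\alpha_\rho}\bigr)=\alpha_{m_\rho}T_{\xi_b}=\lambda_\rho(\xi_b)$.

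For (3) I would use that the $\sC$-action on $A\otimes D$ is $\alpha\otimes\mathrm{id}_D$, so $(\alpha\otimes\mathrm{id}_D)_\rho=\alpha_\rho\otimes D$ with multiplication $\alpha_{m_\rho}\otimes\mathrm{id}_D$ and $\lambda_\rho^{A\otimes D}=\lambda_\rho^A\otimes\mathrm{id}_D$; by (2) this identifies $\cK_\rho(\alpha_\rho\otimes D)\cong B\otimes D$. Combining (1) for $A$ and for $A\otimes D$ with the standard equivalence $\Mod(B\otimes D)\simeq\Mod(B)\boxtimes\Mod(D)$ gives $\Mod(A\otimes D)_\rho\simeq\Mod(A)_\rho\boxtimes\Mod(D)$, and this is $\sBimod(\rho)$-equivariant because under (1) the action of $\sigma\in\sBimod(\rho)$ is $\blank\otimes_\rho\sigma$, which only touches the $\alpha_\rho$-direction and so commutes with $\boxtimes\Mod(D)$. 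The step I expect to be the main obstacle is the surjectivity half of (2) --- the ``$Q$-system commutation'' assertion that $\rho$-equivariant compact endomorphisms of the regular module $\alpha_\rho$ are exactly left multiplications --- where care is needed with the unit $\alpha_{\eta_\rho}$, which is only a multiplier-type map when $A$ is nonunital, and with the attendant approximate-unit limits; everything else is bookkeeping, namely keeping track of the various $\rho$- and $\sBimod(\rho)$-module structures across the equivalences in (1) and (3).
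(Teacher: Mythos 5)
Your proposal is correct and follows essentially the same route as the paper's own (very terse) proof: (1) via the dominance of $\alpha_\rho$ in $\Mod(A)_\rho$ from \cref{lem:Qmodule.nonunital} together with \cref{thm:algebra.category}, (2) via the explicit inverse $b\mapsto b\circ\alpha_{\eta_\rho}$, and (3) via the chain $\Mod(A)_\rho\boxtimes\Mod(D)\simeq\Mod(B\otimes D)\simeq\Mod(\cK_\rho(\alpha_\rho\otimes D))\simeq\Mod(A\otimes D)_\rho$. You simply supply the verifications (adjointability and compactness of $\alpha_{m_\rho}T_\xi$, the approximate-unit arguments, the equivariance of the evaluation unitary) that the paper leaves implicit.
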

\begin{proof}
    The claim (1) follows from \cref{thm:equivalence.category.algebra} and \cref{lem:Qmodule.nonunital}. For (2), the inverse map is explicitly given by $\lambda_\rho^{-1}(T) = T \alpha_{\eta_\rho}$ for $T\in \cK_\rho(\alpha_\rho)$. For (3), notice that the categorical equivalence $\Mod(A)_\rho \boxtimes \Mod(D) \simeq \Mod (\cK_\rho(\alpha_\rho) \otimes D) \simeq \Mod (\cK_\rho (\alpha_\rho \otimes D)) \simeq \Mod(A \otimes D)_\rho$ sends $(X,m_X) \boxtimes Y$ to $(X \otimes Y, m_X \otimes 1)$, and hence is $\sBimod(\rho)$-equivariant. 
\end{proof}

\subsection{Partial Takesaki--Takai duality}\label{section:partial:TT}
An application of weak Morita equivalence, in which a tensor category not coming from a compact quantum group is involved, is the following generalization of the Baaj--Skandalis--Takesaki--Takai duality. 

Let $\Gamma$ be a countable group and let $\Lambda$ be a finite abelian normal subgroup of $\Gamma$. 
As is discussed in \cref{ex:Qsystem.group}, the group algebra $\rho:=\bC[\Lambda]$ determines a $Q$-system in $\Hilbg$. 
Then the C*-tensor category $\sBimod (\rho)$ is equivalent to that of $\Gamma$-graded $\Lambda$-bimodules, namely, finite dimensional Hilbert spaces equipped with a $\Gamma$-grading and compatible $\Lambda$-actions from the left and the right (i.e., ${g_2} \cdot \sH_h \cdot {g_1} \subset \sH_{g_2 h g_1}$). 
This category is known to be equivalent to a $3$-cocycle twist of the semi-direct product group $\hat{\Lambda} \rtimes \Gamma/\Lambda$ with respect to the action $(\chi,g) \mapsto {}^g \chi := \chi(s(g)^{-1} \cdot \blank \cdot  s(g))$.  (cf.\ \cref{ex:group.action}), where $s \colon \Gamma/\Lambda \to \Gamma$ is a set-theoretic section.
\begin{prop}[{\cite{naiduCategoricalMoritaEquiavlence2007}*{Theorem 4.9}}]
	There is an equivalence of C*-tensor categories
	\[
	\sBimod(\rho) \simeq \operatorname{Hilb^f_{\hat{\Lambda} \rtimes \Gamma/\Lambda ,\omega}},
	\]
    where the $3$-cocycle $\omega \in Z^3(\hat{\Lambda} \rtimes \Gamma/\Lambda ; \bT)$ is defined by using a fixed section $s \colon \Gamma/\Lambda \to \Gamma$ as
	\begin{align}
		\omega((\chi_1 , g_1),(\chi_2 , g_2),(\chi_3 , g_3)) = \chi_3(s(g_1)s(g_2)s(g_1 g_2)^{-1}).\label{eqn:cocycle.Takesaki}
	\end{align}
\end{prop}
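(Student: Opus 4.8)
The plan is to unwind both sides to concrete combinatorial data and compare. By the identification already recorded in the excerpt, $\sBimod(\rho)$ is equivalent as a C*-tensor category to the category $\mathcal{A}$ of finite dimensional $\Gamma$-graded $\Lambda$-bimodules, where $V\otimes_\rho W$ carries the grading of $V\otimes W$ collapsed over the central copy of $\Lambda$, the left $\Lambda$-action from $V$, the right from $W$, and the (essentially trivial) associator inherited from that of $\Hilbg$. First I would classify the simple objects of $\mathcal{A}$. Since $\Lambda\trianglelefteq\Gamma$, every such bimodule decomposes along cosets in $\Gamma/\Lambda$, and an indecomposable one is supported on a single coset $C=\Lambda s(\bar g)=s(\bar g)\Lambda$; on such a piece both $\Lambda$-actions are free and transitive on the nonzero graded components, so the object is determined by its degree-$s(\bar g)$ component $V_{s(\bar g)}$ together with the ``holonomy'' right action $v\triangleleft\mu:=\bigl(s(\bar g)\mu s(\bar g)^{-1}\bigr)^{-1}\cdot v\cdot\mu$ on it. As $\Lambda$ is abelian this makes $V_{s(\bar g)}$ a sum of characters, so the simple objects of $\mathcal{A}$ are indexed by pairs $(\chi,\bar g)\in\hat{\Lambda}\times\Gamma/\Lambda$, each invertible; write $M(\chi,\bar g)$ for the standard model (one dimension per degree in $C$) with a chosen generator $e_{\bar g}$ in degree $s(\bar g)$ and $e_{\bar g}\cdot\mu=\chi(\mu)\,(s(\bar g)\mu s(\bar g)^{-1})\cdot e_{\bar g}$.

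Next I would compute the fusion rules. A direct calculation of $M(\chi_1,\bar g_1)\otimes_\rho M(\chi_2,\bar g_2)$ shows it is $\cong M(\chi_3,\bar g_1\bar g_2)$ with $\chi_3$ obtained from $\chi_1,\chi_2$ by the conjugation action, reproducing exactly the multiplication of the semidirect product $\hat{\Lambda}\rtimes\Gamma/\Lambda$ for the action ${}^g\chi=\chi(s(g)^{-1}\,\cdot\,s(g))$. This yields a $\bC$-linear functor
\[
F\colon\operatorname{Hilb^f_{\hat{\Lambda}\rtimes\Gamma/\Lambda,\omega}}\longrightarrow\mathcal{A},\qquad \bC\delta_{(\chi,\bar g)}\mapsto M(\chi,\bar g),
\]
which is fully faithful and essentially surjective (both categories are semisimple and pointed, with matching simples) and respects tensor products of objects. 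It remains to show that, once $\operatorname{Hilb^f_{\hat{\Lambda}\rtimes\Gamma/\Lambda}}$ is equipped with the twisted associator $\omega$ of \eqref{eqn:cocycle.Takesaki}, $F$ upgrades to a monoidal functor, i.e.\ the associativity constraint of $\mathcal{A}$ transported back along $F$ is exactly $\omega$.

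For the associator I would track the rebracketing $(M_1\otimes_\rho M_2)\otimes_\rho M_3\to M_1\otimes_\rho(M_2\otimes_\rho M_3)$ on the distinguished generators. Since the associator of $\Hilbg$ is trivial, the internal reassociation contributes nothing; the entire discrepancy comes from the identifications $M(\chi_i,\bar g_i)\otimes_\rho M(\chi_j,\bar g_j)\cong M(\chi_{ij},\overline{g_ig_j})$, which must send the generator $e_i\otimes e_j$, living in degree $s(g_i)s(g_j)=\tau(g_i,g_j)\,s(g_ig_j)$ with $\tau(g_i,g_j):=s(g_i)s(g_j)s(g_ig_j)^{-1}\in\Lambda$, to the left-$\Lambda$-translate $\tau(g_i,g_j)\cdot e_{\overline{g_ig_j}}$ of the standard generator. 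Propagating these translations through the two bracketings, using the relation $v\cdot\lambda\otimes w=v\otimes\lambda\cdot w$ in the relative tensor product together with the holonomy formula for the right $\Lambda$-action on the outer factor, the $\Lambda$-valued parts cancel by the $2$-cocycle identity for $\tau$, and what survives is a scalar of the form $\chi\bigl(s(g_1)s(g_2)s(g_1g_2)^{-1}\bigr)$ with $\chi$ the character of the appropriate factor — that is, precisely $\omega$ as in \eqref{eqn:cocycle.Takesaki}. (One must fix the section $s$ and normalize the $Q$-system map $m_\rho$ and the generators consistently so that this holds on the nose rather than merely up to a coboundary; that $\omega$ is a normalized $3$-cocycle for the semidirect-product action is a short direct check, which in any case is automatic since $\omega$ arises from a genuine associativity constraint.) Feeding this back gives the monoidal equivalence $\operatorname{Hilb^f_{\hat{\Lambda}\rtimes\Gamma/\Lambda,\omega}}\simeq\mathcal{A}\simeq\sBimod(\rho)$.

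The main obstacle is exactly the computation in the last paragraph: simultaneously managing the left and right $\Lambda$-actions, the non-homomorphic section $s$, and the normalizations of the $Q$-system and of the chosen generators, so as to land on the specific representative \eqref{eqn:cocycle.Takesaki} (and in the correct monoidal-direction convention, since the ambient graded categories are in places taken with the reversed product). An alternative that sidesteps the hand computation is to invoke Naidu's classification of duals of group-theoretical categories (\cite{naiduCategoricalMoritaEquiavlence2007}*{Theorem 4.9}): $\Hilbg$ with the module category attached to the $Q$-system $\bC[\Lambda]$ is group-theoretical, and its dual is read off from that classification to be $\operatorname{Hilb^f_{\hat{\Lambda}\rtimes\Gamma/\Lambda,\omega}}$ with $\omega$ as displayed.
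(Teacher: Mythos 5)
Your outline is essentially the argument the paper has in mind: the paper does not prove this proposition but cites Naidu's Theorem 4.9 and then recalls exactly the construction you describe, namely the invertible simple objects $\sH_{\chi,g}=\ell^2(s(g)\Lambda)$ with grading and the two $\Lambda$-actions $(x\xi y)(h)=\chi(x)\xi(x^{-1}hy^{-1})$, together with the explicit tensor structure $\delta_{s(g_1)x_1}\otimes\delta_{s(g_2)x_2}\mapsto \chi_2(x_1)\delta_{s(g_1)x_1s(g_2)x_2}$. The only soft spot in your write-up is the last step, where you leave the surviving scalar as ``$\chi$ of the appropriate factor'': carrying the computation through with the paper's unitaries on the generators $\delta_{s(g_i)}$, the left bracketing picks up $\chi_3\bigl(s(g_1g_2)^{-1}s(g_1)s(g_2)\bigr)$ while the right bracketing picks up nothing, which is the displayed cocycle up to conjugating the $\Lambda$-element by $s(g_1g_2)$ (a convention/normalization choice you correctly flag, and which your fallback citation of Naidu resolves in any case).
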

    Note that \eqref{eqn:cocycle.Takesaki} is written by the cup product as $\omega =\varphi (\nu \cup \theta)$, where $\nu \in Z^2(\hat{\Lambda} \rtimes \Gamma/\Lambda;\Lambda)$ is the pull-back of the extension class $\nu' \in Z^2(\Gamma/\Lambda;\Lambda)$ explicitly given by $\nu'(g,h):=s(g)s(h)s(gh)^{-1}$, $\theta \in Z^1(\hat \Lambda \rtimes \Gamma/\Lambda ; \hat \Lambda)$ is the $1$-cocycle given by $\theta(\chi,g):=\chi$, and $\varphi \colon \Lambda \otimes_{\mathbb Z} \hat \Lambda \to \bT$ denotes the pairing.

We just recall the construction of the above monoidal equivalence.
For each $(\chi,g) \in \hat{\Lambda} \rtimes \Gamma/\Lambda $, define $\sH_{\chi ,g} := \ell^2(s(g) \Lambda) \in \sBimod(\rho)$, equipped with
\begin{itemize}
	\item the $\Gamma$-grading given by $\ell^2(s(g) \Lambda) = \bigoplus_{h \in s(g) \Lambda} \bC \delta_h$, and
	\item the left and right $\Lambda$-actions given by $(x \xi y)(h) = \chi(x) \xi(x^{-1} h y^{-1})$.
\end{itemize}
As $\sH_{\chi,g}$ exhausts all irreducible objects in $\sBimod(\rho)$, we obtain a unitary categorical equivalence $F \colon \operatorname{Hilb^f_{\Gamma/\Lambda \times \hat \Lambda,\omega}} \to \sBimod(\rho)$. Moreover, the natural unitary
    \[
    \mathsf{v}_{(\chi_1,g_1), (\chi_2,g_2)} \colon \sH_{\chi_1,g_1} \otimes_{\bC[\Lambda]} \sH_{\chi_2,g_2} \to \sH_{\chi_1 \cdot ({}^{g_1} \chi_2) , g_1 g_2},
    \]
given by $\delta_{s(g_1)x_1} \otimes \delta_{s(g_2)x_2} \mapsto \chi_2(x_1) \delta_{s(g_1)x_1s(g_2)x_2}$ for any $x_1,x_2 \in \Lambda$, makes $(F,\mathsf{v})$ a tensor functor.

We make \cref{thm:Morita.invariance} for this weak Morita equivalence more explicit, especially in the case that $\sA$ comes from a $\Gamma$-C*-algebra $A$ as $\sA =\Mod(A)$ (\cref{ex:group.action}). Let $\Lambda $ be as above. 
We use the same letter $\rho$ for the $Q$-system $\bC[\Lambda]$ in this reversed tensor category. 

The $Q$-system $(\alpha_\rho , \alpha_{m_\rho}\fu_{\rho,\rho})$ in $\Bimod(A)$ is isomorphic to the crossed product $\bigoplus_{t \in \Lambda} {}_{\alpha_t} A \cong \Lambda \ltimes_{\alpha} A$. 
By \cref{rem:Qsystem.algebra}, $\Mod(A)_\rho$ is equivalent to $\Mod(\Lambda \ltimes A)$, on which $\sBimod(\rho)\simeq (\Hilb^{\rm f}_{\hat{\Lambda}\rtimes\Gamma/\Lambda,\omega})^{\rm rev}$ acts. 
The corresponding $(\hat{\Lambda}\rtimes \Gamma/\Lambda,\omega)$-action (see \cref{defn:twisted.action}), denoted by $(\hat{\alpha}, \hat{\fu})$, is described by 
\begin{align} \begin{split}
		\hat{\alpha}_{\chi,g} (a u_x) :=& \chi(x)\alpha_{s(g)}(a) u_{s(g)xs(g)^{-1}}
		\\
		\hat{\fu}_{(\chi_1,g_1), (\chi_2,g_2)} :=& u_{s(g_1)s(g_2)s(g_1g_2)^{-1}}
	\end{split} \label{eqn:twisted.action.TT}
\end{align}
for $x \in \Lambda$ and $a\in A$. 

With this identification, an essential Hilbert $A$-$B$ bimodule $\sfE=(E,\phi)$ with a $\Gamma$-action $\gamma$ corresponds to 
\[ \Lambda \ltimes \sfE := (\Lambda \ltimes E, \tilde{\phi}, \tilde{\bbmv} ) \in \Bimod_{\sBimod(\rho)} (\Lambda \ltimes A , \Lambda \ltimes B),  \]
where the essential Hilbert $(\Lambda \ltimes A)$-$(\Lambda \ltimes B)$-bimodule $(\Lambda \ltimes E, \tilde{\phi})$ with $\tilde{\bbmv}$ corresponding to the $(\hat{\Lambda}\rtimes\Gamma/\Lambda,\omega)$-action as the corner of $\Lambda\ltimes \cK(E\oplus B)$ (cf.~\cite{kasparovEquivariantKKTheory1988}).

For $T \in \cL(E)$, we write $\widetilde{T} \in \cL(\Lambda \ltimes E)$ for the operator given by $(\widetilde{T}\xi)(g)=T (\xi(g))$.

In the following theorem, the $(\Hilb_{\hat{\Lambda}\rtimes \Gamma/\Lambda , \omega}^{\rm f})^{\rm rev}$-equivariant Kasparov category is simply written as $\Kas^{\hat{\Lambda}\rtimes \Gamma/\Lambda, \omega}$. This is a notation that will be introduced in \cref{section:group.cocycle}.
\begin{thm}\label{thm:partial.TakesakiTakai}
	Let $\Gamma$ be a countable group and let $\Lambda$ be its finite abelian normal subgroup. 
	For separable $\Gamma$-C*-algebras $A$ and $B$, the crossed product by $\Lambda $ gives the descent functor
	\[
	j^{\Lambda} \colon \KK^\Gamma (A,B) \xrightarrow{\cong} \KK^{\hat{\Lambda} \rtimes \Gamma /\Lambda , \omega } (\Lambda \ltimes A, \Lambda \ltimes B), \quad j^{\Lambda}[\sfE,F]:=[\Lambda \ltimes \sfE, \tilde{F}],
	\]
	which induces an equivalence of categories $\Kas^{\Gamma } \cong \Kas^{\hat{\Lambda} \rtimes \Gamma /\Lambda, \omega}$. 
\end{thm}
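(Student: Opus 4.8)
The plan is to recognize $j^\Lambda$ as the concrete incarnation of the abstract categorical equivalence produced by \cref{thm:Morita.invariance}, specialized to one particular weak Morita equivalence, and then to check that all the bookkeeping matches the explicit formulas \eqref{eqn:twisted.action.TT}.

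First I would set $\sC := (\Hilb_\Gamma^{\rm f})^{\rm rev}$ and view a separable $\Gamma$-C*-algebra as a $\sC$-C*-algebra via \cref{ex:group.action}. By \cref{ex:Qsystem.group} the group algebra $\rho := \bC[\Lambda]$ is a $Q$-system in $\sC$, and since $\Lambda\neq\{e\}$ the module category $\sMod^l_\sC(\rho)$ is nonzero, so this $Q$-system implements a genuine weak Morita equivalence between $\sC$ and $\sD := \sBimod_\sC(\rho)$; the Naidu-type proposition recalled above identifies $\sD \simeq (\Hilb^{\rm f}_{\hat\Lambda\rtimes\Gamma/\Lambda,\omega})^{\rm rev}$. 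Thus \cref{thm:Morita.invariance} already supplies an equivalence $\Kas^\Gamma \cong \Kas^{\hat\Lambda\rtimes\Gamma/\Lambda,\omega}$; what remains is to compute this equivalence on objects and morphisms and verify that it agrees with $j^\Lambda$.

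On the underlying module categories the equivalence of \cref{thm:Morita.invariance} is $\sA \mapsto \sA_\rho$ (through \cref{thm:weakMorita} and \cref{lem:functor.Qsystem}), and by \cref{rem:Qsystem.algebra} the $Q$-system $\alpha_\rho$ in $\Bimod(A)$ is the crossed product $\bigoplus_{t\in\Lambda}{}_{\alpha_t}A\cong\Lambda\ltimes A$, with $\Mod(A)_\rho\simeq\Mod(\Lambda\ltimes A)$ via $\lambda_\rho\colon\alpha_\rho\xrightarrow{\sim}\cK_\rho(\alpha_\rho)$. The heart of the proof is to transport the $\sD$-module structure, which on $\sA_\rho$ is $\blank\otimes_\rho\pi$ for a $\rho$-bimodule $\pi$, across these identifications: writing an irreducible object of $\sD$ as $\sH_{\chi,g}=\ell^2(s(g)\Lambda)$ and using the explicit monoidal functor $F$ together with its coherence unitaries $\mathsf v_{(\chi_1,g_1),(\chi_2,g_2)}$, a direct computation should show that $\alpha_\rho\otimes_\rho\sH_{\chi,g}$ is $\Lambda\ltimes A$ as a right module with left action twisted precisely by $au_x\mapsto\chi(x)\alpha_{s(g)}(a)u_{s(g)xs(g)^{-1}}$, and that the associativity constraints produce the cocycle $u_{s(g_1)s(g_2)s(g_1g_2)^{-1}}$; i.e.\ the induced $\sD$-action is exactly the $(\hat\Lambda\rtimes\Gamma/\Lambda,\omega)$-action $(\hat\alpha,\hat\fu)$ of \eqref{eqn:twisted.action.TT}.

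Finally I would feed a $\sC$-Kasparov $A$-$B$ bimodule $(\sfE,F)=(E,\phi,\bbmv,F)$ through the module-category picture (\cref{prp:category.algebra.KK}, \cref{def:equivariant.KK.category}): the equivalence sends the $\sC$-module functor $\blank\otimes_A\sfE$ to its $\rho$-module companion, which under the identification $\alpha_\rho\otimes_\rho(\blank)\cong\Lambda\ltimes(\blank)$ is $\blank\otimes_{\Lambda\ltimes A}(\Lambda\ltimes\sfE)$, and carries the Fredholm family $(F_X)$ to the family built from $F$, which on the generator $\Lambda\ltimes A$ is the fibrewise operator $\widetilde F$. Translating back via \cref{prp:category.algebra.KK} yields $j^\Lambda[\sfE,F]=[\Lambda\ltimes\sfE,\widetilde F]$; functoriality (in particular compatibility with the Kasparov product) and the fact that $j^\Lambda$ is a descent functor are inherited from \cref{thm:Morita.invariance}, with compatibility with homotopies and the $\Ccat$-module structure provided by \cref{rem:Qsystem.algebra}(3). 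The main obstacle is the third paragraph: chasing a single object simultaneously through the $Q$-system module picture, the categorical equivalence $\sBimod(\rho)\simeq\Hilb^{\rm f}_{\hat\Lambda\rtimes\Gamma/\Lambda,\omega}$, and the algebra--category duality, while keeping the reversed monoidal conventions consistent, so that the induced action is literally \eqref{eqn:twisted.action.TT} on the nose rather than merely cocycle conjugate to it.
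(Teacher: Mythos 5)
Your proposal is correct and follows essentially the same route as the paper: the authors likewise obtain the equivalence from \cref{thm:Morita.invariance} applied to the $Q$-system $\bC[\Lambda]$, carry out the identifications $\Mod(A)_\rho\simeq\Mod(\Lambda\ltimes A)$ and the explicit computation of the induced $(\hat\Lambda\rtimes\Gamma/\Lambda,\omega)$-action \eqref{eqn:twisted.action.TT} in the paragraphs preceding the theorem, and then note that $j^\Lambda$ (well-defined as in Kasparov's descent) extends that equivalence. The "main obstacle" you flag — matching the concrete formulas on the nose — is exactly the content the paper places in the surrounding discussion rather than in the proof environment itself.
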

\begin{proof}
    It is observed in the same way as \cite{kasparovEquivariantKKTheory1988}*{Theorem 3.11} that the above $j^\Lambda$ gives a well-defined functor. As is seen above, it is an extension of the equivalence given in the proof of \cref{thm:Morita.invariance}.
\end{proof}
In particular, \cref{thm:partial.TakesakiTakai} for $\Lambda = \Gamma$ is the same thing as the Baaj--Skandalis--Takesaki--Takai duality  \cite{baajCalgebresHopfTheorie1989}.
		
\begin{ex}
	In the case of $\Gamma = \bZ_4$ and $\Lambda =\bZ_2$, the associated $3$-cocycle  $\omega \in H^3(\bZ_2 \times \bZ_2; \bT)$ is the generator of $H^2(\bZ_2;\bZ_2) \otimes H^1(\bZ_2;\bZ_2) \cong \bZ_2$. For this $\omega$, we get an equivalence of categories $\Kas^{\bZ_4} \cong \Kas^{\bZ_2\times \bZ_2, \omega}$ given by $A \mapsto \bZ_2 \ltimes A$. 
\end{ex}

\section{3-cocycle twist of discrete groups}\label{section:group.cocycle}
In this section, we study the equivariant KK-theory of $\Hilb_{\Gamma, \omega}^{\rm f}$, the tensor category of finite dimensional $\Gamma$-graded Hilbert spaces with the associator twisted by $\omega \in Z^3(\Gamma , \bT)$. 
Under the assumption that $\Gamma$ satisfies the property $\gamma=1$, the equivariant Kasparov category (denoted by $\Kas^{\Gamma, \omega}$) is shown to be unchanged $\omega$ is replaced by another one with the assumption $\beta \omega =0$. 
In particular, a Baum--Connes type result holds for this $\Kas^{\Gamma, \omega}$. 

\subsection{Actions of 3-cocycle twists of discrete groups}
Let $\Gamma$ be a discrete group and let $\omega \in Z^3(\Gamma;\bT)$. 
We consider the associated tensor category $(\Hilb_{\Gamma, \omega}^{\rm f})^{\rm rev}$ of $\Gamma$-graded Hilbert spaces with the associator twisted by $\omega$ (cf.\ \cref{ex:cocycle.twist}) with the reversed tensor product. 
As a generalization of \cref{ex:group.action}, an action of it on a C*-algebra $A$ corresponds to an \emph{anomalous action} of $\Gamma$ in the sense of  ~\cite{jonesRemarksAnomalousSymmetries2021}. 
Let $(A,\alpha,\fu)$ be a $(\Hilb_{\Gamma,\omega})^{\rm rev}$-C*-algebra $(A,\alpha,\fu)$ realized by endomorphisms. 
Then, each $\alpha_g$ is identified with an automorphism $\alpha_g \in \Aut(A)$, and $\overline{\fu}_{g,h}:=\fu_{h,g}^*$ is a unitary in $\cM(A)$ satisfying the relation $\alpha_g \circ \alpha_h = \Ad(\bar{\fu}_{g,h})\alpha_{gh}$ and the cocycle condition twisted by $\omega$ in the following way. 
\begin{df}[{\cite{jonesRemarksAnomalousSymmetries2021}*{Definition 1.1}}]\label{defn:twisted.action}
	A (left) \emph{$(\Gamma , \omega) $-C*-algebra} consists of a triple $(A, \alpha, \fu )$, where $A$ is a C*-algebra, and $\alpha \colon \Gamma \to \Aut (A)$ and $\fu \colon \Gamma \times \Gamma \to \cU(\cM (A))$ are maps satisfying the relations  
		\begin{align}\begin{split}
			\alpha_g \alpha_h &= \Ad (\fu_{g,h}) \circ \alpha_{gh}, \\
			\fu_{g,h}\fu_{gh,k}&= \omega(g,h,k) \alpha_g(\fu_{h,k})\fu_{g,hk} . 
		\end{split} \label{eq:cocycle} \end{align} 
\end{df}

\begin{ex}\label{ex:Z2.cocycle}
	When $\Gamma = \bZ/2\bZ$, the third cohomology $H^3(\Gamma; \bT)$ is isomorphic to $\bZ/2\bZ$ and the nontrivial $3$-cocycle is given by
	\[
	\omega(g,h,k) = \begin{cases} 1 & \text{ if $ghk = 0$,} \\ -1 & \text{ if $g = h = k=1$.} \end{cases}
	\]
	In this case, a $(\Gamma,\omega)$-action on a C*-algebra $A$ corresponds to a pair $(\alpha, \fu)$ where $\alpha$ is an automorphism of $A$ and $\fu = \fu_{1,1} \in \cM(A)$ is a unitary such that $\alpha^2 = \Ad(\fu)$, $\alpha(\fu) = -\fu$.
\end{ex}
\begin{ex}
    As in \cref{section:partial:TT}, let $\Gamma$ be a discrete group and let $\Lambda $ be its finite abelian normal subgroup. Then the crossed product $B:=\Lambda \ltimes A$ has the structure of $(\hat{\Lambda} \rtimes \Gamma /\Lambda  , \omega)$-C*-algebra given by $\beta_{\chi,g}$ and $\fv_{(\chi_1,g_1), (\chi_2,g_2)}$ as \eqref{eqn:twisted.action.TT}. 
\end{ex}
Many other examples besides them are found in \cite{jonesRemarksAnomalousSymmetries2021}.

If the $3$-cocycle $\omega$ is a coboundary on $\cZ\cM(A)$ in the following sense, a $(\Gamma, \omega)$-action is constructed from a cocycle action of $\Gamma$. 
Let $X$ be a locally compact Hausdorff space equipped with a $\Gamma$-action. 
We say that a $(\Gamma, \omega)$-C*-algebra $(A,\alpha,\fu)$ is a \emph{$(\Gamma \ltimes X,\omega)$-algebra} if it is equipped with an essential $*$-homomorphism $C_0(X) \to \cZ \cM(A)$ that is $\Gamma$-equivariant (note that $\Gamma$ acts on the center $\cZ \cM(A)$ as a genuine action).
Let $C(X; \bT)$ denote the abelian group of $\bT$-valued continuous functions on $X$ and let $i \colon \bT \to C(X;\bT)$ denote the homomorphism mapping $z \in \bT$ to the constant function $z \cdot 1_X$. 
We also assume that the $3$-cocycle 
    \[
	p_X^*\omega \in Z^3(\Gamma  ; C(X; \bT) ), \quad \text{where $p_X \colon X \to \pt$,}  
	\]
is a coboundary, i.e., there is $\tau \in C^2(\Gamma ; C(X;\bT))$ such that 
	\[
	\tau (gh, k) \tau(g,h) \tau(g,hk)^{-1}\alpha_g(\tau(h,k)).
	\]
Then, by letting $\fu_{\tau , g,h}:=\tau(g,h)\fu_{g,h} \in \cU(\cM (A))$, the triple $(A, \alpha, \fu_\tau)$ forms a $(\Gamma, \omega)$-C*-algebra. 
Conversely, for a $(\Gamma, \omega)$-C*-algebra $(A, \alpha, \fu)$ and a $\Gamma$-space $X$ as above, the triple $(A,\alpha, \fu_{\tau^{-1}})$ is a cocycle action of $\Gamma$ by putting $\fu_{\tau^{-1} ,g,h}:=\tau(g,h)^{-1}\fu_{g,h}$.

\subsection{Proper \texorpdfstring{$(\Gamma, \omega)$}{(Gamma,omega)}-actions}
Let $\Gamma$ be a discrete group and let $\omega \in Z^3(\Gamma;\bT)$. 
We say that a $(\Gamma,\omega)$-algebra is \emph{proper} if it is $(\Gamma \ltimes X,\omega)$-C*-algebra for some locally compact proper $\Gamma$-space $X$. 
In this subsection, we discuss a sufficient condition that $\omega $ is a coboundary on any proper $(\Gamma,\omega)$-C*-algebra.

For this, we use a model of the equivariant cohomology group $H_\Gamma^*(X;\cA)$ via the simplicial--\v{C}ech double complex, where $\cA$ be either the $\sfA$-valued constant sheaf or the sheaf of $\sfA$-valued continuous functions $\underline{\sfA}$ for either $\sfA=\bZ, \bR, \bT$.
Let $\fU = \{ U_i\}_{i \in I}$ be a $\Gamma$-equivariant open cover, i.e., $\Gamma$ acts on the index set $I$ and $U_{gi} = \alpha_g(U_i)$ for any $i \in I$ and $g \in \Gamma$. 
The group $\check{H}_\Gamma^*(\fU ; \cA)$ is defined as the cohomology of the double complex
	\[
	C^p(\Gamma ; \check{C}^q(\fU, \cA) ) := \mathrm{Map} \bigg( \Gamma ^p , \prod _{i_0, \cdots, i_q} C(U_{i_0, \cdots, i_q} , \cA) \bigg), 
	\]
with the differential $d_\Gamma + (-1)^p\delta_\fU $, where 
    \begin{align*}
	(d_\Gamma \varphi) (g_1, \cdots, g_{p+1}; i_0, \cdots, i_q):= & g_1^*(\varphi (g_2, \cdots , g_{p+1} ; g_1^{-1}i_0 , \cdots, g_1^{-1}i_{q})) \\
	& + \sum\limits_{j=1}^{p} (-1)^j \varphi (g_1, \cdots, g_jg_{j+1} , \cdots, g_{p+1}; i_0, \cdots, i_q) \\
	&+ (-1)^{p+1} \varphi(g_1, \cdots, g_p;i_0, \cdots, i_q ), \\
	(\delta_\fU \varphi) (g_1, \cdots , g_p; i_0, \cdots, i_{q+1}) := & \sum_{j=0}^{q+1} (-1)^j  \varphi(g_1, \cdots , g_p; i_0, \cdots,i_{j-1}, i_{j+1}, \cdots,  i_{q+1}) |_{U_{i_0, \cdots, i_{q+1}}}.
	\end{align*}
The group $\check{H}^*_\Gamma (X;\cA)$ is defined by the colimit $\varinjlim _{\fU } \check{H}^*_\Gamma (\fU ; \cA)$ with respect to refinement of $\fU$. 
If a $\Gamma$-equivariant cover $\fU$ is good, i.e., each finite intersection $U_{i_0, \cdots, i_k}:= U_{i_0} \cap \cdots \cap U_{i_k}$ is contractible (this assumption is satisfied if $X$ is a manifold or a simplicial complex, on which $\Gamma$ acts isometrically, and each $U_i$ is a geodesic ball), then $\check{H}^*_\Gamma (X;\cA) \cong \check{H}^*(\fU ; \cA)$ holds. 

We just remark without proof that the above group is isomorphic to the groupoid cohomology $H\*(\Gamma \ltimes X; \cA)$ in the sense of \cites{haefligerDifferentialCohomology2011,tuGroupoidCohomologyExtensions2006}. 
In particular, for a $\Gamma$-CW-complex $X$ and a constant sheaf $\sfA$, the group $\check{H}^*_\Gamma(X ; \sfA)$ is isomorphic to the Borel equivariant cohomology $H^*(E\Gamma \times_\Gamma X ; \sfA )$. 

\begin{rem}\label{rem:pull.isom}
    As above, let $\cA$ be either a constant sheaf or a sheaf of continuous functions.
    If a $\Gamma$-equivariant map $f \colon X \to Y$ induces isomorphisms $f^* \colon H^k(Y;\cA) \to H^k(X; \cA)$ for $k \leq n$, then the mapping cone double complex of $f^* \colon C^*(\Gamma ; C^*(\fU_Y ; \cA)) \to C^*(\Gamma ; C^*(\fU_X ; \cA))$ has vanishing $E_1$-page for $q \leq n$. 
    Therefore, a comparison of the spectral sequence (see the lemma below) shows that $f^* \colon H^k_\Gamma (Y;\cA) \to H^k_\Gamma (X;\cA)$ are also isomorphic for $k \leq n-1$. 
\end{rem}

\begin{lem}
	The $E_2$-page of the spectral sequence of the double complex $C^p(\Gamma ; \check{C}^q(\fU, \underline{\bT}) )$ with respect to the horizontal filtration is  
	\[
	E_2^{pq} \cong H^p(\Gamma ; \check{H}^q(\fU ; \underline{\bT})) \cong 
	\begin{cases}
		H^p(\Gamma ; H^{q+1}(X ; \bZ)) & \text{ if $q \geq 1$,} \\
		H^p(\Gamma ; C(X;\bT)) & \text{ if $q=0$. }
	\end{cases}
	\]
\end{lem}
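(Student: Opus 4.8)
The plan is to compute the cohomology of the double complex $C^p(\Gamma;\check C^q(\fU,\underline\bT))$ by first running the spectral sequence associated to the \emph{vertical} (\v{C}ech) filtration, i.e.\ taking cohomology in the $q$-direction first. For a good $\Gamma$-equivariant cover $\fU$, each finite intersection $U_{i_0,\dots,i_q}$ is contractible, so I would invoke the classical computation of \v{C}ech cohomology of the sheaf $\underline\bT$ on a contractible space: from the exponential sequence $0\to\bZ\to\underline\bR\to\underline\bT\to 0$ and the fact that $\underline\bR$ is a fine (soft) sheaf on a paracompact space, one gets $\check H^q(U;\underline\bT)\cong \check H^{q+1}(U;\bZ)$ for $q\geq 1$ and $\check H^0(U;\underline\bT)=C(U;\bT)$ for any $U$ in the cover. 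Sheafifying over the nerve, this identifies the $q$-th \v{C}ech cohomology sheaf of $\fU$ with the constant sheaf $\underline{H^{q+1}(X;\bZ)}$ for $q\geq 1$ (using that $X$, being a nice space with a good cover, has $\check H^\ast(\fU;\bZ)\cong H^\ast(X;\bZ)$), and with $\underline{C(X;\bT)}$ for $q=0$. Here I would be slightly careful: what is really needed is that on the level of the cover, $\check H^q(\fU;\underline\bT)$ is the fixed abelian group $H^{q+1}(X;\bZ)$ with its natural $\Gamma$-action (and $C(X;\bT)$ for $q=0$), so that the $\Gamma$-module structure is the expected one.

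Next, I would take cohomology in the $\Gamma$-direction. Since the first page of the spectral sequence in the $p$-direction is $C^p(\Gamma;\check H^q(\fU;\underline\bT))$ with differential $d_\Gamma$, and group cohomology $H^p(\Gamma;-)$ is by definition the cohomology of $C^p(\Gamma;-)=\mathrm{Map}(\Gamma^p,-)$ with the bar differential $d_\Gamma$, the $E_2$-page is exactly $H^p(\Gamma;\check H^q(\fU;\underline\bT))$. Substituting the vertical computation from the previous paragraph yields
\[
E_2^{pq}\cong\begin{cases}H^p(\Gamma;H^{q+1}(X;\bZ)) & q\geq 1,\\ H^p(\Gamma;C(X;\bT)) & q=0,\end{cases}
\]
which is the claim. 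One should note that the statement of the lemma phrases this as the spectral sequence "with respect to the horizontal filtration"; I would reconcile this by observing that the relevant $E_2$-page is symmetric in the description — whichever filtration one uses, after two pages one has taken both $\check H^\ast$ and $H^\ast(\Gamma;-)$ — so the formula is the same, and the cleanest route is the one described above (\v{C}ech first, then $\Gamma$).

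The main obstacle I expect is not any single hard computation but rather the bookkeeping needed to make the sheaf-theoretic input rigorous at the level of an honest cover $\fU$ rather than in the colimit: one must know that for the good covers under consideration the natural map $\check H^\ast(\fU;\bZ)\to H^\ast(X;\bZ)$ is an isomorphism (Leray's theorem for good covers), and that the exponential sequence argument computing $\check H^q(U_{i_0\cdots i_q};\underline\bT)$ goes through for the particular spaces $U_i$ allowed (geodesic balls in a manifold or simplicial complex with isometric $\Gamma$-action — these are contractible and paracompact, so fine-sheaf vanishing applies). Once those standard facts are in place, identifying the two successive cohomology operations with $H^p(\Gamma;-)$ and $\check H^{q+1}(X;\bZ)$ (resp.\ $C(X;\bT)$) is routine, and I would present it as such rather than grinding through the bar-complex differentials explicitly.
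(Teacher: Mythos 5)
Your proposal is correct and follows essentially the same route as the paper: compute $\check H^q(\fU;\underline{\bT})\cong\check H^q(X;\underline{\bT})$ first, using acyclicity of the soft sheaf $\underline{\bR}$ and the long exact sequence of $0\to\underline{\bZ}\to\underline{\bR}\to\underline{\bT}\to 0$ to get $H^{q+1}(X;\bZ)$ for $q\geq 1$ and $C(X;\bT)$ for $q=0$, and then apply $H^p(\Gamma;-)$ to obtain the stated $E_2$-page. One caution: your side remark that the two filtrations of the double complex give ``the same'' $E_2$-page is false in general ($H_I H_{II}$ and $H_{II}H_I$ need not agree), but this is harmless here since the formula $E_2^{pq}\cong H^p(\Gamma;\check H^q(\fU;\underline{\bT}))$ in the lemma unambiguously refers to the spectral sequence that takes \v{C}ech cohomology first, which is exactly what you compute.
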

\begin{proof}
	We determine the group $\check{H}^q(\fU ; \underline{\bT}) \cong \check{H}^q(X;\underline{\bT})$. Since $\underline{\bR}$ is a soft sheaf, we have 
	\[
	\check{H}^q(X, \underline{\bR}) \cong 
	\begin{cases}
	C(X; \bR) & \text{ if $q=0$}, \\
	0 & \text{otherwise.}
	\end{cases} \]
	The long exact sequence associated to $0 \to \underline{\bZ} \to \underline{\bR} \to \underline{\bT} \to 0$ shows that $\check{H}^q(X; \underline{\bT}) \cong H^{q+1}(X;\bZ)$ for $q \geq 1$, and that 
	\[
	0 \to C(X;\bR)/C(X;\bZ) \to C(X;\bT) \cong \check{H}^0(\fU;\underline{\bT}) \to H^1(X;\bZ) \to 0
	\]
	is exact. Here the isomorphism at the middle comes from the fact that $\check{H}^0$ coincides with the global section functor. 
\end{proof}
		
Recall that we are interested in the group $H^3(\Gamma  ; C(X;\bT)) \cong E_2^{3,0}$.
We define the map
	\begin{gather*}
	    \chi \colon C^p(\Gamma ; C(X;\bT)) \to C^p(\Gamma, \check{C}^0(\fU ,\underline{\bT})), \\
	(\chi \varphi) (g_1, \cdots, g_p ; i_0) = \varphi (g_1, \cdots, g_p)|_{U_{i_0}} .
	\end{gather*} 
This is a cochain map and hence induces a map $H^p(\Gamma ; C(X;\bT)) \to H^p_\Gamma ( X; \underline{\bT} )$. 
\begin{lem}
	Assume that $H^k(X; \bZ)=0$ holds for $k=2,3$. Then the map 
	\[
	\chi_* \colon H^3(\Gamma ; C(X;\bT)) \to H^3_\Gamma (X ; \underline{\bT} )
	\]
    is injective.  
\end{lem}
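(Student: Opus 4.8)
The plan is to exploit the spectral sequence for the double complex $C^p(\Gamma;\check C^q(\fU,\underline\bT))$ computed in the preceding lemma, together with the hypothesis $H^k(X;\bZ)=0$ for $k=2,3$, to pin down the $E_\infty$-terms contributing to $H^3_\Gamma(X;\underline\bT)$ and to identify the edge map with $\chi_*$. Concretely: by the lemma, the $E_2$-page has $E_2^{p,q}\cong H^p(\Gamma;H^{q+1}(X;\bZ))$ for $q\geq1$ and $E_2^{p,0}\cong H^p(\Gamma;C(X;\bT))$. The vanishing assumption kills $E_2^{p,1}$ (since $H^2(X;\bZ)=0$) and $E_2^{p,2}$ (since $H^3(X;\bZ)=0$) for all $p$. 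Hence among the terms of total degree $3$, namely $E_2^{3,0}, E_2^{2,1}, E_2^{1,2}, E_2^{0,3}$, the middle two vanish, and $E_2^{0,3}\cong H^0(\Gamma;H^4(X;\bZ))$ is the only other potential contributor.

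First I would observe that the differentials out of $E_r^{3,0}$ land in $E_r^{3+r,1-r}$, which is zero for $r\geq1$ (negative second index or, for $r=1$, in $E_1^{4,0}$ but after passing to $E_2$ the relevant target $E_2^{4,-1}$ is zero — more carefully, $d_1$ on the $q=0$ row computes the $\Gamma$-cohomology so $E_2^{3,0}$ is already the cohomology of that row). The differentials into $E_r^{3,0}$ come from $E_r^{3-r,r-1}$: for $r=2$ this is $E_2^{1,1}=0$ by the vanishing of $H^2(X;\bZ)$, and for $r\geq3$ the source $E_r^{3-r,r-1}$ has $3-r<0$. Therefore $E_2^{3,0}=E_\infty^{3,0}$, i.e. $H^3(\Gamma;C(X;\bT))$ survives to $E_\infty$ as a subquotient of $H^3_\Gamma(X;\underline\bT)$ with no further collapsing. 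Since it sits at the bottom of the filtration (the $q=0$ row corresponds to the top piece or bottom piece of the filtration depending on convention — here the horizontal filtration makes $E_\infty^{3,0}$ a quotient of $H^3_\Gamma$, and dually a subgroup appears; I will use the convention consistent with the map $\chi$), it is a genuine subgroup of $H^3_\Gamma(X;\underline\bT)$.

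The key remaining point is to check that the cochain map $\chi\colon C^p(\Gamma;C(X;\bT))\to C^p(\Gamma,\check C^0(\fU,\underline\bT))$ induces precisely the edge homomorphism $E_\infty^{p,0}\hookrightarrow H^p_\Gamma(X;\underline\bT)$ under the identification $E_2^{p,0}\cong H^p(\Gamma;C(X;\bT))$. This is essentially a formal statement about the filtration: an element of $C^p(\Gamma;C(X;\bT))$ that is a $d_\Gamma$-cocycle is sent by $\chi$ to a cochain supported in filtration degree $0$ (the $q=0$ column), which is automatically a cocycle for the total differential $d_\Gamma\pm\delta_\fU$ because $\delta_\fU\chi\varphi=0$ when $\varphi$ is pulled back from the global sections; and its class in $E_\infty^{p,0}$ is by construction the image in $\mathrm{gr}^0 H^p_\Gamma$. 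Injectivity of $\chi_*$ then follows because $E_2^{p,0}=E_\infty^{p,0}$ for $p=3$, so no nonzero class in $H^3(\Gamma;C(X;\bT))$ can become a boundary.

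The main obstacle I anticipate is bookkeeping with the spectral sequence conventions — in particular making sure the horizontal filtration places the $q=0$ row so that $E_\infty^{3,0}$ is the relevant (sub or quotient) piece matching the direction of the map $\chi$, and verifying that no differential $d_2\colon E_2^{1,1}\to E_2^{3,0}$ or $d_3\colon E_3^{0,2}\to E_3^{3,0}$ can hit $E_2^{3,0}$; both sources are zero under the hypothesis, but one must also confirm that $d_2, d_3$ originating elsewhere don't indirectly alter $E_2^{3,0}$, which they cannot since differentials only increase total degree and the incoming ones have been listed. A secondary routine check is that $\chi$ is genuinely a cochain map (commuting with $d_\Gamma$ and annihilating $\delta_\fU$ on its image), which is immediate from the formulas for $d_\Gamma$ and $\delta_\fU$ since restriction to a single $U_{i_0}$ commutes with the $\Gamma$-action and the face maps of $\check C^\bullet$ act trivially on constants-in-$q$ data. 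Once the spectral sequence is set up correctly the injectivity is essentially automatic.
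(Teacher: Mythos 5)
Your proposal is correct and follows essentially the same route as the paper: use the hypothesis to kill $E_2^{1,1}$ and $E_2^{0,2}$ (hence $E_3^{0,2}$), conclude $E_2^{3,0}\cong E_\infty^{3,0}$ since no differentials enter or leave that spot, and identify $\chi_*$ with the edge homomorphism so that injectivity is automatic. The only wrinkle is your hedging on whether $E_\infty^{3,0}$ is a sub or a quotient of $H^3_\Gamma(X;\underline{\bT})$ (the paper treats it as a quotient), but as you implicitly note, the degeneration $E_2^{3,0}=E_\infty^{3,0}$ yields injectivity under either convention, so this does not affect the argument.
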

\begin{proof}
    Let $\cE_r^{p,q}$ denote the spectral sequence of the double complex $C^p(\Gamma ; C(X;\bT))$ supported on the row $\cE_r^{p,0}$. Then the cochain map $\chi$ induces $\chi_* \colon \cE_r^{p,q} \to E_r^{p,q}$. 
    On the other hand, as is indicated in \cref{fig:E2.page}, the only non-trivial differentials with the range $E_r^{3,0}$ are the ones with the domain $E_2^{1,1}$ and $E_3^{0,2}$, both of which vanish by assumption. 
    Thus the group $E_\infty^{3,0}$, which is a quotient of $H^3_\Gamma (X;\underline{\bT})$, is isomorphic to $E_2^{3,0}$. Now we get the following commutative diagram:
    \[ 
    \xymatrix{
        \cE_2^{3,0} \ar[r]^\cong \ar[d]^{\chi_*}_\cong  &   \cE_\infty^{3,0}  \ar[d]^{\chi_*} & H^3(\Gamma ; C(X;\bT)) \ar[d]^{\chi_*} \ar[l]_{\cong \hspace{2em}}\\
        E_2^{3,0} \ar[r]^\cong &  E_\infty^{3,0}  & H^3_\Gamma (X;\underline{\bT}). \ar[l] 
    }
    \]
    This shows the injectivity of the right vertical map $\chi_*$ as desired.
\end{proof}

\begin{table}[t]
	\begin{tabular}{c|cccccccc}
    	$q=3$ &  && && && &  \\ 
    	$q=2$ & $H^0(\Gamma; H^3(X;\bZ))$  && && &&     &  \\  
    	$q=1$ &   && $H^1(\Gamma ; H^2(X;\bZ))$ && && &  \\ 
    	$q=0$ &  && && && $H^3(\Gamma ; C(X, \bT))$ &  \\ 
	\hline 
	$E_2^{pq}$ & $p=0$ && $p=1$ && $p=2$ && $p=3$ &
	\end{tabular}
	\caption{$E_2$-page of the spectral sequence.}
	\label{fig:E2.page}
\end{table}

		\begin{lem}
			Let $X$ be a finite dimensional proper $\Gamma$-simplicial complex. Then the Bockstein map for the coefficient sheaf $0 \to \underline{\bZ} \to \underline{\bR} \to \underline{\bT} \to 0$ gives an isomorphism $\beta \colon H^3_\Gamma (X ; \underline{\bT}) \cong H^4_\Gamma (X ; \bZ)$. 
			In particular, $p_X^* \omega =0$ for any locally compact proper $\Gamma$-space $X$ if $\beta \omega =0 \in H^4(\Gamma ; \bZ)$.
		\end{lem}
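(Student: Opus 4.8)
The plan is to reduce the statement to the vanishing of the relevant equivariant cohomology in rational coefficients, which forces the Bockstein to be an isomorphism. First I would set up the long exact sequence in equivariant cohomology $\check H^*_\Gamma(X;\blank)$ associated to the short exact sequence of sheaves $0 \to \underline{\bZ} \to \underline{\bR} \to \underline{\bT} \to 0$; the connecting homomorphism is by definition the Bockstein map $\beta$. Around the relevant spot this reads
\[
\check H^3_\Gamma(X;\underline{\bR}) \to \check H^3_\Gamma(X;\underline{\bT}) \xrightarrow{\ \beta\ } \check H^4_\Gamma(X;\bZ) \to \check H^4_\Gamma(X;\underline{\bR}),
\]
so it suffices to show $\check H^k_\Gamma(X;\underline{\bR}) = 0$ for $k=3,4$ (more generally for $k \geq 1$).

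Next I would prove this vanishing. Since $\underline{\bR}$ is a soft sheaf, $\check H^q(U_{i_0,\dots,i_q};\underline{\bR})$ vanishes for $q \geq 1$ on any open set, so in the simplicial--\v{C}ech double complex computing $\check H^*_\Gamma(\fU;\underline{\bR})$ the \v{C}ech direction contributes only in degree $0$, i.e.\ the $E_1$-page (or, in the other filtration, the relevant page) collapses to the single row $C^p(\Gamma;C(X;\bR))$. Thus $\check H^*_\Gamma(X;\underline{\bR}) \cong H^*(\Gamma;C(X;\bR))$. Now I use properness: for a proper $\Gamma$-simplicial complex $X$, the module $C(X;\bR)$ (more precisely $C_c$ or the appropriate model of $\bR$-valued functions with the $\Gamma$-action coming from a proper action) is cohomologically trivial — one can average using a cutoff function subordinate to the proper action, exactly as in the standard argument that proper actions have vanishing higher cohomology with $\bR$-coefficients. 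Concretely, a $\Gamma$-invariant partition of unity $\{\chi_g\}$ with $\sum_g \chi_g = 1$ (locally finite sum, existing because the action is proper) gives a contracting homotopy for the bar complex of $\Gamma$ with coefficients in $C(X;\bR)$, so $H^p(\Gamma;C(X;\bR)) = 0$ for $p \geq 1$. This yields $\check H^k_\Gamma(X;\underline{\bR}) = 0$ for $k \geq 1$, hence $\beta$ is an isomorphism in the range needed.

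For the second assertion, I would argue as follows. If $\beta\omega = 0 \in H^4(\Gamma;\bZ)$, then for any locally compact proper $\Gamma$-space $X$ with classifying map $p_X \colon X \to \pt$ we have $\beta(p_X^*\omega) = p_X^*(\beta\omega) = 0$ in $H^4_\Gamma(X;\bZ)$ (naturality of the Bockstein, together with functoriality of $p_X^*$ on equivariant cohomology — using \cref{rem:pull.isom} and the model above to make sense of $p_X^*$ for a general locally compact proper $\Gamma$-space by first reducing to a proper $\Gamma$-CW or $\Gamma$-simplicial approximation, e.g.\ a simplicial model of the universal proper $\Gamma$-space $\underline{E}\Gamma$). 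Since $\beta$ is injective (indeed an isomorphism) on $H^3_\Gamma(X;\underline{\bT})$, this forces $p_X^*\omega = 0 \in H^3_\Gamma(X;\underline{\bT})$, and then the image of this class in the group $H^3(\Gamma;C(X;\bT))$ relevant to \cref{defn:twisted.action} also vanishes, i.e.\ $p_X^*\omega = 0$ there as well.

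The main obstacle I expect is not the homological algebra — which is the standard soft-sheaf plus proper-action averaging package — but rather the bookkeeping needed to (a) make $p_X^*$ well-defined and natural for a general locally compact (not simplicial) proper $\Gamma$-space, which requires passing to a $\Gamma$-CW approximation and invoking \cref{rem:pull.isom} to see the comparison map is an isomorphism in the degrees that matter, and (b) match the three models of the cohomology group ($\check H^3_\Gamma(X;\underline\bT)$, the groupoid cohomology $H^3(\Gamma\ltimes X;\underline\bT)$, and the group $H^3(\Gamma;C(X;\bT))$ appearing through $p_X^*$) carefully enough that the naturality statement $\beta(p_X^*\omega)=p_X^*(\beta\omega)$ is literally true and not just true up to an unidentified isomorphism. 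Once those identifications are pinned down, the proof is short.
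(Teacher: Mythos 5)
Your first half is correct but proceeds differently from the paper at the key step. Both you and the paper reduce, via softness of $\underline{\bR}$ and the collapse of the \v{C}ech direction, to showing $H^p(\Gamma;C(X;\bR))=0$ for $p\geq 1$, and then read off that $\beta$ is an isomorphism from the long exact sequence. The paper proves this vanishing by filtering $X$ by skeleta, writing $X^{(n)}\setminus X^{(n-1)}$ as induced from the finite cell stabilizers $K_n$, and using that finite groups have no higher cohomology with real coefficients; your route instead contracts the bar complex directly with a cutoff function $c$ satisfying $\sum_g g\cdot c=1$ with locally finite translates. Your argument is cleaner and does not use the cell structure per se, but you should say a word about why such a cutoff function exists on a proper $\Gamma$-simplicial complex that need not be locally compact (e.g.\ a skeleton of the Milnor model of $\underline{E}\Gamma$ has infinite-valence vertices); barycentric coordinates give one, but this is exactly the point where the paper's skeleton argument sidesteps point-set issues.

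The second half has a genuine gap. The class $p_X^*\omega$ whose vanishing is asserted lives in $H^3(\Gamma;C(X;\bT))$, and the comparison map $\chi_*\colon H^3(\Gamma;C(X;\bT))\to H^3_\Gamma(X;\underline{\bT})$ goes \emph{from} the group-cohomology model \emph{to} the equivariant sheaf cohomology --- there is no map in the direction you use in your last sentence ("the image of this class in $H^3(\Gamma;C(X;\bT))$"). What your argument actually yields is $\chi_*(p_X^*\omega)=0$, and this implies $p_X^*\omega=0$ only when $\chi_*$ is injective, which by the preceding lemma requires $H^2(X;\bZ)=H^3(X;\bZ)=0$ --- false for a general locally compact proper $\Gamma$-space. (Relatedly, the injectivity of $\beta$ on $H^3_\Gamma(X;\underline{\bT})$ is only established for finite dimensional proper $\Gamma$-simplicial complexes, so it cannot be invoked for the given $X$ directly.) The repair is the paper's: run the whole argument on $\underline{E}\Gamma$, where contractibility gives $H^2=H^3=0$ so that $\chi_*$ is injective, and where restriction to the $5$-skeleton (an isomorphism in degree $3$ by \cref{rem:pull.isom}) puts you in the simplicial setting where $\beta$ is injective; conclude $p_{\underline{E}\Gamma}^*\omega=0$ already in $H^3(\Gamma;C(\underline{E}\Gamma;\bT))$, and only then pull back along the classifying map $f_X\colon X\to\underline{E}\Gamma$ at the level of $H^3(\Gamma;C(\blank;\bT))$, using $p_X^*=f_X^*\circ p_{\underline{E}\Gamma}^*$. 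This never requires injectivity of $\chi_*$ or of $\beta$ on the arbitrary space $X$.
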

\begin{proof}
	The exact sequence of sheaves $0 \to \underline{\bZ} \to \underline{\bR} \to \underline{\bT} \to 0$ induces the long exact sequence
	\[ 
	\cdots \to \check{H}^*_\Gamma ( X ; \underline{\bZ}) \to \check{H}^*_\Gamma (X ; \underline{\bR}) \to \check{H}^*_\Gamma (X ; \underline{\bT}) \to H^{*+1}_\Gamma (X ; \underline{\bZ}) \to \cdots .
	\]
	We show that $\check{H}^*_\Gamma ( X ; \underline{\bR}) =0$ by considering the spectral sequence of $C^p(\Gamma ; \check{C}^q(\fU;\underline{\bR}))$. 
	Since $\check{H}^0(X; \underline{\bR})\cong C(X;\bR)$ and $\check{H}^q(X; \underline{\bR}) \cong 0$ for $q \geq 1$, we have $H^q_\Gamma(X ; \underline{\bR}) \cong H^q(\Gamma ; C(X; \bR))$. 
    For $0  \leq n \leq \dim X$, let $X^{(n)}$ denote its $n$-skeleton and let $U^{(n)}:=X^{(n)} \setminus X^{(n-1)} \cong \mathop{\mathrm{int}}\Delta^n \times \Gamma / K_n$. 
    Then we have $H^q(\Gamma ; C_0(U^{(n)},\bR)) \cong H^q(K_n ; C_0(\Delta^n;\bR)) \cong 0$ and $H^q(\Gamma ; C(X^{(0)};\bR)) \cong H^q(K_n ; ;\bR) \cong 0$ for $q \geq 1$. 
	An iterated use of the long exact sequence associated to $0 \to C_0(U^{(n)}  ; \bR) \to C(X^{(n)}, \bR) \to C(X^{(n-1)},\bR) \to 0$ proves that $H^q(\Gamma;C(X^{(n)},\underline{\bR})) = 0 $ for any $n \geq 1$ and $q \geq 1$. 
			
	To see the second claim, consider the $5$-skeleton $\underline{E}\Gamma^{(5)}$ of the universal proper $\Gamma$-space $\underline{E}\Gamma$. 
	We remark that a model of $\underline{E}\Gamma$ constructed by the Milnor construction \cite{baumClassifyingSpaceProper1994}*{Appendix 1} is a $\Gamma$-simplicial complex.
	Then the right vertical map of the following commutative diagram is an isomorphism;
	\[
	\xymatrix{
		H^3(\Gamma; \bT) \ar[r] ^-{\cong} \ar[d]^\beta 
		& H^3_\Gamma (\underline{E}\Gamma ; \bT) \ar[r] \ar[d]^\beta & H^3_\Gamma (\underline{E}\Gamma ;\underline{\bT}) \ar[r]^\cong \ar[d]^\beta & H^3_\Gamma (\underline{E}\Gamma^{(5)};\underline{\bT} )\ar[d]^\beta_\cong \\
		H^4(\Gamma ; \bZ) \ar[r]^-{\cong} & H^4_\Gamma(\underline{E}\Gamma  ;\bZ) \ar@{=}[r] & H^4_\Gamma (\underline{E} \Gamma ; \bZ) \ar[r]^\cong & H^4_\Gamma (\underline{E}\Gamma^{(5)};\bZ).
	}
	\]
	Moreover, the left and the right horizontal maps are isomorphisms by \cref{rem:pull.isom}. Thus, $\beta \omega =0$ implies that $p_{\underline{E}\Gamma}^*\omega =0$, and hence $p_X^*\omega = f_X^* p_{\underline{E}\Gamma}^* \omega =0$ for any proper $\Gamma$-space $X$ with the classifying map $f_X \colon X \to \underline{E}\Gamma$.
\end{proof}

\begin{rem}
	We recall the relation between the universal coefficient theorem and the Bockstein map. The UCT theorem states that 
	\[
	H^p(G; \bT) \cong H^p(G ; \bZ) \otimes_\bZ \bT \oplus \mathrm{Tor}^1_\bZ (H^{p+1}(G; \bZ) , \bT). 
	\]
    This isomorphism is given by the flat resolution $0 \to \bZ \to \bR \to \bT \to 0$ of $\bT$. Therefore, $\mathrm{Tor}^1_\bZ (H^{p+1}(G; \bZ) , \bT)$ is isomorphic to the torsion part $\ker (H^{p+1}(G;\bZ) \to H^{p;1}(G;\bR))$, and the map $H^p(G;\bT) \to \mathrm{Tor}^1_\bZ (H^{p+1}(G; \bZ) , \bT)$ is given by the Bockstein map $\beta$.
\end{rem}
		
\begin{prop}
	Let $X$ be a locally compact proper $\Gamma$-space, and $A$ is a $(\Gamma \ltimes X, \omega)$-C*-algebra. 
	We also assume that $\beta \omega =0 \in H^4(\Gamma ; \bZ)$ (this holds e.g. when $H^4(\Gamma;\bZ)$ has no torsion). Then there exists $\tau \in C^2(\Gamma, C(X; \bT))$ such that $d_\Gamma \tau = \omega$, and hence $(A, \alpha, \fu_{\tau^{-1}})$ is a cocycle action. 
\end{prop}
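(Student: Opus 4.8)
The plan is to chase the $3$-cocycle $\omega$ through the equivariant cohomology groups set up in the preceding lemmas and to exhibit the desired $\tau$ as a genuine (non-equivariant-sheaf-level) $2$-cochain on $\Gamma$ with values in $C(X;\bT)$. First I would recall that, since $A$ is a $(\Gamma\ltimes X,\omega)$-C*-algebra, the relevant trivialization lives in $C^2(\Gamma;C(X;\bT))$, i.e. on the $E_2^{*,0}$-row of the simplicial--\v{C}ech spectral sequence; so the goal is to prove that the class $[\,p_X^*\omega\,]\in H^3(\Gamma;C(X;\bT))$ vanishes. The map $\chi_*\colon H^3(\Gamma;C(X;\bT))\to H^3_\Gamma(X;\underline{\bT})$ from the lemma before \cref{fig:E2.page} is injective whenever $H^2(X;\bZ)=H^3(X;\bZ)=0$; and $X$ being proper, these groups are the obstruction coming from the cover. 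I would therefore reduce, exactly as in the last lemma of the excerpt, to the universal proper $\Gamma$-space: the classifying map $f_X\colon X\to\underline{E}\Gamma$ pulls back $p_{\underline{E}\Gamma}^*\omega$ to $p_X^*\omega$, so it suffices to kill the class in $H^3_\Gamma(\underline{E}\Gamma;\underline{\bT})$.

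The key step is then the identification, proved just above, of the Bockstein $\beta\colon H^3_\Gamma(\underline{E}\Gamma;\underline{\bT})\xrightarrow{\;\cong\;}H^4_\Gamma(\underline{E}\Gamma;\bZ)\cong H^4(\Gamma;\bZ)$, which is an isomorphism because $\check H^*_\Gamma(X;\underline{\bR})$ vanishes in positive degrees for a finite-dimensional proper $\Gamma$-simplicial complex. Since $\beta\omega=0\in H^4(\Gamma;\bZ)$ by hypothesis, it follows that the image of $\omega$ in $H^3_\Gamma(\underline{E}\Gamma;\underline{\bT})$ is zero, hence $p_{\underline{E}\Gamma}^*\omega$ and therefore $p_X^*\omega$ are zero as equivariant sheaf cohomology classes. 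Pulling back along $\chi_*$, which is injective under the vanishing assumptions $H^2(X;\bZ)=H^3(X;\bZ)=0$ that hold for a proper $\Gamma$-space admitting a good cover, we conclude $[\,p_X^*\omega\,]=0$ in $H^3(\Gamma;C(X;\bT))$. Unwinding the definition of the differential $d_\Gamma$ on $C^*(\Gamma;C(X;\bT))$ then produces the explicit $\tau\in C^2(\Gamma;C(X;\bT))$ with
\[
\tau(gh,k)\,\tau(g,h)\,\tau(g,hk)^{-1}\,\alpha_g(\tau(h,k)) = \omega(g,h,k),
\]
i.e. $d_\Gamma\tau=\omega$. Finally, setting $\fu_{\tau^{-1},g,h}:=\tau(g,h)^{-1}\fu_{g,h}$ and substituting into the twisted cocycle identity \eqref{eq:cocycle} shows the $\omega$-twist cancels, so $(A,\alpha,\fu_{\tau^{-1}})$ satisfies the ordinary cocycle-action relations; this is the computation already indicated in the paragraph preceding \cref{section:partial:TT} applied to the $(\Gamma,\omega)$-case, and I would only sketch it.

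The main obstacle I anticipate is verifying the hypotheses needed to invoke the two injectivity/isomorphism lemmas for a general locally compact proper $\Gamma$-space $X$, rather than for a simplicial complex: one must either replace $X$ by its classifying map to $\underline{E}\Gamma$ (which is where the real argument happens) and argue that $p_X^*$ factors through $p_{\underline{E}\Gamma}^*$, or directly check that $X$ admits a suitable good $\Gamma$-equivariant cover with the vanishing $\check H^q$. The cleanest route is the former: work out the trivialization on the Milnor model of $\underline{E}\Gamma^{(5)}$, where all the lemmas apply verbatim, transport $\tau$ back via $f_X^*$, and only at the very end remark that $C^2(\Gamma;C(\underline{E}\Gamma;\bT))\to C^2(\Gamma;C(X;\bT))$ is the map realizing $f_X^*$ on cochains. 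A secondary, purely bookkeeping point is to make sure the sign conventions in $d_\Gamma$ match the twisted cocycle relation \eqref{eq:cocycle} so that $d_\Gamma\tau=\omega$ (and not $\omega^{-1}$ or a shifted version) is what trivializes the action; this is routine but worth stating carefully.
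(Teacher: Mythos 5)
Your proposal is correct and follows essentially the same route as the paper, which states this proposition without a separate proof precisely because it is the concatenation of the preceding lemmas: Bockstein isomorphism on $\underline{E}\Gamma^{(5)}$, injectivity of $\chi_*$, and the explicit untwisting $\fu_{\tau^{-1},g,h}=\tau(g,h)^{-1}\fu_{g,h}$. One caution: your intermediate assertions that $H^2(X;\bZ)=H^3(X;\bZ)=0$ ``hold for a proper $\Gamma$-space admitting a good cover'' are false for general $X$ (e.g.\ $\bZ$ acting by translation on $\bR\times S^2$), so the injectivity of $\chi_*$ cannot be invoked on $X$ itself; the only valid route is the one you settle on at the end, namely trivializing $p_{\underline{E}\Gamma}^*\omega$ in $H^3(\Gamma;C(\underline{E}\Gamma;\bT))$ (where contractibility of $\underline{E}\Gamma$ supplies the hypotheses) and transporting the resulting $2$-cochain to $C^2(\Gamma;C(X;\bT))$ via the coefficient homomorphism induced by the classifying map $f_X$.
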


\subsection{Baum--Connes property for 3-cocycle twists of torsion-free groups}
Now we investigate the structure of the $(\Hilb_{\Gamma, \omega}^{\rm f})^{\rm rev}$-equivariant Kasparov category, simply denoted by $\Kas^{\Gamma, \omega}$ in this subsection, under the assumption that $\Gamma $ is torsion-free and satisfies the property $\gamma =1$.

\begin{prop}
	Let $\Gamma$ be a discrete group and let $\omega_1, \omega_2 \in H^3(\Gamma; \bT)$. Then there is a bifunctor 
	\[
	\Kas^{\Gamma, \omega_1} \times \Kas ^{\Gamma, \omega_2} \to \Kas ^{\Gamma , \omega_1 + \omega_2}. \]
\end{prop}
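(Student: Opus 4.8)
The plan is to realize the bifunctor as a composite of two ingredients already established in the paper: the exterior tensor product of $\sC$-equivariant Kasparov categories, and pull-back along a monoidal functor. Fix cocycle representatives $\omega_1,\omega_2\in Z^3(\Gamma;\bT)$ of the given classes, so that $\omega_1+\omega_2$ is represented by the pointwise product $\omega_1\omega_2\in Z^3(\Gamma;\bT)$; the construction will depend on these choices only up to natural isomorphism, which is all that the statement asserts.

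\emph{Step one.} Since $\Hilb^{\rm f}_{\Gamma,\omega_i}$ is a rigid C*-tensor category with countably many objects ($\Gamma$ being countable), the exterior tensor product of \cref{section:equivariant.KKtheory} yields a bifunctor
\[ \blank\otimes\blank\colon \Kas^{\Gamma,\omega_1}\times\Kas^{\Gamma,\omega_2}\longrightarrow \Kas^{(\Hilb^{\rm f}_{\Gamma,\omega_1})^{\rm rev}\boxtimes(\Hilb^{\rm f}_{\Gamma,\omega_2})^{\rm rev}}, \]
which on objects sends a $(\Gamma,\omega_1)$-C*-algebra $A$ and a $(\Gamma,\omega_2)$-C*-algebra $B$ (cf.~\cref{defn:twisted.action}) to $A\otimes B$ equipped with the Deligne tensor product action, and on morphisms sends a pair of equivariant Kasparov bimodules to their exterior product.

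\emph{Step two.} Define a $*$-functor $\cS\colon (\Hilb^{\rm f}_{\Gamma,\omega_1+\omega_2})^{\rm rev}\to (\Hilb^{\rm f}_{\Gamma,\omega_1})^{\rm rev}\boxtimes(\Hilb^{\rm f}_{\Gamma,\omega_2})^{\rm rev}$ on simple objects by $\bC\delta_g\mapsto \bC\delta_g\boxtimes\bC\delta_g$ and extend additively, where in the source $\omega_1+\omega_2$ is the product cocycle $\omega_1\omega_2$. The associator of a Deligne tensor product acts on such ``diagonal'' gradings by the product of the two component associator scalars; hence the associator of the target on $(\bC\delta_g\boxtimes\bC\delta_g)\otimes(\bC\delta_h\boxtimes\bC\delta_h)\otimes(\bC\delta_k\boxtimes\bC\delta_k)$ is precisely the scalar attached to $(\Hilb^{\rm f}_{\Gamma,\omega_1+\omega_2})^{\rm rev}$ at $(g,h,k)$, so $\cS$ is a strict monoidal functor (with identity coherence maps). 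By \cref{prop:KK.monoidal.invariance} together with \cref{prop:Kasparov.category.algebra}, $\cS$ induces a pull-back functor
\[ \cS^*\colon \Kas^{(\Hilb^{\rm f}_{\Gamma,\omega_1})^{\rm rev}\boxtimes(\Hilb^{\rm f}_{\Gamma,\omega_2})^{\rm rev}}\longrightarrow \Kas^{\Gamma,\omega_1+\omega_2}. \]
The desired bifunctor is $\cS^*\circ(\blank\otimes\blank)$. Concretely it carries $(A,\alpha,\fu),(B,\beta,\fv)$ to $(A\otimes B,\alpha\otimes\beta,\fu\otimes\fv)$, which is a $(\Gamma,\omega_1+\omega_2)$-C*-algebra: checking \eqref{eq:cocycle}, the two $2$-cochains multiply and the resulting obstruction is exactly $\omega_1\omega_2$.

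I do not anticipate a genuine obstacle; the argument is an assembly of available results. The points needing care are: the bookkeeping of the reversed monoidal structure and of the Deligne-product associator when identifying the twist of the receiving category with $\omega_1\omega_2$; promoting the monoidal pull-back from a homomorphism of $\KK$-groups to a functor of Kasparov categories that respects the Kasparov product, which is cleanest through the module-category picture (\cref{prop:Kasparov.category.algebra}) or the universal property (\cref{thm:Kasparov.universality}) rather than by a direct product computation; and the harmless dependence on the chosen cocycle representatives, so that the bifunctor, and likewise its expected associativity and commutativity up to natural isomorphism, is canonical only up to equivalence. The whole construction may equally be run on the C*-algebra side via the pull-back $\cS^*\colon\Calg^\sD\to\Calg^\sC$ of anomalous actions and \cref{thm:Kasparov.universality}.
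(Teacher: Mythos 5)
Your proposal is correct and is essentially the paper's own proof: the paper composes the exterior tensor product $\Kas^{\Gamma,\omega_1}\times\Kas^{\Gamma,\omega_2}\to\Kas^{\Gamma\times\Gamma,(\omega_1,\omega_2)}$ with restriction along the diagonal $\Gamma\leq\Gamma\times\Gamma$, and your monoidal functor $\cS$ is precisely that diagonal restriction spelled out at the level of the tensor categories, with the associator bookkeeping made explicit. No changes needed.
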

\begin{proof}
	The desired functor is given by the composition of the exterior tensor product
	\[
	\blank \otimes \blank \colon  \Kas^{\Gamma, \omega_1} \times \Kas ^{\Gamma, \omega_2} \to \Kas ^{\Gamma \times \Gamma , (\omega_1  ,\omega_2)}, 
	\]
	and the restriction of the twisted group action to the diagonal subgroup $\Gamma \leq \Gamma \times \Gamma$. 
\end{proof}

Here, we reformulate the induction-restriction adjunction introduced in \cref{prop:adjunction.tensor} in terms of anomalous actions.
A $(\Gamma , \omega)$-C*-algebras $A$ is said to be trivially contractible if the underlying C*-algebra $A$ is $\KK$-contractible. We write $\cT \cC $ for the full subcategory of $\Kas^{\Gamma,\omega}$ consisting of compactly contractible $(\Gamma,\omega)$-C*-algebras. 		
We also consider the localizing full subcategory $\langle \cT \cI \rangle _{\loc} \subset \Kas^{\Gamma,\omega}$ generated by trivially induced $(\Gamma , \omega)$-C*-algebras, i.e., $(\Gamma, \omega)$-C*-algebras of the form $\Ind ^{\Gamma, \omega} A:=(c_0(\Gamma ) \otimes A , \lambda \otimes \id_A, \tau )$ for some separable C*-algebra $A$. 
Here $\tau$ denotes a $2$-cochain with $d \tau = p_\Gamma ^*\omega \in C^3(\Gamma ; C(\Gamma ; \bT))$. 

By \cref{prop:adjunction.tensor} (or by the same unit and counit as in \cite{meyerBaumConnesConjectureLocalisation2006}), the restriction-induction adjunction 
	\[
	\KK^{\Gamma, \omega} (\Ind^{\Gamma, \omega } A , B) \cong \KK(A, \Res_{\Gamma, \omega} B ) 
	\]
holds. Hence the relative homological algebra of \cite{meyerHomologicalAlgebraBivariant2010} gives a semi-orthogonal decomposition $(\langle \cT \cI \rangle _{\rm loc}, \cT \cC)$. By definition, the tensor product of $B \in \Kas^{\Gamma , \omega} $ with $A \in \langle \cT \cI \rangle_\loc \subset \Kas^\Gamma$ is contained in $\langle \cT \cI \rangle_\loc \subset \Kas^{\Gamma , \omega}$, as well as the tensor product with $A \in \cT \cC \subset \Kas^{\Gamma}$ is contained in $\cT \cC \subset \Kas^{\Gamma , \omega}$.

\begin{thm}\label{thm:untwist.Kasparov}
	Let $\Gamma$ be a countable discrete group. 
	\begin{enumerate}
		\item The C*-tensor category $\Hilbgo$ satisfies $\cT \cC=0$ if $\Gamma$ does (e.g.\ if $\Gamma$ is torsion-free and has the Haagerup property \cite{higsonTheoryKKTheory2001}).
		\item If $\Gamma$ has the Haagerup property and $\beta \omega =0 \in H^4(\Gamma; \mathbb{Z})$, then there exists a categorical equivalence $\Kas^{\Gamma,\omega} \simeq \Kas^\Gamma$ preserving the restriction functor.
	\end{enumerate}
\end{thm}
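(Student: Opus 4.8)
<br>

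The plan is to deduce Theorem~\ref{thm:untwist.Kasparov} from the semi-orthogonal decomposition $(\langle \cT\cI\rangle_{\loc}, \cT\cC)$ established above for $\Kas^{\Gamma,\omega}$, together with the cohomological vanishing results of the previous subsection. For part (1), recall that the property $\gamma=1$ for $\Gamma$ means precisely that $\Kas^\Gamma = \langle \cT\cI\rangle_{\loc}$, i.e.\ that every separable $\Gamma$-C*-algebra lies in the localizing subcategory generated by trivially induced ones, and hence $\cT\cC = 0$ in $\Kas^\Gamma$. I would transport this to $\Kas^{\Gamma,\omega}$ as follows: given $B \in \cT\cC \subset \Kas^{\Gamma,\omega}$, the restriction $\Res_{\Gamma,\omega} B$ is $\KK$-contractible, so $\Res_{\Gamma,\omega} B \sim_{\KK} 0$; but by the adjunction $\KK^{\Gamma,\omega}(\Ind^{\Gamma,\omega} A, B) \cong \KK(A, \Res_{\Gamma,\omega} B) = 0$ for all separable $A$, so $B$ is right-orthogonal to $\langle\cT\cI\rangle_{\loc}$ in $\Kas^{\Gamma,\omega}$. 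On the other hand, I claim every object of $\Kas^{\Gamma,\omega}$ lies in $\langle\cT\cI\rangle_{\loc}$: this is exactly the content of the ``Dirac--dual-Dirac'' / $\gamma$-element argument, which only uses a proper action with a $\gamma$-element equal to $1$, and the construction of $\gamma$ via the Haagerup property (a proper affine isometric action on a Hilbert space, yielding a proper $\Gamma$-C*-algebra $\cP$ and Dirac/dual-Dirac classes) goes through verbatim for $(\Gamma,\omega)$-algebras provided $\cP$ can be made into a $(\Gamma,\omega)$-algebra. Since $\cP$ is a proper $\Gamma$-C*-algebra — a $(\Gamma\ltimes X,\omega)$-algebra for $X$ the relevant proper space — the proposition of the previous subsection (using $\beta\omega = 0$, which holds since $\Gamma$ torsion-free with the Haagerup property has $H^4(\Gamma;\bZ)$ torsion-free, or more directly since we may always arrange $p_X^*\omega = 0$) lets us untwist the cocycle on $\cP$, so $\cP$ carries a genuine cocycle $(\Gamma,\omega)$-action and the $\gamma=1$ machinery applies. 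Hence $B \cong 0$, proving $\cT\cC = 0$.

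For part (2), the strategy is to exhibit an explicit $\KK^{\Gamma,\omega}$- versus $\KK^\Gamma$-equivalence mediated by a proper $(\Gamma\ltimes X,\omega)$-algebra. First fix a model for $\underline{E}\Gamma = E\Gamma$ (torsion-free, so the universal proper space is the universal free space) realized as a $\Gamma$-simplicial complex; the lemmas above give $p_{E\Gamma}^*\omega = 0 \in H^3_\Gamma(E\Gamma;\underline\bT)$, so there is $\tau \in C^2(\Gamma; C(E\Gamma;\bT))$ with $d_\Gamma\tau = \omega$. The key object is $C_0(E\Gamma)$, which is simultaneously a proper $\Gamma$-C*-algebra and, via $\tau$, a $(\Gamma,\omega)$-C*-algebra; more usefully, for any $(\Gamma,\omega)$-algebra $B$ the tensor product $C_0(E\Gamma)\otimes B$ has \emph{both} a $(\Gamma,\omega)$-structure (diagonal) and, after untwisting using $\tau$, a genuine $\Gamma$-structure. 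Since $\Gamma$ has the Haagerup property it is a-T-menable, hence satisfies the Baum--Connes conjecture with coefficients, which in the categorical language of Meyer--Nest means the Dirac morphism $D: \cP \to \bC$ (equivalently, the canonical map $C_0(E\Gamma)$-localization) is a weak equivalence, i.e.\ tensoring with $C_0(E\Gamma)$ is, up to $\KK$-equivalence, the identity on objects of the relevant subcategory. Because $\langle\cT\cI\rangle_{\loc} = \Kas^{\Gamma,\omega}$ by part (1), \emph{every} $(\Gamma,\omega)$-algebra $B$ is $\KK^{\Gamma,\omega}$-equivalent to $C_0(E\Gamma)\otimes B$, and the latter — with its untwisted $\Gamma$-structure — is a genuine $\Gamma$-algebra; the untwisting is canonical enough (depending only on the fixed $\tau$) to be functorial in $B$. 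This produces functors $\Kas^{\Gamma,\omega} \to \Kas^\Gamma$ and $\Kas^\Gamma \to \Kas^{\Gamma,\omega}$, both given by $\blank \mapsto C_0(E\Gamma)\otimes\blank$ with the appropriate (un)twist, which are mutually inverse up to natural isomorphism and visibly compatible with the forgetful/restriction functors to $\Kas^{\{e\}} = \Kas$.

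Concretely the steps are: (i) record that $\gamma=1$ for $\Gamma$ (e.g.\ Haagerup $+$ torsion-free) gives $\Kas^\Gamma = \langle\cT\cI\rangle_{\loc}$ and $\cT\cC^\Gamma = 0$; (ii) upgrade the $\gamma$-element construction to the $(\Gamma,\omega)$-setting by untwisting the cocycle on the proper coefficient algebra $\cP$ via the previous subsection, concluding $\Kas^{\Gamma,\omega} = \langle\cT\cI\rangle_{\loc}$ and hence $\cT\cC^{\Gamma,\omega} = 0$ (this is part (1)); (iii) fix $\tau$ trivializing $p_{E\Gamma}^*\omega$ and use it to define the untwisting functor on $C_0(E\Gamma)$-modules; (iv) use the semi-orthogonal decomposition together with step (ii) to see $B \simeq_{\KK^{\Gamma,\omega}} C_0(E\Gamma)\otimes B$ for all $B$, and symmetrically in $\Kas^\Gamma$; (v) assemble these into the claimed equivalence $\Kas^{\Gamma,\omega}\simeq\Kas^\Gamma$ commuting with restriction. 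The main obstacle I anticipate is step (iii)--(v): making the untwisting genuinely functorial and checking that the two composite functors are naturally isomorphic to the identity, which requires the Baum--Connes-type input (the Dirac morphism is invertible, i.e.\ $\cT\cC = 0$ forces $\langle\cT\cI\rangle_{\loc}$ to be everything) to be packaged so that it applies uniformly to the twisted category; the cochain $\tau$ is only defined up to coboundary, so one must verify the resulting functor is independent of that choice up to natural isomorphism. The cohomological lemmas of the previous subsection do all the heavy lifting for the vanishing $p_X^*\omega = 0$; what remains is categorical bookkeeping in the triangulated/localization framework of Meyer--Nest, which is routine but needs care.
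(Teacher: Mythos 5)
Your overall strategy (semi-orthogonal decomposition from the induction--restriction adjunction, plus a proper algebra untwisted by a cochain $\tau$) is in the right family, but both parts contain concrete gaps. For part (1): your route forces you to make the proper coefficient algebra $\cP$ into a $(\Gamma,\omega)$-algebra, which requires $p_X^*\omega=0$ and hence $\beta\omega=0$ --- a hypothesis that is \emph{not} part of statement (1). Both of your justifications for why this is automatic fail: torsion-free Haagerup groups can certainly have torsion in $H^4(\Gamma;\bZ)$ (e.g.\ products of torsion-free nilpotent groups such as $\Gamma_p\times\Gamma_p$ with $\Gamma_p=\langle x,y,z\mid [x,y]=z^p,\ z\ \text{central}\rangle$, which are amenable and have $p$-torsion already in $H^2$), and one cannot "always arrange $p_X^*\omega=0$" --- the vanishing of $p_X^*\omega$ on proper $X$ is precisely what the hypothesis $\beta\omega=0$ buys via the computation on $\underline{E}\Gamma$. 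The paper's proof of (1) needs none of this: since $\cT\cC=0$ in $\Kas^\Gamma$, the decomposition gives $\bC\in\langle\cT\cI\rangle_{\loc}\subset\Kas^\Gamma$, and the external-tensor-product/diagonal-restriction bifunctor $\Kas^{\Gamma}\times\Kas^{\Gamma,\omega}\to\Kas^{\Gamma,\omega}$ carries $\langle\cT\cI\rangle_{\loc}\times\Kas^{\Gamma,\omega}$ into $\langle\cT\cI\rangle_{\loc}$, so $A=\bC\otimes A\in\langle\cT\cI\rangle_{\loc}$ for every $(\Gamma,\omega)$-algebra $A$; no Dirac--dual-Dirac argument has to be rerun inside the twisted category.

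For part (2): the object you tensor with, $C_0(E\Gamma)$, is wrong. Tensoring with $C_0(E\Gamma)$ is not naturally isomorphic to the identity even for $\Gamma=\bZ$: there $E\Gamma=\bR$ and $C_0(\bR)\otimes B$ is the suspension of $B$, which is not $\KK^{\bZ}$-equivalent to $B$. The Dirac morphism being a weak equivalence does not rescue this, because $C_0(E\Gamma)$ is not the Higson--Kasparov algebra. What the paper uses is exactly $\cA_\Gamma$, the Higson--Kasparov $\Gamma\ltimes X$-algebra, which is genuinely $\KK^\Gamma$-equivalent to $\bC$; then $A\mapsto (A\otimes\cA_\Gamma,\alpha\otimes\gamma,\fu\otimes\tau^{\pm 1})$ gives the two functors. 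Finally, the step you flag as "the main obstacle" --- that the two composites are naturally isomorphic to the identity and independent of the choice of $\tau$ --- is the actual content and is left unresolved in your sketch; the paper settles it by observing that $(\tau\otimes 1)+(1\otimes\tau^{-1})$ is a null-cohomologous $2$-cochain on $X\times X$, since $\tau$ pulls back from $\underline{E}\Gamma$ and $[\tilde\tau\otimes 1]=[1\otimes\tilde\tau]$ in $H^3_\Gamma(\underline{E}\Gamma\times\underline{E}\Gamma;\bT)$ because the two classes agree after restriction to the diagonal, which is an equivariant homotopy equivalence. Without that computation the proposed equivalence is not established.
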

\begin{proof}
	The claim (1) follows from the above observation. Indeed, for any $A \in \Kas^{\Gamma, \omega }$, we have $A = \bC \otimes A \in \langle \cT \cI \rangle_\loc \in \Kas^{\Gamma, \omega}$ since $\bC \in \Kas^{\Gamma} = \langle \cT \cI \rangle _\loc$ by the assumption $\cT \cC=0$ and the semi-orthogonal decomposition 
 $(\langle \cT \cI \rangle _{\rm loc}, \cT \cC)$ of $\Kas^\Gamma$.  
			
	To see (2), let $\cA_\Gamma$ be the Higson--Kasparov $\Gamma$-C*-algebra \cite{higsonTheoryKKTheory2001}, i.e., an $(\Gamma \ltimes X)$-C*-algebra for some locally compact proper $\Gamma$-space $X$ that is $\KK^\Gamma$-equivalent to $\bC$. By assumption, we can take $\tau \in C^2(\Gamma ; C(X;\bT))$ such that $d_\Gamma\tau = p_X^* \omega$. 
    The functors 
	\begin{align*}
		\Kas^{\Gamma} \to \Kas^{\Gamma ,\omega}&, \quad (A,\alpha,\fu) \mapsto (A \otimes \cA_\Gamma , \alpha \otimes \gamma, \fu \otimes \tau), \\
		\Kas^{\Gamma , \omega} \to \Kas^{\Gamma}&, \quad (A,\alpha,\fu) \mapsto (A \otimes \cA_\Gamma , \alpha \otimes \gamma,  \fu \otimes \tau^{-1}), 
	\end{align*}
	are mutually inverse since $(\tau \otimes 1) + (1 \otimes  \tau^{-1}) \in C^2(\Gamma; C(X \times X; \bT))$ is a null-cohomologous $\bT$-valued $2$-cocycle. This is because $\tau$ is the pull-back of $\tilde{\tau} \in C^2(\Gamma ; C(\underline{E}\Gamma , \bT) )$ and $[\tilde{\tau} \otimes 1] = [1 \otimes \tilde{\tau}] \in H^3_\Gamma(\underline{E}\Gamma \times \underline{E}\Gamma ; \bT)$ since they coincide after pulling back to the diagonal.        
\end{proof}
		
		\begin{ex}
		    For $\Gamma = \bZ^3$, we have $H^3(\Gamma ; \bT) \cong \bT$ and $H^4(\Gamma ; \bZ) \cong 0$. Therefore, \cref{thm:untwist.Kasparov} shows the categorical equivalence $\Kas^{\bZ^3, \omega } \simeq \Kas^{\bZ^3}$ for any $\omega \in H^3(\Gamma ; \bT)$.
		\end{ex}

\appendix
		
\section{Toolkit for \texorpdfstring{$\sC$}{C}-module categories and \texorpdfstring{$\sC$}{C}-C*-algebras}
Here we summarize $\sC$-equivariant versions of basic materials of the theory of C*-algebra, which form the basis in operator $\K$-theory and $\KK$-theory.

\subsection{Ideal, quotient and exact sequence}\label{subsubsection:ideal.exact}
    We discuss the $\sC$-equivariant version of the correspondence of ideals and short exact sequences of C*-algebras. 
		
    For a nonunital C*-category $\sA$, its \emph{ideal} is a family of closed subspaces $\cK_\sI(X,Y) \subset \cK_\sA(X,Y)$ for $X,Y \in \Obj \sA$ satisfying the conditions of categorical ideals, i.e., $\cK_\sI (X,Y) \circ \cK_\sA(Y,Z) \subset \cK_\sI(X,Z)$ and $\cK_\sA(Z,X) \circ \cK_\sI(X,Y) \subset \cK_\sI(Z,Y)$. Note that it is automatically $\ast$-closed, i.e., $\cK_\sI(X,Y)^* = \cK_{\sI}(Y,X)$. 
    An ideal again forms a nonunital C*-category by letting $\Obj \sI$ the class of objects $X \in \Obj \sA$ with $\cK_\sI(X) = \cK_{\sA}(X)$, and setting $\cL_\sI(X,Y):= \cL_\sA(X,Y)$. 

	If $\sI \subset \sA$ is an ideal, the quotient category $\sA / \sI$ is defined by the idempotent completion of the category $(\sA/\sI)_0$ defined by $\Obj (\sA/\sI)_0= \Obj \sA$, $\cK_{(\sA/\sI)_0}(X,Y):= \cK_\sA(X,Y)/\cK_{\sI}(X,Y)$ and $\cL_{(\sA/\sI)_0}(X,Y):= p_Y \cM(\cK_{\sA / \sI}(X \oplus Y))p_X$. 
	We note that, if a nonunital C*-category $\sA$ is separable, then so are its ideal $\sI$ and the associated quotient $\sA/\sI$.
	Note that an ideal of $\sA = \Mod(A)$ corresponds to an ideal of a separable C*-algebra $A$. 
	
	\begin{df}\label{def:exact.sequence.categories}
	An exact sequence of nonunital C*-categories is defined by a sequence
	\[ 
	0 \to \sJ \xrightarrow{\cE} \sA \xrightarrow{\cF} \sB \to 0, 
	\]
	where 
	\begin{enumerate}
		\item $\cE$ is a proper fully faithful functor, 
		\item $\cF$ is a proper full functor such that any $X \in \Obj \sB$ is a direct summand of $\cF(Y)$ for some $Y \in \Obj \sA$, and 
		\item for $X \in \Obj \sA$, $\cF(X) \cong 0$ if and only if $X \cong \cE(Z)$ for some $Z \in \Obj \sJ$.  
	\end{enumerate}
	\end{df}
	For an exact sequence of nonunital C*-categories, $\cK_\sI(X,Y):= \ker \cF_{X,Y}$ defines an ideal $\sI$ such that the nonunital C*-categories $\sI$ and $\sA/\sI$ are categorically equivalent to $\sJ$ and $\sB$ respectively. 
	Conversely, if $\sI $ is an ideal of $\sA$, then $0 \to \sI \to \sA \to \sA/\sI \to 0$ is an exact sequence. 
	
	Let $\sA$ be a nonunital $\sC$-module category. An ideal $\sI \subset \sA$ is said to be $\sC$-invariant if $X \otimes \alpha_\pi \in \Obj \sI$ for any $X \in \Obj \sI$. If $\sI$ is a $\sC$-invariant ideal of $\sA$, $\sI$ and $\sA/\sI$ are also endowed with the $\sC$-action.  
	An exact sequence of $\sC$-module categories is a sequence 
	\begin{align}
		0 \to \sJ \xrightarrow{(\cE,\mathsf{v})} \sA \xrightarrow{(\cF,\mathsf{w})} \sB \to 0   \label{eq:exact.Cmodule.category} 
	\end{align}
	of $\sC$-module proper functors such that the underlying sequence of nonunital C*-categories is exact in the above sense. 
	The associated ideal $\sI$ is $\sC$-invariant, and the induced $\sC$-module functors $(\cE, \mathsf{v}) \colon \sJ \to \sI$ and $(\cF,\mathsf{w}) \colon \sA/\sI \to \sB$ are equivalence of $\sC$-module categories. 
    This argument shows that $\sJ \xrightarrow{(\cE,\mathsf{v})} \sA$ is the equalizer of $\sA\xrightarrow{(\cF,\mathsf{w})} \sB$ and $\sA\xrightarrow{0}\sB$ in the category $\Ccat^\sC$ because so is the canonical inclusion $\sI\subset \sA$ by definition.

	We also discuss the $\sC$-C*-algebra picture of the same materials.
	\begin{df}
		Let $(A,\alpha,\fu)$ be a $\sC$-C*-algebra. The ideal $I \triangleleft A$ is $\sC$-invariant if $I \otimes_A \alpha_\pi$ becomes a Hilbert $I$-module, i.e., the inner product takes value in $I$, for each $\pi \in \Obj \sC$.   
	\end{df}
	For such $I$, the triple $(I, I \otimes_A \alpha_\pi, \fu)$ forms a $\sC$-C*-algebra such that the inclusion $\iota \colon I \to A$ together with the trivial cocycle is a $\sC$-$\ast$-homomorphism. 
	Moreover, the quotient C*-algebra $A/I$ is also endowed with a $\sC$-action $(\beta, \fv)$ in the following way: The bimodules are defined as
	\[
	\beta_\pi := \alpha_\pi \otimes _q A/I \in \Bimod(A/I),
	\]
	which is a priori a $A$-$A/I$ bimodule but the left $A$-action actually factors through $A/I$. The unitary cocycles are defined as
	\[ 
	\fv_{\pi,\sigma } \colon (\alpha_\pi \otimes_q A/I) \otimes_{A/I} (\alpha_\sigma \otimes A/I) \cong \alpha_\pi \otimes_A \alpha_\sigma \otimes_q B \xrightarrow{\fu_{\pi,\sigma}} \alpha_{\pi \otimes \sigma} \otimes_q A/I.
	\]
	
	An exact sequence of $\sC$-C*-algebra is defined by a sequence 
	\begin{align}
		0 \to J \xrightarrow{(\phi, \bbmv)} A \xrightarrow{(\psi, \bbmw)} B \to 0 \label{eq:exact.C.algebra}
	\end{align}
	such that the underlying sequence of C*-algebras is exact. 
	Then the ideal $I:=\ker \phi$ is $\sC$-invariant since
	\[ 
	I \otimes_I \alpha_\pi \otimes _q B = I \otimes_I \bbmv_\pi^*(q \otimes_B \beta_\pi) = \bbmv_\pi^*(I \otimes_I q \otimes_B \beta_\pi) =0. 
	\]
	Now the cocycle $\sC$-$\ast$-homomorphisms $(\phi, \bbmv)$ and $(\psi, \bbmv)$ induces the $\sC$-$\ast$-isomorphisms $I \cong J $ and $A/I \cong B$. 
	Conversely, if $I$ is a $\sC$-invariant ideal of $A$, then $0 \to I \to A \to A/I \to 0$ is an exact sequence of $\sC$-C*-algebras.
	
	We say that the exact sequence \eqref{eq:exact.Cmodule.category} equivariantly splits if there is a $\sC$-module functor $(\sS, s) \colon \sB \to \sA$ such that the composition $(\sF,f) \circ (\sS, s)$ is naturally isomorphic to $(\id_\sB, \mathbbm{1})$. 
	Correspondingly, we say that the exact sequence \eqref{eq:exact.C.algebra} equivariantly splits if there is a cocycle $\sC$-$\ast$-homomorphism $(s,\bbmx) \colon B \to A$ such that $(q,\bbmw)  \circ (s,\bbmx) = (\id_B, \mathbbm{1})$. We remark that \eqref{eq:exact.C.algebra} equivariantly splits if and only if so does the corresponding exact sequence of module categories. Indeed, if there is a splitting functor $(\sS, \bx) \colon \Mod(B) \to \Mod(A)$, corresponding to a cocycle $\sC$-$\ast$-homomorphism $(s,\bbmx)$, satisfies that $(q,\bbmw)  \circ (s,\bbmx)$ is unitarily equivalent to the identity map. By replacing $(s,\bbmx)$ with $(s,\bbmx) \circ ((q,\bbmw)  \circ (s,\bbmx))^{-1}$, we get a C*-algebraic splitting.

	\subsection{Fibered sum}\label{subsubsection:fiber.sum}
	Let $\sA_i$ ($i=1,2$) and $\sB$ be nonunital C*-categories. For a proper functor $\cF_i \colon \sA_i \to \sB$, the \emph{fibered sum} $\sA_1 \oplus_{\sB} \sA_2$ is the separable envelope (in the sense of \cref{rem:separable.envelope}) of the C*-category
	\begin{itemize}
		\item whose object is a triplet $(X_1,X_2,U)$, where $X_i \in \Obj \sA_i$ and $U$ is a unitary $\cF_1(X_1) \to \cF_2(X_2)$ and
		\item whose compact morphism ideal is
		\begin{align*}
			&\cK_{\sA_1 \oplus_\sB \sA_2}((X_1,X_2,U),(Y_1,Y_2,V)) \\
			=& \{(x_1,x_2) \in \cK_{\sA_1}(X_1,Y_1) \oplus \cK_{\sA_2}(X_2,Y_2) \mid \cF_1(x_1) = V^* \cF_2(x_2) U \}.
		\end{align*}				
	\end{itemize}
	When $\sA_i$ and $\sB$ are $\sC$-module categories and $\cF_i$ are $\sC$-module functors, a $\sC$-module structure can be imposed on $\sA_1 \oplus _{\sB} \sA_2$ under the assumption that $\cF_2$ satisfies (2) of \cref{def:exact.sequence.categories}. 
	We describe it in terms of $\sC$-C*-algebras.          
	
	Let $(A_i,\alpha_i,\fu_i)$ ($i=1,2$) and $(B, \beta, \fv)$ be $\sC$-C*-algebras and 
	let $(\phi_i,\bbmv _i) \colon A_i \to B$ ($i=1,2$) be cocycle $\sC$-$\ast$-homomorphisms. 
    We write $\phi_{i,\pi} \colon \alpha_{i,\pi} \to \beta_\pi$ for the maps given by $\phi_{i,\pi}(\xi):=\bbmv_{i,\pi}T_\xi \in \cK(B,\beta_\pi) \cong \beta_\pi$.
	The fibered sum 
	\[ 
	A_1 \oplus _B A_2:= \{ (a_1,a_2) \in A_1 \oplus A_2 \mid \phi_1(a_1)=\phi_2(a_2) \}
	\]
	will be equipped with the $\sC$-C*-algebra structure $(\alpha _1 \oplus_B \alpha_2 , \fu_1 \oplus_B \fu_2)$, where 
	\begin{align*}
		&(\alpha_1 \oplus _B \alpha_2)_\pi := \{ (\xi, \eta) \in \alpha_{1,\pi} \oplus \alpha_{2,\pi} \mid \phi_{1,\pi}(\xi) = \phi_{2,\pi}(\eta)  \}, \\
		&(\alpha_1 \oplus_B \alpha_2)_f:=\alpha_{1,f} \oplus _B \alpha_{2,f}, \\
		&(\fu_1 \oplus_B \fu_2)_{\pi, \sigma} ((\xi_\pi,\eta_\pi) \otimes_{A_1 \oplus_B A_2} (\xi_\sigma, \eta_\sigma)) := (\fu_{1,\pi,\sigma}(\xi_\pi \otimes_{A_1} \xi_\sigma), \fu_{2,\pi,\sigma}(\eta_\pi \otimes_{A_2} \eta_\sigma)).
	\end{align*}
    Note that $\Mod(A_1 \oplus _B A_2) \cong \Mod(A_1) \oplus_{\Mod(B)} \Mod(A_2)$ holds. 
	Indeed, a routine computation shows
    \[ 
    \bbmv_{1,\sigma} T_{\alpha_1(f)\xi_\pi} = \bbmv_{2,\sigma}T_{\alpha_2(f) \eta_\pi},  
    \quad
    \bbmv_{\pi \otimes \sigma}T_{\fu_{\pi,\sigma}(\xi_\pi \otimes_A \xi_\sigma) } =\bbmv_{\pi \otimes \sigma}T_{\fv_{\pi,\sigma}(\eta_\pi \otimes_A \eta_\sigma)},  
    \]
    for any $(\xi_\pi,\eta_\pi) \in (\alpha_1 \oplus_B \alpha_2)_\pi$, $f \in \Hom(\pi,\sigma)$ and $(\xi_\sigma,\eta_\sigma) \in (\alpha_1 \oplus_B \alpha_2)_\sigma$. 
    \begin{lem}
        Assume that $\phi_2 \colon A_2 \to B$ is surjective. Then, each $(\fu_1 \oplus_B \fu_2)_{\pi, \sigma}$ is a unitary. Consequently, the above data determines an $\sC$-action on $A_1 \oplus_B A_2$. 
    \end{lem}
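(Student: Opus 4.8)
The plan is to reduce everything to showing that each $(\fu_1\oplus_B\fu_2)_{\pi,\sigma}$ is a bimodule unitary on the Hilbert $C$-module $(\alpha_1\oplus_B\alpha_2)_\pi\otimes_C(\alpha_1\oplus_B\alpha_2)_\sigma$, where $C:=A_1\oplus_B A_2$. Granting this, all remaining axioms of \cref{def:action.of.tensor.category.explicit} for $(C,\alpha_1\oplus_B\alpha_2,\fu_1\oplus_B\fu_2)$ hold for free: the structure maps are by construction the restrictions of the coordinatewise maps on the $\sC$-C*-algebra $A_1\oplus A_2$, so functoriality of $\alpha_1\oplus_B\alpha_2$, commutativity of the two coherence diagrams, and the normalizations (e.g.\ $(\alpha_1\oplus_B\alpha_2)_{\mathbf 1}=C$ and $(\fu_1\oplus_B\fu_2)_{\mathbf 1,\mathbf 1}=$ multiplication) are all inherited componentwise; this also matches $\Mod(C)\cong\Mod(A_1)\oplus_{\Mod(B)}\Mod(A_2)$. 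Moreover $(\fu_1\oplus_B\fu_2)_{\pi,\sigma}$ is automatically a $C$-bilinear \emph{isometric} map, being the restriction of the unitary $\fu_{1,\pi,\sigma}\oplus\fu_{2,\pi,\sigma}$ on $(\alpha_{1,\pi}\otimes_{A_1}\alpha_{1,\sigma})\oplus(\alpha_{2,\pi}\otimes_{A_2}\alpha_{2,\sigma})$, the $C$-valued inner product on the domain agreeing with the coordinatewise one; and the two identities displayed just before the statement show its image lies in $(\alpha_1\oplus_B\alpha_2)_{\pi\otimes\sigma}$. Since an isometric $C$-linear surjection of Hilbert $C$-modules is a unitary, the only thing left to prove is surjectivity, and this is the sole place where surjectivity of $\phi_2$ enters.

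For surjectivity I would first transport the target. Applying the second diagram of \cref{defn:cocycle.hom} to the cocycle $\sC$-$\ast$-homomorphisms $(\phi_i,\bbmv_i)$, the unitaries $\fu_{1,\pi,\sigma}$, $\fu_{2,\pi,\sigma}$ and $\fv_{\pi,\sigma}$ intertwine the maps $\phi_{i,\pi}$, $\phi_{i,\sigma}$, $\phi_{i,\pi\otimes\sigma}$, so they induce an identification of $(\alpha_1\oplus_B\alpha_2)_{\pi\otimes\sigma}$ with the fibered sum of Hilbert $C$-modules
\[
\bigl(\alpha_{1,\pi}\otimes_{A_1}\alpha_{1,\sigma}\bigr)\oplus_{\,\beta_\pi\otimes_B\beta_\sigma}\bigl(\alpha_{2,\pi}\otimes_{A_2}\alpha_{2,\sigma}\bigr)
\]
formed along the isometric maps $\phi_{i,\pi}\otimes_B\phi_{i,\sigma}$, under which $(\fu_1\oplus_B\fu_2)_{\pi,\sigma}$ becomes the canonical comparison map from $(\alpha_1\oplus_B\alpha_2)_\pi\otimes_C(\alpha_1\oplus_B\alpha_2)_\sigma$ into that fibered sum. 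So it suffices to show this comparison map is onto. Surjectivity of $\phi_2$ is used twice here: it forces $(B,\phi_2,\bbmv_2)$ to be essential, hence $\bbmv_{2,\bullet}$ to be a genuine unitary by \cref{lem:cocycle_range}(1), and it makes the left $A_2$-module data out of $\alpha_{2,\bullet}$ sufficiently non-degenerate to run a lifting argument. Concretely, the assertion is the Hilbert-module analogue of the fact that $A_1\oplus_B A_2\to A_1$ is onto when $\phi_2\colon A_2\to B$ is onto, applied after forming interior tensor products over $C$: one lifts the $A_1$-component of a given element through $(\alpha_1\oplus_B\alpha_2)_\pi$ and $(\alpha_1\oplus_B\alpha_2)_\sigma$ and corrects the $A_2$-component with an approximate unit of $\ker\phi_2$ together with unitarity of $\bbmv_{2,\bullet}$.

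I expect the main obstacle to be precisely this last step: proving that interior tensor product over $C=A_1\oplus_B A_2$ commutes with the fibered sum, i.e.\ that the canonical comparison map is an isomorphism (surjectivity being what is needed). The difficulty is that the $\bbmv_\pi$ are in general only isometric, not adjointable, and the balanced tensor products involve completions, so one cannot manipulate elementary tensors naively; the argument has to be organized around the structure of essential bimodules, where surjectivity of $\phi_2$ guarantees the genuine unitarity of $\bbmv_{2,\bullet}$ and the non-degeneracy of the relevant actions. Once $(\fu_1\oplus_B\fu_2)_{\pi,\sigma}$ is shown surjective it is a bimodule unitary, and together with the componentwise verification of the remaining axioms this yields that $(A_1\oplus_B A_2,\alpha_1\oplus_B\alpha_2,\fu_1\oplus_B\fu_2)$ is a $\sC$-C*-algebra, completing the proof.
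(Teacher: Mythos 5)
Your reduction of the lemma to a single statement --- that the canonical comparison map
\[
(\alpha_1\oplus_B\alpha_2)_\pi\otimes_{A_1\oplus_BA_2}(\alpha_1\oplus_B\alpha_2)_\sigma\;\longrightarrow\;(\alpha_{1,\pi}\otimes_{A_1}\alpha_{1,\sigma})\oplus_B(\alpha_{2,\pi}\otimes_{A_2}\alpha_{2,\sigma})
\]
is surjective, all other axioms being inherited componentwise --- is correct and is exactly how the paper organizes the proof. But the proposal stops at the crux. Your argument for surjectivity is ``lift the $A_1$-component through elementary tensors and correct the $A_2$-component with an approximate unit of $\ker\phi_2$ together with unitarity of $\bbmv_{2,\bullet}$,'' and you yourself flag that this is the main obstacle because the balanced tensor products are completions and the $\bbmv_\pi$ need not be adjointable. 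That concern is well founded: after lifting elementary tensors of $\xi_1\in\alpha_{1,\pi}\otimes_{A_1}\alpha_{1,\sigma}$ coordinatewise, the error term lives in the closure of the kernel of $\phi_{2,\pi}\otimes\phi_{2,\sigma}$ inside a completed balanced tensor product, and showing that this closure is hit by the comparison map is essentially the statement you are trying to prove; an approximate unit of $\ker\phi_2$ does not obviously control it. So the proposal identifies the right target but does not contain a proof of it.

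The paper closes this gap by a global stabilization argument rather than elementwise lifting, and this is the idea missing from your proposal. First it reduces to the case where $\phi_1$ is \emph{also} surjective, by replacing $B$ with $B':=\phi_1(A_1)$, $\beta_\pi$ with $\Im(\phi_{1,\pi})$, and $A_2$ with $\phi_2^{-1}(B')$ (with the correspondingly restricted bimodules); the fibered sum is unchanged. Then it fixes a unitary $V_0\colon\bigoplus_\pi\beta_\pi^{\oplus\infty}\to\sH_B$ and uses surjectivity of both $\phi_i$ to lift it to unitaries $V_i\colon\bigoplus_\pi\alpha_{i,\pi}^{\oplus\infty}\to\sH_{A_i}$ satisfying $V_i\otimes_{\phi_i}1=V_0\circ\diag(\bbmv_{i,\pi})_\pi$. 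This realizes each $\alpha_{i,\pi}$ as $p_{i,\pi}\sH_{A_i}$ for projections with $p_{1,\pi}\otimes_{\phi_1}1=p_{2,\pi}\otimes_{\phi_2}1$, so every module in sight --- each $\alpha_{i,\pi}\otimes_{A_i}\alpha_{i,\sigma}$, their fibered sum, and the balanced tensor product over $A_1\oplus_BA_2$ --- becomes a corner of $\sH\otimes\sH\otimes(-)$ cut by matching projections, and the interchange of fibered sum with interior tensor product reduces to the elementary fact that $(P_1,P_2)\cdot(\sH\otimes\sH\otimes(A_1\oplus_BA_2))$ is the fibered sum of $P_i\cdot(\sH\otimes\sH\otimes A_i)$. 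Without this (or some equivalent device for circumventing the completion problem), the proof is incomplete.
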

    \begin{proof}
    First of all, we may reduce the problem to the case that $\phi_1$ is also surjective. Indeed, $B':=\phi_1(A_1)$, $\beta_\pi':=\Im (\phi_{1,\pi})$, and the restriction $\fv_{\pi,\sigma}'$ of $\fv_{\pi,\sigma}$ forms a $\sC$-C*-algebra that is isomorphic to $A_1 / \ker \phi_1$ (defined in \cref{subsubsection:ideal.exact}). By letting $A_2':= \phi_2^{-1}(B')$, $\alpha_{2,\pi}' :=\phi_{2,\pi}^{-1}(\beta_\pi')$, the restriction $\fu_{2,\pi,\sigma}'$ of $\fu_{2,\pi,\sigma}$ gives a unitary $\alpha_{2,\pi}' \otimes_{A_2'} \alpha_{2,\sigma}' \cong \alpha_{2,\pi \otimes \sigma}'$ forming a $\sC$-C*-algebra $(A_2',\alpha_2',\fu'_2)$. 
    Now, $A_1 \oplus_{B'} A_2'$ is the same thing as the desired fibered sum $A_1 \oplus_{B} A_2$.

    Now we show that the naturally induced map 
    \[ 
    (\alpha_{1,\pi} \oplus_B \alpha_{2,\pi}) \otimes_{A_1 \oplus _B A_2} (\alpha_{1,\sigma} \oplus_B \alpha_{2,\sigma}) \to (\alpha_{1,\pi} \otimes_{A_2} \alpha_{1,\sigma}) \oplus_B (\alpha_{2,\pi} \otimes_{A_2} \alpha_{2,\sigma})
    \]
    is a unitary, which concludes the proof. 
    Let $V_0 \colon \bigoplus_{\pi \in \Obj \sC} \beta_\pi^{\oplus \infty} \to \sH_B$ be a unitary. By the surjectivity of $\phi_i$, one may lift it to unitaries $V_i \colon \bigoplus_\pi \alpha_{i,\pi}^{\infty} \to \sH_{A_i}$ such that $V_i \otimes_{\phi_i} 1 = V_0 \circ \diag (\bbmv_{i,\pi})_{\pi}$.  
    Let $\psi_0 \colon B \to \cL(\sH_{B})$ and $\psi_i \colon A_i \to \cL(\sH_{A_i})$ be the left actions imposed through the identifications $V_i$ ($i=0,1,2$). Then we have $\psi_i(a_i) \otimes_{\phi_i} 1 = \phi_i(a_i) \otimes_{\psi_0} 1$ for $a_i \in A_i$.
    Let $p_{i,\pi}$ denote the projection onto the first copy of $\alpha_{i,\pi}$ in $\bigoplus_\pi \alpha_{i,\pi}^{\infty} \cong \sH_{A_i}$. Then both $p_{1,\pi} \otimes_{\phi_1} 1 = p_{2,\pi } \otimes_{\phi_2} 1$ are the support projections of the first copy of $\beta_{\pi}$ in $\bigoplus_\pi \beta_{\pi}^\infty$. 
    Thus we get
        \[ 
        \alpha_{i,\pi} \otimes_{A_i} \alpha_{i,\sigma} = (p_{i,\pi} \sH_{A_i}) \otimes_{A_i} (p_{i,\sigma}\sH_{A_i}) \cong (p_{i,\pi} \otimes_{\psi_i} 1) \cdot (1_\sH \otimes p_{i,\sigma}) \cdot (\sH \otimes \sH \otimes A_i). 
        \]  
    Therefore, by setting $P_{i,\pi,\sigma}:=(p_{i,\pi} \otimes_{\psi_i} 1) \cdot (1 \otimes p_{i,\sigma}) $, we conclude that
    \begin{align*}
        & (\alpha_{1,\pi} \otimes_{A_1} \alpha_{1,\sigma}) \oplus_B (\alpha_{2,\pi} \otimes_{A_2} \alpha_{2,\sigma})\\ 
        \cong {} & P_{1,\pi,\sigma} (\sH \otimes \sH \otimes A_1) \oplus_B P_{2,\pi,\sigma}( \sH \otimes \sH \otimes A_2) \\
        \cong{} & (P_{1,\pi,\sigma}, P_{2,\pi,\sigma}) \cdot (\sH \otimes \sH \otimes (A_1 \oplus_B A_2)) \\
        ={}& ( (p_{1,\pi}, p_{2,\pi}) \otimes _{(\psi_1,\psi_2)} 1) \cdot (1 \otimes (p_{1,\sigma},p_{2,\sigma} )) \cdot (\sH \otimes \sH \otimes (A_1 \oplus_B A_2)) \\
        \cong {}& (p_{1,\pi}, p_{2,\pi}) \sH_{A_1 \oplus_B A_2} \otimes_{A_1 \oplus_B A_2} (p_{1,\sigma}, p_{2,\sigma}) \sH_{A_1 \oplus_B A_2}\\
        \cong {}& (\alpha_{1,\pi} \oplus_B \alpha_{2,\pi}) \otimes _{A_1 \oplus_B A_2} (\alpha_{1, \sigma} \oplus_B \alpha_{2,\sigma}). \qedhere
    \end{align*}
    \end{proof}

	\begin{ex}\label{ex:mapping.cone}
		Let $(\phi,\bbmv) \colon A \to B$ be a cocycle $\sC$-$\ast$-homomorphism. The mapping cone $\cone (\phi,\bbmv) $ is defined by the fibered sum $\cone (B) \oplus_B A$. Since $\cone (B) \to B$ is surjective, $\cone(\phi,\bbmv)$ is equipped with the $\sC$-action. Note that the mapping cone sequence $0 \to S B \to \cone(\phi,\bbmv) \to A \to 0$ is an exact sequence of $\sC$-C*-algebras. Similarly, we can also consider the mapping cylinder of $(\phi,\bbmv)$. 
	\end{ex}

	\subsection{Morita equivalence}\label{subsubsection:Morita}
	\begin{prop}\label{lem:cocycleMoritainverse}
		Let $(A,\alpha,\fu)$ and $(B,\beta,\fv)$ be $\sC$-C*-algebras. 
		The following conditions are equivalent; 
		\begin{enumerate}
			\item there is a $\sC$-equivariant categorical equivalence $\Mod(A) \simeq \Mod(B)$, 
			\item there is $\mathsf{E} \in \Bimod^\sC(A,B)$ and $\mathsf{E}^* \in \Bimod^\sC(B,A)$ such that $\mathsf{E}^* \otimes_A \mathsf{E} \cong B$ and $\mathsf{E} \otimes_B \mathsf{E}^* \cong A$,
			\item there is a $\sC$-Hilbert $A$-$B$ bimodule $\mathsf{E}=(E,\phi,\bbmv )$ such that 
			\begin{itemize}
				\item the underlying Hilbert $B$-module $E$ is full, i.e., $\langle E,E \rangle \subset B$ is dense, and 
				\item $(\phi , \bbmv)$ induces a $\ast$-isomorphism $A \to \cK(E)$.
			\end{itemize} 
		\end{enumerate}
	\end{prop}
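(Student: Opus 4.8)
Since \cref{thm:equivalence.category.algebra} identifies $\Corr^\sC_{\rm es}$ with $\Ccat^\sC$ via $A\mapsto\Mod(A)$, and an equivalence of categories reflects and preserves isomorphisms, condition $(1)$ -- that $\Mod(A)$ and $\Mod(B)$ are equivalent as $\sC$-module categories -- amounts to saying that the correspondence represented by an essential proper $\mathsf{E}\in\Bimod^\sC_{\rm es}(A,B)$ is invertible in $\Corr^\sC_{\rm es}$, i.e.\ to $(2)$, once the bimodules in $(2)$ are known to be essential and proper. Replacing $\mathsf{E},\mathsf{E}^*$ by their essential parts (\cref{ex:essential.bimodule}) does not affect $\mathsf{E}\otimes_B\mathsf{E}^*\cong A$ or $\mathsf{E}^*\otimes_A\mathsf{E}\cong B$ because $A$ and $B$ carry their tautological essential $\sC$-structures; and properness will drop out of the analysis of $(2)\Rightarrow(3)$. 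So the plan is to prove the (essentially formal) equivalence $(1)\Leftrightarrow(2)$ together with the two implications $(2)\Rightarrow(3)$ and $(3)\Rightarrow(2)$.

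\textbf{$(2)\Rightarrow(3)$.} Here I would discard the $\sC$-structure for a moment. From $\mathsf{E}^*\otimes_A\mathsf{E}\cong B$ the closed ideal $\overline{\langle E,E\rangle_B}\subseteq B$ equals $B$, so $E$ is full as a Hilbert $B$-module; and from the two isomorphisms together the classical theory of imprimitivity bimodules (see e.g.\ \cite{kajiwaraJonesIndexTheory2004}) shows that $\phi\colon A\to\cL_B(E)$ is injective with range exactly $\cK_B(E)$. In particular $\mathsf{E}$ is proper and essential, and, since the cocycle $\bbmv$ of $\mathsf{E}$ is unchanged, $(E,\phi,\bbmv)$ witnesses $(3)$.

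\textbf{$(3)\Rightarrow(2)$: the $\sC$-equivariant dual.} This is the substantive step. Given $(E,\phi,\bbmv)$ as in $(3)$, the left $A$-action is essential, $\phi(A)E=\cK_B(E)E=E$, so by \cref{lem:cocycle_range}(1) each $\bbmv_\pi\colon\alpha_\pi\otimes_A E\to E\otimes_B\beta_\pi$ is surjective; as a surjective isometric map of Hilbert modules has its inverse as adjoint, every $\bbmv_\pi$ is in fact a unitary. For the underlying bimodule I take the usual dual imprimitivity $B$-$A$ bimodule $E^*\cong\cK_B(E,B)$, for which $E^*$ is full over $A$, $\psi\colon B\xrightarrow{\cong}\cK_A(E^*)$, and $\mathsf{E}\otimes_B\mathsf{E}^*\cong A$, $\mathsf{E}^*\otimes_A\mathsf{E}\cong B$ as plain Hilbert bimodules (cf.\ the description of conjugates in \cref{lem:dual.bomodule}). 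The $\sC$-structure $\bbmv^*_\pi\colon\beta_\pi\otimes_B E^*\to E^*\otimes_A\alpha_\pi$ I would define by transposing $\bbmv_{\bar{\pi}}$ (equivalently $\bbmv_\pi^{-1}$) through the rigidity morphisms $\evst_\pi,\coev_\pi$ of $\sC$ and the cocycles $\fu,\fv$, in the same spirit as the inverse and adjoint formulas of \cref{lem:cocycle_range}. One then checks that $\bbmv^*$ is a well-defined natural family of unitaries and that the two cocycle diagrams of \cref{defn:cocycle.hom} commute for $(E^*,\psi,\bbmv^*)$ -- a diagram chase assembled from the conjugate equations, the coherence relations of \cref{def:action.of.tensor.category.explicit}, and the diagrams for $\bbmv$ -- and finally that the classical imprimitivity isomorphisms $\mathsf{E}\otimes_B\mathsf{E}^*\cong A$ and $\mathsf{E}^*\otimes_A\mathsf{E}\cong B$ intertwine the tensor-product cocycle of \eqref{eqn:tensor.C-bimodule} with the tautological cocycle of $A$, resp.\ of $B$. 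With this, $(2)$ holds.

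\textbf{Main obstacle.} The genuinely non-routine part is the construction of $\bbmv^*$ and the verification of the cocycle conditions for it -- that is, showing that the conjugate of a $\sC$-Hilbert bimodule is again a $\sC$-Hilbert bimodule and that the duality is multiplicative; the bookkeeping runs parallel to, and relies on, the computations in \cref{lem:dual.bomodule} and \cref{lem:cocycle_range}. A possibly cleaner alternative is to observe that, in the situation of $(3)$, the $\ast$-isomorphism $\phi$ identifies $(A,\alpha,\fu)$ with the $\sC$-C*-algebra $(\cK_B(E),\beta^E,\fv^E)$ of \cref{rmk:fullHilbmod} -- which follows from \cref{lem:condbimod} applied to the $\bbmv_\pi$ -- and to read off $\mathsf{E}^*$ together with its cocycle directly from that identification, reducing the compatibility checks to the definitions in \cref{rmk:fullHilbmod}.
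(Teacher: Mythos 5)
Your proposal is correct and matches the paper's proof in structure: $(1)\Leftrightarrow(2)$ is formal via the bimodule--functor correspondence, $(2)\Rightarrow(3)$ is delegated to classical (non-equivariant) Morita theory, and $(3)\Rightarrow(2)$ is the substantive step, done by equipping the dual bimodule $E^*=\cK(E,B)$ with a cocycle. The only divergence is in how $\bbmv_\pi^*$ is written down: rather than transposing through the conjugate objects of $\sC$ as in your primary suggestion, the paper conjugates the adjoint of the (now unitary) $\bbmv_\pi$ by $E^*$ on both sides via the imprimitivity isomorphisms $E^*\otimes_A E\cong B$ and $E\otimes_B E^*\cong A$, which avoids rigidity of $\sC$ entirely and is essentially your ``cleaner alternative'' via \cref{rmk:fullHilbmod}.
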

	
	We say that $(A,\alpha,\fu)$ and $(B,\beta,\fv)$ are \emph{$\sC$-equivariantly Morita equivalent} if one of the above conditions (1), (2), (3) is satisfied. In this case, the bimodule $\mathsf{E}$ is called the ($\sC$-equivariant) imprimitivity $A$-$B$ bimodule. 
	\begin{proof}
		The equvialence (1)$\Leftrightarrow$(2) is obvious from \cref{prop:functor_bimodule}. (2)$\Rightarrow$(3) is also clear from the known fact in non-equivariant Morita theory. We show (3)$\Rightarrow$(2), by defining the $\sC$-action onto the $B$-$A$ bimodule $E^*:= \cK(E, B)$, endowed with the Hilbert $A$-module structure given by $Ta:=T \cdot \phi(a)$ and $\langle T, S \rangle :=\phi^{-1}(T^*S)$. 
		To this end, we define the cocycles $\bbmv_\pi^* \colon \beta_\pi \otimes_B E^* \to E^* \otimes_A \alpha_\pi$ as the composition
		\begin{align*}
			\beta_\pi \otimes_B E^* \cong (E^* \otimes_A E) \otimes_B \beta_\pi \otimes_B E^* \xrightarrow{(1_E \otimes \bbmv_\pi^*) \otimes 1_{E^*}} E^* \otimes_A \alpha_\pi \otimes E \otimes_B E^* \cong  E^* \otimes_A \alpha_\pi.  
		\end{align*}  
		It is straightforward to check that $\bbmv^*$ makes the diagrams in \cref{defn:cocycle.hom} commutative. 
		Moreover, the multiplication $\mathsf{E}^* \otimes_A \mathsf{E} \to B$ and the isomorphism $\phi \colon A \to \cK(E) \cong \sfE \otimes_B \sfE^*$ are both unitary intertwiners of $\sC$-Hilbert bimodules. 
	\end{proof}

    \begin{ex}
    Let $A$, $B$ be endomorphism $\sC$-C*-algebras. 
    Let $(\iota, \mathbbm{1}) \colon A \to B$ be a $\sC$-$\ast$-homomorphism $(\iota, \mathbbm{1})$ such that $\iota$ is a full-corner embedding onto $pBp$ for some projection $p \in \cM(B)$. 
    Then, the $A$-$B$ imprimitivity bimodule $E:=pB$ is made $\sC$-equivariant by the trivial cocycle ${}_{\alpha_\pi}A \otimes_{\iota} pB \to pB \otimes_B ({}_{\beta_\pi}B)$. By \cref{lem:cocycleMoritainverse} (3), this gives a $\sC$-Morita equivalence. 
    \end{ex}

    Any proper essential $\sC$-Hilbert bimodule is decomposed into a cocycle $\sC$-$\ast$-homomorphism (with trivial cocycles) and the `inverse' of a Morita equivalence in the following sense.
	\begin{lem}\label{lem:linking.trivcocycle}
	Let $A$, $B$ be endomorphism $\sC$-C*-algebras and $\sfE=(E,\phi,\bbmv)$ be a proper essential $\sC$-Hilbert $A$-$B$-bimodule. 
	Then, there is an endomorphism $\sC$-C*-algebra $D$, a $\sC$-$\ast$-homomorphisms $(\phi , \mathbbm{1}) \colon A\to D$, and a full-corner embedding $(\iota , \mathbbm{1}) \colon B\to D$ such that $\sfE \otimes_B D  \cong A\otimes_\phi D$ as $\sC$-Hilbert $A$-$D$ bimodules.
\end{lem}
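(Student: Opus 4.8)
The plan is to produce $D$ as a cocycle-twisted linking algebra of $\sfE$. First I would set $\hat E := E\oplus B$, which is a full Hilbert $B$-module since $\langle \hat E,\hat E\rangle\supseteq \langle B,B\rangle = B$. By \cref{rmk:fullHilbmod} the C*-algebra $\cK_B(\hat E)$ then carries the canonical $\sC$-C*-algebra structure $(\beta^{\hat E},\fv^{\hat E})$ of \eqref{eqn:module.cocycle}. Because $\sfE$ is essential, \cref{lem:cocycle_range}~(1) makes the cocycle $\bbmv_\pi$ into a \emph{unitary} $E\to E\otimes_B\beta_\pi$ once we identify $\alpha_\pi\otimes_A E\cong\overline{\phi(A)E}=E$; letting $\bv^B_\pi\colon B\to B\otimes_B\beta_\pi$ be the canonical unitary realizing the trivial cocycle of $(\id_B,\mathbbm 1)$, I would form
\[
\hat\bv_\pi := \bbmv_\pi\oplus\bv^B_\pi \colon \hat E = E\oplus B \longrightarrow (E\otimes_B\beta_\pi)\oplus(B\otimes_B\beta_\pi) = \hat E\otimes_B\beta_\pi,
\]
and cocycle-twist $\cK_B(\hat E)$ along \cref{ex:cocycle.twist} using the unitaries $\hat\bv_\pi^{\,*}\otimes 1_{\hat E^*}\colon \beta^{\hat E}_\pi \to \hat E\otimes_B\hat E^* = \cK_B(\hat E)$. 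The result is a $\sC$-C*-algebra $D$ with underlying algebra $\cK_B(\hat E)$ and action $\delta^D_\pi(T)=\hat\bv_\pi^{\,*}(T\otimes 1_{\beta_\pi})\hat\bv_\pi$. Here the key small point is that $T\otimes 1_{\beta_\pi}$ is again compact: since $B$ is an endomorphism $\sC$-C*-algebra one has $\beta_\pi\cong B$ as a Hilbert $B$-module, hence $\hat E\otimes_B\beta_\pi\cong \hat E$ and $T\otimes 1_{\beta_\pi}$ corresponds to $T$. So $\delta^D_\pi$ is a genuine essential endomorphism and $D$ is an endomorphism $\sC$-C*-algebra.

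Next I would exhibit the two structure maps. Define $\phi\colon A\to D$ by $\phi(a):=\phi(a)\oplus 0$ on the $E$-summand; it is proper because $\sfE$ is. Using the intertwining relation $\bbmv_\pi\,\phi(\alpha_\pi(a)) = (\phi(a)\otimes 1_{\beta_\pi})\,\bbmv_\pi$ and unitarity of $\bbmv_\pi$, one gets $\delta^D_\pi(\phi(a)) = \phi(\alpha_\pi(a))$, and the analogous identities for $\delta^D_f$ and $\fu^D_{\pi,\sigma}$ follow from the remaining diagrams of \cref{defn:cocycle.hom} together with the twist formulas; by the criterion \eqref{eqn:cocycle.hom.endo} this says precisely that $(\phi,\mathbbm 1)\colon A\to D$ is a $\sC$-$\ast$-homomorphism (with trivial cocycle). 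Let $p\in\cM(D)=\cL_B(\hat E)$ be the projection onto the $B$-summand; it is full (as $\langle B,B\rangle = B$) and $pDp = \cK_B(B)\cong B$. Since $\hat\bv_\pi$ restricts to $\bv^B_\pi$ on the $B$-summand, one computes $\delta^D_\pi(\iota(b)) = \iota(\beta_\pi(b))$ for $\iota\colon B\to D$, $b\mapsto 0\oplus b$; hence $(\iota,\mathbbm 1)$ is a full-corner embedding with trivial cocycle.

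For the bimodule isomorphism I would use the linking-algebra picture $\cK_B(\hat E)\cong\left(\begin{smallmatrix}\cK_B(E)&E\\ E^*&B\end{smallmatrix}\right)$, under which $pD$ is the bottom row $\cK_B(\hat E,B)\cong E^*\oplus B$ with left $B$-action given by $\iota$. The assignment $\xi\otimes S\mapsto |\xi\rangle\circ S$ then identifies
\[
\sfE\otimes_B D \;=\; E\otimes_\iota D \;=\; E\otimes_B pD \;\cong\; \cK_B(\hat E,E) \;=\; \overline{\phi(A)D}
\]
as right $D$-modules, and this identification intertwines the left $A$-actions (both are left multiplication by $\phi(a)$) and, by the computation above, the cocycles (both trivial, the matching reducing to $\delta^D_\pi\circ\phi=\phi\circ\alpha_\pi$). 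This gives $\sfE\otimes_B D\cong A\otimes_\phi D$ as $\sC$-Hilbert $A$-$D$ bimodules.

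The main obstacle is not any single step but the bookkeeping that welds them together: one must check that the twist of \cref{ex:cocycle.twist} yields a coherent $\sC$-C*-algebra structure on $D$ (the hexagon for $\fu^D$ and the naturality in $f$), and then chase every diagram of \cref{defn:cocycle.hom}/\eqref{eqn:cocycle.hom.endo} through the twist for $\phi$ and $\iota$ \emph{simultaneously}. The conceptual point that makes this work — and that one must get right — is to twist by $\bbmv_\pi\oplus\bv^B_\pi$, i.e.\ to use the \emph{trivial} cocycle of $\id_B$ on the $B$-summand (rather than the universal unitaries of the $\kC$-construction, which would leave the corner non-trivial); this is exactly what forces $\iota$ and $\phi$ to acquire trivial cocycles at the same time.
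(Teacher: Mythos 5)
Your construction is exactly the paper's: $D$ is the linking algebra $\cK(E\oplus B)$ with the $\sC$-action obtained by conjugating $T\mapsto T\otimes 1_{\beta_\pi}$ by $\hat\bv_\pi=\bbmv_\pi\oplus\bv^B_\pi$ (the paper simply writes out the resulting $2\times 2$ matrix of maps and the cocycle $\diag(\phi(\fu_{\pi,\sigma}),\fv_{\pi,\sigma})$ explicitly), and your identification of $\sfE\otimes_B D$ with $\overline{\phi(A)D}$ via the multiplication map is the same argument as the paper's commutative diagram. The one justification you should repair is the compactness of $T\otimes 1_{\beta_\pi}$: saying that $\beta_\pi\cong B$ gives $\hat E\otimes_B\beta_\pi\cong\hat E$ under which ``$T\otimes 1$ corresponds to $T$'' is circular (a right-module unitary $\beta_\pi\to B$ does not intertwine left actions, and the only unitary $\hat E\otimes_B\beta_\pi\cong\hat E$ in sight is $\hat\bv_\pi$ itself, which turns $T\otimes 1$ into the very operator $\delta^D_\pi(T)$ whose compactness is at issue); the correct reason is that the left $B$-action on $\beta_\pi$ is proper by \cref{lem:dual.bomodule}~(2), which yields $\cK(\hat E)\otimes 1_{\beta_\pi}\subseteq\cK(\hat E\otimes_B\beta_\pi)$.
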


\begin{proof}
    We write $\bbmv_\pi^{\rm t} \colon E^*\to E^*$ as the transposition of $E \ni \xi \mapsto \bbmv^*_{\pi} (\xi\otimes_{\beta_\pi}1_{B}) \in A\otimes_{\alpha_\pi}E=E$.
    Set 
    \[
	    (D,\delta,\fw)=\bigg( (\cK(E\oplus B), 
	    \begin{pmatrix} \Ad \bbmv_\pi^* (\blank\otimes_{\beta_\pi}\id_B)&\bbmv_\pi^*(\blank \otimes_{\beta_\pi}1_B)\\\bbmv_\pi^{\rm t}&\beta_\pi \end{pmatrix}, 
	    \begin{pmatrix} \phi(\fu_{\pi, \sigma})&0\\0&\fv_{\pi, \sigma} \end{pmatrix}  \bigg) .
	\] 
	It is routine to check $D$ is well-defined as an endomorphism $\sC$-C*-algebra. 
	By definition, $\phi\colon A\to D$ and $B\subset D$ are $\sC$-$*$-homomorphisms with trivial cocycles. Note that $\overline{BD}=(E^*\oplus B, m, {}_{\beta_\pi}(E^*\oplus B)\subset {}_{\delta_\pi}D )$ and $\overline{\phi(A)D}=(\cK(E)\oplus E, \phi, {}_{\phi\alpha_\pi}(\cK(E)\oplus E)\subset {}_{\delta_\pi\phi}D )$, where $m$ denotes the multiplication inside $D$. Thus 
	$\sfE\otimes_BD = \bigl( E\otimes_B(E^*\oplus B), \phi\otimes_B1_{E^*\oplus B}, \bbmv\otimes_B1_{E^*\oplus B} \bigr) \cong A\otimes_\phi D$ by $m$ since the following diagram is commutative, 
	\begin{align*}
		&
		\xymatrix{
			{}_{\phi\alpha_\pi} E\otimes_B (E^*\oplus B)
			\ar[r]^-{\bbmv_\pi\otimes1}\ar[d]_-{1_{\alpha_\pi}\otimes m}& 
			{}_{\phi}E\otimes_{\beta_\pi} (E^*\oplus B)
			\ar[r]^-{=}& {}_\phi E\otimes_B D\otimes_{\delta_\pi}D\ar[d]^-{m\otimes1_{\delta_\pi}} \\
			{}_{\phi\alpha_\pi}(\cK(E)\oplus E)\ar[r]^-{\subset}&{}_{\delta_\pi\phi}D&{}_\phi (\cK(E)\oplus E)\otimes_{\delta_\pi}D\ar[l]_-{\supset},
		}
	\end{align*}
	where the composition ${}_\phi E\otimes_{\beta_\pi}(E^*\oplus B)\to {}_{\delta_\pi\phi}D$ of the right arrows is given by $m(\delta_\pi|_E\otimes \id_D)=m(\bbmv_\pi^*\otimes \id_D)$. 
\end{proof}

	\subsection{External tensor product}\label{subsubsection:external.tensor} 
        In this subsection, $\bullet$ stands for either $\min$ or $\max$. 
        
        For nonunital C*-tensor categories $\sA$ and $\sB$, their minimal or maximal Deligne tensor products $\sA \boxtimes_{\bullet} \sB$ are defined (cf.~\cite{antounBicolimitsCategories2020}*{Subsection 3.1}), which are again nonunital C*-tensor categories by \cite{antounBicolimitsCategories2020}*{Proposition 3.2}. 
	Let $\sC$ and $\sD$ be rigid C*-tensor categories with countable objects, and let $\sA$ and $\sB$ be a nonunital $\sC$-module category and a nonunital $\sD$-module category respectively. 
    Their tensor product $\sA \boxtimes_{\bullet} \sB$ is equipped with the action of the Deligne tensor product $\sC \boxtimes \sD$ by $(X \otimes Y) \otimes_{\alpha \otimes \beta}(\pi \otimes \sigma):=(X \otimes_\alpha \pi) \otimes (Y \otimes_\beta \sigma)$. 
    
	Equivalently, in terms of $\sC$-C*-algebras, the external tensor product of $\sC$-C*-algebras $(A, \alpha, \fu)$ and $(B, \beta, \fv)$ is defined by 
	\[ A \otimes_{\bullet} B:= (A \otimes_{\bullet} B, \{ \alpha _\pi \otimes \beta_\sigma \}_{(\pi, \sigma) \in\Obj (\sC \boxtimes \sD)} , \{\fu_{\pi, \pi'} \otimes \fv_{\sigma, \sigma'}\}_{(\pi, \sigma),(\pi', \sigma') \in \Obj (\sC \boxtimes  \sD)} ). \]
    This construction is related to the above paragraph through the isomorphism $\Mod (A) \boxtimes_\bullet \Mod (B) \cong \Mod (A \otimes_\bullet B)$ (\cite{antounBicolimitsCategories2020}*{Proposition 3.5}).

	Let $\sA$, $\sB$ be $\sC$-module categories and let $\sA'$, $\sB'$ be $\sD$-module categories. 
	Then, for equivariant functors $(\cE, \mathsf{v}) \colon \sA \to \sB$ and $(\cF, \mathsf{w}) \colon \sA' \to \sB'$, their external tensor product 
	\[(\cE \boxtimes_\bullet \cF, \mathsf{v} \boxtimes \mathsf{w} ) \colon \sA \boxtimes_\bullet  \sA' \to \sB \boxtimes_\bullet  \sB' \]
	is defined by $(\cE \boxtimes_\bullet \cF)(X \boxtimes_\bullet  Y):=\cE(X) \boxtimes_\bullet \cF(Y)$ and $(\mathsf{v} \boxtimes_\bullet \mathsf{w})_{X \boxtimes_\bullet Y, \pi \boxtimes_\bullet \sigma}:=\mathsf{v}_{X,\pi} \boxtimes_\bullet \mathsf{w}_{Y,\sigma}$.
	Its C*-algebraic counterpart is as follows. 
	Let $A$, $B$ be $\sC$-C*-algebras $A$ and $B$ an $A'$, $B'$ $\sD$-C*-algebras. Then, for $\mathsf{E}_1\in\Bimod^\sC(A,B)$ and $\mathsf{E}_2\in\Bimod^\sD(A',B')$, their external tensor product is a $\sC \boxtimes \sD$-equivariant $A \otimes_\bullet A'$-$B \otimes_\bullet B'$ bimodule defined as
	\[ \mathsf{E}_1 \otimes_\bullet \mathsf{E}_2:=(E_1\otimes_{\bullet} E_2, \phi_1 \otimes \phi_2, \bbmv_1\otimes\bbmv_2). \]
	It is a routine work to check that $\mathsf{E}_1 \otimes_\bullet \mathsf{E}_2$ satisfies the conditions of tensor category equivariant Hilbert bimodule. 
	
	\subsection{\texorpdfstring{$\bZ_2$}{Z2}-grading}\label{subsubsection:graded}
	The notions concerned with $\bZ_2$-grading are described only in terms of $\sC$-C*-algebra. 
	Let us recall that, for $\bZ_2$-graded C*-algebras $A$ and $B$, $\bZ_2$-graded Hilbert $A$-module $E_1$ and $B$-module $E_2$, and a graded $\ast$-homomorphism $\phi \colon A \to \cL(E_2)$, their graded exterior product $E_1 \hotimes_A E_2$ is the Hilbert $B$-module $E_1 \otimes_A E_2$ equipped with the $\bZ_2$-grading $\xi \hotimes \eta \mapsto \gamma_{E_1}(\xi) \hotimes \gamma_{E_2}(\eta)$.
 	\begin{df}
		Let $A$, $B$ be $\sC$-C*-algebras. 
		\begin{enumerate}
			\item A $\sC$-C*-algebra $(A, \alpha, \fu)$ is \emph{$\bZ_2$-graded} if $A$ is equipped with a $\bZ_2$-grading, i.e., a $\ast$-automorphism $\gamma_A \colon A \to A$ with $\gamma_A^2=\id$, and each $\alpha_\pi$ is equipped with a $\bZ_2$-grading $\gamma_{\alpha_\pi}$ making it a $\bZ_2$-graded Hilbert $A$-$A$ bimodule (i.e., $\gamma_{\alpha_\pi}(a\xi b) = \gamma_A(a)\gamma_{\alpha_\pi} (\xi) \gamma_A(b)$ and $\langle \gamma_{\alpha_\pi}(\xi),\gamma_{\alpha_\pi}(\eta) \rangle  = \gamma_A(\langle \xi,\eta \rangle) $ hold) and the morphisms $\alpha_f$ for $f \in \Hom_\sC(\pi,\sigma)$ and $\fu_{\pi,\sigma} \colon \alpha_\pi \hotimes \alpha_\sigma \to \alpha_{\pi \otimes \sigma}$ are even. 
			\item For $\bZ_2$-graded $\sC$-C*-algebras $A$ and $B$, a $\sC$-Hilbert $A$-$B$ bimodule $\sfE:=(E,\phi, \bbmv)$ is graded if $E$ is equipped with a $\bZ_2$-grading $\gamma_E$ such that $\phi$ is even (i.e., $\gamma_E \phi(\gamma_A(a)) = \phi(a)\gamma_E$) and each $\bbmv _\pi \colon \alpha_\pi \hotimes_\phi E \to E \hotimes_\phi \beta_\pi$ are even.
		\end{enumerate}
	\end{df}
	To these $\bZ_2$-gradings, the eigenspace decompositions $A = A^{0} \oplus A^1$, $\alpha_\pi = \alpha_\pi^0 \oplus \alpha_\pi^1$, and $E=E^0 \oplus E^1$ are associated. An element of $A^i$, $\alpha_\pi^i$ or $E^i$ for $i=0,1$ is said to be homogeneous. The induced $\bZ_2$-grading $T \mapsto \gamma_E T \gamma_E$ on $\cL(E)$ and $\cK(E)$ are denoted by $\gamma_{\cL(E)}$ and $\gamma_{\cK(E)}$.
	
    \begin{rem}
        A $\bZ_2$-grading of a $\sC$-C*-algebra $(A, \alpha, \fu)$ is equivalent to a cocycle $\sC$-$\ast$-homomorphism $(\gamma, \mathsf{g}) \colon A \to A$ such that $(\gamma , \mathsf{g})^2=(\id_A , \mathbf{1})$. Indeed, $\mathsf{g}_\pi$ and $\gamma_{\alpha_\pi}$ are identified through the canonical isomorphisms $\alpha_\pi \otimes_A ({}_\gamma A) \cong \alpha_\pi$ and $({}_\gamma A) \otimes_A \alpha_\pi \cong \alpha_\pi$. 
        Similarly, a $\bZ_2$-grading of an essential $\sC$-Hilbert $A$-$B$ bimodule $(E,\phi,\bbmv)$ is equivalent to an intertwiner $(\gamma_A, \mathsf{g}_A) \otimes_A \sfE \to \sfE \otimes_B  (\gamma_B , \mathsf{g}_B )$. 
    \end{rem}

    For a $\bZ_2$-graded $\sC$-Hilbert $A$-$B$ bimodule $\sfE_1:=(E_1,\phi_1,\bbmv_1)$ and a $\bZ_2$-graded $\sC$-Hilbert $B$-$D$ bimodule $\sfE_2:=(E_2,\phi_2,\bbmv_2)$, their composition 
    \[ \sfE_1 \hotimes_B \sfE_2:=(E_1 \hotimes_{\phi_2} E_2, \phi_1 \hotimes 1, (1 \hotimes \bbmv_2)(\bbmv_1 \hotimes 1) )\]
    is again $\bZ_2$-graded. Here, isometric maps $\bbmv_{1,\pi} \hotimes 1$ and $1 \hotimes \bbmv_{2,\pi}$ are defined to be $\bbmv_{1,\pi} \otimes 1$ and $1 \otimes \bbmv_{2,\pi}$, which are both even by definition. 

    Recall that the (maximal or minimal) graded tensor product $A \hotimes B$ of $\bZ_2$-graded C*-algebras $A$, $B$ is the C*-completion of the $A \hotimes_{\rm alg} B :=A \otimes_{\rm alg} B$ with the algebra structure $(a \hotimes a') \cdot (b \hotimes b'):=(-1)^{|a'| \cdot |b|} ab \hotimes a'b'$ for homogeneous $a,a' \in A$ and $b, b' \in B$. 
    For a $\bZ_2$-graded $\sC$-C*-algebra $(A,\alpha,\fu)$ and a $\bZ_2$-graded $\sD$-C*-algebra $(B,\beta,\fv)$, the (maximal or minimal) graded tensor product $A \hotimes B$ is equipped with the canonical structure of $\sC \boxtimes \sD$-C*-algebra by $ (\alpha \hotimes \beta )_{(\pi , \pi')} := \alpha_\pi \hotimes \beta_\pi$ and
    \begin{gather*}
        (\fu \hotimes \fv)_{(\pi , \pi'), (\sigma,\sigma')} \colon (\alpha_\pi \hotimes \beta_{\pi'}) \hotimes_{A \hotimes B} (\alpha_\sigma \hotimes \beta_{\sigma'} ) \to \alpha_{\pi \otimes \sigma} \hotimes \beta_{\pi \otimes \sigma},\\
        (\fu \hotimes \fv)_{(\pi , \pi'), (\sigma,\sigma')} ((\xi \hotimes \xi' ) \hotimes_{A \hotimes B} (\eta \hotimes \eta')):=(-1)^{|\eta|\cdot |\xi'|} \fu_{\pi,\sigma} (\xi,\eta) \hotimes \fv_{\pi',\sigma'} (\xi ',\eta') .
    \end{gather*}
    A typical and motivating example of a $\bZ_2$-graded $\sC$-C*-algebra is $(A \hotimes \bC \ell_n, \alpha \hotimes \id_{\bC \ell_n},\fu \hotimes 1)$, where $(A ,\alpha, \fu)$ be a $\sC$-C*-algebra and  $\bC \ell_n$ is the Clifford algebra. 

    A $\sC$-C*-algebra is regarded as trivially $\bZ_2$-graded, i.e.,  $\bZ_2$-graded via $\gamma_A = \id_A$ and $\gamma_{\alpha_\pi} = \id_{\alpha_\pi}$.
    When both $A$ and $B$ are trivially graded, a $\bZ_2$-graded Hilbert $A$-$B$ bimodule $(E,\phi,\bbmv)$ is nothing else than the direct sum $(E^0,\phi^0,\bbmv^0) \oplus (E^1,\phi^1,\bbmv^1)$ of two $\sC$-Hilbert $A$-$B$ bimodules. In this case, we define the oppositely graded bimodule $\sfE^{\rm op}:=(E^{\rm op},\phi^{\rm op},\bbmv^{\rm op})$ by $E^{\rm op}:=E$, $\phi^{\rm op}:=\phi$, $\bbmv_\pi^{\rm op}:=\bbmv_\pi$ and $\gamma_{E^{\rm op}}:=-\gamma_E$.

	\section{Kasparov's technical theorem}
	Here we record a variant of Kasparov's technical theorem below (\cref{thm:technicalappendix}). 
	They can be proved in a completely same way as \cite{baajCalgebresHopfTheorie1989}*{Section 4}, but we include them here for the convenience of readers. 
	
	\begin{thm}[cf. {\cite{baajCalgebresHopfTheorie1989}*{Theoreme 4.3}}]\label{thm:technicalappendix}
		Let $I$ be a countable index set. Let 
		\begin{itemize}
			\item $\cJ$ be a $\bZ_2$-graded C*-algebra, 
			\item $\{ (\cJ_i, \sigma_i)\}_{i \in I} $ be a family of $\bZ_2$-graded C*-algebras $\cJ_i$ and graded $\ast$-homomorphisms $\sigma_i \colon \cJ \to \cJ_i$ that are all essential, 
			\item $\cA_1 $ be a $\sigma$-unital $\bZ_2$-graded essential C*-subalgebra of $\cM(\cJ)$, 
			\item $\{ \cA_{2,i}\}_{i \in I} $ be a family of $\sigma$-unital $\bZ_2$-graded C*-subalgebra of $\cM(\cJ_i)$ such that $\sigma_i(\cA_1) \cdot \cA_{2,i} \subset \cJ_i$, and 
			\item $\{ \Delta _i\}_{i \in I}$ be a family of separable $\bZ_2$-graded closed linear subspace of $\cM(\cJ_i)$ such that $[\sigma_i(\cA_1), \Delta_i] \subset  \overline{\sigma_i(\cA_{1})\cM(\cJ_i)\sigma_i(\cA_{1})}$.
		\end{itemize}
		Then there is an even operator $M \in \cM(\cJ)$ such that $0 \leq M \leq 1$ and 
		\[
		M \cA_1 \subset \cJ, \quad (1-\sigma_i(M))\cA_{2,i} \subset \cJ_i, \quad [\sigma_i(M), \Delta_i] \subset \cJ_i
		\]
		holds for any $i \in I$. 
	\end{thm}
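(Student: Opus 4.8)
The plan is to follow the classical proof of Kasparov's technical theorem as in \cite{kasparovOperatorFunctorExtensions1980}*{Theorem 3.2} and \cite{baajCalgebresHopfTheorie1989}*{Th\'eor\`eme 4.3}, but carrying along the whole countable family $\{(\cJ_i,\sigma_i)\}_{i\in I}$ simultaneously by a single diagonal-sequence argument. First I would fix a strictly positive element $u\in\cA_1$ with $0\le u\le 1$ and a countable dense subset $\{a_n\}$ of the unit ball of $\cA_1$; for each $i$ fix a strictly positive element of $\cA_{2,i}$, a sequential approximate unit of $\cJ_i$, and a countable dense subset of the unit ball of $\Delta_i$. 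Since $I$ is countable, all of these data together form a countable collection, so I can enumerate them as a single sequence of ``test elements'' $\{b_m\}_m$ living in the various multiplier algebras, together with the condition $\sigma_i(\cA_1)\cdot\cA_{2,i}\subset\cJ_i$ and $[\sigma_i(\cA_1),\Delta_i]\subset\overline{\sigma_i(\cA_1)\cM(\cJ_i)\sigma_i(\cA_1)}$ which will be used to guarantee that the relevant products land in $\cJ_i$.

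The core is the standard quasicentral-approximate-unit construction: build an increasing sequence $0\le e_1\le e_2\le\cdots\le 1$ in $\cJ$ (or in the hereditary subalgebra generated by $\overline{\cA_1\cJ\cA_1}$, using that $\cA_1$ is essential and $\sigma$-unital) such that $e_k$ is an approximate unit for $\cA_1$ in the appropriate sense, and such that the images $\sigma_i(e_k)$ are asymptotically quasicentral for the finitely many test elements of $\Delta_i$ and $\cA_{2,i}$ seen by stage $k$. Concretely, at stage $k$ one chooses $e_k$ (by functional calculus / interpolation, exactly as in Kasparov's lemma on quasicentral approximate units) so that $\|(e_k-e_{k-1})^{1/2} x\|$ and $\|[\sigma_i(e_k),\delta]\|$, $\|(1-\sigma_i(e_k))c\|$ are small for all test elements indexed by $j\le k$. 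Then set $M:=\sum_k (e_k-e_{k-1})^{1/2}\,\lambda_k\,(e_k-e_{k-1})^{1/2}$ for a suitably slowly growing sequence $0\le\lambda_k\le 1$ — or, more in the spirit of Baaj--Skandalis, $M=\sum_k(e_k-e_{k-1})f(k)$ for an appropriate bounded function — so that $M$ is an even positive contraction in $\cM(\cJ)$, $M\cA_1\subset\cJ$ because $Me_k$-type tails are controlled, and $(1-\sigma_i(M))\cA_{2,i}\subset\cJ_i$, $[\sigma_i(M),\Delta_i]\subset\cJ_i$ because the telescoping sum converges in norm after multiplying by the relevant elements, using quasicentrality of the pieces $\sigma_i(e_k-e_{k-1})$. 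Evenness is automatic since every $e_k$ can be taken even (the grading on $\cJ$ restricts to a grading on the separable subalgebras involved, and one averages over the $\bZ_2$-action).

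The only genuinely new bookkeeping compared to the single-index case is the passage from one pair $(\cJ_i,\sigma_i)$ to a countable family: one must interleave the approximation conditions for all $i\in I$ into one sequence so that each condition is eventually addressed at every later stage. This is handled by a standard diagonal enumeration of $I\times\bN$, and since each $\sigma_i$ is essential, the hypothesis $\sigma_i(\cA_1)\cdot\cA_{2,i}\subset\cJ_i$ together with $[\sigma_i(\cA_1),\Delta_i]\subset\overline{\sigma_i(\cA_1)\cM(\cJ_i)\sigma_i(\cA_1)}$ gives exactly the inclusions needed to push the limits into $\cJ_i$ rather than merely $\cM(\cJ_i)$. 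I expect the main obstacle — though it is really a technical rather than conceptual one — to be verifying that a \emph{single} $M\in\cM(\cJ)$ works for all $i$ at once, i.e.\ that the slowly-growing weight sequence $\{\lambda_k\}$ can be chosen to simultaneously dominate the error terms coming from infinitely many indices; this is why one enumerates so that only finitely many new constraints appear at each stage, making the choice of $\lambda_k$ (or $e_k$) a finite optimization at every step. Since the paper explicitly says this ``can be proved in a completely same way as \cite{baajCalgebresHopfTheorie1989}*{Section 4}'', I would present the argument at the level of detail above and refer to \emph{loc.\ cit.}\ for the routine estimates.
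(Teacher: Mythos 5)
Your overall framework is right in two respects: the paper does reduce everything to a Hahn--Banach/convexity lemma producing quasicentral approximate-unit elements (its \cref{lem:sublemma.technical}, applied to the compact set of inner derivations $\ad(\bfK)\circ\tau$), and it does handle the countable family $\{(\cJ_i,\sigma_i)\}$ exactly by the interleaving you describe, via an exhaustion $I=\bigcup_n I_n$ by finite sets and strictly positive elements of $\cA_1$ and of $\bigoplus_i\cA_{2,i}$, with evenness obtained for free because the even positive contractions still form a convex approximate unit. So the ``bookkeeping'' part of your plan matches the paper.

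The gap is in the construction of $M$ itself. A single increasing sequence $(e_k)$ in $\cJ$ with scalar weights, $M=\sum_k\lambda_k(e_k-e_{k-1})$, cannot satisfy both $M\cA_1\subset\cJ$ and $(1-\sigma_i(M))\cA_{2,i}\subset\cJ_i$ in general: if $\lambda_k\to 1$ then $1-M=\sum_k(1-\lambda_k)(e_k-e_{k-1})$ converges in norm, so $1-M\in\cJ$ and $Ma\in\cJ$ forces $a\in\cJ$, i.e.\ $\cA_1\subset\cJ$; while for the tail estimate $\|\sum_{n<k\le m}\lambda_k(e_k-e_{k-1})a\|\le\|(e_m-e_n)^{1/2}a\|$ to vanish one needs $a^*e_na$ to be norm-Cauchy, which again forces $a^*a\in\cJ$. (Your intermediate requirements are already inconsistent: $e_k\in\cJ$ with $\|(1-\sigma_i(e_k))c\|\to 0$ would put $\cA_{2,i}$ inside $\cJ_i$.) The two target conditions are genuinely in tension, and the classical resolution --- which the paper follows --- is to use \emph{two} interleaved sequences: $u_n\in\cA_1$ an approximate unit of $\cA_1$ with $\sigma_i(u_n)$ asymptotically central for $\Delta_i$, and $v_n\in\cJ$ with $(1-\sigma_i(v_n))\sigma_i(u_n-u_{n-1})y_i$ small and $v_n$ asymptotically commuting with $\sqrt{u_n-u_{n-1}}$ and $\Delta_i$, then
\[
M=\sum_n\sqrt{u_n-u_{n-1}}\,v_n\,\sqrt{u_n-u_{n-1}}.
\]
Here the outer factors (an approximate unit of $\cA_1$, not of $\cJ$) make $Ma$ a norm-convergent sum, the inner operator factor $v_n\in\cJ$ puts each summand in the ideal, and $1-v_n$ being small against $\sigma_i(u_n-u_{n-1})\cA_{2,i}\subset\cJ_i$ gives the second condition. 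Replacing $v_n$ by a scalar $\lambda_k$ loses exactly the degree of freedom that separates $\cA_1$ from $\cA_{2,i}$, so your proposed $M$ would not exist except in degenerate cases.
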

	Note that this is the same thing as the original Kasparov technical theorem in the case that $I=\{ 0 \}$, $\cJ_0 = \cJ$, and $\sigma_0 = \id$. 
	For $\bZ_2$-graded C*-algebras $\cB$, $\cD$ and an essential graded $\ast$-homomorphism $\tau \colon \cB \to \cD$, we write $\mathrm{Der}(\cB,\cD;\tau)$ for the set of bounded linear maps $\delta \colon \cB \to \cD$ such that 
	\[\delta (ab)=\delta (a)\tau(b) + (-1)^{|\delta| \cdot |a|} \tau(a)\delta (b) \]
	holds. 
        Note that the inner derivation $\ad (\blank) \circ \tau \colon \cM(\cD) \to \mathrm{Der}(\cB,\cD ;\tau)$ is continuous in the norm topology. 
	
	\begin{lem}[cf. {\cite{baajCalgebresHopfTheorie1989}*{Lemma 4.1}}]\label{lem:sublemma.technical}
		Let $\cB$, $\cD$ be $\bZ_2$-graded C*-algebras and $\tau \colon \cB \to \cD$ be an essential graded  $\ast$-homomorphism. 
		Let $K \subset \Der (\cB, \cD; \tau) $ be a compact subset in the norm topology. 
		Then, for any $\varepsilon > 0$, $ b \in \cB$, $d \in \cD$, 
		and $e_0\in \cB$ with $0\leq e_0$ and $\| e_0 \|<1$, 
		there exists $e\in \cB$ such that 
		\begin{itemize}
			\item $e_0\leq e$, $\|e\|<1$, and $e$ is even,  
			\item $\|b-eb\|, \|d - \tau(e)d\|, \| \delta (e) \| < \varepsilon $ for any $ \delta \in K$. 
		\end{itemize}
	\end{lem}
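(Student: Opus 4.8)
The plan is to mimic the proof of the classical Kasparov technical lemma \cite{baajCalgebresHopfTheorie1989}*{Lemma~4.1}, building the element $e$ as a carefully chosen perturbation of $e_0$ inside $\cB$. First I would reduce to a single derivation bound: since $K$ is norm-compact in $\Der(\cB,\cD;\tau)$ and the map $\mathcal{M}(\cD) \to \Der(\cB,\cD;\tau)$, $m \mapsto \ad(m)\circ\tau$, is norm-continuous, it suffices to handle the finitely many derivations of a finite $\varepsilon/3$-net $\delta_1,\dots,\delta_N \in K$ and then absorb the remaining $2\varepsilon/3$ slack. Next I would pick a quasicentral approximate unit: let $(f_\lambda)$ be an increasing approximate unit for $\cB$ that is quasicentral relative to the (separable) C*-subalgebra of $\cB$ generated by $e_0$, $b$, and the finitely many elements that witness the derivation condition, i.e. such that $\|f_\lambda x - x f_\lambda\| \to 0$ for all those $x$; such an approximate unit exists because $\cB$ is $\sigma$-unital and we only need quasicentrality with respect to a separable subset.

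The key step is then to set $e := e_0 + (1-e_0)^{1/2} f_\lambda (1-e_0)^{1/2}$ for $\lambda$ large; this is manifestly even (as $e_0$ is even, $f_\lambda$ can be chosen even by averaging with the grading automorphism of $\cB$), satisfies $e_0 \le e$, and $\|e\| < 1$ since $0 \le f_\lambda$ with $\|f_\lambda\|<1$ and a short functional-calculus estimate (the same one as in the classical proof) gives $e \le 1 - (1-e_0)(1-\|f_\lambda\|)$ away from $1$. For the approximation estimates: $\|b - eb\|$ is controlled because $f_\lambda$ is an approximate unit and $(1-e_0)^{1/2}$ factors are handled via quasicentrality; $\|d - \tau(e)d\|$ follows from $\tau(b)d$ being approximable and $\tau$ essential, applying $\tau$ to the previous relation; and for $\|\delta(e)\| < \varepsilon$ with $\delta \in \{\delta_1,\dots,\delta_N\}$ one expands $\delta(e)$ using the Leibniz rule, obtaining terms involving $\delta(e_0)$, $\delta((1-e_0)^{1/2})$, $\delta(f_\lambda)$, and cross terms $\tau(\cdot)\delta(\cdot)$; the $\delta(f_\lambda)$ contribution is made small precisely by choosing $f_\lambda$ quasicentral for the generators controlling $\delta$, while the remaining terms are bounded uniformly and multiplied by factors that go to $0$ thanks to the approximate-unit property — one may need to apply $\delta$ to the identity $f_\lambda = f_\lambda^{1/2} f_\lambda^{1/2}$ and use $\|\delta(f_\lambda^{1/2})\| \to 0$ as well, which follows from continuity of the functional calculus $t \mapsto t^{1/2}$ together with quasicentrality.

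The main obstacle I anticipate is the derivation estimate $\|\delta(e)\| < \varepsilon$ in the graded setting: one must be careful that the grading signs $(-1)^{|\delta|\cdot|a|}$ in the Leibniz rule do not obstruct the telescoping, and that all the auxiliary elements ($e_0$, the square roots, $f_\lambda$) can be taken \emph{even}, so that no parasitic odd cross-terms appear. This is handled exactly as in \cite{baajCalgebresHopfTheorie1989} by replacing any chosen approximate unit $f_\lambda$ with its even part $\tfrac12(f_\lambda + \gamma_\cB(f_\lambda))$, which remains an approximate unit and remains quasicentral, and by noting that continuous functional calculus of even positive elements stays even. Once evenness is secured, the argument is the standard $\sigma$-unital quasicentral approximate-unit computation, with the only genuinely new bookkeeping being that the derivations now land in $\cD$ via $\tau$ rather than in $\cB$ itself — but since $\tau$ is a bounded essential $\ast$-homomorphism this changes nothing qualitative. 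I would then conclude by assembling the finite-net estimates with the norm-continuity of $\delta \mapsto \delta(e)$ over $K$ to upgrade from the net to all of $K$.
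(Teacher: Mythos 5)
Your overall architecture (finite $\varepsilon$-net in $K$, evenness by averaging with the grading automorphism, the element $e = e_0 + (1-e_0)^{1/2} f_\lambda (1-e_0)^{1/2}$ to enforce $e_0 \le e$ and $\|e\|<1$, and the estimates for $\|b-eb\|$ and $\|d-\tau(e)d\|$ using that $\tau(f_\lambda)$ is an approximate unit for $\cD$ because $\tau$ is essential) is reasonable, but there is a genuine gap in the derivation estimate. Writing $1-e = (1-e_0)^{1/2}(1-f_\lambda)(1-e_0)^{1/2}$ and expanding $\delta(e)=-\delta(1-e)$ by the Leibniz rule, the outer terms are of the form $\delta(c)\,(1-\tau(f_\lambda))\cdot(\cdots)$ with $c\in\cB$ fixed, and these do tend to $0$ in norm; but the middle term is $\tau((1-e_0)^{1/2})\,\delta(f_\lambda)\,\tau((1-e_0)^{1/2})$, and to make it small you need an approximate unit $(f_\lambda)$ of $\cB$ with $\|\delta(f_\lambda)\|\to 0$ for every $\delta\in K$. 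That is precisely the nontrivial content of the lemma (it is the special case $e_0=0$), and it is \emph{not} supplied by the standard quasicentral approximate unit theorem: quasicentrality controls commutators $\|[f_\lambda,x]\|$ for $x$ in a separable subset of $\cM(\cB)$, whereas a general element of $\Der(\cB,\cD;\tau)$ is an abstract bounded $\tau$-derivation into a \emph{different} algebra $\cD$ and need not have any ``witnessing elements'' at all; even for inner ones $\ad(m)\circ\tau$ with $m\in\cM(\cD)$ you would need $\tau(f_\lambda)$ to be quasicentral for $m$ while $f_\lambda$ stays in the constrained convex set $\{f\in\cB:\ e_0\le f,\ \|f\|<1\}$, which is again not the off-the-shelf statement. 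In short, your proof assumes the key point rather than proving it.

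The mechanism that closes this gap --- and the one the paper uses --- is the convexity/Hahn--Banach argument: the affine map $\Phi(x)=(b-xb,\ d-\tau(x)d,\ (\delta(x))_{\delta\in K})$ into $\cB\oplus\cD\oplus C(K;\cD)$ extends strictly continuously on the unit ball to $\cM(\cB)$ with $\Phi(1)=0$; since the convex set $E=\{e\in\cB^0 \mid e_0\le e,\ \|e\|<1\}$ is an approximate unit, $\Phi(E)$ accumulates at $0$ weakly, and Mazur's theorem (weak closure equals norm closure for convex sets) yields $e\in E$ with $\|\Phi(e)\|<\varepsilon$. If you insist on your concrete formula for $e$, you still have to run this convexity argument first to produce an $f_\lambda$ with $\|\delta(f_\lambda)\|$ small for all $\delta\in K$ --- at which point the formula is redundant, since the Hahn--Banach step already handles the constraints $e_0\le e$, $\|b-eb\|$ and $\|d-\tau(e)d\|$ simultaneously.
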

	
	\begin{proof}
		Consider the bounded linear map 
		\begin{align*}
			\Phi_{b,d,K} \colon \cB \to \cB \oplus \cD \oplus  C(K ; \cD ), \quad 			
			\Phi_{b,d,K}(x):=(b-xb,d - \tau(x)d, \delta \mapsto \delta (x) ).  
		\end{align*}
		This uniquely extends to the map $\Phi_{b,d,K} \colon\cM(\cB ) \to \cM(\cB \oplus \cD \oplus C(K;\cD))$ that is strictly continuous on the unit ball. Indeed, for $T \in \cM(\cB)$, its image $\delta(T) \in \cM(\cD)$ characterized by the relation
		\[ \delta (T)\tau (x)=\delta(T x) - (-1)^{|\delta | \cdot |T|} \tau(T) \delta(x) \]
		varies strict continuously with $\delta \in K$.
		
		Since the ordered set $E:=\{e\in \cB^0 \mid  e_0\leq e, \| e\| <1 \}$
		gives an approximate unit of $\cB$ and $\Phi_{b,d,K} (1_{\cM(\cB)}) = 0$, 
		we see that $ 0 $ and $\Phi(E)$ is not separated by any bounded linear functional on $\cB\oplus\cD \oplus C(K;\cD)$. 
		Now, by the convexity of $E$, we get  
		$0\in \overline{\Phi_{b,d,K}(E)}{}^{\rm weak} =\overline{\Phi_{b,d,K}(E)}{}^{\rm norm}$ 
		by the Hahn--Banach separation theorem. 
	\end{proof}

	\begin{proof}[Proof of \cref{thm:technicalappendix}]
		For $i \in I$, let 
		\[ \cD_i:=C^*(\{ \sigma_i(x), d\sigma_i(x) \mid d \in \Delta_i, x \in \cA_1 \}) \subset \cM(\cJ_i).\]
		Then, by the assumption $[\sigma_i(\cA_1), \Delta_i] \subset  \overline{\sigma_i(\cA_{1}) \cM(\cJ_i) \sigma_i(\cA_{1})}$, each $\sigma_i \colon \cA_1 \to \cD_i$ is essential. 
		Take $\bfK \subset \cM(\bigoplus_i \cJ_i)$ as a norm-compact subset of the unit ball with $ \overline{\bigoplus_i \Delta_i} = \cspan \bfK$. 
		Fix strictly positive elements $x \in \cA_1$, and $y=(y_i)_{i} \in \bigoplus_i \cA_{2,i}$ with the norms less than $1$. 
		
		Take $(I_n)_{n=1}^{\infty}$ as a family of monotonically non-decreasing finite subsets of $I$ such that $\bigcup_{n=1}^{\infty}I_n=I$.
		Starting from $u_0:=0\in \cA_1$, take $u_n\in \cA_1$ for $n\geq 1$ inductively in the way that 
		\begin{itemize}
			\item
			$u_{n-1}\leq u_n$, $\|u_n\|<1$, \ $u_n$ is even, and  
			\item
			$\| x- u_nx\|, \| [d_i, \sigma_i (u_n) ] \| < 2^{-n} $ for any $d=(d_i)_i \in \bfK$ and $i \in I_n$,
		\end{itemize} 
		by applying \cref{lem:sublemma.technical} for 
		\[ \cB=\cA_1, \; \cD:=\bigoplus_{i \in I_n} \cD_i, \; \tau:= \prod_{i \in I_n} \sigma_i, \; K=\ad (\bfK) \circ \tau, \;  b=x, \; d=0, \;\varepsilon=2^{-n}, \;  e_0=u_{n-1}.\] 
		Similarly, starting from $v_0=0$, we also take $v_n \in \cJ$ for $n \geq 1$ inductively in the way that 
		\begin{itemize}
			\item
			$0\leq v_n$,\ $\| v_n\|<1$,\ $v_n$ is even, and 
			\item
			$\| (1 - \sigma_i(v_n))\sigma_i(u_n-u_{n-1})y_i \|, \| [d_i, \sigma_i(v_n)]\|, \| [\sqrt{u_n-u_{n-1}},v_n] \| <2^{-n}$ for any $d = (d_i)_i\in \bfK$ and $i\in I_n$,
		\end{itemize} 
		by applying \cref{lem:sublemma.technical} for 
		\begin{gather*}
			\cB=\cJ, \; \cD= \cJ \oplus \bigoplus\limits_{i\in I_n} \cJ_i, \; \tau:=\id_\cJ \oplus  \prod\limits_{i\in I_n} \sigma_i, \; K= \ad (\sqrt{u_n-u_{n-1}},\bfK) \circ \tau, \\
			b=0, \; d=\tau (u_n-u_{n-1}) \cdot (0,y), \; \varepsilon = 2^{-n}, \; e_0=v_{n-1}. \;      
		\end{gather*}

		Set 
		\[ M_n:=\sum\limits_{k=1}^{n}\sqrt{u_k-u_{k-1}}v_k\sqrt{u_k-u_{k-1}} \in \cJ, \quad \text{and } M:=\lim _{n \to \infty} M_n\in \cM(\cJ),\]
		where the limit is taken in the strict topology of $\cM(\cA_1)$. 
		The strict limit converges since $\| M_n \| \leq 1$ and $\| \sqrt{u_n-u_{n-1}}v_n\sqrt{u_n-u_{n-1}}xa \| \leq 5\| a\|2^{-n}$ for any $a \in \cA_1$.
		Let $M_n':= \sum _{k=1}^n v_k (u_k-u_{k-1})$. Then $M_n - M_n' = \sum [\sqrt{u_k-u_{k-1}} , v_k] \sqrt{u_k - u_{k-1}} \in \cJ$ converges in the norm topology.
		For $a \in \cA_1$, $z_i \in \cA_{2,i}$ and $d_i \in \Delta_i$, the sequences
		\begin{itemize}
			\item $M_n a = M_n'a + (M_n-M_n')a \in \cJ$,
			\item $(\sigma_i(u_n)-\sigma_i(M_n))z_i = (\sigma_i(u_n)-\sigma_i(M_n'))z_i + \sigma_i(M_n' - M_n)z_i \in \cJ_i$, and 
			\item $[d_i, \sigma_i(M_n) ]  = [d_i, \sigma_i(M_n')] + [d_i, \sigma_i(M_n-M_n')] \in \cJ_i$
		\end{itemize}
		converge in the norm topology to $Ma$, $(1-\sigma_i(M))z_i$, and $[d_i, \sigma_i(M)]$ respectively. This concludes the proof. 
	\end{proof}

\begin{bibdiv}
\begin{biblist}
\bib{antounBicolimitsCategories2020}{article}{
  title={On bicolimits of C*-categories},
  author={Antoun, Jamie},
  author={Voigt, Christian},
  journal={Theory and Applications of Categories},
  year={2020},
  volume={35},
  pages={1683\ndash 1725}
}

\bib{aranoTensorCategoriesSubfactors2017}{inproceedings}{
      author={Arano, Yuki},
      author={Vaes, Stefaan},
       title={C*-tensor categories and subfactors for totally disconnected
  groups},
        date={2016},
   booktitle={Operator algebras and applications},
      editor={Carlsen, Toke~M.},
      editor={Larsen, Nadia~S.},
      editor={Neshveyev, Sergey},
      editor={Skau, Christian},
   publisher={Springer International Publishing},
     address={Cham},
       pages={1\ndash 43},
}

\bib{baumClassifyingSpaceProper1994}{article}{
      author={Baum, Paul},
      author={Connes, Alain},
      author={Higson, Nigel},
       title={Classifying space for proper action and $K$-theory of group C*-algebras},
        date={1994},
     journal={Contemporary {{Mathematics}}},
      volume={167},
       pages={241\ndash 291},
}

\bib{baumEquivariantHomologyRm1993}{incollection}{
      author={Baum, Paul},
      author={Higson, Nigel},
      author={Plymen, Roger},
       title={Equivariant homology for {$\rm SL(2)$} of a {$p$}-adic field},
        date={1993},
   booktitle={Index theory and operator algebras ({{Boulder}}, {{CO}}, 1991)},
      series={Contemp. {{Math}}.},
      volume={148},
   publisher={{Amer. Math. Soc., Providence, RI}},
       pages={1\ndash 18},
}

\bib{baajCalgebresHopfTheorie1989}{article}{
      author={Baaj, Saad},
      author={Skandalis, Georges},
       title={{{C}}-alg\`ebres de {{Hopf}} et th\'eorie de {{Kasparov}}
  \'equivariante},
        date={1989},
        ISSN={0920-3036},
     journal={K-Theory},
      volume={2},
      number={6},
       pages={683\ndash 721},
}

\bib{baajUnitairesMultiplicatifsDualite1993}{article}{
      author={Baaj, Saad},
      author={Skandalis, Georges},
       title={Unitaires multiplicatifs et dualit\'{e} pour les produits crois\'{e}s de $\mathrm{C}^\ast$-alg\`{e}bres},
        date={1993},
     journal={Annales scientifiques de l'\'{E}cole Normale Sup\'{e}rieure, 4e s{\'e}rie},
      volume={26},
      number={4},
       pages={425\ndash 488},
}

\bib{baajDoubleCrossedProduct2005}{article}{
      author={Baaj, Saad},
      author={Vaes, Stefaan},
       title={Double crossed products of locally compact quantum groups},
        date={2005},
     journal={Journal of the Institute of Mathematics of Jussieu},
      volume={4},
      number={1},
       pages={135\ndash 173},
}

\bib{bischoffTensorCategoriesEndomorphisms2015}{book}{
      author={Bischoff, Marcel},
      author={Kawahigashi, Yasuyuki},
      author={Longo, Roberto},
      author={Rehren, Karl-Henning},
       title={Tensor Categories and Endomorphisms of von Neumann Algebras---with applications to quantum field theory},
      series={Springer Briefs in Mathematical Physics},
   publisher={{Springer, Cham}},
        date={2015},
      volume={3},
}

\bib{blackadarTheoryOperatorAlgebras1998}{book}{
      author={Blackadar, Bruce},
       title={{$K$}-theory for operator algebras},
     edition={Second},
      series={Mathematical {{Sciences Research Institute Publications}}},
   publisher={{Cambridge University Press, Cambridge}},
        date={1998},
      volume={5},
        ISBN={0-521-63532-2},
}

\bib{cuntzMappingConesExact1986}{article}{
      author={Cuntz, Joachim},
      author={Skandalis, G.},
       title={Mapping cones and exact sequences in {$KK$}-theory},
        date={1986},
        ISSN={0379-4024},
     journal={Journal of Operator Theory},
      volume={15},
      number={1},
       pages={163\ndash 180},
}

\bib{cuntzNewLookKK1987}{article}{
      author={Cuntz, Joachim},
       title={A new look at {$KK$}-theory},
        date={1987},
        ISSN={0920-3036},
     journal={{$K$}-Theory. An Interdisciplinary Journal for the Development,
  Application, and Influence of {$K$}-Theory in the Mathematical Sciences},
      volume={1},
      number={1},
       pages={31\ndash 51},
}

\bib{decommerTannakaKreinDualityCompact2013}{article}{
      author={De~Commer, Kenny},
      author={Yamashita, Makoto},
       title={Tannaka-{{Kre{\u \i}n}} duality for compact quantum homogeneous
  spaces. {{I}}. {{General}} theory},
        date={2013},
        ISSN={1201-561X},
     journal={Theory and Applications of Categories},
      volume={28},
       pages={No. 31, 1099\ndash 1138},
}

\bib{decommerTannakaKreinDualityCompact2015}{article}{
      author={De~Commer, Kenny},
      author={Yamashita, Makoto},
       title={Tannaka-{{Kre{\u \i}n}} duality for compact quantum homogeneous
  spaces {{II}}. {{Classification}} of quantum homogeneous spaces for quantum
  {{SU}}(2)},
        date={2015},
        ISSN={0075-4102},
     journal={Journal f\"{u}r die Reine und Angewandte Mathematik},
      volume={708},
       pages={143\ndash 171},
}

\bib{dupontCurvatureCharacteristicClasses1978}{book}{
      author={Dupont, Johan~L.},
       title={Curvature and Characteristic Classes},
      series={Lecture Notes in Mathematics},
   publisher={Springer, Berlin, Heidelberg},
        date={1978},
      volume={640},
}

\bib{etingofIsocategoricalGroups2001}{article}{
      author={Etingof, Pavel},
      author={Gelaki, Shlomo},
       title={Isocategorical groups},
     journal={{International Mathematics Research Notices IMRN}},
        date={2001},
      volume={2001},
     number ={2},
       pages={59\ndash 76},
}

\bib{etingofTensorCategories2015}{book}{
      author={Etingof, Pavel},
      author={Gelaki, Shlomo},
      author={Nikshych, Dmitri},
      author={Ostrik, Victor},
       title={Tensor categories},
      series={Mathematical Surveys and Monographs},
   publisher={{American Mathematical Society, Providence, RI}},
        date={2015},
      volume={205},
        ISBN={978-1-4704-2024-6},
}

\bib{gabeDynamicalKirchbergphillipsTheorem2022}{article}{
      author={Gabe, James},
      author={Szab\'{o}, G\'{a}bor},
       title={The dynamical Kirchberg--Phillips theorem},
       date ={2023},
     journal={to appear in Acta Mathematica},       
}

\bib{haefligerDifferentialCohomology2011}{innerbook}{
     author = {Haefliger, Andr{\'e}},
      title = {Differential Cohomology},
  booktitle = {Differential Topology},
     series = {C.I.M.E. Summer Schools}, 
     editor = {Villani, V.},
     volume = {73}, 
       year = {2011},
      pages = {19--70},
  publisher = {{Springer Berlin Heidelberg}},
}

\bib{higsonTechnicalTheoremKasparov1987}{article}{
      author={Higson, Nigel},
       title={On a technical theorem of Kasparov},
        date={1987},
     journal={Journal of Functional Analysis},
      volume={73},
       pages={107\ndash 112},
}

\bib{higsonTheoryKKTheory2001}{article}{
      author={Higson, Nigel},
      author={Kasparov, Gennadi~G.},
       title={{$E$}-theory and {$KK$}-theory for groups which act properly and
  isometrically on {{Hilbert}} space},
        date={2001},
        ISSN={0020-9910},
     journal={Inventiones Mathematicae},
      volume={144},
      number={1},
       pages={23\ndash 74},
}

\bib{izumiInclusionsSimpleAst2002}{article}{
      author={Izumi, Masaki},
       title={Inclusions of simple $C^\ast$-algebras},
        date={2002},
     journal={Journal f\"{u}r die Reine und Angewandte Mathematik},
      volume={547},
       pages={97\ndash 138},
}

\bib{jonesRemarksAnomalousSymmetries2021}{article}{
      author={Jones, Corey},
       title={Remarks on Anomalous Symmetries of C*-Algebras},
        date={2021},
     journal={Communications in Mathematical Physics},
      volume={388}, 
       pages={385\ndash 417},
}

\bib{jones1983index}{article}{
       title={Index for subfactors},
      author={Jones, Vaughan F.R.},
     journal={Inventiones Mathematicae},
      volume={72},
      number={1},
       pages={1\ndash 25},
        year={1983},
   publisher={Springer},
}

\bib{kajiwaraJonesIndexTheory2004}{article}{
      author={Kajiwara, Tsuyoshi},
      author={Pinzari, Claudia},
      author={Watatani, Yasuo},
       title={Jones index theory for Hilbert C*-bimodules and its equivalence with conjugation theory},
        date={2004},
     journal={Journal of Functional Analysis},
      volume={215},
      number={1},
       pages={1\ndash 49},
}

\bib{kasparovOperatorFunctorExtensions1980}{article}{
      author={Kasparov, Gennadi~G.},
       title={The operator {$K$}-functor and extensions of {$C^\ast
  $}-algebras},
        date={1980},
        ISSN={0373-2436},
     journal={Izvestiya Akademii Nauk SSSR. Seriya Matematicheskaya},
      volume={44},
      number={3},
       pages={571\ndash 636, 719},
}

\bib{kasparovEquivariantKKTheory1988}{article}{
      author={Kasparov, Gennadi~G.},
       title={Equivariant {$KK$}-theory and the {{Novikov}} conjecture},
        date={1988},
        ISSN={0020-9910},
     journal={Inventiones Mathematicae},
      volume={91},
      number={1},
       pages={147\ndash 201},
}

\bib{longo1994duality}{article}{
     author = {Longo, Roberto},
      title = {A duality for {Hopf} algebras and for subfactors. {I}},
     volume = {159},
    journal = {Communications in Mathematical Physics},
     number = {1},
  publisher = {Springer},
      pages = {133\ndash 150},
       year = {1994},
}

\bib{meyerEquivariantKasparovTheory2000}{article}{
      author={Meyer, Ralf},
       title={Equivariant {{Kasparov}} theory and generalized homomorphisms},
        date={2000},
        ISSN={0920-3036},
     journal={{$K$}-Theory. An Interdisciplinary Journal for the Development,
  Application, and Influence of {$K$}-Theory in the Mathematical Sciences},
      volume={21},
      number={3},
       pages={201\ndash 228},
}

\bib{meyerBaumConnesConjectureLocalisation2006}{article}{
      author={Meyer, Ralf},
      author={Nest, Ryszard},
       title={The {{Baum-Connes}} conjecture via localisation of categories},
        date={2006},
        ISSN={0040-9383},
     journal={Topology. An International Journal of Mathematics},
      volume={45},
      number={2},
       pages={209\ndash 259},
}

\bib{meyerHomologicalAlgebraBivariant2010}{innerbook}{
      author={Meyer, Ralf},
      author={Nest, Ryszard},
       title={Homological algebra in bivariant {$K$}-theory and other
  triangulated categories. {{I}}},
        date={2010},
   booktitle={Triangulated categories},
      series={London Mathematical Society Lecture Note Series},
      volume={375},
   publisher={{Cambridge Univ. Press, Cambridge}},
       pages={236\ndash 289},
}

\bib{meyerHomologicalAlgebraBivariant2008}{article}{
      author={Meyer, Ralf},
       title={Homological algebra in bivariant {$K$}-theory and other
  triangulated categories. {{II}}},
        date={2008},
        journal={Tbilisi Mathematical Journal},
        volume={1},
        pages={165\ndash 210}, 
}

\bib{mitchenerCATEGORIES2002}{article}{
      author={Mitchener, Paul D.},
       title={{C*}-categories},
        date={2002},
     journal={Proceedings of the London Mathematical Society},
      volume={84},
      number={2},
      pages={375\ndash 404},
}

\bib{naiduCategoricalMoritaEquiavlence2007}{article}{
    author={Naidu, Deepak},
    title ={Categorical Morita Equivalence for Group-Theoretical Categories}, 
    date = {2007},
    journal={Communications in Algebra},
    volume={35},
    number={1},
    pages={3544\ndash 3565},
}

\bib{neshveyevDualityTheoryNonergodic2014}{article}{
      author={Neshveyev, Sergey},
       title={Duality theory for nonergodic actions},
        date={2014},
        ISSN={1867-5778},
     journal={M\"unster Journal of Mathathematics},
      volume={7},
      number={2},
       pages={413\ndash 437},
}

\bib{NVY}{misc}{
      author={Neshveyev, Sergey},
      author={Voigt, Christian},
      author={Yamashita, Makoto},
       title={unpublished},
        date={},
}

\bib{neshveyevDrinfeldCenterRepresentation2016}{article}{
      author={Neshveyev, Sergey},
      author={Yamashita, Makoto},
       title={Drinfeld center and representation theory for monoidal categories},
        date={2016},
     journal={Communications in Mathematical Physics},
      volume={345},
      number={1},
       pages={385\ndash 434},
}

\bib{neshveyevCategoricallyMoritaEquivalent2018}{article}{
      author={Neshveyev, Sergey},
      author={Yamashita, Makoto},
       title={Categorically Morita Equivalent Compact Quantum Groups},
        date={2018},
     journal={Documenta Mathematica},
      volume={7},
      number={2},
       pages={413\ndash 437},
}

\bib{nestEquivariantPoincareDuality2010}{article}{
      author={Nest, Ryszard},
      author={Voigt, Christian},
       title={Equivariant Poincar\'{e} duality for quantum group actions},
        date={2010},
     journal={Journal of Functional Analysis},
      volume={258},
      number={5},
       pages={1466\ndash 1503},
}

\bib{ocneanuquantized1988}{inproceedings}{
      author={Ocneanu, Adrian}, 
       title={Quantized group, string algebras and {Galois} theory for algebras}, 
    booktitle={Operator algebras and applications, vol.~2 (Warwick, 1987)}, 
       editor={Evans, David E.},
       editor={Takesaki, Masamichi}, 
       series={London Mathematical Society Lecture Note Series 136},
    publisher={Cambridge University Press}, 
         year={1988},
        pages={119--172},
}

\bib{phillipsClassificationTheoremNuclear2000}{article}{
      author={Phillips, N.~Christopher},
       title={A classification theorem for nuclear purely infinite simple C*-algebras},
        date={2000},
     journal={Documenta Mathematica},
      volume={5},
       pages={49\ndash 114},
}

\bib{popa1990classification}{article}{
      title={Classification of subfactors: the reduction to commuting squares},
     author={Popa, Sorin},
    journal={Inventiones Mathematicae},
     volume={101},
      pages={19\ndash 43},
       year={1990},
}

\bib{popa1995axiomatization}{article}{
      title={An axiomatization of the lattice of higher relative commutants of a subfactor},
     author={Popa, Sorin},
    journal={Inventiones Mathematicae},
     volume={120},
      pages={427\ndash 445},
       year={1995},
}

\bib{skandalisRemarksKasparovTheory1984}{article}{
      author={Skandalis, Georges},
       title={Some remarks on {{Kasparov}} theory},
        date={1984},
        ISSN={0022-1236},
     journal={Journal of Functional Analysis},
      volume={56},
      number={3},
       pages={337\ndash 347},
}

\bib{szaboCategoricalFrameworkClassifying2021}{article}{
      author={Szab\'{o}, G\'{a}bor},
       title={On a categorical framework for classifying C⁎-dynamics up to cocycle conjugacy},
        date={2021},
     journal={Journal of Functional Analysis},
      volume={280},
      number={8},
       pages={108927},
}

\bib{tuGroupoidCohomologyExtensions2006}{article}{
      author={Tu, Jean-Louis},
       title={Groupoid cohomology and extensions},
        date={2006},
     journal={Transactions of the American Mathematical Society},
      volume={358},
      number={11},
       pages={4721\ndash 4747},
}

\bib{vaesExtensionsLocallyCompactQuantum2003}{article}{
      author={Vaes, Stefaan},
      author={Vainerman, Leonid},
       title={Extensions of locally compact quantum groups and the bicrossed product construction},
        date={2003},
     journal={Advances in Mathematics},
      volume={175},
      number={1},
       pages={1\ndash 101},
}

\bib{vaesUnitaryImplementationLocallyCompact2001}{article}{
      author={Vaes, Stefaan},
       title={The Unitary Implementation of a Locally Compact Quantum Group Action},
        date={2001},
     journal={Journal of Functional Analysis},
      volume={180},
      number={2},
       pages={426\ndash 480},
}

\bib{vandaeleKtheoryGradedBanach1988}{article}{
      author={Van~Daele, Alfons},
       title={{{K}}-theory for graded {{Banach}} algebras. {{I}}},
        date={1988},
        ISSN={0033-5606},
     journal={The Quarterly Journal of Mathematics},
      volume={39},
      number={154},
       pages={185\ndash 199},
}

\bib{voigtBaumConnesConjectureFree2011}{article}{
      author={Voigt, Christian},
       title={The Baum-Connes conjecture for free orthogonal quantum groups},
        date={2011},
     journal={Advances in Mathematics},
      volume={227},
      number={5},
       pages={1873\ndash 1913},
}

\bib{woronowiczAlgebrasGeneratedUnbounded1995}{article}{
      author={Woronowicz, Stanis{\l{}}aw Lech},
       title={{C*}-algebras generated by unbounded
  elements},
        date={1995},
        ISSN={0129-055X},
     journal={Reviews in Mathematical Physics},
      volume={7},
      number={3},
       pages={481\ndash 521},
}

\end{biblist}
\end{bibdiv}

\end{document}